\newcommand{\tcb}{\textcolor{blue}}
\newcommand{\tcr}{\textcolor{red}}
\newcommand{\R}{{\mathbb R}}
\newcommand{\N}{{\mathbb N}}
\DeclareMathOperator{\argmin}{argmin}
\newcommand{\interior}{{\rm int}\kern 0.06em}
\newcommand{\Rb}{\mathbb R\cup\{+\infty\}}
\newcommand{\cC}{{\mathcal C}}
\newcommand{\cE}{{\mathcal E}}
\newcommand{\cH}{{\mathcal H}}
\newcommand{\cO}{{\mathcal O}}
\newcommand{\demi}{\frac{1}{2}}
\newcommand{\ie}{{\it i.e.}\,\,}
\def\<{\langle}
\def\>{\rangle}
\DeclareMathOperator*\prox{prox}%
\def\<{\langle}
\def\>{\rangle}
\DeclareMathOperator*\crit{crit}
\DeclareMathOperator*\dist{dist}
\newcommand{\TheTitle}{Fast optimization via  inertial dynamics\\ with closed-loop damping}
\newcommand{\TheAuthors}{H. Attouch, R.I. Bo\c t, and E.R. Csetnek}
\headers{\TheTitle}{\TheAuthors}
\title{{\TheTitle}}
\author{
  Hedy Attouch\thanks{IMAG, Universit\'e Montpellier, CNRS, Place Eug\`ene Bataillon, 34095 Montpellier CEDEX 5, France. E-mail: hedy.attouch@umontpellier.fr. Supported by COST Action: CA16228}
  \and
  Radu Ioan \sc Bo\c t\thanks{University of Vienna, Faculty of Mathematics, Oskar-Morgenstern-Platz 1, A-1090 Vienna, Austria. E-mail: radu.bot@univie.ac.at.}
  \and
   Ern\"o Robert  Csetnek\thanks{University of Vienna, Faculty of Mathematics, Oskar-Morgenstern-Platz 1, A-1090 Vienna, Austria. E-mail: robert.csetnek@univie.ac.at. Supported by the Austrian Science Fund: project P 29809-N32}
}
\begin{document}

\maketitle

\date{August 5, 2020}

\begin{abstract}
In a real Hilbert space $\cH$, in order to develop fast optimization methods, we  analyze the asymptotic behavior, as time $t$ tends to infinity, of a large class of  autonomous dissipative inertial continuous dynamics. The function $f: \cH \to \R$ to be minimized (not necessarily convex) enters the dynamic via its gradient, which is assumed to be Lipschitz continuous on the bounded subsets of $\cH$.   This results in autonomous dynamic systems with nonlinear damping and nonlinear driving force.
 We first consider the case where the damping term $\partial \phi (\dot{x}(t))$ acts as a closed-loop control of the velocity. The damping potential $\phi : \cH \to \R_+$ is a convex continuous function which reaches its minimum at the origin.
We  show the existence and  uniqueness of a global solution to  the associated Cauchy problem.
We analyze the asymptotic convergence and the convergence rates of the trajectories generated by this system.
To do this, we use techniques from optimization, control theory, and PDE's: Lyapunov analysis based on the decreasing property of an energy-like function,   quasi-gradient and Kurdyka-Lojasiewicz theory, monotone operator theory for wave-like equations. Convergence rates are obtained based on the geometric properties of the data $f$ and $\phi$. We put forward minimal hypotheses on the damping potential $\phi$ guaranteeing the convergence of trajectories, thus showing the dividing line between strong and weak damping. When $f$ is strongly convex, we give general conditions on the damping potential $\phi$ which provide exponential convergence rates.
Then, we extend the results to the case where an additional
Hessian-driven damping enters the dynamic,  which reduces the oscillations.
Finally, we consider a new inertial system where the damping jointly involves  the velocity  $\dot{x}(t)$ and the gradient  $\nabla f(x(t))$.
This study naturally leads to similar results for the
proximal-gradient algorithms obtained by temporal discretization, some of them are studied in the article.
In addition to its original results, this work  surveys the numerous works devoted to the interaction between  damped inertial continuous dynamics and  numerical optimization algorithms, with an emphasis on autonomous systems, adaptive procedures, and convergence rates.
\end{abstract}

\begin{keywords}     closed-loop damping; convergence rates; damped inertial  gradient systems;  Hessian damping; quasi-gradient systems; Kurdyka-Lojasiewicz inequality; maximally monotone operators.
\end{keywords}

\begin{AMS}
37N40, 46N10, 49M15, 65K05, 65K10, 90C25.
\end{AMS}


\section{Introduction}

Our work is part of the active research stream which studies the close link between continuous dissipative dynamical systems and the optimization algorithms obtained by temporal discretization. In this context, second-order evolution equations provide a natural and intuitive way to speed up algorithms.
Then, the optimization properties  come from the damping term. It is the skill of the mathematician to design this term to obtain rapidly converging trajectories and algorithms (ideally, obtain optimal convergence rates). Precisely, we will consider the following system (ADIGE) which  covers a large number of situations. Let us fix the setting.
Let $\mathcal H$ be a real Hilbert space endowed with the scalar product $\< \cdot,\cdot\>$ and norm $\|\cdot\|$.
Let  $f: \cH \to \R$ be a differentiable function (not necessarily convex), whose gradient $\nabla f: \cH \to \cH$ is Lipschitz continuous on the bounded subsets of $\cH$, and such that $\inf_{\cH}  f > -\infty$ (when considering the Hessian of $f$, we will assume that $f$ is twice differentiable).
Our objective is to study from the optimization point of view the Autonomous Damped Inertial Gradient Equation
 \begin{equation*}
\mbox{\rm (ADIGE)}    \qquad \ddot{x}(t) + \mathcal G \Big( \dot{x}(t),  \nabla f({x}(t)), \nabla^2 f({x}(t))\Big)    +  \nabla f (x(t)) = 0,
\end{equation*}
where  the damping term $\mathcal G \Big( \dot{x}(t),  \nabla f (x(t)),  \nabla^2 f({x}(t)) \Big) $ acts as a closed-loop control. Under suitable assumptions, this term will induce dissipative effects, which tend to stabilize asymptotically (\ie as $t \to +\infty$) the trajectories to critical points of $f$ (minimizers in the case   where $f$ is convex).
We will use the generic terminology  \textit{damped inertial continuous dynamics}  to designate second order evolution systems which have a strict Lyapunov function. To be specific we will refer to (ADIGE) or to some of its particular cases.
From there, we can distinguish two distinct  classes of dynamics and algorithms, depending on whether  the damping term involves  coefficients which are given a priori as  functions of time (open-loop damping, non autonomous dynamic), or is a feedback of the current state of the system (closed-loop damping, adaptive methods, autonomous dynamic). We will use these terminologies indifferently, but to be precise they correspond to the case of autonomous, and non-autonomous dynamics respectively.
Indeed, one of our objective is to understand  if the closed-loop damping can do as well (and possibly improve) the fast convergence properties of the accelerated gradient method of Nesterov. Recall that, in convex optimization, the accelerated gradient method of Nesterov (which is associated with a non-autonomous damped inertial dynamic) provides convergence rate of order $1/t^2$, which is optimal for first-order methods (involving only evaluations of  $\nabla f$ at iterates). This justifies the importance of inertial dynamics for developing fast optimization methods (recall that the continuous steepest descent, which is a first order evolution equation, only guarantees  the convergence rate $1/t$ for general convex functions).
Closely related questions concern the impact of geometric properties of data (damping term, objective function) on the convergence rates of trajectories and iterations.  This is a wide subject which concerns continuous optimization, as well as the study of the stabilization of oscillating systems in mechanics and physics.
 Due to the highly nonlinear characteristics of (ADIGE) (non-linearity occurs both in the damping term and in the gradient of $f$), our convergence analysis will mainly rely on the combination of the  quasi-gradient approach for inertial systems initiated by B\'egout--Bolte--Jendoubi \cite {BBJ} with the theory of Kurdyka--Lojasiewicz. The price to pay is that some of the results are only valid in finite dimensional Hilbert spaces.
 It should be noted that the relative simplicity of the functional framework (single functional space, differentiable objective function) does not allow direct application to the corresponding PDEs.
Our objective is mainly the study of optimization problems, but the Lyapunov analysis developed in the article can be a very useful guide for its extension to the PDE framework, as it was done in \cite{AA2}, \cite{BCD}, \cite{CFr}.

 \subsection{Presentation of the results}
 For each of the following systems, we will show  existence and  uniqueness of the solution of the Cauchy problem, and  study its asymptotic behavior.

 \subsubsection{(ADIGE-V)}
Our study mainly concerns  the case
the  differential inclusion
\begin{equation}\label{closed_loop-phi_def}
\mbox{\rm (ADIGE-V)}   \quad 0\in \ddot x(t) +\partial \phi(\dot x(t))+ \nabla f(x(t)),
\end{equation}
where $\phi: \cH \to \R$ is a convex continuous function which achieves its minimum at the origin, and the  operator $\partial \phi: \cH \to 2^{\cH}$ is its convex subdifferential.
The damping term $\mathcal G$  depends only on the velocity, which is reflected by the suffix V.
This model encompasses several classic situations:

\smallskip

\noindent $\bullet$ \, The case
 $\phi (u)= \frac{\gamma}{2} \|u\|^2$ corresponds to the Heavy Ball with Friction method
\begin{equation}\label{HBF}
\mbox{\rm (HBF)} \quad \ddot x(t) + \gamma \dot x(t)+ \nabla f(x(t))=0
\end{equation}
 introduced by B. Polyak  \cite{Pol,Polyak2} and further studied by Attouch--Goudou--Redont \cite{AGR} (exploration of local minima), Alvarez \cite{Alvarez} (convergence in the convex case), Haraux-Jendoubi \cite{HJ1, HJ2} (convergence in the analytic case),
 B\'egout--Bolte--Jendoubi \cite{BBJ} (convergence based on the Kurdyka-Lojasiewicz property), to cite part of the rich literature devoted to this subject.

 \smallskip

\noindent $\bullet$ \, The case  $\phi (u)= r\|u\|$ corresponds to the dry friction
effect. Then, (ADIGE-V) is a differential inclusion (because of $\phi$ nondifferentiable), which, when $\dot x(t)$  is not equal to zero, writes as follows
$$
\quad \ddot x(t) + r \frac{\dot x(t)}{\|\dot x(t)\|}+ \nabla f(x(t))=0.
$$
The importance of this case in optimization comes from the finite time stabilization property of the trajectories, which is satisfied generically with respect to the initial data.
The rigourous mathematical treatment of this case has been considered by Adly--Attouch--Cabot \cite{AAC},  Amann--Diaz \cite{AmaDia}, see  Adly-Attouch \cite{AA-preprint-jca, AA0, AA} for recent developements.

 \smallskip

\noindent $\bullet$ \, Taking $\phi (u)= \frac{1}{p}\|u\|^p$ with $ p \geq 1$   allows to
treat these questions in a unifying way. We will pay particular attention to the role played by the parameter $p$ in the asymptotic convergence analysis. For $p>1$ the dynamic writes
$$
\ddot x(t) +  \|   \dot x(t)\|^{p-2} \dot x(t)+ \nabla f(x(t))=0.
$$
We will see that the case $p=2$  separates the weak damping ($p>2$) from the strong damping ($p<2$), hence the importance of this case.

\subsubsection{(ADIGE-VH)}
 Then,
 we will extend the previous results to the  differential inclusion
\begin{equation*}
\mbox{(ADIGE-VH)}\quad  \ddot{x}(t) + \partial \phi (\dot{x}(t))+ \beta \nabla^2 f (x(t))\dot{x}(t)  + \nabla f (x(t)) \ni 0,
\end{equation*}
which, besides   a  damping potential $\phi$ as above acting on the velocity, also involves   a geometric damping driven by the Hessian of $f$, hence the terminology.
The inertial system
\begin{equation*}
{\rm \mbox{(DIN)}}_{\gamma,\beta} \qquad \ddot{x}(t) + \gamma \dot{x}(t) + \beta \nabla^2 f (x(t)) \dot{x}(t)  + \nabla f (x(t)) = 0,
\end{equation*}
was  introduced in \cite{AABR}. In the same spirit as (HBF), the dynamic ${\rm \mbox{(DIN)}}_{\gamma,\beta} $ contains a \textit{fixed} positive viscous friction coefficient $\gamma>0$. The introduction of the Hessian-driven damping allows to damp the transversal oscillations that might arise  with (HBF), as observed in \cite{AABR} in the case of the Rosenbrock function. The need to take a geometric damping adapted to $f$ had already been observed by Alvarez \cite{Alvarez} who considered the inertial system
\[
\ddot{x}(t) + \Gamma \dot{x}(t) + \nabla f (x(t)) = 0 ,
\]
where $\Gamma: \cH \to \cH$ is a linear positive anisotropic operator (see also \cite{BotCse}). But still this damping operator is fixed. For a general convex function, the Hessian-driven damping in $\mbox{\rm (DIN)}_{\gamma,\beta}$ performs a similar operation in an adaptive way. The terminology (DIN) stands shortly for Dynamic Inertial Newton system. It  refers to the natural link between this dynamic and the continuous Newton method, see
 Attouch--Svaiter \cite{ASv}.
Recent studies have been devoted to the study of the  dynamic
\[
\ddot{x}(t) + \frac{\alpha}{t} \dot{x}(t)+  \beta \nabla^2 f (x(t)) \dot{x}(t)  + \nabla f (x(t)) = 0 ,
\]
which combines asymptotic vanishing damping  with Hessian-driven damping.
The corresponding algorithms involve a correcting term in the Nesterov accelerated gradient method which reduces the oscillatory aspects, see  Attouch--Peypouquet--Redont \cite{APR}, Attouch--Chbani--Fadili--Riahi \cite{ACFR},
 Shi--Du--Jordan--Su   \cite{SDJS}.

 \smallskip

\subsubsection{(ADIGE-VGH)}

 Finally, we will consider the new dynamical system
\begin{equation*}
\mbox{(ADIGE-VGH)}\quad \ddot{x}(t) + \partial \phi \Big(\dot{x}(t) + \beta \nabla  f (x(t)\Big)  + \beta \nabla^2  f (x(t)) \dot{x} (t) + \nabla  f (x(t)) \ni 0,
\end{equation*}
 where the damping term $\partial \phi \Big(\dot{x}(t) + \beta \nabla  f (x(t)\Big)$ involves both the velocity vector and  the gradient of the potential function $f$.
The parameter  $\beta \geq 0$ is  attached to the geometric  damping induced by the Hessian. As previously considered, $\phi$ is a damping potential function.
Assuming that $f$ is convex and $\phi$ is a sharp function at the origin, that is
$\phi (u) \geq r\|u\|$ for some $r>0$,
we will show that, for each trajectory generated by  (ADIGE-VGH), the following properties are satisfied:

\smallskip

$i)$  $x(\cdot)$ converges weakly as $t\to +\infty$,  and its limit belongs to $\argmin _{\cH} f$.

\smallskip

$ii)$  $\dot{x}(t)$ and $\nabla  f (x(t))$ converge strongly to zero as $t\to +\infty$.

\smallskip

$iii)$ After a finite time, $x(\cdot)$ follows the steepest descent dynamic.

\subsection{Contents}
The paper is organized in accordance with the above presentation.
In Section  \ref{sec:classic}, we  recall some classical facts concerning the Heavy Ball with Friction method, the Su--Boyd--Cand\`es dynamic approach to the Nesterov method,  the Hessian-driven damping, and the dry friction.
Then, we successively examine each of the cases considered above:
Sections \ref{sec: basic_1}, \ref{sec: basic_2}, \ref{rate-f-str-conv}, \ref{sec:weakdamping}, \ref{sec: basic_3} are devoted to the closed-loop control of the velocity, which is the main part of our study. We  show the existence and uniqueness of a global solution for the Cauchy problem, the  exponential convergence rate in the case $f$ strongly convex, the effect of weak damping, and finally analyze the convergence under the Kurdyka--Lojasiewicz property (KL). Section \ref{sec: basic_4} considers some first related algorithmic results. Section \ref{Sec:Hessian} is devoted to the closed-loop damping with Hessian driven damping.
Section \ref{Sec: combine} is devoted to the closed-loop damping involving the velocity and the gradient.
We conclude by mentioning several lines of research for the future.

\section{Classical facts}\label{sec:classic}
Let's recall some classical facts which will serve as comparison tools.
\subsection{(HBF) dynamic system}
The  Heavy Ball with Friction system
\begin{equation*}
{\rm (HBF)}_{r} \quad \ddot{x}(t) + r \dot{x}(t)   +  \nabla f (x(t)) = 0,
\end{equation*}
was introduced by B. Polyak \cite{Pol,Polyak2}. It involves a fixed viscous friction coefficient $r>0$.
Assuming that $ f $ is a convex function such that $\argmin_{\cH} f \neq \emptyset$, we know, by Alvarez's theorem \cite{Alvarez},
 that  each trajectory of ${\rm (HBF)}_{r}$ converges weakly, and its limit belongs to $\argmin_{\cH} f$.
In addition, we have the following convergence rates, the proof of which (see \cite{AC10}) is based on the decrease property of the following Lyapunov functions
\begin{equation*}
\mathcal E (t) :=
  \frac{1}{r^2}(f(x(t))-\min_\mathcal H f)+\frac{1}{2}\|x(t)-x^* + \frac{1}{r}\dot x(t)\|^2 ,
\end{equation*}
where $x^* \in \argmin_{\cH} f$.

\begin{theorem}\label{th.HBF-rate of conv}
Let $f :\mathcal H\to \mathbb R$ be a convex function of class ${\mathcal C}^1$ such that $\mbox{\rm argmin} f \neq~\emptyset$, and let $r$ be a positive parameter. Let $x(\cdot): [0, + \infty[ \rightarrow \cal H$ be a solution trajectory of ${\rm (HBF)}_r$. Set $x(0)= x_0$ and $\dot x (0)= x_1$. Then, we have

\smallskip

$(i)$ $\displaystyle\int_{0}^{+\infty}  (f(x(t))-\min_\mathcal H f)\, dt<+\infty$, \quad $\displaystyle\int_{0}^{+\infty} t \|\dot x(t)\|^2\, dt <+\infty$.

\smallskip

 $(ii)$ \ $f(x(t))-\min_\mathcal H f \leq \displaystyle\frac{C(x_0, x_1)}{t}$,  \quad $\displaystyle\|\dot x(t)\| \leq
 \frac{\sqrt{ 2C(x_0, x_1)}}{\sqrt{t}} $, where
 $$
 C(x_0, x_1) =: \frac{3}{2r} \left( f(x_0) - \min_\mathcal H f \right) + r \mbox{\rm dist}(x_0, \mbox{\rm argmin} f)^2
 + \frac{5}{4r}\|x_1\|^2 .
 $$

 $(iii)$ $\displaystyle f(x(t))-\min_\mathcal H f = o \left(\frac{1}{t}\right)$ \quad and \quad
$\displaystyle\|\dot x(t)\|= o\,\!\left(\frac{1}{\sqrt {t}}\right)$\quad as $t\to +\infty$.
\end{theorem}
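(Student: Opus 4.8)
The plan is to run a Lyapunov analysis with two energy functions: the function $\mathcal{E}(t)$ displayed just before the theorem, and the total mechanical energy
\[
E(t):= f(x(t))-\min_{\cH}f+\tfrac12\|\dot x(t)\|^2 .
\]
First I would differentiate both along a solution of ${\rm (HBF)}_r$. Substituting $\ddot x(t)=-r\dot x(t)-\nabla f(x(t))$ produces the cancellation $\dot x(t)+\tfrac1r\ddot x(t)=-\tfrac1r\nabla f(x(t))$, whence
\[
\dot{\mathcal E}(t)=\frac1{r^2}\langle\nabla f(x(t)),\dot x(t)\rangle+\Big\langle x(t)-x^*+\tfrac1r\dot x(t),\,-\tfrac1r\nabla f(x(t))\Big\rangle=-\frac1r\langle\nabla f(x(t)),x(t)-x^*\rangle ,
\]
and convexity ($\langle\nabla f(x),x-x^*\rangle\ge f(x)-\min_{\cH}f\ge0$) gives $\dot{\mathcal E}(t)\le-\tfrac1r(f(x(t))-\min_{\cH}f)\le0$. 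Likewise $\dot E(t)=\langle\nabla f(x(t)),\dot x(t)\rangle+\langle\dot x(t),\ddot x(t)\rangle=-r\|\dot x(t)\|^2\le0$, so $E$ is nonnegative and nonincreasing. (Both functions are of class $C^1$ since the solution of ${\rm (HBF)}_r$ is $C^2$.)

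For $(i)$, integrating $\dot{\mathcal E}\le-\tfrac1r(f(x(\cdot))-\min_{\cH}f)$ over $[0,+\infty[$ and using $\mathcal E\ge0$ yields $\int_0^{+\infty}(f(x(t))-\min_{\cH}f)\,dt\le r\,\mathcal E(0)$, while integrating $\dot E=-r\|\dot x\|^2$ yields $\int_0^{+\infty}\|\dot x(t)\|^2\,dt\le\tfrac1r E(0)$. Adding these shows $\int_0^{+\infty}E(t)\,dt<+\infty$; choosing $x^*$ to be the projection of $x_0$ onto $\argmin f$, expanding $\mathcal E(0)$ and $E(0)$ and splitting $\|x_0-x^*+\tfrac1r x_1\|^2\le2\|x_0-x^*\|^2+\tfrac2{r^2}\|x_1\|^2$ gives $\int_0^{+\infty}E(t)\,dt\le C(x_0,x_1)$ with precisely the constant in the statement. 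The weighted integral then follows from $\frac{d}{dt}\big(tE(t)\big)=E(t)-rt\|\dot x(t)\|^2$: integrating over $[0,T]$ and dropping the nonpositive term $-TE(T)$ gives $r\int_0^T t\|\dot x(t)\|^2\,dt\le\int_0^T E(t)\,dt$, hence $\int_0^{+\infty}t\|\dot x(t)\|^2\,dt<+\infty$.

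For $(ii)$, since $E$ is nonincreasing, $E(t)\le\frac1t\int_0^t E(s)\,ds\le\frac1t\int_0^{+\infty}E(s)\,ds\le\frac{C(x_0,x_1)}{t}$, and then $f(x(t))-\min_{\cH}f\le E(t)$ and $\tfrac12\|\dot x(t)\|^2\le E(t)$ give the two announced bounds. For $(iii)$, I would invoke the elementary fact that a nonnegative nonincreasing $g$ with $\int_0^{+\infty}g<+\infty$ satisfies $tg(t)\to0$ (since $\tfrac t2 g(t)\le\int_{t/2}^t g(s)\,ds\to0$), applied to $g=E$; this gives $E(t)=o(1/t)$, whence $f(x(t))-\min_{\cH}f=o(1/t)$ and $\|\dot x(t)\|=o(1/\sqrt t)$.

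There is no real obstacle here: the whole point sits in the (already supplied) choice of $\mathcal E$, which makes the differential inequality $\dot{\mathcal E}\le-\tfrac1r(f-\min_{\cH}f)$ come out with the right sign from a single use of convexity. The only steps demanding attention are the bookkeeping needed to recover the exact constant $C(x_0,x_1)$ — in particular the split of $\|x_0-x^*+\tfrac1r x_1\|^2$ and the identification $\inf_{x^*\in\argmin f}\|x_0-x^*\|=\dist(x_0,\argmin f)$ — and checking that $\mathcal E$ and $E$ are regular enough to apply the fundamental theorem of calculus, which is immediate from $C^2$ regularity of the trajectory.
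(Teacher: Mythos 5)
Your proof is correct, and it follows essentially the approach indicated by the paper: the paper does not write out a proof but cites [AC10] and explicitly tells the reader that the argument rests on the decrease of the displayed Lyapunov function $\mathcal E$, which you combine in the expected way with the mechanical energy $E$. Every step checks out, including the bookkeeping that recovers the exact constant $C(x_0,x_1)$ from $r\mathcal E(0)+\frac{1}{2r}E(0)$ with the splitting $\|x_0-x^*+\frac1r x_1\|^2\le 2\|x_0-x^*\|^2+\frac{2}{r^2}\|x_1\|^2$ and $x^*$ taken as the projection of $x_0$ onto $\argmin f$.
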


Let us now consider the case of a strongly convex function. Recall that a function  $f: \cH \to \mathbb R$ is $\mu$-strongly convex for some $\mu >0$ if   $f- \frac{\mu}{2}\| \cdot\|^2$ is convex.
We have  the following exponential convergence rate, whose proof relies on the decrease property of the following Lyapunov function
$$
\mathcal E (t):= f(x(t))-  \min_{\mathcal H}f  + \frac{1}{2} \| \sqrt{\mu} (x(t) -x^*) + \dot{x}(t)\|^2,
$$
where $x^*$ is the unique minimizer of $f$.

\begin{theorem}\label{strong-conv-thm}
Suppose that $f: \cH \to \mathbb R$ is a function of class ${\mathcal C}^1$ which is $\mu$-strongly convex for some $\mu >0$.
Let  $x(\cdot): [0, + \infty[ \to \cH$ be a solution trajectory of
\begin{equation}\label{dyn-sc-a}
\ddot{x}(t) + 2\sqrt{\mu} \dot{x}(t)  + \nabla f (x(t)) = 0.
\end{equation}
Set $x(0)= x_0$ and $\dot x (0)= x_1$.
 Then, the following property is satisfied:   for all $t\geq 0$
$$
f(x(t))-  \min_{\mathcal H}f  \leq  C e^{-\sqrt{\mu}t},
$$
where \quad
$ C:= f(x_0)-  \min_{\mathcal H}f  + \mu \mbox{\rm dist}(x_0,S)^2 +
\| x_1\|^2 .$
\end{theorem}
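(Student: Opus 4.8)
The plan is to show that the energy function
\[
\mathcal E(t) := f(x(t)) - \min_{\cH} f + \frac{1}{2}\,\big\| \sqrt{\mu}\,(x(t) - x^*) + \dot x(t)\big\|^2
\]
satisfies the differential inequality $\dot{\mathcal E}(t) \le -\sqrt{\mu}\,\mathcal E(t)$ for all $t \ge 0$, and then to integrate it. Since $f$ is $\mu$-strongly convex and of class $\mathcal C^1$, it is coercive and admits a unique minimizer $x^*$ (so $S = \argmin_{\cH} f = \{x^*\}$ and $\dist(x_0,S) = \|x_0-x^*\|$); together with $\nabla f$ being Lipschitz on bounded sets, this yields a unique global solution $x(\cdot)$ of class $\mathcal C^2$, so that the computations below are legitimate.

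First I would differentiate $\mathcal E$ along the trajectory. Setting $v(t) := \sqrt{\mu}(x(t)-x^*) + \dot x(t)$ and using \eqref{dyn-sc-a} in the form $\ddot x(t) = -2\sqrt{\mu}\,\dot x(t) - \nabla f(x(t))$, one gets $\dot v(t) = -\sqrt{\mu}\,\dot x(t) - \nabla f(x(t))$; expanding $\langle v(t),\dot v(t)\rangle$ and noticing that the term $\langle \nabla f(x(t)),\dot x(t)\rangle$ arising from $\frac{d}{dt}\big(f(x(t))-\min_{\cH} f\big)$ cancels, one obtains
\[
\dot{\mathcal E}(t) = -\sqrt{\mu}\,\|\dot x(t)\|^2 - \mu\,\langle x(t)-x^*,\dot x(t)\rangle - \sqrt{\mu}\,\langle x(t)-x^*,\nabla f(x(t))\rangle .
\]

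Next I would compare this with $-\sqrt{\mu}\,\mathcal E(t)$. Expanding the squared norm in $\mathcal E(t)$, dividing the target inequality by $\sqrt{\mu} > 0$ and cancelling the common cross term $\sqrt{\mu}\,\langle x(t)-x^*,\dot x(t)\rangle$, the estimate $\dot{\mathcal E}(t) \le -\sqrt{\mu}\,\mathcal E(t)$ is seen to be equivalent to
\[
\langle x(t)-x^*,\nabla f(x(t))\rangle \ \ge\ f(x(t)) - \min_{\cH} f + \frac{\mu}{2}\|x(t)-x^*\|^2 - \frac{1}{2}\|\dot x(t)\|^2 ,
\]
which follows at once from the strong convexity inequality $f(x^*) \ge f(x(t)) + \langle \nabla f(x(t)), x^* - x(t)\rangle + \frac{\mu}{2}\|x(t)-x^*\|^2$, the term $-\frac{1}{2}\|\dot x(t)\|^2 \le 0$ being a harmless surplus on the right-hand side.

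Finally, Grönwall's lemma (equivalently, the fact that $t \mapsto e^{\sqrt{\mu}t}\mathcal E(t)$ is nonincreasing) gives $\mathcal E(t) \le \mathcal E(0)\,e^{-\sqrt{\mu}t}$ for all $t\ge 0$. Since the squared-norm term in $\mathcal E$ is nonnegative, $f(x(t)) - \min_{\cH} f \le \mathcal E(t)$; and since $\|a+b\|^2 \le 2\|a\|^2 + 2\|b\|^2$ yields $\frac{1}{2}\|\sqrt{\mu}(x_0-x^*)+x_1\|^2 \le \mu\|x_0-x^*\|^2 + \|x_1\|^2 = \mu\,\dist(x_0,S)^2 + \|x_1\|^2$, we get $\mathcal E(0) \le C$, hence $f(x(t)) - \min_{\cH} f \le C e^{-\sqrt{\mu}t}$. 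The only point requiring care is the derivative computation together with the sign bookkeeping that makes all cross terms cancel and leaves exactly the strong-convexity gap; there is no genuine analytic obstacle once well-posedness of the trajectory is granted.
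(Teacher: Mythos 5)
Your proof is correct and follows exactly the approach the paper indicates: it uses precisely the Lyapunov function $\mathcal E(t) = f(x(t))-\min_{\cH}f + \frac{1}{2}\|\sqrt{\mu}(x(t)-x^*)+\dot x(t)\|^2$ that the paper displays immediately before the theorem statement, establishes $\dot{\mathcal E}(t)\le -\sqrt{\mu}\,\mathcal E(t)$ via strong convexity (with the harmless surplus $-\tfrac12\|\dot x\|^2\le 0$), integrates by Gr\"onwall, and bounds $\mathcal E(0)\le C$ with $\|a+b\|^2\le 2\|a\|^2+2\|b\|^2$. The computations check out and there is no gap.
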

A recent account on the best tuning of the damping coefficient can be found in Aujol--Dossal--Rondepierre \cite{ADR}.
The above results show the important role played by the geometric properties of the data in the convergence rates.
Apart from the convex case, the first convergence result for (HBF) was obtained by Haraux--Jendoubi \cite{HJ1} in the case where $f:\R^n \to \R$ is a real-analytic function. They have shown the central role played by Lojasiewicz's inequality  (see also \cite{Chergui}).
Then, on the basis of Kurdyka's work  in real algebraic geometric,   Lojasiewicz's inequality was extended in \cite{BDLM} by Bolte--Daniilidis--Ley--Mazet to a large class of tame functions, possibly nonsmooth.
This is the Kurdyka--Lojasiewicz inequality, to which we will briefly refer (KL).
The convergence of first and second-order proximal-gradient dynamical systems in the context of the (KL) property was obtained by Bo\c t--Csetnek \cite{BotCseESAIM} and Bo\c t--Csetnek--L\'aszl\'o \cite{BotCseLaJEE}. The (KL) property will be a key tool for obtaining convergence rates based on the geometric properties of the data.
Note that this theory only works in the finite dimensional setting \footnote{In the field of PDE's, the Lojasiewicz--Simon theory \cite{CHJ} makes it possible to deal with certain classes of particular problems, such as semi-linear equations.} (the infinite dimensional setting is a difficult topic which is the subject of current research), and only for autonomous systems.
This explains why working with autonomous systems is important,  it allows us to use the powerful  theory (KL).

\subsection{Su-Boyd-Cand\`es dynamic approach to Nesterov accelerated gradient method}

\noindent The following non-autonomous system
\begin{equation*}
\mbox{\rm (AVD)}_{\alpha} \qquad \ddot{x}(t) + \frac{\alpha}{t} \dot{x}(t)  +  \nabla f (x(t)) = 0,
\end{equation*}
 will serve as a reference to compare our results
with the open-loop damping approach. It was introduced in the context of convex optimization by Su--Boyd--Cand\`es in \cite{SBC}.   As a specific feature, the viscous damping coefficient $\frac{\alpha}{t}$ vanishes (tends to zero) as  $t$ goes to infinity, hence the terminology "Asymptotic Vanishing Damping". This contrasts with (HBF) where the viscous friction coefficient is fixed, which prevents obtaining fast convergence of the values for general convex functions.
 Recall the main results concerning the asymptotic behaviour of the trajectories generated by $\mbox{\rm (AVD)}_{\alpha}$.

\smallskip

 \begin{itemize}
  \item For $\alpha \geq 3$, for each trajectory $x(\cdot)$ of $\mbox{\rm (AVD)}_{\alpha}$, \, $f(x(t)) - \inf_{\cH}f =\cO \left(1 /t^2\right)$ as $t \to +\infty$.

  \smallskip

  \item For $\alpha >3$,  each trajectory converges weakly to a minimizer of $f$, see
\cite{ACPR}. In addition, it is shown in \cite{AP} and \cite{May} that $f(x(t)) - \inf_{\cH}f = o\left(1 /t^2\right)$ as $t \to +\infty$.

  \smallskip

\item For $\alpha \leq 3$, we have  $f(x(t)) - \inf_{\cH}f = \displaystyle{\cO\Big( t^{-\frac{2\alpha}{3}}}\Big)$, see
\cite{AAD}  and  \cite{ACR-subcrit}.

\item $\alpha=3$ is a critical value.\footnote{The convergence of the trajectories is an open question in this case.} It corresponds to the historical case studied by Nesterov \cite{Nest1,Nest2}.
  \end{itemize}
The implicit time discretization
of $\mbox{\rm (AVD)}_{\alpha}$  provides an inertial proximal algorithm that enjoys the same properties as the continuous dynamics. Replacing the proximal step by a gradient step gives the
following Nesterov accelerated gradient method (illustrated in Figure \ref{Nest_picture})
$$
 \left\{
\begin{array}{rcl}
y_k&= &  x_{k} + \left(1 -\frac{\alpha}{k}\right) ( x_{k}  - x_{k-1}) \\
\rule{0pt}{15pt}
x_{k+1}& = & y_k- s\nabla f (y_k),
\end{array}\right.
$$
 which still enjoys the same properties when the step size $s$ is  less than the inverse of the Lipschitz constant of $\nabla f$.
Based on the dynamic approach above, many recent studies have been devoted to the convergence properties of the sequences $(x_k)$ and $(y_k)$, which have led to a better understanding and improvement of Nesterov's accelerated gradient algorithm \cite{AAD}, \cite{AC2}, \cite{ACFR}, \cite{ACPR}, \cite{ACR-subcrit},
\cite{AP}, \cite{CD}, \cite{SBC}, and of the Ravine algorithm \cite{GT}, \cite{PKD}.

\smallskip

\begin{figure}
\setlength{\unitlength}{6cm}
\begin{picture}(0.5,0.7)(-0.65,0.00)
\tcb{
\put(0.357,0.51){$y_k =  x_{k} + \left(1- \frac{\alpha}{k} \right)( x_{k}  - x_{k-1})$}
\put(0.328,0.493){{\tiny $\bullet$}}
\put(0.369,0.582){$x_k$}
\put(0.348,0.565){{\tiny $\bullet$}}
\put(0.37,0.657){$x_{k-1}$}
\put(0.364,0.635){{\tiny $\bullet$}}
\put(0.293,0.365){$x_{k+1} = \ y_k - s\nabla f \left( y_k\right) $}
\put(-0.1,0.29){$\argmin f$}
}
\qbezier(-0.1,0.2)(0.4,0.37)(-0.1,0.42)
\qbezier(-0.1,0.15)(0.6,0.4)(-0.1,0.48)
\qbezier(-0.1,0.02)(1.05,0.45)(-0.1,0.6)
\qbezier(0.,-0.01)(1.2,0.53)(-0.1,0.65)
\qbezier(0.16,-0.01)(1.42,0.59)(-0.1,0.709)
\put(-0.1,0.23){\line(1,1){0.16}}
\put(-0.02,0.23){\line(1,1){0.136}}
\put(-0.1,0.32){\line(1,1){0.085}}
\tcr{
\put(0.019,0.633){\line(5,-2){0.6}}
\put(0.341,0.504){\vector(-1,-2){0.06}}
\put(0.385,0.485){\line(-1,-2){0.022}}
\put(0.32,0.463){\line(2,-1){0.042}}
}
\tcr{
\put(0.318,0.505){\line(2,8){0.035}}
}
\end{picture}
\caption{Nesterov accelerated gradient method}
\label{Nest_picture}
\end{figure}
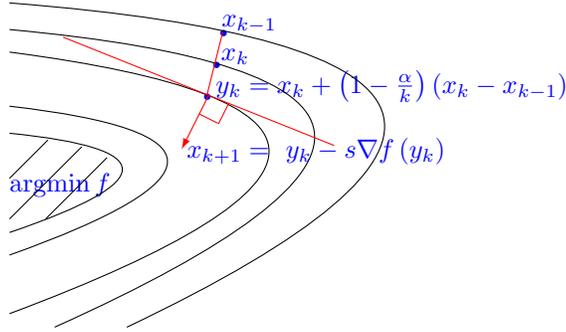

\subsubsection{Optimal convergence rates} In the above results the convergence rates are optimal, that is, they can be reached, or approached arbitrarily close, as shown by the following example from
\cite{ACPR}.
Let us  show that  $\mathcal O (1/t^2)$ is the worst possible case for the rate of convergence of the values for the $ \mbox{(AVD)}_{\alpha} $  trajectories, when $\alpha \geq 3$. It is attained as a limit in the following example.
Take $\mathcal H = \mathbb R$ and $f (x) = c|x|^{\gamma}$, where $c$ and $\gamma$ are positive parameters. We look for nonnegative solutions of $\rm{(AVD)_{\alpha}}$ of the form $x(t)= \frac{1}{t^{\theta}}$, with $\theta >0$. This means that the trajectory is not oscillating, it is a completely damped trajectory. Let us determine the values of $c$, $\gamma$ and $\theta$ that provide such solutions. We have
$$\ddot{x}(t) + \frac{\alpha}{t} \dot{x}(t) = \theta (\theta +1 -\alpha) \frac{1}{t^{\theta+ 2}}, \quad \nabla f (x(t))= c \gamma |x(t)|^{\gamma -2}x(t) = c \gamma \frac{1}{t^{\theta (\gamma -1)}}.
$$
Thus, $x(t)= \frac{1}{t^{\theta}}$ is solution of $\rm{(AVD)_{\alpha}}$  if, and only if,

\smallskip

\begin{itemize}
	\item [i)] $\theta+ 2 = \theta (\gamma -1)$, which is equivalent to $\gamma >2$ and $\theta= \frac{2}{\gamma -2}$; and
	\item [ii)] $c \gamma = \theta (\alpha -\theta -1)$, which is equivalent to $\alpha > \frac{\gamma}{\gamma -2}$ and $c= \frac{2}{\gamma(\gamma -2)}( \alpha - \frac{\gamma}{\gamma -2})$.
\end{itemize}

\smallskip

\noindent We have $\min f = 0$ and
$
f (x(t)) =\frac{2}{\gamma(\gamma -2)}( \alpha - \frac{\gamma}{\gamma -2}  )  \frac{1}{t^{\frac{2 \gamma}{\gamma -2 }}}.$\\
The speed of convergence of $f (x(t))$ to $0$ depends on the parameter $\gamma$. The exponent $\frac{2 \gamma}{\gamma -2 }$ is greater than $2$, and  tends to $2$ when $\gamma$ tends to $+\infty$. This limiting situation is obtained by taking  a function $f$ which becomes very flat around the set of its minimizers. Therefore, without any other geometric assumptions on $f$, we cannot expect a convergence rate better than $\mathcal O (1/t^2)$. This means that it is not possible to obtain a rate $\mathcal O (1/t^{r})$ with $r>2$, which holds for all convex functions. Hence, when $\alpha \geq 3$, $\mathcal O (1/t^{2})$ is sharp. This is not contradictory with the rate $o (t^{-2})$ obtained when  $\alpha  > 3$.

\subsection{Hessian-driven damping}\label{sec:Hessian_intro}

 The inertial system
\begin{equation*}
{\rm \mbox{(DIN)}}_{\gamma,\beta} \qquad \ddot{x}(t) + \gamma \dot{x}(t) + \beta \nabla^2 f (x(t)) \dot{x}(t)  + \nabla f (x(t)) = 0,
\end{equation*}
was  introduced in \cite{AABR}. In line with (HBF), the  viscous friction coefficient $\gamma$ is a \textit{fixed} positive real number. The introduction of the Hessian-driven damping makes it possible to neutralize the oscillations likely to occur with (HBF), a key property for numerical optimization purpose.\\
To accelerate this system, several  studies considered the case where the viscous damping is vanishing. As a model example, which is based on the Su--Boyd--Cand\`es continuous model for the Nesterov accelerated gradient method, we have
\begin{equation}\label{DIN-AVD}
{\rm (DIN-AVD)}_{\alpha, \beta} \qquad \ddot{x}(t) + \frac{\alpha}{t}\dot{x}(t) +  \beta  \nabla^2  f (x(t)) \dot{x} (t) + \nabla  f (x(t)) = 0.
\end{equation}
Considering this sytem, let us quote Attouch--Peypouquet--Redont\cite{APR}, Attouch--Chbani--Fadili--Riahi \cite{ACFR},
 Bo\c t--Csetnek--L\'{a}szl\'{o} \cite{BCL},
Castera--Bolte--F\'evotte--Pauwels \cite{CBFP},  Kim \cite{Kim}, Lin--Jordan \cite{LJ}, Shi--Du--Jordan--Su  \cite{SDJS}.
While preserving the convergence properties of $\mbox{\rm (AVD)}_{\alpha}$, the above system  provides fast convergence to zero of the gradients, namely $\int_{t_0}^{\infty} t^2 \|\nabla f (x(t)) \|^2  dt    < + \infty$ for $\alpha \geq 3$ and $\beta>0$, and  reduces the oscillatory aspects.

\begin{small}
\begin{figure}
\begin{center}
\includegraphics[width=\textwidth]{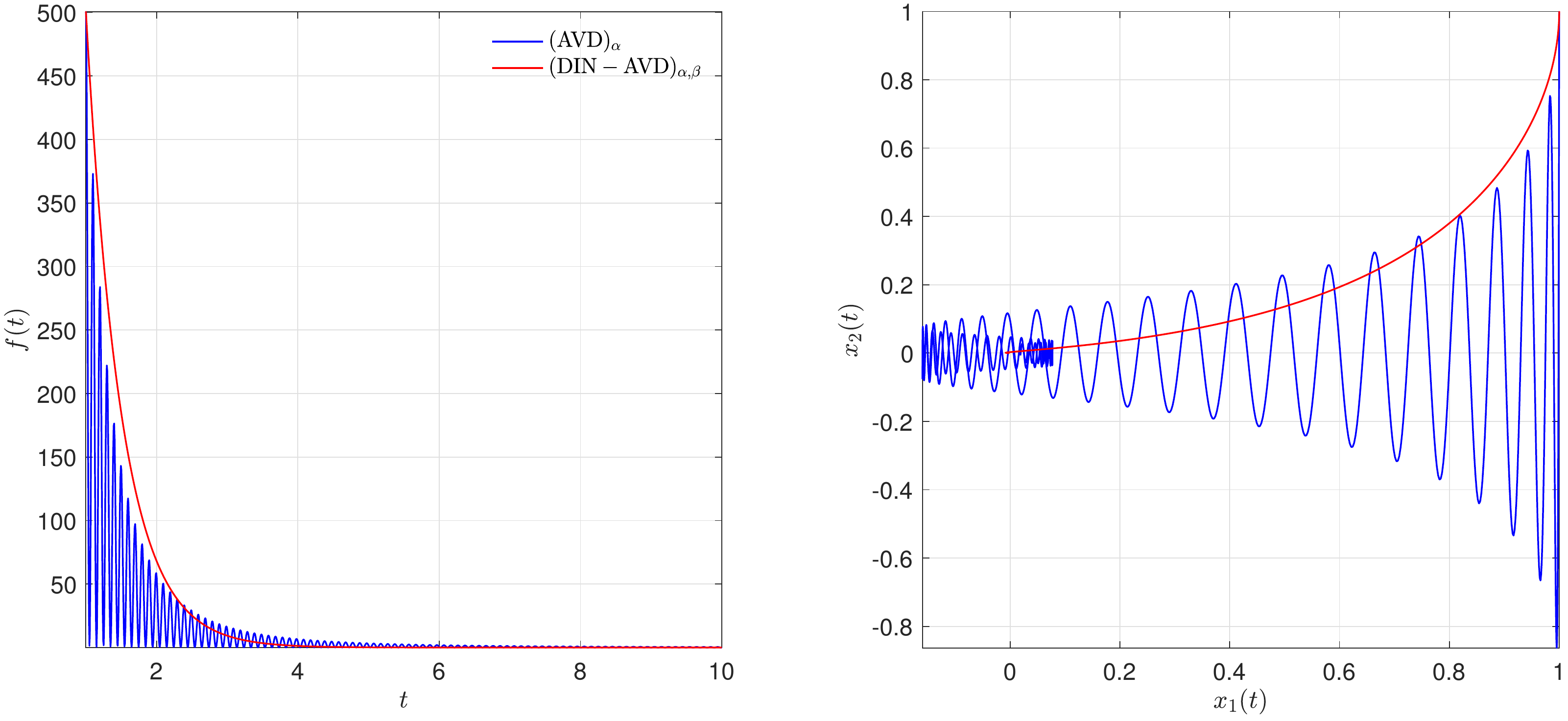}
\end{center}
\caption{Evolution of the objective (left) and trajectories (right) for ${\rm (AVD)}_{\alpha}$ ($\alpha=3.1)$ and  ${\rm (DIN-AVD)}_{\alpha, \beta}$ ($\alpha=3.1,\beta=1$) on an ill-conditioned quadratic problem in $\R^2$.}
\label{figH}
\end{figure}
\end{small}

To illustrate the remarkable effect of the Hessian-driven damping, let's compare the two dynamics ${\rm (AVD)}_{\alpha}$  and  ${\rm (DIN-AVD)}_{\alpha, \beta}$ on a simple ill-conditioned quadratic minimization problem. In the following example of \cite{ACFR},  the trajectories can be computed in closed form. Take $\cH= \R^2$ and $ f(x_1,x_2)=\frac{1}{2}(x_1^2+1000x_2^2)$. We take parameters $\alpha=3.1$, $\beta=1$,  to obey the condition $\alpha >3$. Starting with initial conditions: $(x_1(1),x_2(1))=(1,1)$, $(\dot x_1(1),\dot x_2(1))=(0,0)$, we have the trajectories displayed in Figure~\ref{figH}. We observe that the wild oscillations of ${\rm (AVD)}_{\alpha}$ are neutralized by the presence of the Hessian-driven damping in  ${\rm (DIN-AVD)}_{\alpha, \beta}$.

At first glance, the presence of the Hessian may seem to cause numerical difficulties. However, this is not the case because the Hessian intervenes in the above ODE in the form $\nabla^2  f (x(t)) \dot{x} (t)$, which is nothing other than the derivative with respect to time of $\nabla  f (x(t))$. Thus, the temporal discretization of this dynamic provides first-order algorithms
which, by comparison with the accelerated gradient method of Nesterov, contain a correction term which is equal to the difference of the gradients at two consecutive steps.

\noindent The following closely related inertial system was recently introduced by
Alecsa--L\'aszl\'o--Pinta \cite{ALP}
\begin{equation*}
\ddot{x}(t) + \frac{\alpha}{t} \dot{x}(t) + \nabla f \Big (x(t) + \beta \dot{x}(t) \Big) = 0.
\end{equation*}
The link with ${\rm (DIN-AVD)}_{\alpha, \beta}$ results from  Taylor expansion: as $t \to +\infty$  we have $\dot{x}(t) \to 0$, and so
$ \nabla f \Big (x(t) + \beta \dot{x}(t) \Big) \approx   \nabla  f (x(t)) +  \beta  \nabla^2  f (x(t)) \dot{x} (t) $.

\subsubsection{Hessian-driven damping and unilateral mechanics}

Another motivation for the study of ${\rm \mbox{(DIN)}}_{\gamma,\beta}$ comes from mechanics, and the modeling of damped shocks.
In \cite{AMR}, Attouch-Maing\'e-Redont consider the inertial system with Hessian-driven damping
\begin{equation}
\label{shoks}
\ddot{x}(t) + \gamma \dot{x}(t) + \beta \nabla^2 f (x(t))\dot{x}(t) + \nabla f (x(t)) + \nabla g(x(t))\; =0,
\end{equation}
where $g:\cH\to\R$ is a smooth real-valued function.
 An interesting property of this system
is that, after the introduction of an auxiliary variable $y$, it can be equivalently written as a first-order
system involving only the time derivatives $\dot{x}(t)$, $\dot{y}(t)$ and the gradient terms $\nabla f (x(t))$, $\nabla g (x(t))$. More precisely, the system \eqref{shoks} is equivalent to the following first-order differential equation
\begin{equation}
\label{shoks2}
\left\{
\begin{array}{l}
\dot{x}(t)+\beta \nabla f(x(t))+ax(t)+by(t)=0,\vspace{1mm}\\
\dot{y}(t)-\beta\nabla g(x(t))+ax(t)+by(t)=0,
\end{array}
\right.
\end{equation}
where $a$ and $b$ are real numbers such that: $a+b=\gamma$ and $\beta b=1$. Note that \eqref{shoks2} is different from the classical Hamiltonian formulation, which would still involve the Hessian of $f$. In contrast,  the formulation \eqref{shoks2}  uses  only first-order information from the function $f$ (no occurence of the Hessian of $f$).
 Replacing $\nabla f$ by $\partial f$ in \eqref{shoks2}
 allows us to extend the analysis to the case of a convex lower semicontinuous function $f :\cH \to \Rb $, and so to introduce constraints in the model. When $f = \delta_K$ is the indicator
function of a closed convex set $K \subset \cH$ , the subdifferential operator $\partial f$ takes account of the
contact forces, while $\nabla g$ takes account of the driving forces. In this setting, by playing with
the geometric damping parameter $\beta$, one can describe nonelastic shock laws with restitution
coefficient (for more details we refer to \cite{AMR} and references therein).
 Combination of dry friction ($\phi(u)= r\|u\|$) with Hessian damping
has been considered by Adly--Attouch  \cite{AA-preprint-jca}, \cite{AA}.

\subsection{Inertial dynamics with dry friction}
Although dry friction (also called Coulomb friction) plays a fundamental role in mechanics, its use in optimization has only recently been developed. Due to the nonsmooth character of the associated damping function $ \phi (u) = r \| u \| $,
the dynamics is a differential inclusion, which, when the speed is not equal to zero, is given by
$$
\quad \ddot x(t) + r \frac{\dot x(t)}{\|\dot x(t)\|}+ \nabla f(x(t))=0.
$$
In this case,  the energy estimate gives
$\int_{0}^{+\infty} \| \dot{x}(t) \| dt <+\infty.$
Therefore, the trajectory has finite length, and it converges strongly.
The limit $x_{\infty}$  of the trajectory  $x(\cdot)$ satisfies
$$
\| \nabla f(x_{\infty} )\| \leq r.
$$
Thus, $x_{\infty}$ is an ``approximate''  critical point
 of $f$. In practice, for optimization purpose, we  choose  a small $r>0$.
This amounts to solving the optimization problem $\min_{\cH} f$ with the  variational principle of Ekeland, instead of the Fermat rule.
The importance of this case in optimization comes from the finite time stabilization property of the trajectories, which is satisfied generically with respect to the initial data.
The rigourous mathematical treatment of this case has been considered by Adly--Attouch--Cabot \cite{AAC}, see  Adly--Attouch \cite{AA-preprint-jca, AA0, AA} for recent developements.
Corresponding PDE's results have been obtained by Amann--Diaz \cite{AmaDia} for the nonlinear wave equation, and by
Carles--Gallo \cite{CarlesGallo} for the nonlinear Schr\"odinger equation.


\subsection{Closed-loop versus open-loop damping}
In the strongly convex case, the autonomous system (HBF) provides an exponential rate of convergence.
On the other hand, the $\mbox{\rm (AVD)}_{\alpha}$  system provides a convergence rate of order $1/t^{\alpha}$.
Thus, in this case, the closed-loop damping behaves better that the open-loop damping.
For general convex functions (\ie in the worst case), we have the opposite  situation.
 $\mbox{\rm (AVD)}_{\alpha}$ provides a convergence rate  $1/t^2$, while (HBF)  gives only $1/t$.
In this paper, we will study  the impact of the choice of the damping potential on the rate of convergence.
A related question is:  using  autonomous systems, can we obtain for general convex functions, a convergence rate of order $1/t^2$,
\ie as good as the Nesterov accelerated gradient method?
As we will see, to answer these questions, we will have to study different types of closed-loop damping, and rely on the geometric properties of the data.
These questions fall within the framework of an active research current, to quote some recent works, Apidopoulos--Aujol--Dossal--Rondepierre \cite{AADR} (geometrical properties of the data), Iutzeler--Hendricx \cite{IH}  (online acceleration), Lin--Jordan \cite{LJ} (control perspective on high-order optimization), Poon--Liang \cite{PL}  (geometry of first-order methods and adaptive acceleration).

\vspace{3mm}

\section{Damping via closed-loop velocity control, existence and uniqueness}\label{sec: basic_1}
In this section, we will successively introduce the notion of damping potential, then prove the existence and  uniqueness of the solution of the corresponding Cauchy problem.

\subsection{Damping potential}

We consider  the  differential inclusion
\begin{equation*}
\mbox{\rm (ADIGE-V)} \qquad  0\in \ddot x(t) +\partial \phi(\dot x(t))+ \nabla f(x(t)),
\end{equation*}
where $\phi$ is a convex damping potential, which is defined below.

\begin{definition}\label{def1}
A function $\phi : \cH \to \R_+$ is  a damping potential if it satisfies (i), (ii), (iii):

$(i)$ $\phi$ is a nonnegative convex continuous function;

\smallskip

$(ii)$  $\phi (0) = 0  = \min_{\cH} \phi $;

\smallskip

$(iii)$ the minimal section of $\partial \phi$ is bounded on the bounded sets, that is,  for any $R>0$
$$
\sup_{\|u\|\leq R} \|(\partial \phi)^0(u)  \| <+\infty.
$$
\end{definition}
In the above,  $(\partial \phi)^0(u)$ is the element of minimal norm of the closed convex non empty set $\partial \phi(u)$, see \cite[Proposition 2.6]{Brezis}.
Note that, when $\cH$ is finite dimensional, then property $(iii)$ is automatically satisfied. Indeed, in this case, $\partial \phi$ is bounded on the bounded sets, see \cite[Proposition 16.17]{BC}.

\noindent The concept of damping potential is flexible, and allows to cover various situations. For example,
$$ \phi_1(u)=\frac{\gamma}{2}\|u\|^{2} + r\|u\|, \quad \phi_2(u)=\max\{\frac{\gamma}{2}\|u\|^{2}; r\|u\| \}$$
are damping potentials
which combine  dry friction with viscous damping, see \cite{AA}.

\subsection{Existence and uniqueness results}

In this section, we  study the existence and the uniqueness of the solution of the Cauchy problem associated with \mbox{\rm (ADIGE-V)},  where $\phi$ is a convex damping potential.
No convexity assumption is made on the function $f$, which is supposed to be differentiable.
Since we work with  autonomous (dissipative) systems, we can take an arbitrary initial time $t_0$.
As is usual, we take $t_0 =0$, and hence work on the time interval $[0, +\infty[$.\\
Let us precise the notion of strong solution.

\begin{definition}
The trajectory  $x: [0, +\infty[ \to \cH$ is said to be a strong global solution of
\mbox{\rm (ADIGE-V)} if it satisfies the following properties:

\smallskip

$(i)$ $x\in \mathcal C^1 ([0, +\infty[; \cH)$,

\smallskip

$(ii)$  $\dot{x} \in {\rm Lip} (0, T; \cH)$, $\ddot x \in L^{\infty} (0, T; \cH)$ for all $T>0$,

\smallskip

$(iii)$ For almost all $t>0$, \,
$
0\in \ddot x(t) +\partial \phi(\dot x(t))+ \nabla f(x(t)).
$
\end{definition}
Note that, since $\dot{x} \in {\rm Lip} (0, T; \cH)$, it is absolutely continuous on the bounded time intervals, its distribution derivative coincide with its derivative almost everywhere (which exists). Thus, the acceleration $\ddot x$ belongs to $L^{\infty} (0, T; \cH)$ for all
$T>0$, but it is not necessarily continuous, see \cite[Appendix]{Brezis} for further details on  vector-valued  Lebesgue and Sobolev spaces.

Let's prove the following existence and uniqueness result for the associated Cauchy problem.
\begin{theorem}\label{basic_exist_thm} Let $f: \cH \to \R$ be a differentiable function whose gradient is  Lipschitz continuous on the bounded subsets of $\cH$, and such that $\inf_{\cH} f >-\infty$. Let $\phi : \cH \to \R_+$ be a damping potential (see Definition \ref{def1}).  Then, for any $x_0,x_1\in \mathcal H$, there exists a unique strong global solution $x: [0, +\infty[ \to \cH$ of \mbox{\rm (ADIGE-V)} such that $x(0)=x_0$ and $\dot x(0)=x_1$, that is
$$\left\{
\begin{array}{l}
 0\in \ddot x(t) +\partial \phi(\dot x(t))+ \nabla f(x(t))  \vspace{3mm}\\
 x(0)=x_0, \, \dot x(0)=x_1.
\end{array}\right.
$$
\end{theorem}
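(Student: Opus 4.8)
The natural approach is to recast (ADIGE-V) as a first-order evolution equation in the product space $\mathcal H \times \mathcal H$ governed by a (time-independent) perturbation of a maximally monotone operator, and then invoke the classical Cauchy--Lipschitz/Br\'ezis theory for such equations together with a global-in-time extension argument. Concretely, set $Z(t) = (x(t), \dot x(t)) \in \mathcal H \times \mathcal H$. Then (ADIGE-V) reads
\begin{equation*}
\dot Z(t) + A Z(t) + B Z(t) \ni 0,
\end{equation*}
where $A(x,y) = \{0\} \times \partial \phi(y)$ is maximally monotone on $\mathcal H\times\mathcal H$ (since $\phi$ is convex, continuous, hence $\partial\phi$ is maximally monotone, and $A$ is a trivial product extension), and $B(x,y) = (-y, \nabla f(x))$ is a Lipschitz-continuous-on-bounded-sets, everywhere-defined perturbation. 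The first step is to verify these structural facts: $A$ maximally monotone with $0 \in \dom A$ and $0 \in A(0,0)$; $B$ locally Lipschitz because $\nabla f$ is Lipschitz on bounded sets and $y \mapsto -y$ is globally Lipschitz.

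The second step is a \emph{local} existence-uniqueness statement. Because $B$ is only locally Lipschitz, one cannot apply the standard theorem for $\dot Z + AZ + BZ \ni 0$ with globally Lipschitz $B$ directly; instead one truncates. Fix $R$ larger than $\|(x_0,x_1)\|$, let $\chi_R$ be a Lipschitz truncation (e.g. replace $\nabla f$ by $\nabla f$ composed with the projection onto the ball of radius $R$, or multiply by a cutoff), obtaining a globally Lipschitz $B_R$ that agrees with $B$ on that ball. For the truncated equation $\dot Z + AZ + B_R Z \ni 0$, $Z(0) = (x_0,x_1)$, the classical result (see \cite{Brezis}, or the theory of Lipschitz perturbations of maximally monotone operators) yields a unique strong solution on $[0,+\infty[$ with $Z \in \mathcal C([0,\infty[;\mathcal H\times\mathcal H)$, $\dot Z \in L^\infty_{\mathrm{loc}}$, and the equation holding a.e. By continuity, $Z(t)$ stays in the ball of radius $R$ for $t$ in some maximal interval $[0,T_R[$, and on that interval $Z$ solves the original (untruncated) equation; uniqueness on $[0,T_R[$ follows from uniqueness for the truncated problem. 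Translating back: $x \in \mathcal C^1$, $\dot x \in \mathrm{Lip}(0,T;\mathcal H)$, $\ddot x \in L^\infty(0,T;\mathcal H)$ for $T < T_R$, so $x$ is a strong solution on $[0,T_R[$.

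The third and decisive step is the \emph{a priori bound} that promotes the local solution to a global one, i.e.\ shows the trajectory cannot blow up in finite time, so $T_R$ can be taken arbitrarily large (equivalently, the maximal solution is global). This is where the energy / Lyapunov structure enters and is the main obstacle. Define the energy
\begin{equation*}
E(t) = \tfrac12 \|\dot x(t)\|^2 + f(x(t)) - \inf_{\mathcal H} f \geq 0.
\end{equation*}
Differentiating along the trajectory and using the inclusion, $\frac{d}{dt}E(t) = \langle \ddot x(t) + \nabla f(x(t)), \dot x(t)\rangle = -\langle \xi(t), \dot x(t)\rangle$ with $\xi(t) \in \partial\phi(\dot x(t))$; since $\phi(0)=\min\phi$, monotonicity of $\partial\phi$ (or the subgradient inequality $\phi(0) \geq \phi(\dot x) + \langle \xi, 0 - \dot x\rangle$) gives $\langle \xi(t), \dot x(t)\rangle \geq \phi(\dot x(t)) - \phi(0) = \phi(\dot x(t)) \geq 0$. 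Hence $E$ is nonincreasing, so $\|\dot x(t)\|^2 \leq 2E(0)$ for all $t$ in the existence interval, and $\|x(t)\| \leq \|x_0\| + t\sqrt{2E(0)}$ grows at most linearly. Consequently the trajectory remains in a ball whose radius is controlled over any finite horizon, so for a given horizon $T$ one picks $R$ large enough that $T_R > T$; letting $T \to \infty$ yields the global solution. Finally, global uniqueness follows because any two global strong solutions with the same initial data must, by the a priori bound, lie in a common ball on $[0,T]$ and hence coincide there by the local uniqueness already established, for every $T$. The only technical care needed is the measurable-selection issue for $\xi(t) \in \partial\phi(\dot x(t))$ and the chain rule for $t\mapsto f(x(t))$ and $t \mapsto \phi(\dot x(t))$ along absolutely continuous trajectories — standard but worth citing \cite{Brezis} for — and the use of Definition \ref{def1}(iii) to ensure $\ddot x = -\xi - \nabla f(x) \in L^\infty_{\mathrm{loc}}$ with $\xi$ bounded on bounded velocity sets.
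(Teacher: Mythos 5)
Your proof is correct, but it takes a genuinely different route from the paper's. The paper's handling of the difficult case (where $\nabla f$ is only locally Lipschitz) regularizes the \emph{nonsmooth} part: it replaces $\phi$ by its Moreau--Yosida envelope $\phi_\lambda$, solves the resulting locally Lipschitz ODE by Cauchy--Lipschitz combined with the same energy estimate you use, shows the filtered family $(Z_\lambda)$ is uniformly Cauchy on bounded intervals via a Yosida inequality (this is precisely where Definition~\ref{def1}(iii) enters, to bound $\nabla\phi_\lambda(\dot x_\lambda)$ uniformly in $\lambda$), and finally passes to the limit $\lambda\downarrow 0$ via Mosco convergence of the integral functionals $\Psi^\lambda$ and the graph convergence of their subdifferentials. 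You instead leave $\partial\phi$ untouched, treating it as the maximally monotone operator $\partial\Phi$ in the product space, and tame the smooth but only locally Lipschitz coupling $G(x,u)=(-u,\nabla f(x))$ by truncating $\nabla f$; then the very theorem the paper already invokes in its Case~a) --- globally Lipschitz perturbations of a convex subdifferential, \cite[Proposition~3.12]{Brezis} --- applies directly, and the a priori energy bound lets the truncation radius go to infinity. Your route bypasses the Mosco-convergence passage to the limit entirely, which is a real simplification; as a side observation, it also does not appear to need condition~(iii) of Definition~\ref{def1} for this existence result, since strong-solution regularity already bounds the subgradient selection $\xi(t)=-\ddot x(t)-\nabla f(x(t))$ on compact intervals. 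The tradeoff is that the paper's approximating family $(x_\lambda)$ is reused downstream (e.g.\ in the acceleration estimate of Proposition~\ref{preliminary_est} and in the existence proofs of Sections~\ref{Sec:Hessian} and~\ref{Sec: combine}), so keeping the Yosida scheme explicit pays dividends elsewhere in the text even though it is not the shortest path to existence and uniqueness here.
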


\begin{proof} We successively consider the case where $ \nabla f $ is Lipschitz continuous over the whole space, then the case where it is supposed to be Lipschitz continuous only on the bounded sets.
In both cases, the idea is to mix the existence results for ODEs which are respectively based on the Cauchy--Lipschitz theorem, and on the theory of maximally monotone operators. We treat the two cases independently because the proof is much simpler in the first case.

\medskip

 \textbf{Case a)  $\nabla f$ is Lipschitz continuous on the whole space}. The Hamiltonian formulation of
(ADIGE-V)  gives the equivalent first-order differential
inclusion in the product space $\mathcal H\times \mathcal H$:
\begin{equation}\label{first_order_cl_loop_0}
0\in\dot z(t)+\partial \Phi(z(t))+F(z(t)),
\end{equation}
where $z(t)=(x(t), \dot x(t)) \in \mathcal H\times \mathcal H $, and
\begin{itemize}
\item
$\Phi:\mathcal H\times \mathcal H\to \R$ is the convex function defined by $\Phi(x,u)=\phi(u)$

\item   $F: \mathcal H\times \mathcal H\to \mathcal H\times \mathcal H$
is defined by \,  $F(x,u)=(-u,\nabla f(x))$.
\end{itemize}
Since $\nabla f$ is Lipschitz continuous on the whole space  $\cH$, we immediately get that    $F$ is a Lipschitz continuous mapping on $\mathcal H\times \mathcal H$. So, we can apply a result related to evolution equations governed by  Lipschitz perturbations of
convex subdifferentials \cite[Proposition 3.12]{Brezis} in order to conclude that \eqref{first_order_cl_loop_0} has
a unique strong global solution with initial data $z(0)=(x_0,x_1)$.

\medskip

\textbf{Case b)  $\nabla f$ is Lipschitz continuous on the bounded sets}.
The major difficulty in (ADIGE-V) is the presence of the term
$\partial \phi (\dot{x}(t))$, which involves a possibly nonsmooth operator $\partial \phi $.
A natural idea is to regularize this operator, and thus obtain a classical evolution equation.
To this end, we  use the Moreau-Yosida regularization.
Let us recall some basic facts concerning this regularization procedure.
 For any $\lambda >0$,  the Moreau envelope of $\phi$  of index $\lambda$  is the function $\phi_{\lambda}: \cH \to \mathbb R $  defined by: for all $u\in \cH$,
$$
\phi_{\lambda} (u) = \min_{\xi \in \cH} \left\lbrace \phi (\xi) + \frac{1}{2 \lambda} \| u - \xi \|^2   \right\rbrace.
$$
The function  $\phi_{\lambda} $ is  convex, of class $ {\mathcal C}^{1,1}$, \, and satisfies  $\inf_{\cH} \phi_{\lambda} = \inf_{\cH} \phi $, $\argmin_{\cH} \phi_{\lambda} = \argmin_{\cH} \phi$.
One can consult \cite[section 17.2.1]{ABM}, \cite{AE}, \cite{BC}, \cite{Brezis} for an in-depth study of the properties of the Moreau envelope in a Hilbert framework.
In our context, since $\phi: \cH \to \R$ is a damping potential, we can easily verify that
$\phi_{\lambda} $ is still  a damping potential. In particular $\phi_{\lambda}(0) = \inf_{\cH} \phi_{\lambda}=0  $. According to the subdifferential inequality for convex functions, this implies that, for all $u\in \cH$
\begin{equation}\label{ineq_phi}
\left\langle   \nabla \phi_{\lambda}(u), u \right\rangle \geq 0.
\end{equation}
 We will also use the following inequality, see \cite[Proposition 2.6]{Brezis}: for any $\lambda >0$, for any $u\in \cH$
\begin{equation}\label{ineq_phi_b}
\|  \nabla \phi_{\lambda}(u)\|  \leq \|(\partial \phi)^0(u)  \|.
\end{equation}
So, for each $\lambda >0$, we consider  the approximate evolution equation
\begin{equation}\label{existence_approx}
 \ddot{x}_{\lambda}(t)  +  \nabla \phi_{\lambda} (\dot{x}_{\lambda}(t)) + \nabla f (x_{\lambda}(t)) = 0,\; t\in [0,+\infty[.
\end{equation}
We will first prove the existence and uniqueness of a global classical solution $x_{\lambda}$ of \eqref{existence_approx} satisfying
$x_{\lambda}(0)=x_0$ and $\dot {x}_{\lambda}(0)=x_1$.
Then, we will prove that the filtered sequence $(x_{\lambda})$
converges uniformly as $\lambda \to 0$ over the bounded time intervals towards a solution of   (ADIGE-V).
According to the Hamiltonian formulation of \eqref{existence_approx}, it is equivalent
to consider  the first-order (in time) system
\begin{equation}\label{Hamilton_Yosida}
 \left\{
\begin{array}{l}
\dot x_{\lambda}(t)   -u_{\lambda}(t) =0;  	 \\
\rule{0pt}{18pt}
 \dot{u}_{\lambda}(t) +\nabla \phi_{\lambda}(u_{\lambda}(t))+ \nabla f(x_{\lambda}(t))  = 0 ,
 \hspace{2.3cm}
\end{array}\right.
\end{equation}
 with the Cauchy data
$x_{\lambda}(0) =x_0$, \, $u_{\lambda}(0)= x_1       $.
Set
$Z_{\lambda}(t) = (x_{\lambda}(t), u_{\lambda}(t)) \in \cH \times \cH .$\\
The  system (\ref{Hamilton_Yosida})  can be written equivalently as
$$
\dot{Z}_{\lambda}(t) + F_{\lambda}( Z_{\lambda}(t)) =  0, \quad Z_{\lambda}(0) = (x_0, x_1),
$$
where  $F_{\lambda}: \cH \times \cH\rightarrow \cH \times \cH,\;\;(x,u)\mapsto F_{\lambda}(x,u)$ is defined by
$$
F_{\lambda}(x,u)= \Big( 0,  \nabla \phi_{\lambda}(u) \Big) +
 \Big( -u, \nabla f(x)   \Big).
$$
Hence $F_{\lambda}$ splits as follows
$
F_{\lambda}(x,u) = \nabla \Phi_{\lambda} (x,u) + G (x,u),
$
where
$$
\Phi (x,u) =   \phi(u), \quad  \Phi_{\lambda}(x,u)= \phi_{\lambda}(u),
\quad
G(x,u) = \Big(  -u, \, \nabla f(x)  \Big).
$$
Therefore, it is equivalent to consider the first-order differential inclusion with Cauchy data
\begin{equation}
\label{1odd_existence}
\dot{Z}_{\lambda}(t) +\nabla \Phi_{\lambda}(Z_{\lambda}(t)) + G( Z_{\lambda}(t))= 0, \quad Z_{\lambda}(0) = (x_0, x_1).
\end{equation}
According to the Lipschitz continuity of $\nabla \Phi_{\lambda}$,  and the fact that $G$ is Lipschitz continuous on the bounded sets, we have that the sum operator $ \nabla \Phi_{\lambda} + G$ which governs \eqref{1odd_existence} is  Lipschitz continuous on the bounded sets.
As a consequence, the existence of a local solution to \eqref{1odd_existence} follows from the classical Cauchy--Lipschitz theorem.
To pass from a local solution to a global solution, we use a standard energy argument, and the following a priori estimate on the solutions of \eqref{existence_approx}. After taking the scalar product of
\eqref{existence_approx} with $\dot{x}_{\lambda} $, and using
\eqref{ineq_phi}, we get that the global energy
\begin{equation}\label{energy_approx}
\mathcal E_{\lambda} (t):= f(x_{\lambda}(t)) -\inf\nolimits_{\cH}f + \demi \| \dot{x}_{\lambda}(t) \|^2 ,\end{equation}
is a decreasing function of $t$. According to the Cauchy data, and $f$ minorized,
this implies that, on any bounded time interval, the filtered sequences of functions
$(x_{\lambda})$ and $(\dot{x}_{\lambda}) $ are bounded.
According to the property \eqref{ineq_phi_b} of the Yosida approximation, and the property $(iii)$ of the
 damping potential $\phi$, this implies that
$$
\| \nabla \phi_{\lambda} (x_{\lambda}(t))\| \leq \| (\partial \phi )^{0} (x_{\lambda}(t))\|
$$
is also bounded uniformly with respect to $\lambda >0$  and $t$ bounded. According to the constitutive equation \eqref{existence_approx}, this in turn implies that the filtered sequence $(\ddot{x}_{\lambda} )$ is also bounded.
This implies that if a maximal solution is defined on a finite time interval $[0, T[$, then the limits of $x_{\lambda}(t)$ and $\dot{x}_{\lambda} (t)$
exist, as $t \to T$. Then,  we can apply the local existence result, which gives a solution defined on a larger interval, thus contradicting the maximality of $T$.

To prove the uniform  convergence of the filtered sequence $(Z_{\lambda})$ on the bounded time intervals, we proceed in a similar way as in the proof of Br\'ezis  \cite[Theorem 3.1]{Brezis}, see also
Adly-Attouch \cite{AA-preprint-jca} in the context of damped inertial dynamics.
Take  $T >0$, and $ \lambda, \mu >0$.
Consider the corresponding solutions of \eqref{1odd_existence}  on $[0, T]$
\begin{eqnarray*}
&&\dot{Z}_{\lambda}(t) +\nabla \Phi_{\lambda}(Z_{\lambda}(t)) + G( Z_{\lambda}(t))= 0, \quad Z_{\lambda}(0) = (x_0, x_1) \smallskip \\
&&\dot{Z}_{\mu}(t) +\nabla \Phi_{\mu}(Z_{\mu}(t)) + G( Z_{\mu}(t))= 0, \quad Z_{\mu}(0) = (x_0, x_1).
\end{eqnarray*}
Let's make the difference between the two above equations, and take the scalar product with $Z_{\lambda}(t) - Z_{\mu}(t)$. We obtain
\begin{eqnarray}
\demi \frac{d}{dt}\| Z_{\lambda}(t) - Z_{\mu}(t) \|^2 &+ &
\left\langle  \nabla \Phi_{\lambda}(Z_{\lambda}(t)) - \nabla \Phi_{\mu}(Z_{\mu}(t)) ,  Z_{\lambda}(t) - Z_{\mu}(t) \right\rangle \nonumber\\
&+& \left\langle  G( Z_{\lambda}(t)) - G( Z_{\mu}(t)) ,  Z_{\lambda}(t) - Z_{\mu}(t) \right\rangle =0 . \label{basic_ex_Y}
\end{eqnarray}
We now use the following ingredients:

\medskip

\noindent a) According to the  properties of the Yosida approximation (see \cite[Theorem 3.1]{Brezis}), we have
$$
\left\langle  \nabla \Phi_{\lambda}(Z_{\lambda}(t)) - \nabla \Phi_{\mu}(Z_{\mu}(t)) ,  Z_{\lambda}(t) - Z_{\mu}(t) \right\rangle
\geq -\frac{\lambda}{4} \|\nabla \Phi_{\mu}(Z_{\mu}(t))  \|^2 -
\frac{\mu}{4} \|\nabla \Phi_{\lambda}(Z_{\lambda}(t))  \|^2.
$$
Since the filtered sequences $(x_{\lambda})$ and $(\dot{x}_{\lambda}) $ are uniformly bounded  on $[0, T]$, there exists a constant $C_T$ such that, for all $0\leq t \leq T$
$$\| Z_{\lambda}(t)\|\leq C_T .$$
According to \eqref{ineq_phi_b}, and the fact that $\phi$ is a damping potential (property $(iii)$ of Definition \eqref{def1}), we deduce that
$$
 \|\nabla \Phi_{\lambda}(Z_{\lambda}(t))  \|  \leq \sup_{\|\xi\|\leq C_T} \|(\partial \phi)^0(\xi)  \|= M_T <+\infty.
$$
Therefore
$$
\left\langle  \nabla \Phi_{\lambda}(Z_{\lambda}(t)) - \nabla \Phi_{\mu}(Z_{\mu}(t)) ,  Z_{\lambda}(t) - Z_{\mu}(t) \right\rangle
\geq -\frac{1}{4} M_T (\lambda +\mu).
$$
b)  According to the local Lipschitz assumption  on $\nabla f$, the mapping $G : \cH \times  \cH  \to \cH \times  \cH$ is Lipschitz continuous on the bounded sets.
Using again that the sequence $(Z_{\lambda})$ is uniformly bounded on $[0, T]$, we deduce that there exists a constant $L_T$ such that, for all $t\in [0,T]$, for all $\lambda, \mu >0$
$$
\|  G( Z_{\lambda}(t)) - G( Z_{\mu}(t)) \| \leq L_T \|   Z_{\lambda}(t) -  Z_{\mu}(t) \|.
$$
Combining the above results, and using Cauchy--Schwarz inequality, we deduce from
\eqref{basic_ex_Y} that
$$
\demi \frac{d}{dt}\| Z_{\lambda}(t) - Z_{\mu}(t) \|^2
\leq  \frac{1}{4} M_T (\lambda +\mu) + L_T \|   Z_{\lambda}(t) -  Z_{\mu}(t) \|^2 .
$$
We now proceed with the integration of this differential inequality.
Using that $ Z_{\lambda}(0) - Z_{\mu}(0) =0$, elementary calculus gives
$$
\| Z_{\lambda}(t) - Z_{\mu}(t) \|^2 \leq \frac{M_T}{4L_T}(\lambda +\mu) \Big( e^{2L_T t}  -1  \Big).
$$
Therefore, the filtered sequence $(Z_{\lambda})$  is a Cauchy sequence for the uniform convergence on $[0, T]$, and hence it converges uniformly.
This means the uniform convergence of $x_{\lambda}$ and $\dot{x}_{\lambda}$ to $x$ and $\dot{x}$ respectively.
To go to the limit on \eqref{existence_approx}, let us write it as follows
\begin{equation}\label{hbdf_lambda_bbb}
  \nabla \phi_{\lambda} (\dot{x}_{\lambda}(t)) =  \xi_{\lambda} (t)
\end{equation}
where
$
\xi_{\lambda} (t) :=  -\ddot{x}_{\lambda}(t)  - \nabla f (x_{\lambda}(t)) .
$
We now rely on the variational convergence properties of the Yosida approximation.
Since $(\phi_{\lambda})$ converges increasingly to $\phi$ as
$\lambda \downarrow 0$, the sequence of integral functionals
$$
\Psi^{\lambda}(\xi) := \int_{0}^T \phi_{\lambda} (\xi(t))dt
$$
converges increasingly to
$
\Psi (\xi)= \int_{0}^T \phi (\xi(t))dt.
$
Therefore $(\Psi^{\lambda})$   Mosco-converges to $\Psi$ in
$\mathbb L^2 (0, T; \cH)$.
According to the theorem which makes the link between the Mosco convergence of a sequence of convex lower semicontinuous functions and the graph convergence of their subdifferentials, see  Attouch \cite[Theorem 3.66]{Att00}),  we have that
$$
\partial \Psi^{\lambda} \to \partial \Psi
$$
with respect to the topology ${\rm strong}-\mathbb L^2 (0, T; \cH) \times {\rm weak}-\mathbb L^2 (0, T; \cH) $. According to \eqref{hbdf_lambda_bbb} we have
$$
\xi_{\lambda} =\nabla  \Psi^{\lambda} (\dot{x}_{\lambda}).
$$
Since
$
\dot{x}_{\lambda} \to \dot{x} \, \mbox{ strongly in} \, \mathbb L^2 (0, T; \cH)
$
and $\xi_{\lambda}$ converges weakly in $\mathbb L^2 (0, T; \cH)$ to $\xi$ given by
\begin{equation}\label{def:xi}
\xi (t) =  -\ddot{x}(t) - \nabla f (x(t)) ,
\end{equation}
we deduce that $\xi \in \partial \Psi (\dot{x})$, that is
$$
\xi (t) \in \partial \phi (\dot{x}(t)).
$$
According to the formulation (\ref{def:xi}) of $\xi$, we finally obtain  that $x$ is a solution of  (ADIGE-V).\\
The uniqueness of the solution of the Cauchy problem is obtained exactly in the same way as in the case of the global Lipschitz assumption.
\end{proof}

\begin{remark} {\rm
The above existence and uniqueness result uses as  essential ingredient that the potential function $f$ to be minimized is a  differentiable function, whose  gradient is locally Lipschitz continuous.
The introduction of constraints into $f$  via the indicator function would lead to solutions  involving shocks when reaching the boundary
of the constraint. In this case, existence can be still obtained in finite dimension, but uniqueness may not be satisfied, see
Attouch--Cabot--Redont \cite{ACR}.}
\end{remark}

\section{Closed-loop velocity control, preliminary convergence results }\label{sec: basic_2}

Let  $x: [0, +\infty[ \to \cH$ be a solution trajectory of (ADIGE-V).
\subsection{Energy estimates}
Define the global energy at time $t$  as follows:
\begin{equation}\label{energy}
\mathcal E (t):= f(x(t)) -\inf\nolimits_{\cH}f + \demi \| \dot{x}(t) \|^2 .
\end{equation}
Take the scalar product of (ADIGE-V) with $ \dot {x} (t) $. According to the derivation chain rule, we get
\begin{equation}\label{closed_loop_2}
 \frac{d}{dt} \mathcal E (t) + \left\langle \partial \phi(\dot x(t)),   \dot x(t) \right\rangle  = 0.
\end{equation}
The convex subdifferential inequality, and $\phi (0)=0$, gives, for all $u \in \cH$
\begin{equation}\label{energy_2}
\left\langle \partial \phi(u),   u \right\rangle \geq \phi (u).
\end{equation}
Combining the two above inequalites, we get
\begin{equation}\label{closed_loop_2_b}
 \frac{d}{dt} \mathcal E (t) +  \phi(\dot x(t))   \leq 0.
\end{equation}
Since $\phi$ is nonnegative, this implies that the global energy is non-increasing. Since $f$ is minorized, this implies that the velocity $\dot{x}(t)$ is bounded over $[0, +\infty[$. Precisely,
\begin{equation}\label{closed_loop_2_c}
\sup_{t \geq 0}  \| \dot{x}(t) \|  \leq R_1:= \sqrt{2 \mathcal E (0)}.
\end{equation}
To go further, suppose that the trajectory $x(\cdot)$ is bounded (this is verified for example if $f$ is coercive), and set
\begin{equation}\label{closed_loop_2_d}
\sup_{t \geq 0}  \| x(t) \|  \leq R_2.
\end{equation}
Let us now establish a bound on the acceleration.
For this, we rely on the approximate dynamics
\begin{equation}\label{existence_approx_b}
 \ddot{x}_{\lambda}(t)  +  \nabla \phi_{\lambda} (\dot{x}_{\lambda}(t)) + \nabla f (x_{\lambda}(t)) = 0,\; t\in [0,+\infty[.
\end{equation}
A similar estimate as above gives $\sup_{t \geq 0}  \| \dot{x}_{\lambda}(t) \|  \leq R_1:= \sqrt{2 \mathcal E (0)}$.
According to property $(iii)$ of the damping potential, we obtain
$$
 \|\nabla \phi_{\lambda} (\dot{x}_{\lambda}(t))   \|  \leq \sup_{\|u\|\leq R_1} \|(\partial \phi)^0(u)  \|= M_1 <+\infty.
$$
According to the local Lipschitz continuity property of $\nabla f$
$$
 \|\nabla f (x_{\lambda}(t))   \|  \leq \sup_{\|x\|\leq R_2} \|\nabla f (x)  \|= M_2 <+\infty.
$$
Combining the two above inequalities with \eqref{existence_approx_b}, we get that for all $\lambda >0$, and all $t\geq 0$
$$
 \| \ddot{x}_{\lambda}(t) \| \leq M_1 + M_2.
$$
Since $\ddot{x}_{\lambda}(t)$ converges weakly to $\ddot{x}(t)$
as $\lambda \to 0$ (see the proof of Theorem \ref{basic_exist_thm}), we obtain
\begin{equation}\label{closed_loop_2_e}
\sup_{t \geq 0}  \| \ddot{x}(t) \|  < +\infty.
\end{equation}
Moreover,
by integrating \eqref{closed_loop_2_b}, we immediately obtain
$
\int_{0}^{+\infty}  \phi(\dot x(t))  dt <+\infty.
$
Let us summarize  the above results, and complete them, in the following proposition.
\begin{proposition}\label{preliminary_est}
Let  $x: [0, +\infty[ \to \cH$ be a solution trajectory of {\rm (ADIGE-V)}.
Then, the global energy $\mathcal E (t)= f(x(t)) -\inf_{\cH}f + \demi \| \dot{x}(t) \|^2 $ is non-increasing, and
$$
\sup_{t \geq 0}  \| \dot{x}(t) \|  < +\infty,  \quad \int_{0}^{+\infty}  \phi(\dot x(t))  dt <+\infty.
$$
Suppose moreover that $x$ is  bounded. Then
\begin{equation}\label{closed_loop_2_f}
 \sup_{t \geq 0}  \| \ddot{x}(t) \|  < +\infty.
\end{equation}
Suppose moreover that there exists $p\geq 1$, and $r>0$ such that, for all $u\in \cH$,    $\phi (u) \geq r \|u\|^p$. Then
\begin{equation}\label{dx-conv-0}
 \lim_{t\to +\infty}  \| \dot{x}(t) \|=0.
\end{equation}
\end{proposition}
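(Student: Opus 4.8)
The first three conclusions (monotonicity of $\mathcal E$, boundedness of $\dot x$, and integrability of $\phi(\dot x(t))$) have already been derived in the discussion preceding the statement, via the scalar product of (ADIGE-V) with $\dot x(t)$, the chain rule, the subdifferential inequality $\langle \partial\phi(u),u\rangle \geq \phi(u)$, and integration of \eqref{closed_loop_2_b}. So the plan is to restate these briefly and then concentrate on the two conclusions that require the extra hypotheses. For \eqref{closed_loop_2_f}, I would reproduce the argument already sketched after \eqref{closed_loop_2_d}: work with the approximate dynamics \eqref{existence_approx_b}, note that the energy estimate transfers to $x_\lambda$ so $\|\dot x_\lambda(t)\|\leq R_1$ uniformly, use property $(iii)$ of the damping potential together with \eqref{ineq_phi_b} to bound $\|\nabla\phi_\lambda(\dot x_\lambda(t))\|\leq M_1$, use the local Lipschitz continuity of $\nabla f$ on the set $\{\|x\|\leq R_2\}$ — which contains all $x_\lambda(t)$ by the uniform convergence $x_\lambda\to x$ established in Theorem \ref{basic_exist_thm} — to bound $\|\nabla f(x_\lambda(t))\|\leq M_2$, and then read off $\|\ddot x_\lambda(t)\|\leq M_1+M_2$ from the equation; passing to the weak limit in $\lambda$ gives \eqref{closed_loop_2_f}.

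The only genuinely new part is \eqref{dx-conv-0}. The plan is as follows. From $\int_0^{+\infty}\phi(\dot x(t))\,dt<+\infty$ and the hypothesis $\phi(u)\geq r\|u\|^p$ we get $\int_0^{+\infty}\|\dot x(t)\|^p\,dt<+\infty$, so the nonnegative function $t\mapsto \|\dot x(t)\|^p$ is integrable on $[0,+\infty[$. To upgrade integrability to convergence to $0$, I would show this function is uniformly continuous, and then invoke a Barbalat-type lemma: a uniformly continuous integrable nonnegative function on $[0,+\infty[$ tends to $0$. Uniform continuity of $t\mapsto\|\dot x(t)\|^p$ follows from the fact that $t\mapsto \dot x(t)$ is Lipschitz on $[0,+\infty[$ — indeed $\|\ddot x\|$ is bounded by \eqref{closed_loop_2_f} (under the standing assumption that $x$ is bounded, which is in force here) — composed with the map $v\mapsto\|v\|^p$, which is Lipschitz on the bounded set $\{\|v\|\leq R_1\}$ that contains the whole velocity trajectory.

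The main obstacle is a bookkeeping one: the conclusion \eqref{dx-conv-0} is stated without the explicit hypothesis that $x$ is bounded, yet my route to uniform continuity of $\|\dot x(\cdot)\|^p$ uses the acceleration bound \eqref{closed_loop_2_f}, which was proved only under boundedness of $x$. I expect the intended reading is that the hypothesis ``$x$ bounded'' from the previous sentence stays in force (the statement says ``suppose moreover''), so I would make that explicit. If one wants \eqref{dx-conv-0} without boundedness of $x$, the fix is to avoid \eqref{closed_loop_2_f} and instead argue directly: from \eqref{closed_loop_2_b}, $\mathcal E$ is nonincreasing and bounded below, hence converges, so $\int_t^{t+1}\phi(\dot x(s))\,ds\to 0$; combined with a monotonicity or a Cauchy-type argument on $\|\dot x(t)\|^2$ (using that $\frac{d}{dt}\tfrac12\|\dot x(t)\|^2 = -\langle\partial\phi(\dot x(t)),\dot x(t)\rangle - \langle\nabla f(x(t)),\dot x(t)\rangle$, whose second term is controlled once we know $\dot x\in L^2$ when $p\leq 2$, and directly when $p>2$), one still reaches $\|\dot x(t)\|\to 0$. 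I would present the clean Barbalat argument under the stated ``moreover'' and remark on this point.
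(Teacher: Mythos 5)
Your proposal is correct and follows essentially the same route as the paper: the first three conclusions and the acceleration bound come from the energy estimates and the Yosida-approximate dynamics already laid out before the statement, and the final conclusion is obtained from $\int_0^{+\infty}\|\dot x(t)\|^p\,dt<+\infty$ together with $\sup_t\|\ddot x(t)\|<+\infty$; the paper merely labels this step as a classical implication while you explicitly unfold it as a Barbalat-type argument. Your observation that the ``suppose moreover'' hypotheses are cumulative, so that the boundedness of $x$ stays in force for \eqref{dx-conv-0}, is the reading the paper's proof implicitly uses.
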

\begin{proof}
We just need to prove the last point.
From  $\int_{0}^{+\infty}  \phi(\dot x(t))  dt <+\infty$ and
$\phi (u) \geq r \|u\|^p$, we get $\int_{0}^{+\infty} \| \dot{x}(t) \|^{p} dt <+\infty$.
This estimate, combined with $\sup_{t \geq 0}  \| \ddot{x}(t) \|  < +\infty$ classically implies that
$
 \lim_{t\to +\infty}  \| \dot{x}(t) \|=0.
$
\end{proof}

\medskip

Let us complete the above result by examining the convergence of the acceleration towards zero. To get this result, we need additional assumptions on the data $ f $ and $ \phi $.

\begin{proposition}\label{est_acceleration}
Let  $x: [0, +\infty[ \to \cH$ be a bounded solution trajectory of {\rm (ADIGE-V)}.
Suppose that  $f$ is a $\cC^2$ function, and that $\phi$ is a $\cC^2$ function which satisfies (i) and (ii):

\medskip

\noindent (i) (local strong convexity)
there exists positive constants $\gamma >0$,   and  $\rho >0$ such that for all $u$ in
$\cH$ with $\|u\| \leq \rho$ the following inequality holds
$$
\left\langle \nabla^2 \phi (u) \xi, \xi \right\rangle \geq \gamma \|\xi\|^2    \quad   \mbox{for all } \xi \in \cH;
$$
(ii) (global growth)  there exist $p\geq 1$ and $r>0$ such that   $\phi (u) \geq r \|u\|^p$ for all $u\in \cH$.

\smallskip

\noindent Then, the following convergence property is satisfied:
\begin{equation}\label{acc}
 \lim_{t\to +\infty}  \| \ddot{x}(t) \|=0.
\end{equation}
\end{proposition}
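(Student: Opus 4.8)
The plan is to exploit the fact that, since $\phi$ is of class $\cC^1$, the differential inclusion (ADIGE-V) reduces to the equation
$$
\ddot x(t) + \nabla \phi(\dot x(t)) + \nabla f(x(t)) = 0,
$$
and then to differentiate it once in time so as to obtain a governing equation for $\ddot x$, on which a Lyapunov/Gr\"onwall argument can be run.

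First I would carry out a regularity bootstrap. The right-hand side of $\ddot x(t) = -\nabla\phi(\dot x(t)) - \nabla f(x(t))$ is continuous (hence $\ddot x$ is continuous and the equation holds for every $t$), and in fact locally Lipschitz in $t$: $\nabla f(x(\cdot))$ is $\cC^1$ because $x\in\cC^1$ and $\nabla f\in\cC^1$, while $\nabla\phi(\dot x(\cdot))$ is locally Lipschitz because $\nabla\phi$ is locally Lipschitz and $\dot x$ is Lipschitz. Therefore $\dot x\in\cC^1$ (so $x\in\cC^2$), $\ddot x$ is locally Lipschitz, and $\dddot x$ exists almost everywhere and belongs to $L^\infty_{\mathrm{loc}}$. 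Differentiating the equation, the chain rule being valid a.e. since $\nabla^2\phi,\nabla^2 f$ are continuous and $\dot x,x$ are locally Lipschitz, yields for a.e. $t>0$
$$
\dddot x(t) + \nabla^2\phi(\dot x(t))\ddot x(t) + \nabla^2 f(x(t))\dot x(t) = 0.
$$

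Next I would set up the energy estimate. By Proposition \ref{preliminary_est}, boundedness of the trajectory gives $R_3:=\sup_{t\geq 0}\|\ddot x(t)\|<+\infty$, and assumption (ii) gives $\|\dot x(t)\|\to 0$; let $T_0\geq 0$ be such that $\|\dot x(t)\|\leq\rho$ for all $t\geq T_0$. Since $x$ stays in a bounded set and $f$ is $\cC^2$ with $\nabla f$ Lipschitz on bounded sets, $M:=\sup_{t\geq 0}\|\nabla^2 f(x(t))\|<+\infty$. Taking the scalar product of the differentiated equation with $\ddot x(t)$, using that $t\mapsto\demi\|\ddot x(t)\|^2$ is locally Lipschitz with derivative $\langle\ddot x(t),\dddot x(t)\rangle$ a.e., the local strong convexity assumption (i) applied at $\dot x(t)$ (legitimate for $t\geq T_0$ since then $\|\dot x(t)\|\leq\rho$), and Cauchy--Schwarz, we obtain for a.e. $t\geq T_0$
$$
\frac{d}{dt}\Big(\demi\|\ddot x(t)\|^2\Big) \leq -\gamma\|\ddot x(t)\|^2 + M R_3\|\dot x(t)\| .
$$

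Finally I would conclude by a Gr\"onwall-type argument. Writing $g(t):=\demi\|\ddot x(t)\|^2$ and $\varepsilon(t):=M R_3\|\dot x(t)\|$, the inequality reads $\dot g(t)\leq -2\gamma g(t)+\varepsilon(t)$ with $\varepsilon(t)\to 0$. Multiplying by $e^{2\gamma t}$ and integrating on $[T_0,t]$ gives $g(t)\leq e^{-2\gamma(t-T_0)}g(T_0) + e^{-2\gamma t}\int_{T_0}^t e^{2\gamma s}\varepsilon(s)\,ds$; the first term tends to $0$, and splitting the integral at a large time after which $\varepsilon\leq\eta$ shows $\limsup_{t\to+\infty}g(t)\leq \eta/(2\gamma)$ for every $\eta>0$, hence $g(t)\to 0$, i.e. $\|\ddot x(t)\|\to 0$. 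I expect the only delicate point to be the regularity bootstrap together with the justification of the a.e. chain rule when differentiating the equation; once the differentiated equation is in hand, the Lyapunov estimate and the closing Gr\"onwall step are routine.
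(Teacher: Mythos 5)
Your proof is correct and follows essentially the same approach as the paper: differentiate the equation, take the scalar product with $\ddot x(t)$, use the local strong convexity of $\phi$ together with Cauchy--Schwarz, and conclude via an exponential-decay comparison combined with $\|\dot x(t)\|\to 0$. The only difference lies in the concluding step: you bound $\|\ddot x(t)\|$ in the cross term by its supremum $R_3$ (supplied by Proposition~\ref{preliminary_est}) to reduce to the linear inequality $\dot g\leq -2\gamma g+\varepsilon(t)$ with $\varepsilon\to 0$, whereas the paper keeps the quadratic form and invokes Br\'ezis's Gronwall lemma to pass from $\|\ddot x\|^2$ to $\|\ddot x\|$; both routes are equally valid, and your preliminary regularity bootstrap makes the differentiation of the equation more rigorous than the paper's terse ``let us derivate.''
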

\begin{proof}
Let us derivate (ADIGE-V), and set $w(t):= \ddot x(t)$. We obtain
$$
\dot w(t) + \nabla^2 \phi (\dot{x}(t))w(t) = - \nabla^2 f(x(t))\dot{x}(t).
$$
Take the scalar product of the above equation with $w(t)$. We get
$$
\demi \frac{d}{dt} \|w(t)\|^2  + \left\langle \nabla^2 \phi (\dot{x}(t))w(t), w(t) \right\rangle = - \left\langle\nabla^2 f(x(t))\dot{x}(t),w(t)\right\rangle.
$$
According to Proposition \ref{preliminary_est}, we have
$
 \lim_{t\to +\infty}  \| \dot{x}(t) \|=0.
$
From the local strong convexity assumption $(i)$, and the
Cauchy--Schwarz inequality, we deduce that for $t$ sufficiently large, say $t\geq t_1$
$$
\demi \frac{d}{dt} \|w(t)\|^2  + \gamma \| w(t)\|^2 \leq \|\nabla^2 f(x(t))\dot{x}(t)\| \|w(t)\|.
$$
Since $x(\cdot)$ is bounded and $\nabla f$ is Lipschitz continuous on bounded sets, we get that for some $C>0$
$$
\demi \frac{d}{dt} \|w(t)\|^2  + \gamma \| w(t)\|^2 \leq C\|\dot{x}(t)\| \|w(t)\|  \  \mbox{for all } t \geq t_1.
$$
After multiplication by $e^{2\gamma t}$,
and integration from $t_1$ to $t$, we get
$$
 \demi \left(e^{\gamma t}  \|w(t)\| \right)^2  \leq   \demi \left(e^{\gamma t_1}  \|w(t_1)\| \right)^2   +    C  \int_{t_1}^t
e^{\gamma \tau}\|\dot{x}(\tau)\|\left( e^{\gamma \tau}\|w(\tau)\| \right)d \tau.
$$
According to the Gronwall Lemma (see \cite[Lemma A.5]{Brezis}) we obtain
$$
e^{\gamma t}  \|w(t)\| \leq   e^{\gamma t_1}\|w(t_1)\|   +    C \int_{t_1}^t
e^{\gamma \tau}\|\dot{x}(\tau)\|d \tau.
$$
Therefore
$$
 \|\ddot x(t)\| \leq   \|\ddot x(t_1)\| e^{-\gamma (t-t_1)}  +    C e^{-\gamma t} \int_{t_1}^t
e^{\gamma \tau}\|\dot{x}(\tau)\|d \tau.
$$
Since $\lim_{t\to +\infty}  \| \dot{x}(t) \|=0,$ we have
$
\lim_{t\to +\infty} e^{-\gamma t} \int_{t_1}^t
e^{\gamma \tau}\|\dot{x}(\tau)\|d \tau=0.
$
Therefore, by passing to the limit on the above inequality we get
$\lim_{t\to +\infty}  \| \ddot{x}(t) \|=0.$
\end{proof}

\begin{corollary}\label{attractor}
Let us make the assumption of Proposition \ref{est_acceleration} and suppose that the trajectory $x(\cdot)$ is relatively compact.
Then for any sequence $x(t_n) \to x_{\infty}$ with $t_n \to +\infty$ we have  $\nabla f ( x_{\infty})=0$.
Set $S= \{x\in \cH: \, \nabla f ( x)=0 \}$. Therefore,
$$
\lim_{t \to + \infty}  d(x(t), S)=0.
$$
\end{corollary}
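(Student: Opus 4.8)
The plan is to exploit that, under the $\mathcal{C}^2$ hypothesis on $\phi$, the operator $\partial\phi$ is single-valued with $\partial\phi=\{\nabla\phi\}$, so that (ADIGE-V) is the genuine second-order differential equation $\ddot x(t)+\nabla\phi(\dot x(t))+\nabla f(x(t))=0$ for all $t\geq 0$. Since $\phi$ is a damping potential it attains its global minimum at the origin, and differentiability there forces $\nabla\phi(0)=0$. This is the one observation that matters: it converts the decay of the velocity into the decay of the damping term.

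Next I would invoke the two preceding results for the bounded trajectory $x(\cdot)$: Proposition~\ref{preliminary_est} (whose growth hypothesis $\phi(u)\geq r\|u\|^p$ is part of the standing assumptions here) gives $\lim_{t\to+\infty}\|\dot x(t)\|=0$, and Proposition~\ref{est_acceleration} gives $\lim_{t\to+\infty}\|\ddot x(t)\|=0$. By continuity of $\nabla\phi$ together with $\nabla\phi(0)=0$, we get $\nabla\phi(\dot x(t))\to 0$; hence, from the constitutive equation, $\nabla f(x(t))=-\ddot x(t)-\nabla\phi(\dot x(t))\to 0$ as $t\to+\infty$. Now if $x(t_n)\to x_\infty$ with $t_n\to+\infty$, continuity of $\nabla f$ (recall $f\in\mathcal{C}^2$, and $\nabla f$ is in any case continuous) yields $\nabla f(x_\infty)=\lim_n \nabla f(x(t_n))=0$, that is, $x_\infty\in S$.

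Finally, for the statement $d(x(t),S)\to 0$: the set $S$ is closed, being the preimage of $\{0\}$ under the continuous map $\nabla f$, and it is nonempty since relative compactness of the trajectory produces at least one limit point of $x(t)$ as $t\to+\infty$, which by the previous step lies in $S$; thus $x\mapsto d(x,S)$ is a well-defined continuous function. I would then argue by contradiction: if $d(x(t),S)\not\to 0$, there exist $\varepsilon>0$ and a sequence $t_n\to+\infty$ with $d(x(t_n),S)\geq\varepsilon$; relative compactness extracts a subsequence $x(t_{n_k})\to x_\infty$, which belongs to $S$ by the above, and continuity of the distance gives $d(x(t_{n_k}),S)\to d(x_\infty,S)=0$, contradicting $d(x(t_{n_k}),S)\geq\varepsilon$.

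The argument is essentially routine; there is no genuine obstacle beyond carefully tracking the continuity properties and the identity $\nabla\phi(0)=0$ that makes the vanishing of $\dot x(t)$ propagate to the vanishing of $\nabla f(x(t))$ along the trajectory.
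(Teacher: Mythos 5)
Your proof is correct and is essentially the argument the paper intends (the paper leaves the corollary without a written proof). You correctly identify the two key ingredients — $\dot x(t)\to 0$ and $\ddot x(t)\to 0$ from Propositions~\ref{preliminary_est} and~\ref{est_acceleration} — and, crucially, the observation that $\mathcal C^2$-smoothness of $\phi$ together with $\phi(0)=\min\phi$ forces $\nabla\phi(0)=0$, so that $\nabla\phi(\dot x(t))\to 0$ and hence $\nabla f(x(t))\to 0$ along the trajectory; this is exactly the role of smoothness that the paper flags in Remark~(b) following the corollary (contrasting with dry friction, where $\partial\phi(0)$ is not a singleton and only $\|\nabla f(x_\infty)\|\le r$ can be concluded). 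The passage from pointwise limits to $d(x(t),S)\to 0$ via relative compactness and continuity of the distance is standard and correct.
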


\begin{remark}{\rm
\textit{a)} Without geometric assumption on the function $f$, the trajectories of (ADIGE-v) may fail to converge.
In \cite{AGR}   Attouch--Goudou--Redont exhibit a function $f: \R^2 \to \R$ which
is $\mathcal C^1$, coercive, whose gradient is Lipschitz continuous on the bounded sets,
and such that the (HBF) system
admits an orbit $t \mapsto x(t)$ which does not converge as $t \to +\infty$.
The above result expresses that in such situation, the  attractor is the set  $S= \{x\in \cH: \, \nabla f ( x)=0 \}$.

\textit{b)} It is necessary to assume that $\phi$ is a smooth function in order to get the conclusion of Corollary \ref{attractor}. In fact in the case of the dry friction, that is $\phi (u)= r \|u\|$, there is convergence of the orbits to points satisfying
$ \|\nabla f ( x_{\infty})\| \leq r$, and which are not in general critical point of $f$.
}
\end{remark}

\subsection{Model example}  Consider the case
 $\phi (u)= \frac{r}{p}\|u\|^p$ with $p >1$, in which case the dynamic
 (ADIGE-V) writes
\begin{equation}\label{closed_loop_p}
\ddot x(t) +  r\|   \dot x(t)\|^{p-2} \dot x(t)+ \nabla f(x(t))=0.
\end{equation}
Therefore, for $p>2$, the viscous damping coefficient $\gamma(\cdot)$ that enters equation \eqref{closed_loop_p}, and which is equal to
\begin{equation}\label{g-speed}
\gamma (t) :=r \| \dot{x}(t) \|^{p-2}
\end{equation}
tends to zero as $t\to +\infty$. So,  we are in the setting of the inertial dynamics with vanishing damping coefficient. Consequently, in the associated inertial gradient algorithms, the extrapolation coefficient tends to $1$, and we can expect fast asymptotic convergence results.
To summarize, in the case of \eqref{closed_loop_p}, and for $f$ coercive, we have obtained that, for $p>2$
\begin{equation}\label{closed_loop_33}
 \lim_{t\to +\infty}  \gamma (t)=0, \quad \gamma (\cdot) \in L^{\frac{p}{p-2}}(0, +\infty).
\end{equation}
Are these informations sufficient to derive the convergence rate of the values, and obtain similar convergence properties as for the $\mbox{\rm (AVD)}_{\alpha}$  system?\\
To give a first answer to this question, we rely on the results of Cabot--Engler--Gaddat \cite{CEG}, Attouch--Cabot \cite{AC10} and Attouch--Chbani--Riahi \cite{ACR-Pafa} which concern the asymptotic stabilization of inertial gradient dynamics with general time-dependent viscosity coefficient $\gamma(t)$.
In the case of a vanishing damping coefficient, the key property which insures the asymptotic minimization property is that
$$
\int_{0}^{+\infty} \gamma (t)\, dt = +\infty.
$$
This means that the coefficient $\gamma(t)$ can go to zero as $t\to +\infty$, but not too fast in order to  dissipate the energy enough.
On the positive side, $\gamma (t) = \frac{\alpha}{t}$ does satisfy
the conditions \eqref{closed_loop_33} for any $p>0$, which does not exclude the Nesterov case.
On the negative side, we can easily find $\gamma(t)$ such that
\begin{equation}\label{closed_loop_333}
 \lim_{t\to +\infty}  \gamma (t)=0, \quad \gamma (\cdot) \in L^{\frac{p}{p-2}}(0, +\infty) \, \mbox{ and } \int_{0}^{+\infty} \gamma (t)\, dt < +\infty.
\end{equation}
So, without any other hypothesis, we cannot conclude from this information alone.
At this point, the idea is to introduce additional information,  assuming a geometric property on the function $f$ to minimize.
In the next two sections, we successively consider the case where $f$ is a strongly convex function, then the case of the functions $ f $ satisfying the Kurdyka--Lojasiewicz property.


\section{The strongly convex case: exponential convergence rate}
\label{rate-f-str-conv}

We will study the asymptotic behavior of the system (ADIGE-V)
when  $f$ is a strongly convex function. Recall that $f: \cH \to \R$ is said to be  $\mu$-strongly convex (with $\mu >0$) if $f - \frac{\mu}{2}\| \cdot \|^2$ is convex.
Then, we will consider the particular case where $f$ is strongly convex and quadratic. Finally, we will give numerical illustrations in  dimension one.

\subsection{General  strongly convex function \textit{f}}\label{strong-convex}
\begin{theorem}\label{strong_convex_thm} Let $f: \cH \to \R$ be a differentiable function  which is $\mu$-strongly convex for some $\mu>0$, and whose gradient is  Lipschitz continuous on the bounded sets.  Let $\overline{x}$ be the unique minimizer of $f$.\\
Let $\phi : \cH \to \R_+$ be a damping potential (see Definition \ref{def1}) which is differentiable, and whose gradient is  Lipschitz continuous on the bounded subsets of $\cH$. Suppose  that $\phi$ satisfies the following growth conditions (i) and (ii):

\medskip

$(i)$ (local) there exists positive constants $\alpha$,   and  $\rho >0$ such that for all $u$ in
$\cH$ with $\|u\| \leq \rho$
$$     \left\langle \nabla \phi (u), u \right\rangle \geq \alpha \|u\|^2 .$$

$(ii)$ (global) there exists  $p\geq 1$, $c>0$, such that for all $u$ in $\cH$, $\phi (u) \geq c\|u\|^p$.

\medskip
 Then, for any   solution trajectory $x: [0, +\infty[ \to \cH$ of \mbox{\rm (ADIGE-V)}, we have  exponential
convergence rate to zero  as $t \to +\infty$  for $f(x(t))-f(\overline{x}) $, $\| x(t)-\overline{x}\|$ and the velocity $\|\dot x (t)\|$.
\end{theorem}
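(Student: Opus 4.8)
The plan is to design a Lyapunov function adapted to the strongly convex setting, mimicking the energy used in Theorem~\ref{strong-conv-thm} for the linearly damped case but allowing for the nonlinear damping $\partial\phi$. A natural candidate is
\[
\mathcal E(t) := f(x(t)) - f(\overline x) + \tfrac12 \|\lambda(x(t)-\overline x) + \dot x(t)\|^2 + \varepsilon\bigl(f(x(t))-f(\overline x)\bigr)
\]
for suitable small constants $\lambda,\varepsilon>0$; in practice I expect one has to keep the parameters generic and tune them at the end. First I would record the a priori bounds already available from Proposition~\ref{preliminary_est} and its proof: since $f$ is $\mu$-strongly convex it is coercive, so the trajectory $x(\cdot)$ is bounded, hence $\dot x$, $\ddot x$ are bounded, $\int_0^{+\infty}\phi(\dot x(t))\,dt<+\infty$, and (using growth condition $(ii)$) $\|\dot x(t)\|\to 0$. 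In particular, there exists $t_0\ge 0$ such that $\|\dot x(t)\|\le \rho$ for all $t\ge t_0$, so the local coercivity condition $(i)$, $\langle \nabla\phi(\dot x(t)),\dot x(t)\rangle \ge \alpha\|\dot x(t)\|^2$, is available for all $t\ge t_0$; this is the key point that lets the nonlinear damping behave like linear viscous damping along the (asymptotically slow) trajectory.

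Next I would differentiate $\mathcal E$ along the trajectory, substituting $\ddot x = -\nabla\phi(\dot x) - \nabla f(x)$ from (ADIGE-V) (working for $t\ge t_0$ where $\phi$ is smooth enough; the differential inclusion issue is handled by the approximation $x_\lambda$ of Theorem~\ref{basic_exist_thm} or simply by noting $\partial\phi = \{\nabla\phi\}$ here). The cross terms produce, as usual, $-\lambda\langle \nabla f(x), x-\overline x\rangle$ which by $\mu$-strong convexity is $\le -\lambda\mu\|x-\overline x\|^2$ and also $\le -\lambda(f(x)-f(\overline x))$; the damping term produces $-\langle\nabla\phi(\dot x),\dot x\rangle - \lambda\langle\nabla\phi(\dot x), x-\overline x\rangle$, where the first piece is $\le -\alpha\|\dot x\|^2$ for $t\ge t_0$ and the second, a priori of the wrong sign, is controlled by Young's inequality at the cost of a constant times $\|\dot x\|^2$ — which is fine since we have a genuine $-\alpha\|\dot x\|^2$ term to spend, provided $\lambda$ is chosen small. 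Collecting everything I would aim at a differential inequality of the form
\[
\tfrac{d}{dt}\mathcal E(t) \le -\kappa\,\mathcal E(t) \qquad \text{for all } t\ge t_0,
\]
for some $\kappa>0$ depending on $\mu,\alpha,\lambda,\varepsilon$; Grönwall then gives $\mathcal E(t)\le \mathcal E(t_0)e^{-\kappa(t-t_0)}$. Since $\mathcal E$ dominates $f(x(t))-f(\overline x)$, and $f(x(t))-f(\overline x)\ge \tfrac\mu2\|x(t)-\overline x\|^2$ by strong convexity, one immediately gets the exponential rates for $f(x(t))-f(\overline x)$ and $\|x(t)-\overline x\|$; for $\|\dot x(t)\|$ one reads it off from $\tfrac12\|\lambda(x-\overline x)+\dot x\|^2 \le \mathcal E(t)$ together with the triangle inequality and the rate already obtained for $\|x(t)-\overline x\|$.

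The main obstacle I anticipate is not the algebra of the Lyapunov estimate but making the threshold mechanism rigorous: the coercivity of $\phi$ is only assumed locally (near $0$), so one genuinely needs the preliminary fact $\|\dot x(t)\|\to 0$ before the good damping inequality kicks in, and one must check that the constants in the Young-inequality bookkeeping can be closed up consistently (i.e. that there is a nonempty range of $\lambda,\varepsilon$ making $\kappa>0$) using only $\alpha$ and $\mu$ — not the global exponent $p$, which serves only to guarantee $\|\dot x(t)\|\to 0$. A secondary technical point is the low regularity of $\phi$: if $\phi$ is merely differentiable with locally Lipschitz gradient, the computation $\tfrac{d}{dt}\phi(\dot x(t))$ and the chain rule for $\mathcal E$ hold for a.e.\ $t$ since $\dot x$ is locally Lipschitz, which is enough to integrate the differential inequality; alternatively one passes through the Yosida-regularized trajectories $x_\lambda$ and lets $\lambda\to 0$ as in the proof of Theorem~\ref{basic_exist_thm}, the constants being uniform.
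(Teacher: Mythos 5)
Your proposal follows essentially the same route as the paper's proof: establish the a priori bounds from Proposition~\ref{preliminary_est}, use strong convexity (hence coercivity) to get boundedness, use the global growth (ii) plus boundedness of $\ddot x$ to get $\dot x(t)\to 0$, activate the local coercivity (i) after a finite threshold time, introduce the cross term $\langle x-\overline x,\dot x\rangle$, close the estimates via strong convexity and the local Lipschitz bound $\|\nabla\phi(\dot x)\|\le L\|\dot x\|$ (using $\nabla\phi(0)=0$), and conclude with Gr\"onwall. The paper's Lyapunov function is $h_\epsilon(t)=\cE(t)+\epsilon\langle x(t)-\overline x,\dot x(t)\rangle$ with $\cE$ the global energy; expanding your $\tfrac12\|\lambda(x-\overline x)+\dot x\|^2$ shows your candidate is the same thing up to an extra $\tfrac{\lambda^2}{2}\|x-\overline x\|^2$ term (harmlessly absorbed by strong convexity) and an extra $\varepsilon(f-f(\overline x))$ term (unnecessary: it introduces an additional $\varepsilon\langle\nabla f(x),\dot x\rangle$ in $\dot{\mathcal E}$ that you would then have to control, whereas the paper's choice makes the damping inequality $\dot\cE\le-\alpha\|\dot x\|^2$ come out cleanly). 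So the approach is the same, with a slightly less economical Lyapunov function.
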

\begin{proof}
We will use the following inequalities which are attached to the strong convexity  of $f$:
\begin{eqnarray}
&& f(\overline{x})-f(x(t)) \geq \langle \nabla f(x(t)),\overline{x}-x(t)\rangle+\frac{\mu}{2}\|x(t)-\overline{x}\|^2  \label{from-str-conv1}  \\
&& f(x(t)) - f(\overline{x}) \geq \frac{\mu}{2}\|x(t)-\overline{x}\|^2. \label{from-str-conv2}
\end{eqnarray}
Let us consider the global energy (introduced in \eqref{energy}, in the preliminary estimates)
 $$
 \cE (t):=\frac{1}{2}\|\dot x(t)\|^2 + f(x(t)) - f(\overline{x}).
 $$
By Proposition \ref{preliminary_est},
$\dot x(t)$ is bounded on $\R_+$.
Moreover,  $\cE (\cdot)$ is non-increasing, and hence bounded from above. By definition of  $\cE (t)$, this implies that $f(x(t))$ is bounded from above. Since $f$ is strongly convex, it is coercive, which implies that $x(\cdot)$ is  bounded.
Since $x(\cdot)$ and $\dot{x}(\cdot)$ are  bounded, and  the vector fields $\nabla f $ and $\nabla \phi$ are locally Lipschitz continuous, we deduce from the constitutive equation
$
\ddot x(t) = -\nabla \phi(\dot x(t))- \nabla f(x(t))
$
that $\ddot x (\cdot)$
is also bounded.
According to the preliminary estimates established in Proposition \ref{preliminary_est}, we have
$
\int_0^{+\infty} \phi (\dot{x}(t)) dt < +\infty.
$
Combining this property with the global growth assumption $(ii)$ on $\phi$, we deduce that there exists $p\geq 1$ such that
$$
\int_0^{+\infty} \| \dot{x}(t)\|^p dt < +\infty.
$$
Since $\ddot x (\cdot)$
is  bounded,  this implies that
$\dot{x}(t) \to 0$ as $t \to + \infty$.
So, for $t$ sufficiently large, say $t\geq t_1$
$$
\| \dot{x}(t)\| \leq \rho.
$$
Time derivation of $\cE (\cdot)$, together with the constitutive equation (ADIGE-V),  gives for $t\geq t_1$
\begin{eqnarray} \dot \cE(t) & = & \langle \dot x(t), -\nabla\phi(\dot x(t))-\nabla f(x(t))\rangle + \langle \dot x(t),\nabla f(x(t))\rangle
\nonumber \\
&=& -\langle \dot x(t),\nabla \phi(\dot x(t))\rangle \nonumber \\
&\leq & -\alpha \|\dot x(t)\|^2, \label{strong_conv_1}
\end{eqnarray}
where the last inequality comes from the growth condition $(i)$ on $\phi$, and $\| \dot{x}(t)\| \leq \rho$ for $t\geq t_1$.\\
Since  $\dot{x}(\cdot)$ is  bounded,
let $L$ be the Lipschitz constant of $\nabla\phi$ on a ball that contains the velocity vector $\dot x(t)$ for all $t\geq 0$. Since $\nabla \phi (0) =0$ we have, for all $t\geq 0$
\begin{equation}\label{local_Lip}
\| \nabla \phi(\dot x(t))\| \leq L \| \dot x(t)\|.
\end{equation}
Using successively  (ADIGE-V), \eqref{local_Lip}
and \eqref{from-str-conv1}, we obtain
\begin{eqnarray}\frac{d}{dt}\Big(\langle x(t)-\overline{x},\dot x(t)\rangle \Big) &=&
\|\dot x(t)\|^2  + \langle x(t)-\overline{x},-\nabla \phi(\dot x(t))-\nabla f(x(t))\rangle \nonumber\\
&\leq & \|\dot x(t)\|^2 + L\|x(t)-\overline{x}\| \|\dot x(t)\|  - \langle x(t)-\overline{x},\nabla f(x(t))\rangle \nonumber\\
&\leq & \|\dot x(t)\|^2 + \frac{L^2}{2\mu}\|\dot x(t)\|^2 +\frac{\mu}{2} \|x(t)-\overline{x}\|^2 + \langle \overline{x} -x(t),\nabla f(x(t))\rangle \nonumber\\
&\leq & \left(1+\frac{L^2}{2\mu}\right)\|\dot x(t)\|^2 + f(\overline{x}) - f(x(t)). \label{strong_conv_2}
\end{eqnarray}
Take now $\epsilon >0$ (we will specify below how it should be chosen), and define
$$h_{\epsilon}(t) := \cE(t) + \epsilon \langle x(t)-\overline{x},\dot x(t)\rangle.$$
Time derivation of $h_{\epsilon}$, together with \eqref{strong_conv_1} and \eqref{strong_conv_2}, gives for $t\geq t_1$
$$\dot h_{\epsilon}(t) \leq -\left(\alpha-\epsilon\left(1+\frac{L^2}{2\mu}\right)\right)\|\dot x(t)\|^2
-\epsilon(f(x(t))-f(\overline{x})).$$
Choose $\epsilon>0$ such that $\alpha-\epsilon\left(1+\frac{L^2}{2\mu}\right)>0$. Set $C_1:= \min \{\alpha-\epsilon\left(1+\frac{L^2}{2\mu}\right);  \epsilon  \}$.
We deduce that
\begin{equation}\label{dot_h-str-conv}\dot h_{\epsilon}(t) \leq -C_1\Big(\|\dot x(t)\|^2+f(x(t))-f(\overline{x})\Big).\end{equation}
Further, from \eqref{from-str-conv2} and the Cauchy--Schwarz inequality we easily obtain
\begin{eqnarray*} h_{\epsilon}(t) &\leq & \frac{1}{2}\|\dot x(t)\|^2 + f(x(t)) - f(\overline{x}) +
\frac{\epsilon}{2}\|x(t)-\overline{x}\|^2+\frac{\epsilon}{2}\|\dot x(t)\|^2\\
&\leq & \left(\frac{1}{2}+\frac{\epsilon}{2}\right)\|\dot x(t)\|^2 + \left(1+\frac{\epsilon}{\mu}\right)(f(x(t))-f(\overline{x}))\\
&\leq & \left(1+\epsilon\left(\frac{1}{2}+\frac{1}{\mu}\right)\right)\Big(\|\dot x(t)\|^2 + f(x(t))-f(\overline{x})\Big).
\end{eqnarray*}
Combining this inequality with \eqref{dot_h-str-conv}, we obtain
$\dot {h}_{\epsilon}(t) + C_2 h_{\epsilon}(t)\leq 0,$
with $C_2:= \frac{C_1}{1+\epsilon\left(\frac{1}{2}+\frac{1}{\mu}\right)}    >0$. Then, the Gronwall inequality classically implies
\begin{equation}\label{rate_h-str-conv}h_{\epsilon}(t) \leq h_{\epsilon}(0)e^{-C_2t}.\end{equation}
Finally, from \eqref{from-str-conv2} and the Cauchy--Schwarz inequality we have
\begin{eqnarray*}h_{\epsilon}(t) &\geq & \frac{1}{2}\|\dot x(t)\|^2 + f(x(t)) - f(\overline{x})
-\frac{\epsilon}{2}\|x(t)-\overline{x}\|^2 - \frac{\epsilon}{2}\|\dot x(t)\|^2\\
&\geq & \left(\frac{1}{2}-\frac{\epsilon}{2}\right)\|\dot x(t)\|^2 + \left(1-\frac{\epsilon}{\mu}\right)(f(x(t))-f(\overline{x})).\end{eqnarray*}
Therefore, by taking $\epsilon$ small enough, we obtain the existence of  $C_3>0$ such that
$$h_{\epsilon}(t)\geq C_3 \Big(\|\dot x(t)\|^2 + f(x(t))-f(\overline{x})\Big).$$
Combining this inequality  with  \eqref{rate_h-str-conv} and  \eqref{from-str-conv2}, we obtain an exponential
convergence rate to zero for $f(x(t))-f(\overline{x}) $, $\|x(t)-\overline{x}\|$ and the velocity $\|\dot x (t)\|$.
\end{proof}


\medskip

\begin{remark}
In subsection \ref{Sec:KL_poly_growth}, as a consequence of the Kurdyka-Lojasiewicz theory, we will extend the above results to the case where we  only assume a quadratic growth assumption
$$
f(x) -\inf\nolimits_{\cH} f \geq c\dist (x, \argmin f)^2.
$$
\end{remark}
\begin{remark} In section \ref{sec:weakdamping}, we will give indications concerning the case of a general convex function $f$, whose solution set $\argmin f$ is nonempty.
Let us recall that, in the case of (HBF), which corresponds to $\phi(u)= r \|u\|^2$, each trajectory converges weakly and its limit belongs to $\argmin f$.
Apart from this important case, the convergence of the trajectories depends both on the geometric properties of the function to be minimized $f$ and on those of the damping potential $\phi$. Precisely in  section \ref{sec:weakdamping} we will give an example in dimension one, with trajectories which do not converge.
\end{remark}

\subsection{Case \textit{f} convex quadratic positive definite}\label{f_convex_quadratic}

Let us make precise the previous results in the case
$
f(x) = \demi \left\langle Ax, x \right\rangle,
$
where $A :\cH \to \cH$ is a linear continuous positive definite self-adjoint operator.
Then
$
\nabla f(x)= Ax,
$
and  (ADIGE-V) is written
\begin{equation}\label{closed_loop_1_bbb}
 \ddot{x}(t) + \partial \phi ( \dot{x}(t))  +  A(x(t)) \ni 0.
\end{equation}
Let us  prove the following  ergodic convergence result, valid for a general damping potential $\phi$.

\begin{theorem}\label{edp} Let
$x:[0,+\infty[\to\mathcal H$ be a solution trajectory of \eqref{closed_loop_1_bbb},
where $\phi$ is a damping potential, and  $A :\cH \to \cH$ is a linear continuous positive definite self adjoint operator.
Then, we have the following ergodic convergence result for the weak topology: as $t \to  +\infty$,
$$
\frac{1}{t} \int_0^t   x(\tau) d \tau \rightharpoonup x_{\infty},
$$
where the limit $x_{\infty}$ satisfies
$0\in \partial \phi(0)+ Ax_{\infty}.$\\
When $\phi$ is differentiable at the origin, we have $Ax_{\infty}=0$, that is $x_{\infty}=0$.\\
When $\phi (x)= r \|x\|$, we have $\| Ax_{\infty}\|\leq r.$
\end{theorem}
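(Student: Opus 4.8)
The plan is to exploit the energy estimates already established in Proposition \ref{preliminary_est} together with an integration of the dynamics in order to control the Cesàro averages. First I would record that, since $f(x)=\demi\langle Ax,x\rangle$ is convex with $\argmin f=\{0\}$ but possibly not coercive (if $A$ is merely positive definite and $\cH$ is infinite dimensional, $A$ need not be bounded below), we still have from \eqref{closed_loop_2_c} that $\sup_{t\geq 0}\|\dot x(t)\|\leq R_1$ and from \eqref{closed_loop_2_b} that $\int_0^{+\infty}\phi(\dot x(t))\,dt<+\infty$; moreover the energy $\cE(t)=\demi\langle Ax(t),x(t)\rangle+\demi\|\dot x(t)\|^2$ is non-increasing, hence $\langle Ax(t),x(t)\rangle$ is bounded, which since $A$ is positive definite self-adjoint gives boundedness of $\|A^{1/2}x(t)\|$ and in turn of the averages $\frac1t\int_0^t A^{1/2}x(\tau)\,d\tau$. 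From this I extract a weakly convergent subnet of the averages $\frac1t\int_0^t x(\tau)\,d\tau$; the real work is to identify every weak cluster point with the same $x_\infty$ characterized by $0\in\partial\phi(0)+Ax_\infty$.

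The key computation is to integrate (ADIGE-V), i.e. \eqref{closed_loop_1_bbb}, over $[0,t]$ and divide by $t$. Writing $\xi(\tau)\in\partial\phi(\dot x(\tau))$ for the selection furnished by the strong solution, we get
\begin{equation*}
\frac{\dot x(t)-\dot x(0)}{t}+\frac1t\int_0^t\xi(\tau)\,d\tau+A\left(\frac1t\int_0^t x(\tau)\,d\tau\right)=0.
\end{equation*}
The first term tends strongly to $0$ because $\dot x$ is bounded. So the plan reduces to showing that the time-average of the subgradient selection, $\frac1t\int_0^t\xi(\tau)\,d\tau$, converges (weakly) to an element of $\partial\phi(0)$; granting this, any weak cluster point $x_\infty$ of the averages of $x$ satisfies $Ax_\infty\in-\partial\phi(0)$, i.e. $0\in\partial\phi(0)+Ax_\infty$, and since $\partial\phi(0)$ is bounded (property $(iii)$ of a damping potential at $u=0$, together with monotonicity) and $A$ is injective, $x_\infty$ is uniquely determined — giving convergence of the whole net. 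For the differentiable case $\partial\phi(0)=\{0\}$ forces $Ax_\infty=0$, hence $x_\infty=0$; for $\phi(x)=r\|x\|$ we have $\partial\phi(0)=\overline B(0,r)$, hence $\|Ax_\infty\|\leq r$.

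The main obstacle is precisely the claim that $\frac1t\int_0^t\xi(\tau)\,d\tau$ has its weak cluster points in $\partial\phi(0)$. The natural route: by the subgradient inequality for $\phi$, for every $v\in\cH$ and a.e. $\tau$, $\langle\xi(\tau),v-\dot x(\tau)\rangle\leq\phi(v)-\phi(\dot x(\tau))\leq\phi(v)$; averaging over $[0,t]$ and using $\frac1t\int_0^t\dot x(\tau)\,d\tau=\frac{x(t)-x(0)}{t}$ together with $\|x(t)\|\leq R_2$ (when the trajectory is bounded; in general one argues with $A^{1/2}$ and the boundedness of $\|A^{1/2}x(t)\|$, noting $\langle\xi(\tau),\dot x(\tau)\rangle\geq 0$ by \eqref{ineq_phi}-type reasoning so the term $-\frac1t\int_0^t\langle\xi(\tau),\dot x(\tau)\rangle d\tau$ is $\leq 0$), one obtains in the limit $\langle\xi_\infty,v\rangle\leq\phi(v)$ for all $v$, where $\xi_\infty$ is any weak cluster point of the averages; since $\phi(0)=0$ this is exactly the statement $\xi_\infty\in\partial\phi(0)$. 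The delicate points are the uniform integrability needed to pass $\frac1t\int_0^t\langle\xi(\tau),\dot x(\tau)\rangle\,d\tau$ to a controlled limit and the fact that $\frac1t\int_0^t\xi(\tau)\,d\tau$ is bounded — the latter follows from $\|\xi(\tau)\|\leq\|(\partial\phi)^0(\dot x(\tau))\|\leq\sup_{\|u\|\leq R_1}\|(\partial\phi)^0(u)\|<+\infty$, i.e. property $(iii)$ of the damping potential — so that weak cluster points exist by Banach–Alaoglu; then injectivity of $A$ upgrades subnet convergence to convergence of the full Cesàro average.
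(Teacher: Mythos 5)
Your plan follows a genuinely different route than the paper. The paper passes to the Hamiltonian first-order formulation, renorms the product space $\cH\times\cH$ with the $A$-scalar product so that the driving operator $F(x,u)=(-u,Ax)$ becomes skew-symmetric (hence monotone), checks via Rockafellar's theorem that $\partial\Phi+F$ is maximally monotone, and then invokes the nonlinear ergodic theory for semigroups generated by maximally monotone operators (Baillon--Br\'ezis type) to obtain full ergodic weak convergence in one stroke. Your approach instead averages the constitutive equation directly and characterizes weak cluster points of the Ces\`aro averages via the subdifferential inequality. The parts of your plan that do this --- the a priori estimates, the integration of the dynamics, the averaged subgradient inequality showing any weak cluster point $\xi_\infty$ of $\frac1t\int_0^t\xi(\tau)\,d\tau$ lies in $\partial\phi(0)$, and the resulting inclusion $0\in\partial\phi(0)+Ax_\infty$ --- are correct and give a clean bare-hands identification of the cluster points without appealing to the monotone-semigroup machinery.

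However, there is a genuine gap at the final step: the claim that boundedness of $\partial\phi(0)$ together with injectivity of $A$ forces $x_\infty$ to be uniquely determined, and hence that the whole net of averages converges. This is false as stated. The set of possible cluster points is $A^{-1}\bigl(-\partial\phi(0)\bigr)$, and this is a singleton only when $\partial\phi(0)$ is a singleton. In the dry-friction case $\phi(u)=r\|u\|$ (explicitly covered by the theorem) one has $\partial\phi(0)=\overline B(0,r)$, so $A^{-1}(-\partial\phi(0))$ is a nontrivial bounded convex set; injectivity of $A$ and boundedness of $\partial\phi(0)$ do not collapse it to a point. Thus your argument only shows that \emph{all weak cluster points satisfy the inclusion}, not that they coincide. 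To recover full ergodic convergence you need either (a) the nonlinear ergodic theorem for maximal monotone semigroups --- which is precisely what the paper's renorming trick unlocks --- or (b) an Opial-type argument showing that, for every $z$ with $Az\in-\partial\phi(0)$, the quantity $\|x(t)-z\|$ (measured in the $A$-norm on the product phase space) is asymptotically non-increasing, so that the cluster-point set must be a single point. A smaller issue: you hedge on whether $A$ is coercive, but then extract a weakly convergent subnet of $\frac1t\int_0^t x(\tau)\,d\tau$, which requires boundedness of those averages in $\|\cdot\|$, not merely in $\|A^{1/2}\cdot\|$; the paper's proof implicitly assumes coercivity (the $A$-scalar product is stated to be \emph{equivalent} to the original), and you should too.
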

\begin{proof}
The Hamiltonian formulation of \eqref{closed_loop_1_bbb} gives
 the equivalent first-order differential
inclusion in the product space $\mathcal H\times \mathcal H$:
\begin{equation}\label{first_order_cl_loop_0_linear}
0\in\dot z(t)+\partial \Phi(z(t))+F(z(t)),
\end{equation}
where $z(t)=(x(t), \dot x(t)) \in \mathcal H\times \mathcal H $, and
\begin{itemize}
\item
$\Phi:\mathcal H\times \mathcal H\to \R$ is the convex continuous function defined by $\Phi(x,u)=\phi(u)$

\item   $F: \mathcal H\times \mathcal H\to \mathcal H\times \mathcal H$
is defined by \,  $F(x,u)=(-u , Ax)$.
\end{itemize}
The trick is to renorm the product space $\mathcal H\times \mathcal H$ as follows:

\noindent  The mapping $(x,y) \mapsto  \left\langle Ax, y \right\rangle$
defines a scalar product on $\mathcal H$, which is equivalent to the initial one.
Accordingly, let us equip the  product space $\mathcal H\times \mathcal H$ with the scalar product
$$
 \left\langle \left\langle (x_1 u_1), (x_2 ,u_2) \right\rangle \right\rangle := \left\langle A x_1, x_2 \right\rangle  + \left\langle u_1, u_2 \right\rangle
$$
With respect to this new scalar product, let us observe that:
\begin{itemize}
\item $F: \mathcal H\times \mathcal H\to \mathcal H\times \mathcal H$ is a linear continuous skew symmetric operator. Since $A$ is self-adjoint
\begin{eqnarray*}
\left\langle \left\langle F(x,u), (x,u) \right\rangle \right\rangle &=& \left\langle \left\langle (-u,Ax), (x,u) \right\rangle \right\rangle \\
&=& -\left\langle Au, x \right\rangle + \left\langle Ax, u \right\rangle =0.
\end{eqnarray*}
\item The subdifferential of $\Phi$ is unchanged that is $\partial\Phi (x,u) = (0, \partial \phi (u)).$
\end{itemize}
\noindent Therefore, the differential inclusion \eqref{first_order_cl_loop_0_linear} is governed by the sum of two maximally monotone operators, one of them is the subdifferential of a convex continuous function, the other one is a monotone skew symmetric operator.
By the classical Rockafellar theorem (see \cite[Corollary 24.4]{BC}), their sum is still maximally monotone.
Consequently, we can apply the  theory concerning the semi groups generated by general maximally monotone operators, and conclude that $z(t)$ converges weakly and in an ergodic way towards
 a zero
$z_{\infty}= (x_{\infty}, u_{\infty}) $ of  $\partial \Phi+F$. This means
$$
(0, \partial \phi (u_{\infty})) + (-u_{\infty}, Ax_{\infty}) =(0,0).
$$
Equivalently $u_{\infty} =0$ and $\partial \phi (0) + Ax_{\infty}\ni 0$.
\end{proof}

\medskip

In the case of the wave equation, this type of argument was developed by Haraux in \cite[Lecture 12,  Theorem 45]{Haraux_PDE}.
A recent account on these questions can be found in
Haraux-Jendoubi \cite{HJ2} and
Alabau Boussouira-Privat-Tr\'elat \cite{APT}.

\subsection{Numerical illustrations}\label{sec:num}
Finding explicit solutions in a closed form of nonlinear oscillators   has  direct applications in various fields.
In the one-dimensional case, the corresponding second-order differential equation
 $
 \ddot x(t) +  d(x(t), \dot x(t))\dot x(t)+  g(x(t))=0
 $
 is known as the Levinson-Smith equation. It is reduced to the  Li\'enard equation when $d$ depends only on $x$.
One can consult \cite{GCG, GGH} for recent reports on the subject and the description of some of the different techniques  developed to resolve these questions.
  In our setting, we will provide some insight on this question by combining energetic and topological arguments.

\subsubsection{A numerical one-dimensional example}
Consider the case $\cH = \R$, $f(x) = \demi |x|^2$, and $\phi (u) = \frac{1}{p} |u|^p$ with $p>1$.
Then, (ADIGE-V) writes
\begin{equation}\label{adige_v_one_dim}
\ddot x(t) +  |\dot x(t)|^{p-2} \dot x(t)+ x(t)=0.
\end{equation}
It is a linear oscillator with nonlinear damping. According to the previous results,

\medskip

$\bullet$ For $p=2$, according to the strong convexity of the potential function $f(x)=\demi |x|^2$ and  Theorem \ref{strong_convex_thm}, we have  convergence at an exponential rate of $x(t)$ and $\dot{x}(t)$ toward $0$.
Indeed, $p=2$ is the only value of $p$ for which the hypothesis of
Theorem \ref{strong_convex_thm} are satisfied. For $p>2$ the  local hypothesis $(i)$ is not satisfied, and for $p<2$ the gradient of $\phi$ fails to be Lipschitz continuous on the bounded   sets containing the origin.

\medskip

$\bullet$  For $p>1$,  let us first show that
$
\lim_{t \to +\infty} \dot{x}(t) =0.
$
This results from Proposition \ref{preliminary_est}, and the fact that the trajectory is bounded. This last property  results from
 the fact that
the global energy
$
\mathcal E (t)= \demi |\dot{x}(t) |^2 +  \demi |x(t) |^2
$
is non-increasing, and hence convergent (and bounded from above).

Let us  show that $x(t)$ tends to zero.
Since $\lim_{t \to +\infty} \dot{x}(t) =0$, and $\lim_{t \to +\infty}  \mathcal E (t)$ exists, we have
\begin{equation}\label{conv _dim_1}
\lim_{t \to +\infty} |x(t) |^2 = \lim_{t \to +\infty}  \mathcal E (t) \quad \mbox{\rm exists}.
\end{equation}
Since the identity operator clearly satisfies the assumptions of Theorem \ref{edp}, we have the following ergodic convergence result
$
 \lim_{t \to +\infty} \frac{1}{t} \int_0^t   x(\tau) d \tau =0.
$
 There are two possibilities:

\textit{a)} For $t$ sufficiently large, $x(t)$ has a fixed sign.
According to  \eqref{conv _dim_1}, we get that   $\lim_{t \to +\infty} x(t):= x_{\infty} \, \mbox{ exists}.$
The convergence implies the ergodic convergence. Therefore,
$\lim_{t \to +\infty} \frac{1}{t} \int_0^t   x(\tau) d \tau = x_{\infty}. $
 But we know that the ergodic limit is zero, hence $x_{\infty}=0. $

\medskip

\textit{b}) The trajectory changes sign an infinite number of time as $t\to +\infty$. This means that there exists sequences $s_n$ and $t_n$ which tend to infinity such that  $x(t_n) x(s_n) <0$.
Since the trajectory is continuous, by the mean value theorem, this implies the existence of $\tau_n \in[s_n, t_n]$ such that $x(\tau_n)=0$.
Hence $x(\tau_n)^2=0$ for all $n\in \N$, with $\tau_n \to +\infty$. Since $\lim_{t \to +\infty} |x(t) |^2$ exists, this implies that
$\lim_{t \to +\infty} |x(t) |^2 =0$.
Clearly, this implies that $\lim_{t \to +\infty} x(t) =0$.

So, for $p>1$, for any solution trajectory of
\eqref{adige_v_one_dim}, we have:
\begin{equation}\label{conv _dim_2}
\lim_{t \to +\infty} x(t) =0 \, \, \mbox{ and }\, \lim_{t \to +\infty} \dot{x}(t) =0.
\end{equation}
Now let's analyze how the trajectories and their speeds go to zero. 
 As we shall see, the case $p>2$ corresponds to a weak damping, while the case $p<2$ corresponds to a strong damping.

\smallskip

\textbf{Case $p>2$.}
Since the speed $| \dot x(t)|$ tends to zero, we have
$\gamma (t):= | \dot x(t)|^{p-2} \to 0 \, \mbox{ as } \, t\to +\infty.$
\begin{figure}[h]
	\centering
	{\includegraphics*[viewport=78 200  540 600,width=0.325\textwidth]{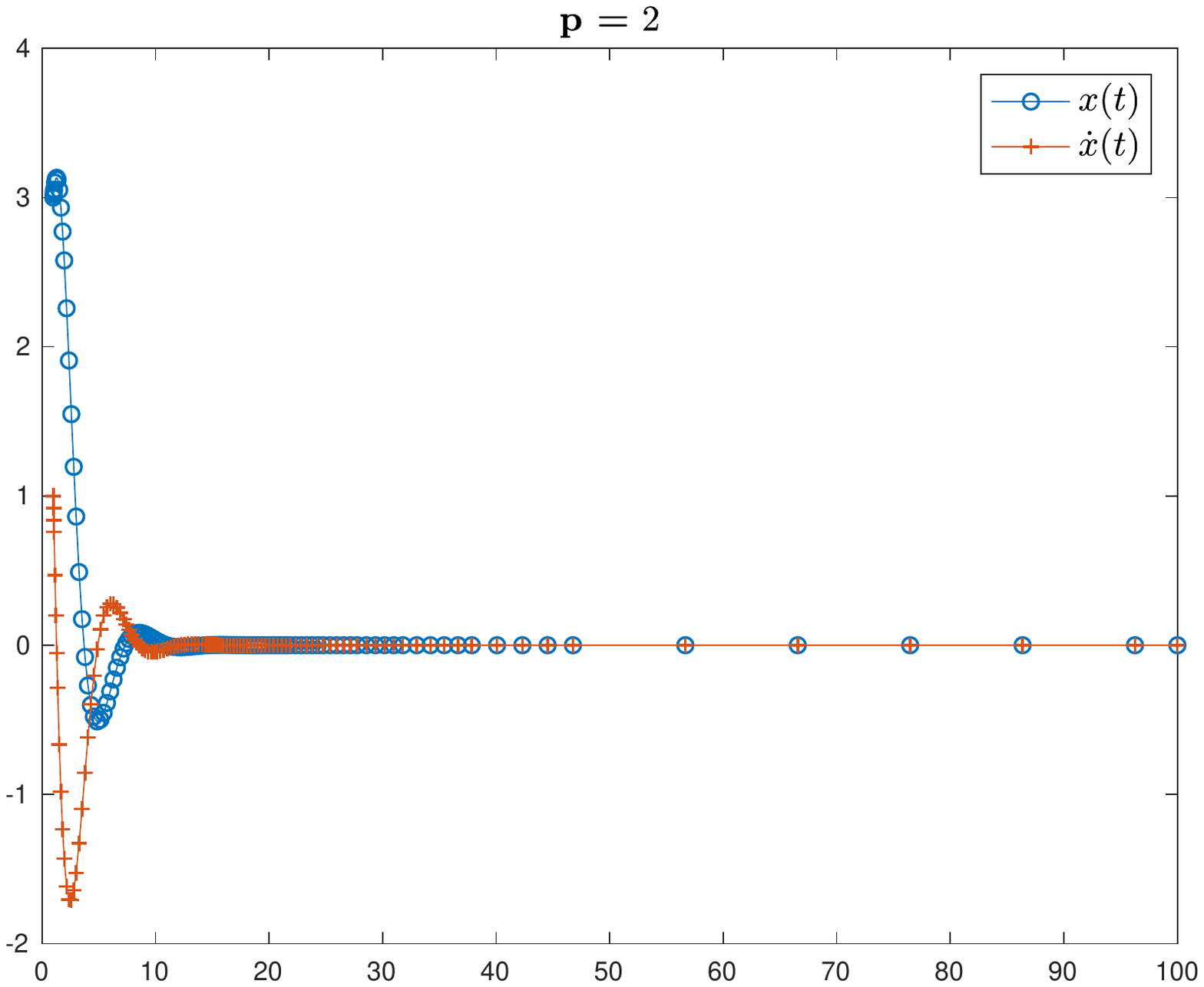}}\hspace{0.03cm}
	{\includegraphics*[viewport=78 200  540 600,width=0.325\textwidth]{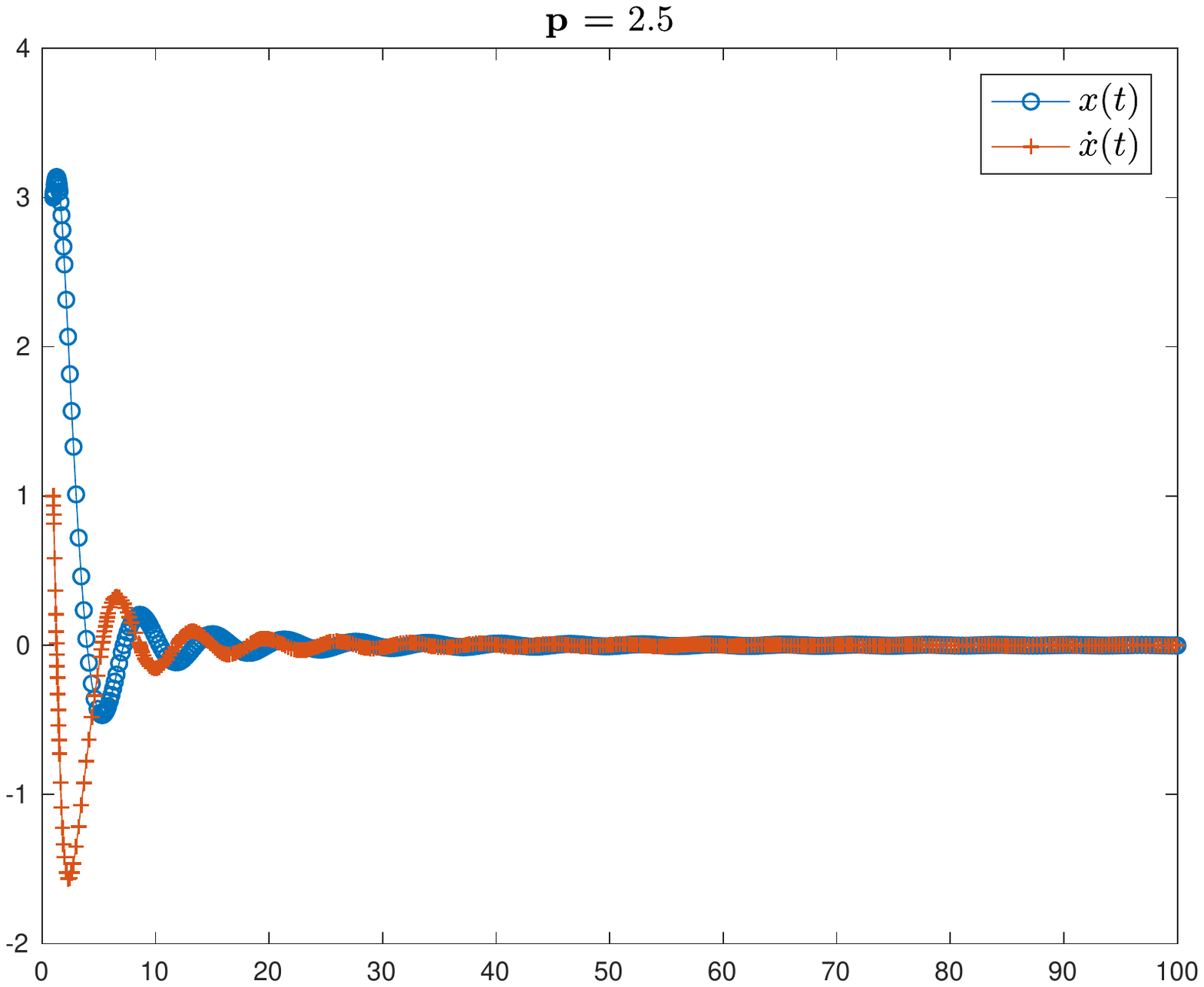}}\hspace{0.03cm}
	{\includegraphics*[viewport=78 200  540 600,width=0.325\textwidth]{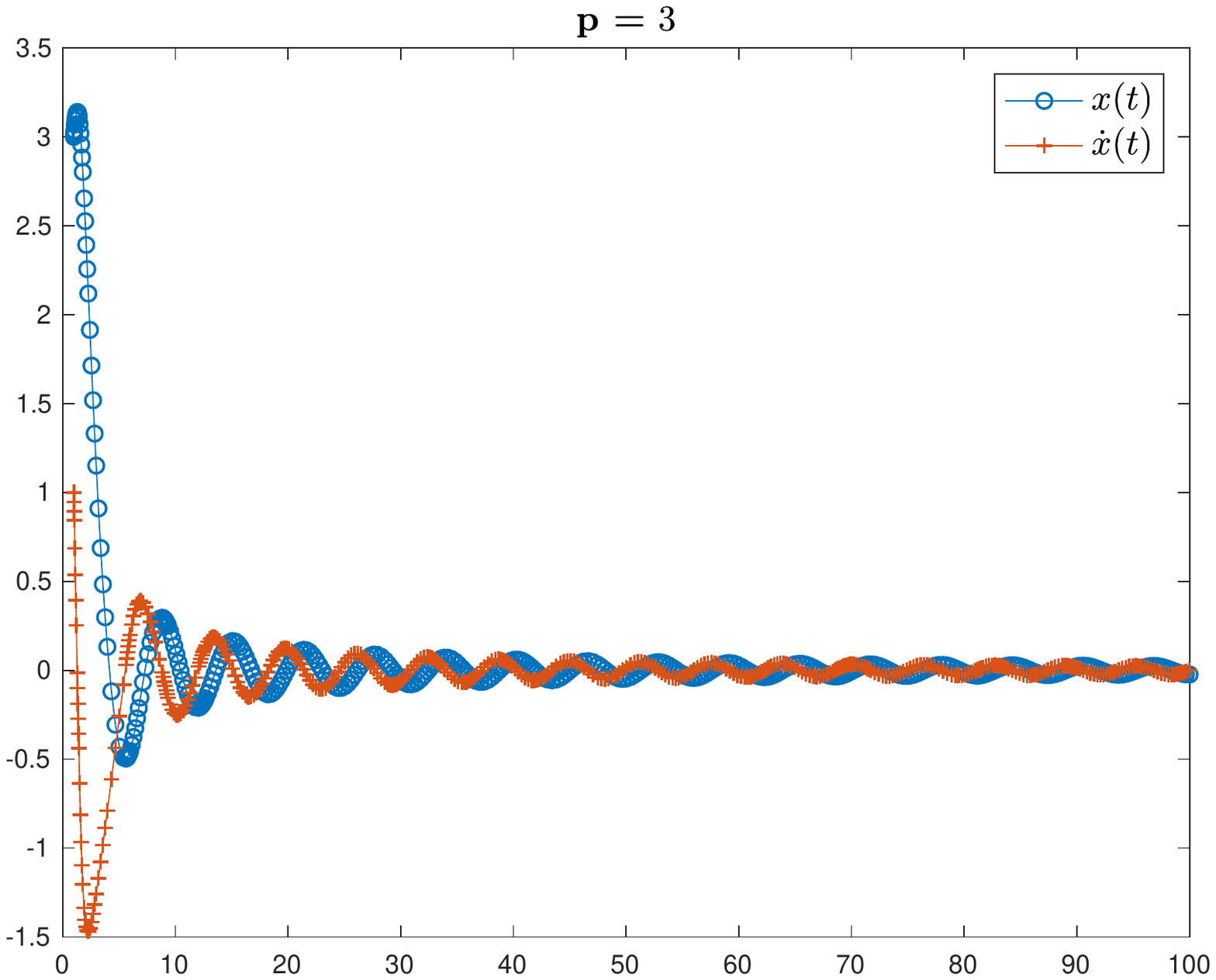}}\\
	{\includegraphics*[viewport=78 200  540 600,width=0.325\textwidth]{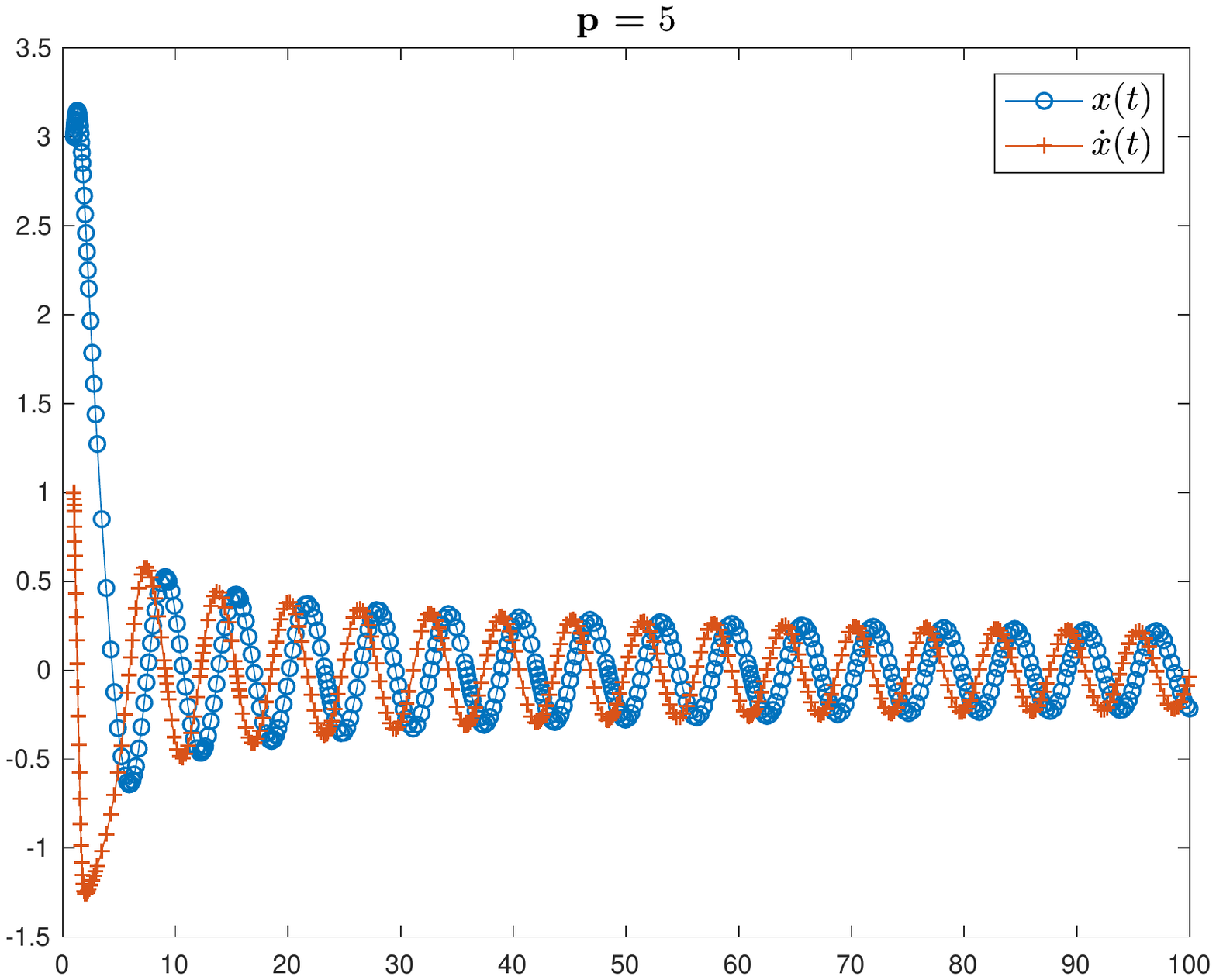}}\hspace{0.03cm}
	{\includegraphics*[viewport=78 200  540 600,width=0.325\textwidth]{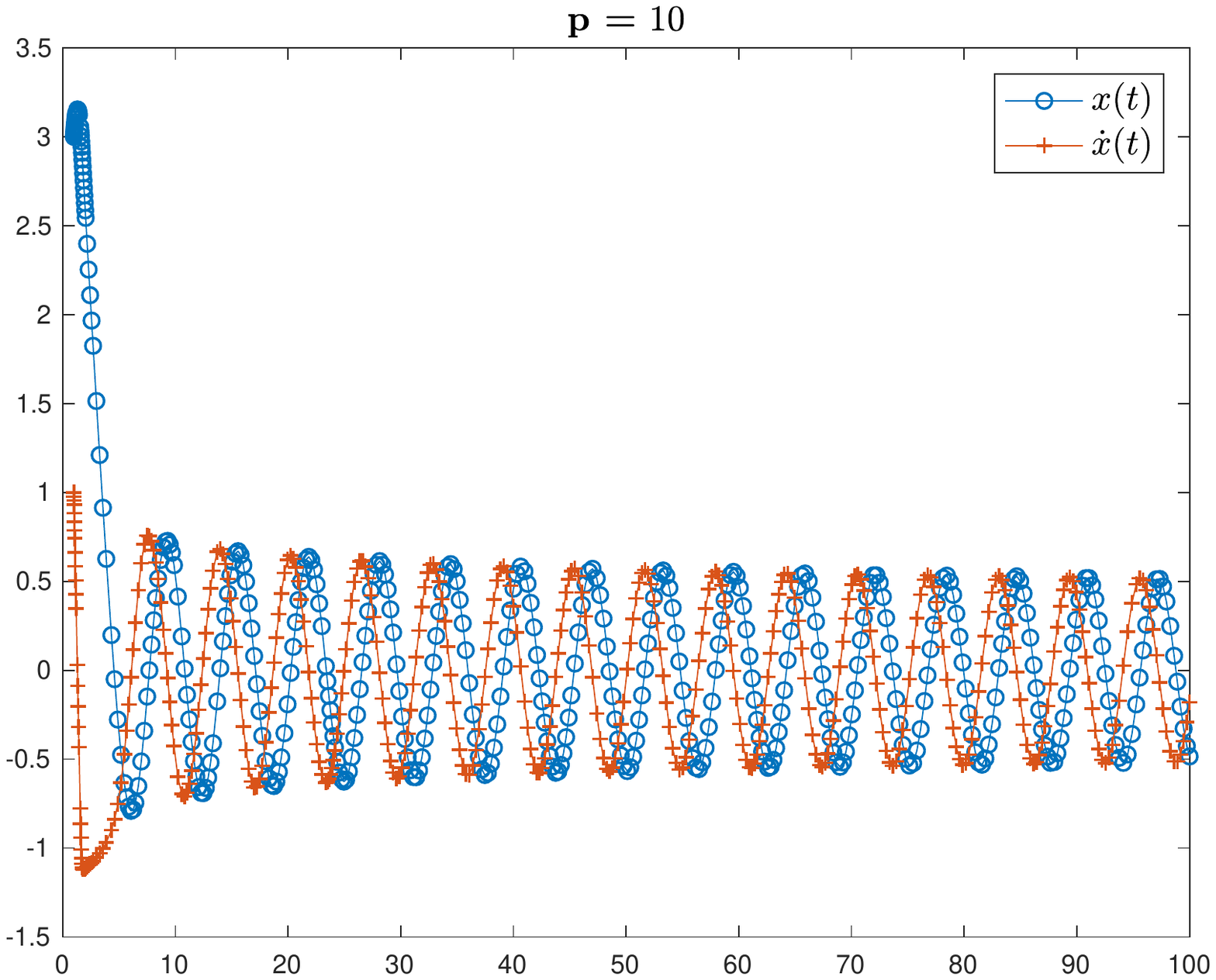}}\hspace{0.03cm}
	{\includegraphics*[viewport=78 200  540 600,width=0.325\textwidth]{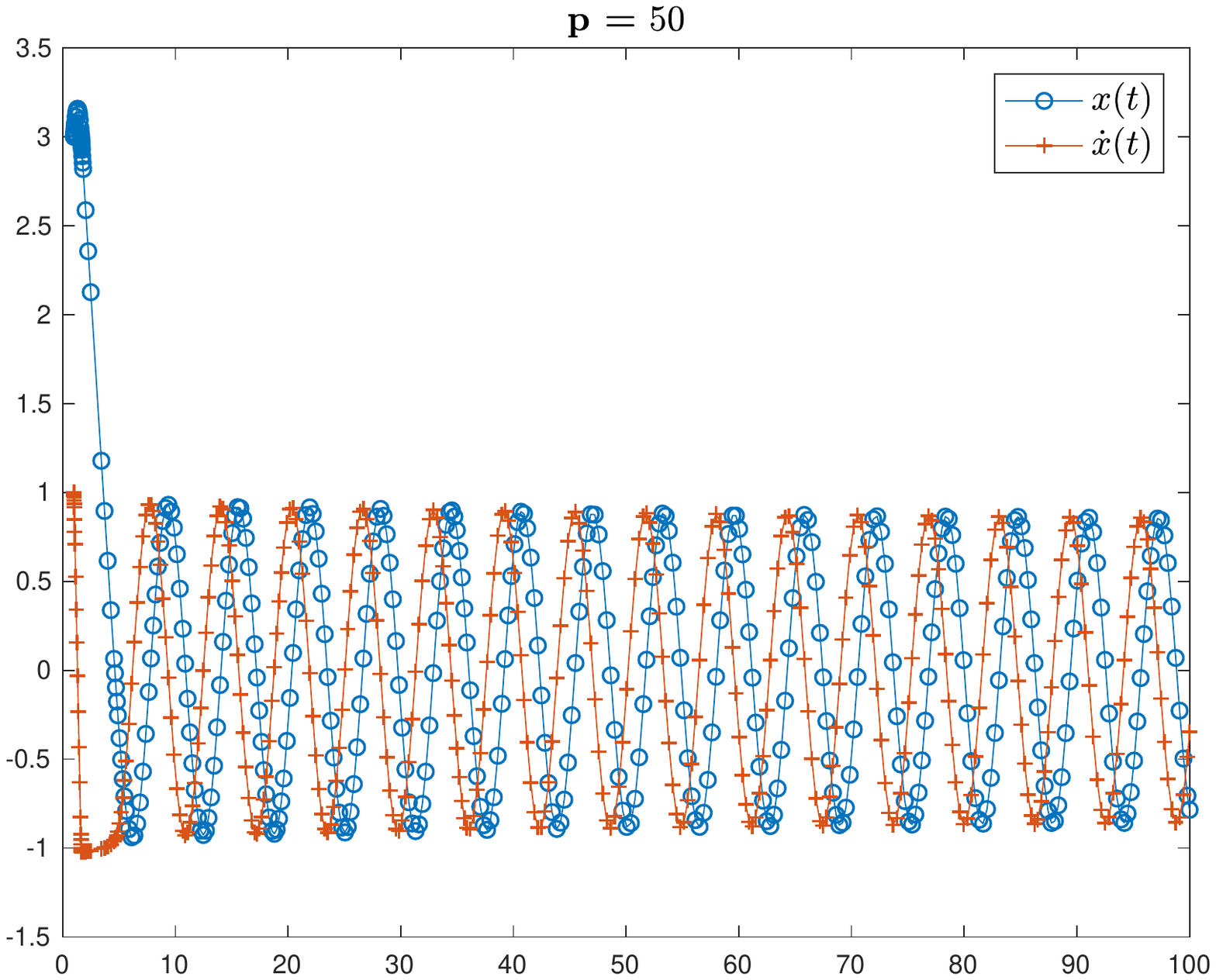}}
	\caption{\small The evolution of the trajectories $x(t)$ (blue line) and $\dot x(t)$ (red line) of the dynamical system \eqref{adige_v_one_dim} for different values of $p \geq 2$.
}
	\label{fig:ex0}	
\end{figure}
The viscous damping coefficient $\gamma(t)$ becomes asymptotically small.  As a result, the damping effect also becomes  weak (that's what we call weak damping). As $p$ increases, the damping effect tends to decrease,  the trajectory tends to oscillate more and more, and the rate of convergence  deteriorates.\\
This is illustrated  in Figure \ref{fig:ex0}, where we can see the evolution of the trajectory $x(t)$ (blue line) and  its derivative $\dot x(t)$ (red line) of the dynamical system \eqref{adige_v_one_dim} with starting point $(x(0), \dot x(0)) = (3,1)$ and for different values of $p$ greater than or equal to $2$. The trajectory and the velocity tend towards zero,  however, the oscillations become stronger as $p$ increases, and the convergence towards zero becomes very slow.
For large $p$, the oscillatory aspect  conforms to  the ergodic convergence of the trajectory to $0$ (indeed in dimension one the trajectory tends towards zero, but we can expect that in higher dimensions there is only ergodic convergence towards zero).
Note also that for $p>2$, and $p$ close to $2$, the trajectory is close to that corresponding to $p=2$, and therefore enjoys excellent convergence  properties. It would be  interesting  to  study  this situation, because it is a natural candidate to obtain convergence rates similar to the  accelerated gradient method of Nesterov.


\textbf{Case $1<p<2$}.
According to \eqref{conv _dim_2} we have
$\lim_{t \to +\infty} \dot{x}(t) =0,$
and
$\lim_{t \to +\infty} x(t) =0.$\\
 Since $\lim_{t \to +\infty} \dot{x}(t) =0$ and $2-p >0$,  we have that the viscous damping coefficient
 $$\gamma (t):= \frac{1}{|\dot x(t)|^{2-p}} \to +\infty \, \mbox{ as } \, t\to +\infty .$$
We are in the setting of a strong damping effect.
This situation was analyzed in the following result of \cite{AC1}, which we reproduce here. It concerns the asymptotic behaviour of

$$
{\rm(IGS)_{\gamma}} \quad \ddot x(t) + \gamma (t) \dot x(t) + \nabla  f(x(t))=0.
$$

\begin{proposition}\label{large_gamma}
Let $f:\cH\to \R$ be a function of class $\cC^1$ such that $\nabla f$ is bounded on the bounded subsets of $\cH$. Given $r>0$ and $\theta>1$, assume that $\gamma(t)=r\, t^\theta$ for every $t\geq t_0\geq 0$.\\ Then each bounded solution trajectory $x(.)$ of $\rm{(IGS)_{\gamma} }$ satisfies $\int_{t_0}^{+\infty}\|\dot x(t)\|\, dt <+\infty$, and hence converges strongly toward some $x^*\in \cH$.
\end{proposition}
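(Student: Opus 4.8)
The plan is to read the second-order equation as a first-order \emph{linear} non-autonomous ODE in the velocity $v(t):=\dot x(t)$, namely $\dot v(t)+\gamma(t)v(t)=-\nabla f(x(t))$, and to integrate it explicitly with the integrating factor
$$p(t):=\exp\left(\int_{t_0}^t\gamma(s)\,ds\right)=\exp\left(\frac{r}{\theta+1}\bigl(t^{\theta+1}-t_0^{\theta+1}\bigr)\right).$$
Since the trajectory $x(\cdot)$ is assumed bounded and $\nabla f$ is bounded on the bounded subsets of $\cH$, there is a constant $M$ with $\|\nabla f(x(t))\|\le M$ for all $t\ge t_0$. Multiplying the equation by $p(t)$ gives $\frac{d}{dt}\bigl(p(t)v(t)\bigr)=-p(t)\nabla f(x(t))$ almost everywhere, and after integration from $t_0$ to $t$,
$$\|\dot x(t)\|\le \frac{p(t_0)}{p(t)}\,\|\dot x(t_0)\|+\frac{M}{p(t)}\int_{t_0}^t p(s)\,ds.$$

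The core of the argument is then to show that the right-hand side is integrable on $[t_0,+\infty[$. The first term is harmless, because $1/p(t)$ is the exponential of minus a positive power $t^{\theta+1}$, hence decays faster than any exponential and $\int_{t_0}^{+\infty}p(t)^{-1}\,dt<+\infty$. For the second term I would integrate in $t$ and apply Fubini's theorem, writing $\int_{t_0}^{+\infty}\frac{1}{p(t)}\bigl(\int_{t_0}^t p(s)\,ds\bigr)\,dt=\int_{t_0}^{+\infty}p(s)\bigl(\int_s^{+\infty}\frac{dt}{p(t)}\bigr)\,ds$. The decisive elementary estimate is the convexity (mean value) bound $t^{\theta+1}-s^{\theta+1}\ge(\theta+1)s^\theta(t-s)$ for $t\ge s$, which gives $\int_s^{+\infty}\frac{dt}{p(t)}\le \frac{1}{p(s)}\int_s^{+\infty}e^{-r\,s^\theta(t-s)}\,dt=\frac{1}{p(s)\,r\,s^\theta}$. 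Substituting, the double integral is bounded by $\frac{1}{r}\int_{t_0}^{+\infty}s^{-\theta}\,ds$, which is finite precisely because $\theta>1$; when $t_0=0$ one first absorbs the contribution of the bounded interval $[0,1]$, on which $\dot x$ is continuous hence integrable, and runs the estimate from $t_1=1$. Combining the two bounds yields $\int_{t_0}^{+\infty}\|\dot x(t)\|\,dt<+\infty$. Finite length then gives strong convergence: for $t_0\le t\le t'$ one has $\|x(t')-x(t)\|\le\int_t^{t'}\|\dot x(s)\|\,ds\le\int_t^{+\infty}\|\dot x(s)\|\,ds\to0$ as $t\to+\infty$, so the Cauchy criterion is met as $t,t'\to+\infty$ and $x(t)$ converges strongly in the complete space $\cH$ to some $x^*$.

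The hard part will be obtaining the tail bound on $\int_s^{+\infty}p(t)^{-1}\,dt$ with enough precision: a crude comparison such as $p(t)\ge p(s)$ only yields $\|\dot x(t)\|=O(1)$, not integrability, so one genuinely has to exploit that $\gamma$ grows like $t^\theta$ in order to extract the extra polynomial decay factor $s^{-\theta}$, and it is exactly the assumption $\theta>1$ that makes $\int^{+\infty}s^{-\theta}\,ds$ converge. An equivalent route avoiding Fubini is to split $\int_{t_0}^t p(s)\,ds$ at $t/2$ and combine the super-polynomially small ratio $p(t/2)/p(t)$ with the identity $p(s)=p'(s)/(r\,s^\theta\,p(s))\cdot p(s)$, i.e. $p(s)=p'(s)/(rs^\theta)$, to get $\frac{1}{p(t)}\int_{t_0}^t p(s)\,ds\le C\,t^{-\theta}$ for large $t$ directly; the decisive ingredient, $\theta>1$, is the same either way.
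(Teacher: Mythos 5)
The paper does not prove this proposition; it reproduces the statement from Attouch--Cabot \cite{AC1} and moves on, so there is no in-text proof to compare against. Your proof is correct and self-contained. Treating the equation as a linear first-order ODE in $v=\dot x$ with integrating factor $p(t)=\exp\bigl(\tfrac{r}{\theta+1}(t^{\theta+1}-t_0^{\theta+1})\bigr)$, and using the uniform bound $\|\nabla f(x(t))\|\le M$ (valid because $x$ is bounded and $\nabla f$ is bounded on bounded sets), yields
$$\|\dot x(t)\|\le \frac{p(t_0)}{p(t)}\|\dot x(t_0)\|+\frac{M}{p(t)}\int_{t_0}^t p(s)\,ds.$$
The first term is integrable since $\theta+1>1$ makes $1/p$ decay superexponentially. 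For the second, the Tonelli swap gives $\int_{t_0}^\infty p(s)\bigl(\int_s^\infty p(t)^{-1}\,dt\bigr)\,ds$, and the convexity inequality $t^{\theta+1}-s^{\theta+1}\ge(\theta+1)s^\theta(t-s)$ gives the tail estimate $\int_s^\infty p(t)^{-1}\,dt\le (r s^\theta p(s))^{-1}$, collapsing everything to $\tfrac{1}{r}\int_{t_0}^\infty s^{-\theta}\,ds$, finite precisely because $\theta>1$. You correctly identify that a crude comparison only bounds the velocity, and that the growth of $\gamma$ is what supplies the extra $s^{-\theta}$ factor that makes the length finite. The patch at $t_0=0$ (absorb $[0,1]$, where $\dot x$ is continuous, then run the estimate from $t_1=1$) is necessary and correct, since $s^{-\theta}$ is not integrable at the origin. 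Finite length then gives the Cauchy property and hence strong convergence in $\cH$. Your alternative sketch, splitting $\int_{t_0}^t p(s)\,ds$ at $t/2$ and using $p'(s)=r s^\theta p(s)$ to get $\tfrac{1}{p(t)}\int_{t_0}^t p(s)\,ds=O(t^{-\theta})$, is also sound and has the small advantage of producing the pointwise rate $\|\dot x(t)\|=O(t^{-\theta})$ rather than integrability alone.
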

According to this result, we can obtain some information about the convergence rate of the velocity to zero.
We have two cases: either
$\int_{t_0}^{+\infty}\|\dot x(t)\|  \, dt <+\infty$,
or $\int_{t_0}^{+\infty}\|\dot x(t)\|  \, dt = +\infty$.
In this last case, according to Proposition \ref{large_gamma}, we cannot have $\gamma(t)= \frac{1}{|\dot x(t)|^{2-p}}  $  of order $r\, t^\theta$ with $\theta>1$.
This excludes the possibility to have $|\dot x(t)|$ or order
$\frac{1}{t^\frac{\theta}{2-p}}$ with $\theta>1$.
So, the best that we can expect is,
$
|\dot x(t)| \sim \frac{1}{t^\frac{1}{2-p}} \, \, \mbox{ as} \; \;  t \to +\infty.
$
This estimate is in accordance with the exponential decay when $p=2$, and the finite length property when $p=1$.
We emphasize the fact that the above argument is not a rigorous proof, it just gives an indication of the type of convergence rate that we can expect.

\smallskip

\begin{figure}[h]
	\centering
	{\includegraphics*[viewport=78 200  540 600,width=0.325\textwidth]{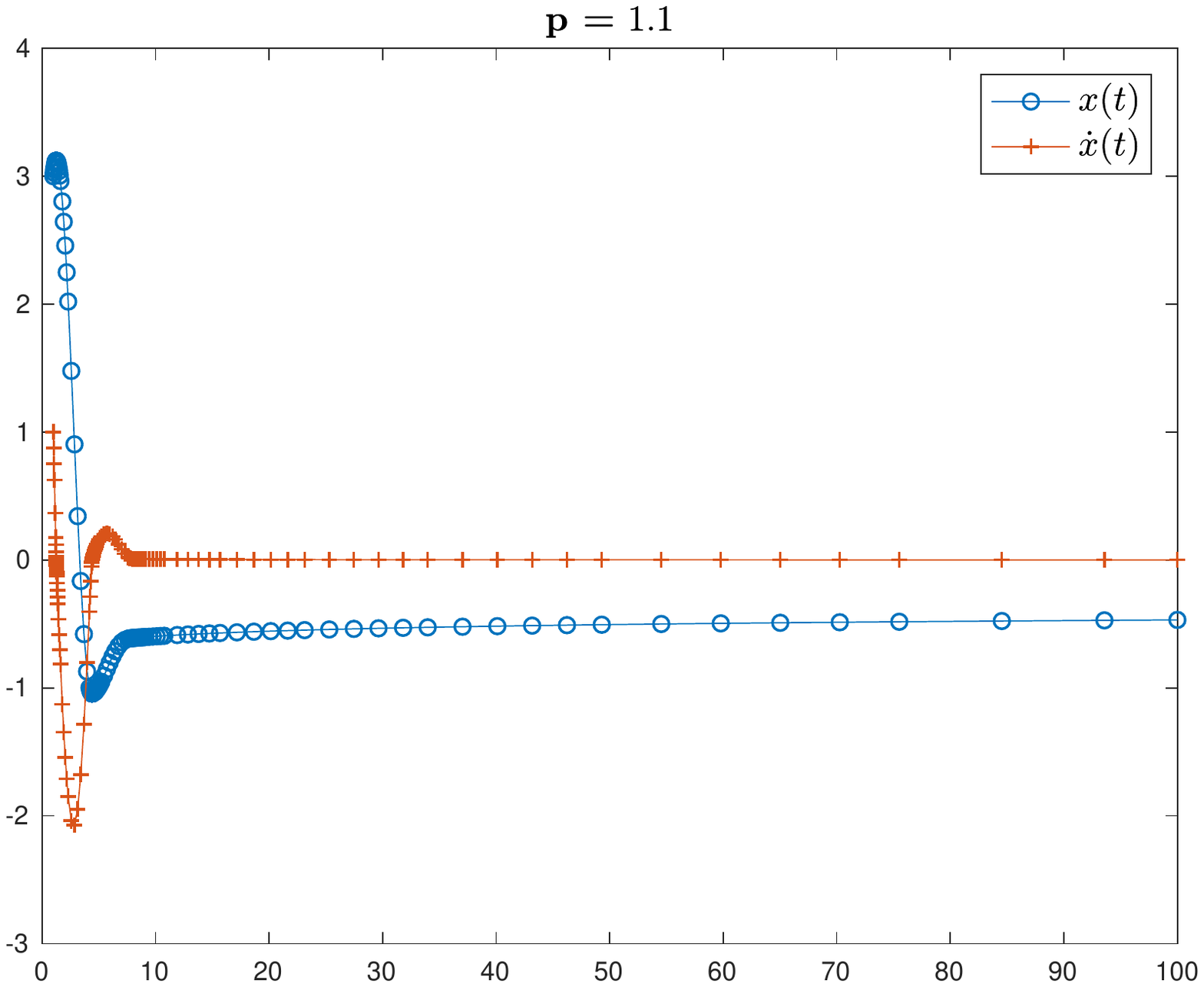}}\hspace{0.03cm}
	{\includegraphics*[viewport=78 200  540 600,width=0.325\textwidth]{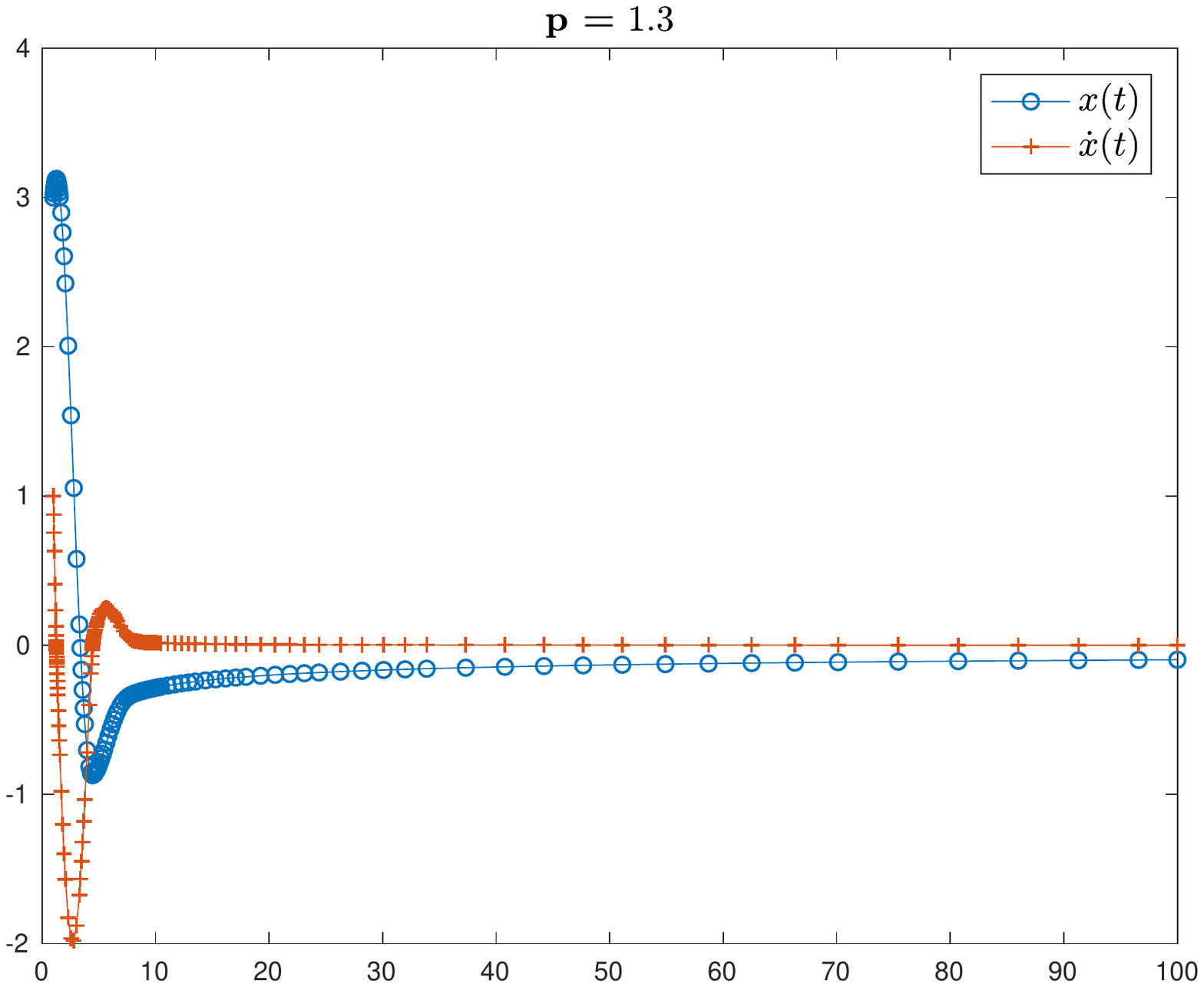}}\hspace{0.03cm}
	{\includegraphics*[viewport=78 200  540 600,width=0.325\textwidth]{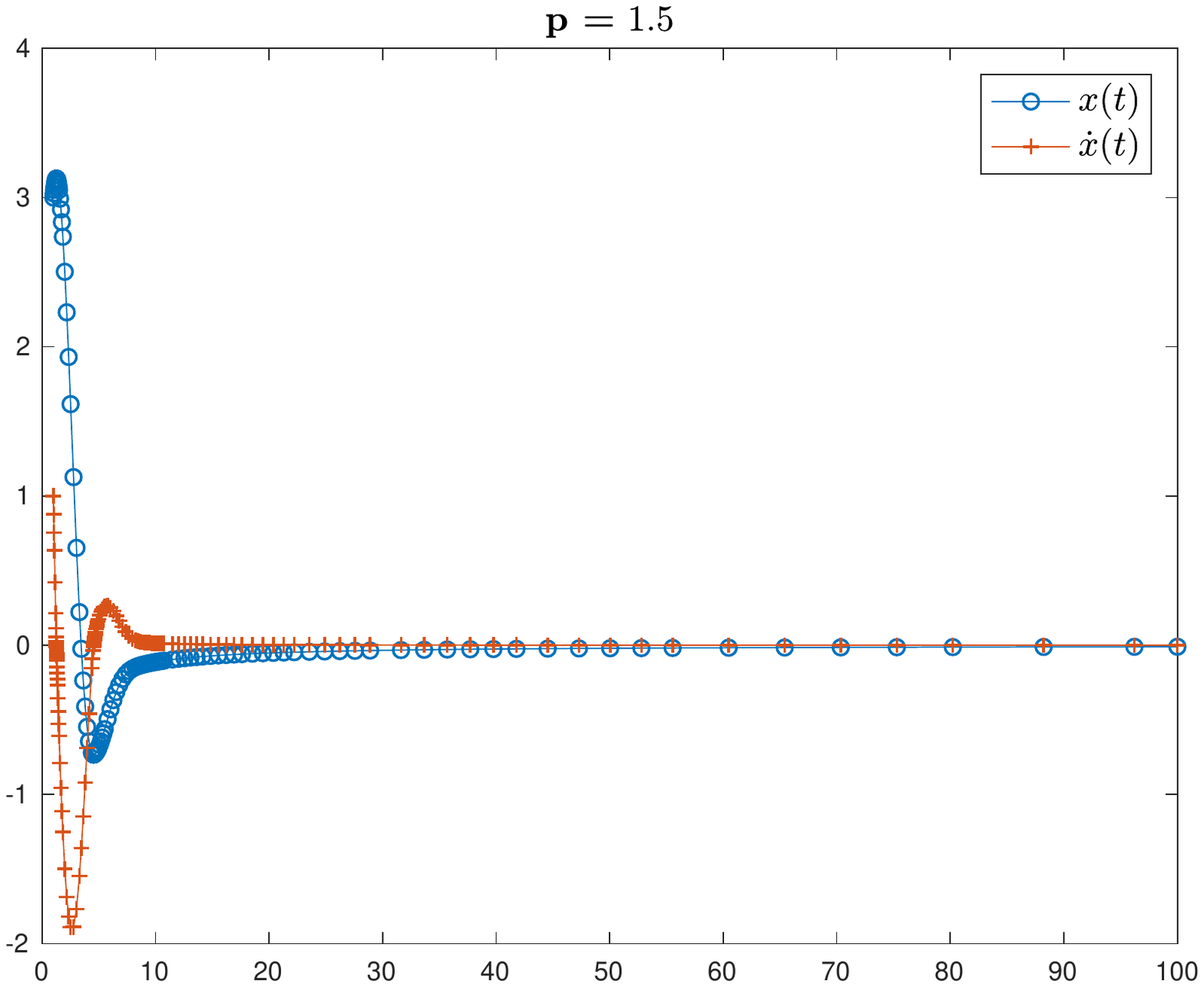}}
	\caption{\small Evolution of  $x(t)$ (blue) and $\dot x(t)$ (red) for different values of $1 < p < 2$.
}
	\label{fig:ex2}	
\end{figure}
In Figure \ref{fig:ex2} we can see the evolution of the trajectory $t \mapsto x(t)$ (blue line) and of its derivative $t \mapsto \dot x(t)$ (red line) of the dynamical system \eqref{adige_v_one_dim} with starting point $(x(0), \dot x(0)) = (3,1)$ for different values of $1  < p < 2$.
 Because of the strong damping, the trajectories exhibit small oscillations, and the velocity converges fastly to zero.
By contrast, the convergence of $x(t)$ to zero highly depends on the parameter $p$.
When $p$ is close to $1$, the convergence of the trajectory to zero is poor, however, already a slight increase of $p$ concisely improves the convergence of the trajectory.
Indeed, when $p$ becomes large the convergence of the trajectory improves.

\vspace{3mm}

\section{Weak damping: from slow convergence to attractor  effect}
\label{sec:weakdamping}
As we  already noticed, even in the case of a strongly convex function $ f $, when the damping effect becomes too weak, then the convergence property is degraded. In the case of the damping $ \|  \dot x (t) \| ^ {p-2} \dot x (t) $), this corresponds to situations where $ p >2 $.
In this section, we  give examples showing that in the case of a general convex function, the situation is even worse, and the trajectory may not converge in the case of weak damping.
In this case, one has to replace the convergence notions by the concept of attractor, a central subject for the theory of dynamic systems, and PDE's, see Hale \cite{Hale}, Haraux \cite{Haraux} for seminal contributions to the subject in the case of gradient systems (\ie systems for which there exists a Lyapunov function). For optimization purposes, this is a promising research topic, largely to be explored in the case of a general damping function.
In the next section, we take a convex function $ f $ with a continuum of minimizers, and examine the lack of convergence when the damping becomes too weak.
In fact, as we have already underlined, convergence depends both on the geometric properties of the damping potential and on the potential function $ f $ to be minimized. The corresponding geometric aspects concerning $ f $ will be examined a little later.

\subsection{An example where convergence fails to be satisfied}
The following example is based on Haraux \cite[section 5.1]{Haraux}.
Take $\cH =\R$, and $f: \R \to \R$ a convex function of class $\cC^1$ which achieves its minimal value on the line segment $[a,b]$, with $a<b$.
We suppose that $f$ is coercive, \ie $\lim_{|x|\to+\infty} f(x)=+\infty$. Its graph looks like a bowl with a flat bottom.

\noindent Consider the evolution equation with closed-loop damping
\begin{equation}\label{closed_loop_1_counter_example}
 \ddot{x}(t) + | \dot{x}(t) |^{p-2} \dot{x}(t)  +  \nabla f (x(t)) = 0.
\end{equation}
Let us discuss, according to the value of $ p $, the convergence properties of the trajectories of this system. We will need the following elementary lemma, see \cite[Lemma 5.1.3]{Haraux}.

\begin{lemma}\label{lem_tech} Let $v \in \cC^2 (\R_+)$ which satisfies, for some $c>0$
$$
\dot{v}(0) >0; \quad  \ddot{v}(t) \geq -c \dot{v}(t)^2 \, \mbox{ for all } \, t \geq 0.
$$
Then, $v$ is an increasing function, and \,
$
\lim_{t\to +\infty} v(t) = + \infty.
$
\end{lemma}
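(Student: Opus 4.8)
The plan is to prove the two assertions — monotonicity of $v$ and $v(t)\to+\infty$ — essentially by a comparison argument with the scalar ODE $\dot{w}=-cw^2$, which has solutions that blow down to $0$ in infinite time but stay positive.

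First I would establish that $\dot{v}(t)>0$ for all $t\geq 0$. Set $w(t):=\dot{v}(t)$, so $w\in\mathcal{C}^1(\R_+)$, $w(0)>0$, and $\dot{w}(t)\geq -c\,w(t)^2$ for all $t$. Suppose for contradiction that $w$ vanishes somewhere, and let $t_0:=\inf\{t\geq 0:\ w(t)=0\}$; by continuity $t_0>0$ and $w>0$ on $[0,t_0)$ while $w(t_0)=0$. On $[0,t_0)$ divide the differential inequality by $w^2>0$ to get $\frac{d}{dt}\bigl(-\tfrac{1}{w(t)}\bigr)=\frac{\dot{w}(t)}{w(t)^2}\geq -c$, hence integrating from $0$ to any $t<t_0$, $-\tfrac{1}{w(t)}\geq -\tfrac{1}{w(0)}-ct$, i.e. $\tfrac{1}{w(t)}\leq \tfrac{1}{w(0)}+ct$. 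The right-hand side stays bounded as $t\uparrow t_0$, so $\tfrac{1}{w(t)}$ stays bounded, contradicting $w(t)\to w(t_0)=0$. Therefore $w=\dot{v}>0$ on all of $\R_+$, which shows $v$ is (strictly) increasing.

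Now that $\dot{v}(t)>0$ everywhere, the inequality $\tfrac{1}{w(t)}\leq \tfrac{1}{w(0)}+ct$ holds for all $t\geq 0$ by the same computation (no longer needing $t<t_0$), giving the lower bound
$$
\dot{v}(t)=w(t)\ \geq\ \frac{1}{\tfrac{1}{w(0)}+ct}\ =\ \frac{w(0)}{1+c\,w(0)\,t}\qquad\text{for all }t\geq 0.
$$
Integrating from $0$ to $t$ yields $v(t)-v(0)\geq \tfrac{1}{c}\log\bigl(1+c\,w(0)\,t\bigr)$, and since the right-hand side tends to $+\infty$ as $t\to+\infty$, we conclude $\lim_{t\to+\infty}v(t)=+\infty$, which completes the proof.

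The only delicate point is the first step: justifying that $\dot{v}$ cannot touch zero. Once the "$-1/w$" substitution is in place this is routine, but one must be careful that the substitution is only legitimate while $w>0$, which is exactly why the argument is phrased via the infimum $t_0$ of the (relatively closed, possibly empty) zero set and a contradiction at $t_0$. No regularity beyond $v\in\mathcal{C}^2$ is needed: $w\in\mathcal{C}^1$ suffices for all the elementary calculus used, and the hypothesis $\ddot v(t)\ge -c\dot v(t)^2$ is used only through its integrated form.
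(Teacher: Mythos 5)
Your proof is correct and follows essentially the same route as the paper: integrate the Riccati-type inequality $\ddot v \ge -c\dot v^2$ to get $\dot v(t)\ge \dot v(0)/(1+c\dot v(0)t)$, deduce positivity of $\dot v$, then integrate once more to obtain a logarithmic lower bound on $v$. Your treatment is a bit more careful than the paper's on the ``$\dot v$ never vanishes'' step (you make the contradiction at $t_0=\inf\{t:\dot v(t)=0\}$ explicit rather than relying on the paper's ``as long as $\dot v>0$ \dots this immediately implies'' phrasing), but the underlying idea and computation are identical.
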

\begin{proof} As long as $\dot{v}(t) >0$,  by integration of the  differential inequality $ \ddot{v}(t) +c \dot{v}(t)^2 \geq 0$, we obtain
$$
\dot{v}(t) \geq \frac{1}{ct + \frac{1}{\dot{v}(0)}}.
$$
This immediately implies that $\dot{v}(t) >0$ for all $t\geq 0$.
By integrating the above inequality, we obtain
$$
v(t) \geq v(0) + \int_0^t \frac{1}{c\tau + \frac{1}{\dot{v}(0)}} d\tau
$$
which implies
$\lim_{t\to +\infty} v(t) = + \infty.$
\end{proof}

\begin{proposition}\label{prop_counterex}
Suppose that $p \geq 3$.
Then, any solution trajectory of \eqref{closed_loop_1_counter_example}
which is not constant, passes an infinitely of times through the points $a$ and $b$.
\end{proposition}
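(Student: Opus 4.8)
The plan is to argue by contradiction: suppose a nonconstant trajectory $x(\cdot)$ does \emph{not} pass through $a$ and $b$ infinitely often, and derive that the trajectory is eventually constant, contradicting the nonconstancy. The energy $\mathcal E(t) = f(x(t)) - \min_{\R}f + \frac{1}{2}|\dot x(t)|^2$ is non-increasing (this is \eqref{closed_loop_2_b} with $\phi(u) = \frac{1}{p}|u|^p$), so it converges; since $f$ is coercive the trajectory is bounded, hence $\sup_t|\ddot x(t)| < +\infty$ by Proposition \ref{preliminary_est}, and then $\int_0^{+\infty}|\dot x(t)|^p\,dt < +\infty$ together with bounded acceleration forces $\dot x(t)\to 0$. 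Consequently $f(x(t))\to \ell := \lim_t\mathcal E(t) + \min f$, and by boundedness any cluster point $x_\infty$ of $x(t)$ satisfies $f(x_\infty) = \ell$ and (by Corollary \ref{attractor}, since $\phi$ is smooth here) $\nabla f(x_\infty) = 0$, so $x_\infty \in [a,b]$ and $\ell = \min f$. Thus $f(x(t))\to\min f$, i.e.\ $\mathrm{dist}(x(t),[a,b])\to 0$, so eventually $x(t)$ lies in a small neighborhood of $[a,b]$.

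The core of the argument is a local analysis near the flat bottom using Lemma \ref{lem_tech}. Suppose, toward a contradiction, that there is $T$ beyond which $x(t)$ never equals $a$ (the case of $b$ is symmetric); combined with $\mathrm{dist}(x(t),[a,b])\to 0$, either $x(t) < a$ for all large $t$, or $x(t) > a$ for all large $t$. In the region $x > a$ we are on the flat part or to the right, and the key point is that $f'$ has a controlled sign there. I would first treat the genuinely problematic subcase where $x(t)$ stays in a region where $\nabla f(x(t))$ behaves like $-c\,(\text{something})\cdot$ a quadratic in $\dot x$ — more precisely, rewrite \eqref{closed_loop_1_counter_example} as $\ddot x(t) = -|\dot x(t)|^{p-2}\dot x(t) - \nabla f(x(t))$ and look at an auxiliary scalar quantity $v$ (a suitable affine or monotone function of $x$, or directly $v = x$ when the trajectory is monotone) whose second derivative is bounded below by $-c\,\dot v(t)^2$. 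The hypothesis $p \geq 3$ is exactly what makes $|\dot x|^{p-2}|\dot x| = |\dot x|^{p-1} \leq C\dot x^2$ for $|\dot x|$ bounded (since $p - 1 \geq 2$), so the damping term is itself dominated by $c\,\dot x(t)^2$; and on the flat part $\nabla f \equiv 0$, so near the flat bottom $\ddot x(t) \geq -c\,\dot x(t)^2$ whenever $\dot x(t) > 0$. Then Lemma \ref{lem_tech} (applied from a time where the velocity is positive) forces $x(t)\to +\infty$, contradicting boundedness; symmetrically, a negative velocity episode with $\ddot x(t) \leq c\,\dot x(t)^2$ forces $x(t)\to -\infty$.

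To make this rigorous I would proceed as follows. Since the trajectory is nonconstant and $\mathcal E$ is strictly decreasing as long as $\dot x \neq 0$ (from \eqref{closed_loop_2_b}, as $\phi(\dot x) > 0$ when $\dot x \neq 0$), the velocity cannot vanish on a whole interval unless the trajectory is constant there; a short ODE-uniqueness argument (Theorem \ref{basic_exist_thm}) shows that if $\dot x(t_0) = 0$ and $\nabla f(x(t_0)) = 0$ then $x$ is constant, so a nonconstant trajectory that has entered the flat segment with zero velocity is impossible — hence on the flat segment the velocity is either strictly positive or strictly negative on each maximal subinterval. Now suppose $x(t)$ eventually stays $\geq a$ (or $\leq b$) without hitting $a$ (resp.\ $b$); pick a large $t_1$ with $x(t_1)$ close to $[a,b]$. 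If $\dot x(t_1) \neq 0$, apply the Lemma-type estimate above to get a contradiction with boundedness. If instead $\dot x$ changes sign infinitely often near $[a,b]$ but $x$ never reaches the endpoints, then $x(t)\to$ some point in the open interval $(a,b)$ and $\dot x(t)\to 0$; but then, choosing a time where $\dot x > 0$ (which exists since $x$ is nonconstant and $\dot x$ changes sign), the flat-bottom inequality $\ddot x \geq -c\dot x^2$ holds along that whole forward orbit as long as $x$ stays in $(a,b)$ — so again $x(t)\to +\infty$, contradiction. The main obstacle, and the step needing the most care, is handling the \emph{transition} regions just outside $[a,b]$ where $\nabla f \neq 0$: there one must verify that the restoring force $-\nabla f(x(t))$, which is \emph{helpful} (it points back toward $[a,b]$ when $x > b$ or $x < a$), does not spoil the one-sided differential inequality — concretely, one checks that whenever $\dot v > 0$ one still has $\ddot v \geq -c\dot v^2$ because the $\nabla f$ contribution has a favorable sign, or one confines the argument to the portion of the orbit lying in $[a,b]$ by a connectedness/continuity argument. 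Once the Lemma applies, the contradiction with the boundedness of $x(\cdot)$ is immediate, so the assumption fails and the trajectory must cross both $a$ and $b$ infinitely often.
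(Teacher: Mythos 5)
Your proposal has the right raw ingredients (Lemma \ref{lem_tech}, the estimate $|\dot x|^{p-1}\le |\dot x|^2$ for small velocity which is exactly where $p\ge 3$ enters, applying the lemma to $x$ and to $-x$), but it contains two genuine gaps, one of which is the crux of the argument.

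First, the appeal to Corollary \ref{attractor} in your opening paragraph is not legitimate here. That corollary rests on Proposition \ref{est_acceleration}, which requires the local strong convexity hypothesis $\langle\nabla^2\phi(u)\xi,\xi\rangle\ge\gamma\|\xi\|^2$ near $u=0$. For $\phi(u)=\frac1p|u|^p$ with $p>2$ one has $\phi''(u)=(p-1)|u|^{p-2}\to 0$ as $u\to 0$, so the hypothesis fails and you cannot invoke $\ddot x(t)\to 0$ or $\dist(x(t),[a,b])\to 0$ this way. (This part of your setup is in fact unnecessary, but it signals that the hypotheses were not checked.)

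The more serious gap is in the positive-velocity case. When $v=x$ and $\dot x>0$, the inequality $\ddot x\ge -c\dot x^2$ needed for Lemma \ref{lem_tech} reads $-|\dot x|^{p-1}-\nabla f(x)\ge -c\dot x^2$. For $x>b$ one has $-\nabla f(x)<0$, which pushes $\ddot x$ \emph{down}; your claim that ``the $\nabla f$ contribution has a favorable sign'' is correct only for the $\dot x<0$, $v=-x$ branch (there $\ddot x\le c\dot x^2$ must hold, and $-\nabla f(x)\le 0$ indeed helps). For the rightward-moving branch the restoring force works against you, and your proposal gives no argument to control it beyond a vague remark about ``confining the argument to $[a,b]$.'' The paper resolves this cleanly and it is the key step you are missing: if $\dot x(t)\ge 0$ for all large $t$, then $x$ is monotone and bounded, hence converges to some $x_\infty$; since $\dot x\to 0$, the constitutive equation forces $\ddot x(t)\to -\nabla f(x_\infty)$, and $\dot x\to 0$ then forces $\nabla f(x_\infty)=0$; finally, monotonicity of $\nabla f$ together with $a\le x(t)\le x_\infty$ gives $\nabla f(x(t))\equiv 0$, i.e.\ the orbit never leaves the flat segment, so the gradient term literally vanishes from the equation before the lemma is applied. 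Without this reduction your case (a) with $\dot x(t_1)>0$ is not justified, and your case (b) (sign changes of $\dot x$ while $x$ stays in $(a,b)$) cannot actually occur — once $x$ is on the flat part the equation is $\ddot x+|\dot x|^{p-2}\dot x=0$, whose velocity never changes sign — so that branch of the argument is vacuous rather than a real case. You should replace the first paragraph's attractor machinery and the ``helpful restoring force'' heuristic by the paper's monotone-convergence argument that pins the orbit to $[a,b]$ in the nonnegative-velocity case, and keep your $v=-x$ application of Lemma \ref{lem_tech} for the case where the velocity is negative somewhere.
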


\begin{proof}
According to Proposition \ref{preliminary_est},  the trajectory $x(\cdot)$ is bounded and satisfies
\begin{equation}\label{dx-conv-00}
 \lim_{t\to +\infty}  \| \dot{x}(t) \|=0 \,  \mbox{ and }\, \sup_{t \geq 0}  \| \ddot{x}(t) \|  < +\infty.
\end{equation}
Let us argue by contradiction, and assume that there exists some $t_1 >0$ such that $x(t) \geq a$ for all $t\geq t_1$.
We can distinguish two cases:

\smallskip

\noindent $\bullet$ \textbf{First case}: $\dot{x}(t) \geq 0$ for all $t \geq t_1$. Then, $t\mapsto x(t)$ is increasing and bounded, hence converges to some $x_{\infty}\in \R$. From the constitutive equation \eqref{closed_loop_1_counter_example}, $\lim_{t\to +\infty}  \| \dot{x}(t) \|=0$,  and the continuity of $\nabla f$, we deduce that
$\lim_{t\to +\infty}   \ddot{x}(t) = - \nabla f (x_{\infty})$.
Using again that $\lim_{t\to +\infty}  \| \dot{x}(t) \|=0$, we deduce that $\nabla f (x_{\infty})=0$.
Since $x \mapsto \nabla f( x)$ is an increasing function, and
 $\nabla f( x(t_1)) \geq 0$, we obtain that
$$
\nabla f( x (t))=0  \mbox{ for all } \, t\geq t_1.
$$
Returning to the constitutive equation \eqref{closed_loop_1_counter_example}, we get
$$
\ddot{x}(t) +   | \dot{x}(t) |^{p-2} \dot{x}(t)=0 \mbox{ for all } \, t\geq t_1.
$$
Since  $\lim_{t\to +\infty}  \| \dot{x}(t) \|=0$, and $p\geq 3$, we have for $t$ sufficiently large, say $t\geq t_2 \geq t_1$\\
 $
  | \dot{x}(t) |^{p-1} \leq | \dot{x}(t) |^2.
 $
Therefore, for  all $t\geq t_2$
$$
\ddot{x}(t) +  | \dot{x}(t) |^{2} \geq 0 .
$$
Since $x(\cdot)$ is not constant, there exists some $t_3 \geq t_2$ such that $\dot{x}(t_3) >0$.
According to Lemma \ref{lem_tech}, we have
$\lim_{t\to +\infty} x(t) = + \infty$,
a clear contradiction with the convergence of $x(t)$.

\smallskip

\noindent $\bullet$ \textbf{Second case}: there exists  $t_2 \geq t_1$ such that $\dot{x}(t_2) < 0$. From the constitutive equation \eqref{closed_loop_1_counter_example} and $x(t)\geq a$
we get, for  all $t\geq t_2$
$$
\ddot{x}(t) +   | \dot{x}(t) |^{p-2} \dot{x}(t)= - \nabla f(x(t)) \leq 0
$$
This implies, for $t$ large enough
$$
\ddot{x}(t) \leq    | \dot{x}(t) |^{2}.
$$
Let's apply Lemma \ref{lem_tech} to $-x(\cdot)$. Since  $-\dot{x}(t_2) > 0$, we obtain $\lim_{t\to +\infty} x(t) = - \infty$, a  contradiction.

A similar argument gives the same kind of result for $b$, namely, for every $t_1>0$ there exists $t>t_1$ such that $x(t)>b$.  Therefore, there is an infinite number of times such that trajectory takes the values $a$ and $b$, which means that it oscillates indefinitely between $a$ and $b$.
\end{proof}

By contrast, if the damping effect is sufficiently important, there is convergence. In our situation, this corresponds to the case $2\leq p<3$, as shown in the following proposition.

 \begin{proposition}\label{prop_conv}
Suppose that $2 \leq p < 3$.
Then, any solution trajectory of \eqref{closed_loop_1_counter_example} converges, and its limit belongs to $[a,b]$.
 \end{proposition}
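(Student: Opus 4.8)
The plan is to combine the general a priori bounds of Proposition~\ref{preliminary_est} with a one–dimensional analysis of how the trajectory crosses the flat bottom $[a,b]$, the assumption $p<3$ entering only in a single integral estimate. \textbf{Step 1 (a priori bounds).} Since $f$ is coercive and the global energy $\mathcal E(t)=f(x(t))-\min_{\cH}f+\demi|\dot x(t)|^2$ is non‑increasing (Proposition~\ref{preliminary_est}), $x(\cdot)$ is bounded; hence $\ddot x$ is bounded, $\int_0^{+\infty}|\dot x(t)|^p\,dt<+\infty$, and (as $\phi(u)=\tfrac1p|u|^p\ge \tfrac1p|u|^p$) $\dot x(t)\to 0$. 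Moreover $\mathcal E(t)\downarrow\ell\ge0$, so $f(x(t))\to\min_{\cH}f+\ell$. \textbf{Step 2 ($\ell=0$ and limit points in $[a,b]$).} If $\ell>0$, then by continuity of $f$ together with its convexity and coercivity the level set $\{f=\min_{\cH}f+\ell\}$ reduces to a pair $\{x_-,x_+\}$ with $x_-<a\le b<x_+$, and every limit point of $x(\cdot)$ belongs to it. A short continuity/intermediate–value argument then rules out that both $x_-$ and $x_+$ are limit points (a value strictly between them would be a limit point, of $f$–value $<\min_{\cH}f+\ell$), so $x(t)\to x_-$ or $x(t)\to x_+$; say $x(t)\to x_+$. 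Then $\nabla f(x(t))\to\nabla f(x_+)>0$ (strict, since $\nabla f$ is non‑decreasing, vanishes on $[a,b]$, and $f(x_+)>\min_{\cH}f$), while $|\dot x|^{p-2}\dot x\to0$, hence by \eqref{closed_loop_1_counter_example} $\ddot x(t)\to-\nabla f(x_+)<0$ and $\dot x(t)\to-\infty$, contradicting Step~1. Thus $\ell=0$: $f(x(t))\to\min_{\cH}f$ and every limit point of $x(\cdot)$ is a minimizer, i.e.\ lies in $[a,b]$.

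\textbf{Step 3 (reduction to a crossing analysis).} Suppose, for contradiction, that $x(\cdot)$ does not converge, and put $\alpha^*:=\liminf_{t\to+\infty}x(t)<\limsup_{t\to+\infty}x(t)=:\beta^*$; by Step~2, $a\le\alpha^*<\beta^*\le b$. Fix $c\in(\alpha^*,\beta^*)\subset(a,b)$. Then $x(t)<c$ and $x(t)>c$ both hold for arbitrarily large $t$, so $x(\cdot)$ crosses the level $c$ at an increasing sequence of times tending to $+\infty$, and infinitely many of these are \emph{upward} crossings. \textbf{Step 4 (velocity at upward crossings is bounded below).} Let $t_0$ be an upward crossing of $c$. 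Because $\nabla f(c)=0$ and, for $p\ge2$, the vector field $(x,v)\mapsto(v,-|v|^{p-2}v-\nabla f(x))$ governing \eqref{closed_loop_1_counter_example} is locally Lipschitz, the constant $x\equiv c$ is the unique solution through $(c,0)$ in phase space; since $x(\cdot)$ is non‑constant, $\dot x(t_0)>0$. Let $\sigma$ be the first time $t>t_0$ with $x(t)\in\{c,b\}$. On $[t_0,\sigma]$ we have $x(t)\in[c,b]$, hence $\nabla f(x(t))=0$, so $v:=\dot x$ solves the \emph{autonomous} equation $\dot v=-|v|^{p-2}v$; starting from $v(t_0)>0$ it stays positive (uniqueness, no finite‑time extinction for $p\ge2$) and decreasing, so $x$ is strictly increasing on $[t_0,\sigma]$, which forces $\sigma<+\infty$ and $x(\sigma)=b$. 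Integrating the explicit solution of $\dot v=-v^{p-1}$,
\[ b-c=\int_{t_0}^{\sigma}v(t)\,dt\ \le\ \int_{t_0}^{+\infty}v(t)\,dt\ =\ \frac{1}{3-p}\,\dot x(t_0)^{\,3-p}\]
(the integral being finite \emph{precisely} because $p<3$; the right‑hand side is $\dot x(t_0)$ when $p=2$). Hence $\dot x(t_0)\ge m:=\bigl((3-p)(b-c)\bigr)^{1/(3-p)}>0$.

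\textbf{Step 5 (conclusion).} Applying Step~4 at the infinitely many upward crossings, whose times tend to $+\infty$, yields $\dot x(t_0)\ge m>0$ along a sequence of times going to infinity, contradicting $\dot x(t)\to0$ from Step~1. Therefore $x(\cdot)$ converges, and its limit is a limit point, hence lies in $[a,b]$. The main obstacle — and the place where $p<3$ is genuinely used — is Step~4: one must recognize that inside the flat region the velocity obeys the autonomous scalar law $\dot v=-|v|^{p-2}v$, that for $p\ge2$ it cannot vanish in finite time, and that the total rightward distance it can then cover, $\tfrac1{3-p}\dot x(t_0)^{3-p}$, is finite exactly in the range $2\le p<3$. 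This is the precise mechanism separating the present statement from Proposition~\ref{prop_counterex}: for $p\ge3$ that distance is infinite, so the trajectory can traverse $[a,b]$ and overshoot again and again, whereas for $p<3$ a bounded‑below velocity would be needed to keep doing so, which is incompatible with $\dot x(t)\to0$.
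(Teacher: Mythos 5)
Your proof is correct and rests on the same key estimate as the paper's — the finite free-run distance $\tfrac{1}{3-p}\dot x(t_0)^{3-p}$ covered by a purely damped trajectory inside the flat bottom, finite exactly when $p<3$ — but extracts the contradiction differently. The paper fixes the midpoint $l$ of the $\omega$-limit interval $[c,d]$, picks times $t_n$ with $x(t_n)=l$, and shows that once $\dot x(t_n)$ is small the trajectory can never leave $[c,d]$, hence converges to $l$. You instead fix a level $c$ strictly between $\liminf_{t\to+\infty}x(t)$ and $\limsup_{t\to+\infty}x(t)$ and derive a uniform positive lower bound on $\dot x$ at every upward crossing of $c$, which then contradicts $\dot x(t)\to 0$. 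Both are valid; your route treats $p=2$ on the same footing as $2<p<3$ (the paper delegates $p=2$ to Alvarez's theorem and only integrates the power-law ODE for $p>2$), and your Step~2 gives a self-contained elementary proof that $\omega(x)\subset[a,b]$, where the paper leans on a ``classical argument'' and a reference.

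One inference to repair in Step~4: the phrase ``so $x$ is strictly increasing on $[t_0,\sigma]$, which forces $\sigma<+\infty$'' is not a valid deduction — strict monotonicity alone does not force $\sigma<+\infty$; if $\tfrac{1}{3-p}\dot x(t_0)^{3-p}<b-c$, the trajectory asymptotes to a point strictly below $b$ and $\sigma=+\infty$. What actually forces $\sigma<+\infty$ here is the standing non-convergence hypothesis from Step~3: were $\sigma=+\infty$, $x$ would be strictly increasing and bounded on $[t_0,+\infty)$, hence convergent, a contradiction. Once this is made explicit (or, equivalently, once one argues that $\tfrac{1}{3-p}\dot x(t_0)^{3-p}\geq b-c$ must hold, for otherwise $x$ converges), the rest of the argument is sound.
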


\begin{proof} When $ p = 2 $ the convergence follows from Alvarez's theorem for (HBF). So suppose $ 2 <p <3 $. We  sketch the main lines of the proof, whose details can be found in Haraux-Jendoubi \cite[Theorem 9.2.1]{HJ2}, which deals with a slightly more general situation.
Take $x$ a solution trajectory of \eqref{closed_loop_1_counter_example}, and denote by $\omega(x)$ its  limit set, that is the set of all its limit points as $t \to +\infty$ (limits of sequences $x(t_n)$ for $t_n \to +\infty$). By classical argument, this set is a connected subset of  $\{\nabla f =0\}$, that is  $\omega(x) \subset [a,b]$.
If $\omega(x)$ is reduced to a singleton, the proof is finished.
Let us therefore examine the complementary case
$$
\omega(x) = [c,d]\subset [a,b], \, \mbox{ with } \, c<d,
$$ and show that this leads to a contradiction.
Set $l:= \frac{1}{2}(c+d)$. Let us prove that
$\lim_{t\to +\infty} x(t)=l$, which gives $\omega(x) = \{l\}$,  a clear contradiction with $\omega(x) = [c,d]$, $c\neq d$.
First, since $l$ belongs to the interior of  $\omega(x) $, according to the intermediate value property, there exists a sequence $(t_n)$ with $t_n \to +\infty$ such that $ x(t_n)=l$. By continuity of $x$, and since $l$ belongs to the interior of $[c,d]$, for each $n\in \N$ there exists $\delta_n >0$ such that
$$
x(t) \in [c,d] \mbox{ for all } \,  t\in [t_n, t_n + \delta_n].
$$
Let's prove that for $n$ large enough we can take $\delta_n =+\infty$. Set
$$
\theta_n = \inf \left\lbrace t >t_n: \, x(t) \notin [c,d]   \right\rbrace,
$$
and assume $\theta_n <+\infty$. So for all $t \in [t_n, \theta_n]$ we have $\nabla f(x(t))=0$, and  \eqref{closed_loop_1_counter_example} reduces to
$$
\ddot{x}(t) +   | \dot{x}(t) |^{p-2} \dot{x}(t)=0 .
$$
After multiplying by $\dot{x}(t)$, we get for all $t \in [t_n, \theta_n]$
$$
\frac{d}{dt} | \dot{x}(t) |^2 + 2 | \dot{x}(t) |^{p}=0.
$$
After integration from $t_n$ to $t \in [t_n, \theta_n]$, we get for all $t \in [t_n, \theta_n]$
$$
| \dot{x}(t) | = \Big(  | \dot{x}(t_n) |^{-p+2} + (p-2) (t-t_n)\Big)^\frac{-1}{p-2}.
$$
After further integration, we get for all $t \in [t_n, \theta_n]$
\begin{align*}
|x(t) - l| & = |x(t) - x(t_n)| \leq \int_{t_n}^t | \dot{x}(s) | ds  \\
& = \frac{1}{p-3} \Big(  | \dot{x}(t_n) |^{-p+2} + (p-2) (t-t_n)\Big)^\frac{p-3}{p-2} +  \frac{1}{3-p}  | \dot{x}(t_n)|^{3-p}\\
& \leq \frac{1}{3-p}  | \dot{x}(t_n)|^{3-p},
\end{align*}
where, to obtain the last inequality, we use the hypothesis $2<p<3$.
Since $\dot{x}(t_n)$ converges to zero as $t_n \rightarrow + \infty$, for $n$ large enough we have that $\theta_n = +\infty$. This means that for $n$ large enough
$$x(t) \in [c,d] \, \,  \mbox{and} \, \,  |x(t) - l| \leq \frac{1}{3-p}  | \dot{x}(t_n)|^{3-p} \quad \forall t \in [t_n,+\infty),$$
which implies that $x(t)$ converges to $l$ as $t\to +\infty$.
\end{proof}

\vspace{2mm}

\noindent Figure \ref{fig:ex4} illustrates the attractor effect when the damping becomes too weak. Take $f:\R\to\R$
  $f(x)=\frac{1}{2}(x+1)^2 \, \mbox{ for } x\leq -1, \ f(x)=0 \, \mbox{ for }\, |x|<1, \, \mbox{ and } f(x)=\frac{1}{2}(x-1)^2\, \mbox{ for } x\geq 1.$

\begin{figure}[h]
	\centering
	{\includegraphics*[viewport=78 200  540 600,width=0.325\textwidth]{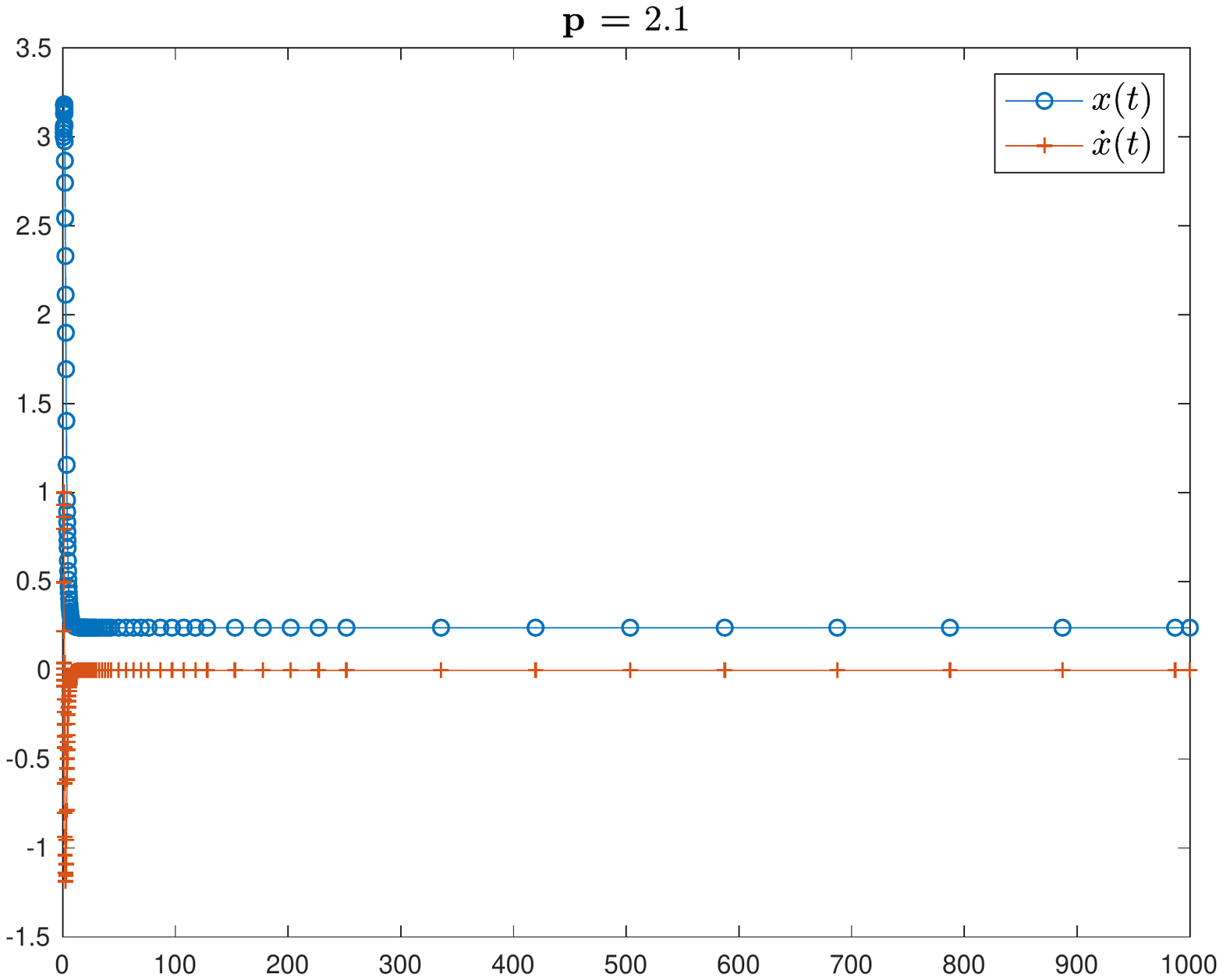}}\hspace{0.03cm}
	{\includegraphics*[viewport=78 200  540 600,width=0.325\textwidth]{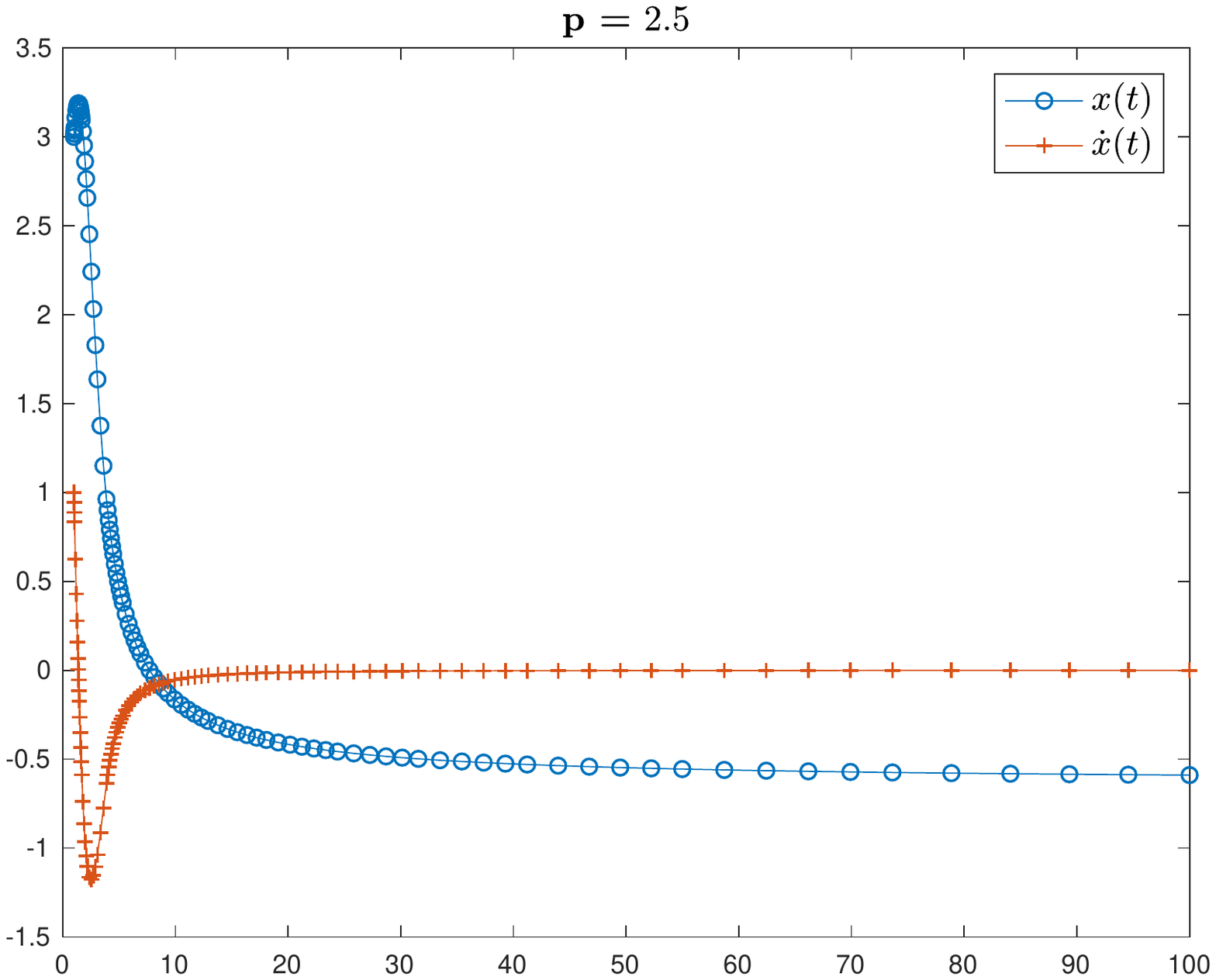}}\hspace{0.03cm}
	{\includegraphics*[viewport=78 200  540 600,width=0.325\textwidth]{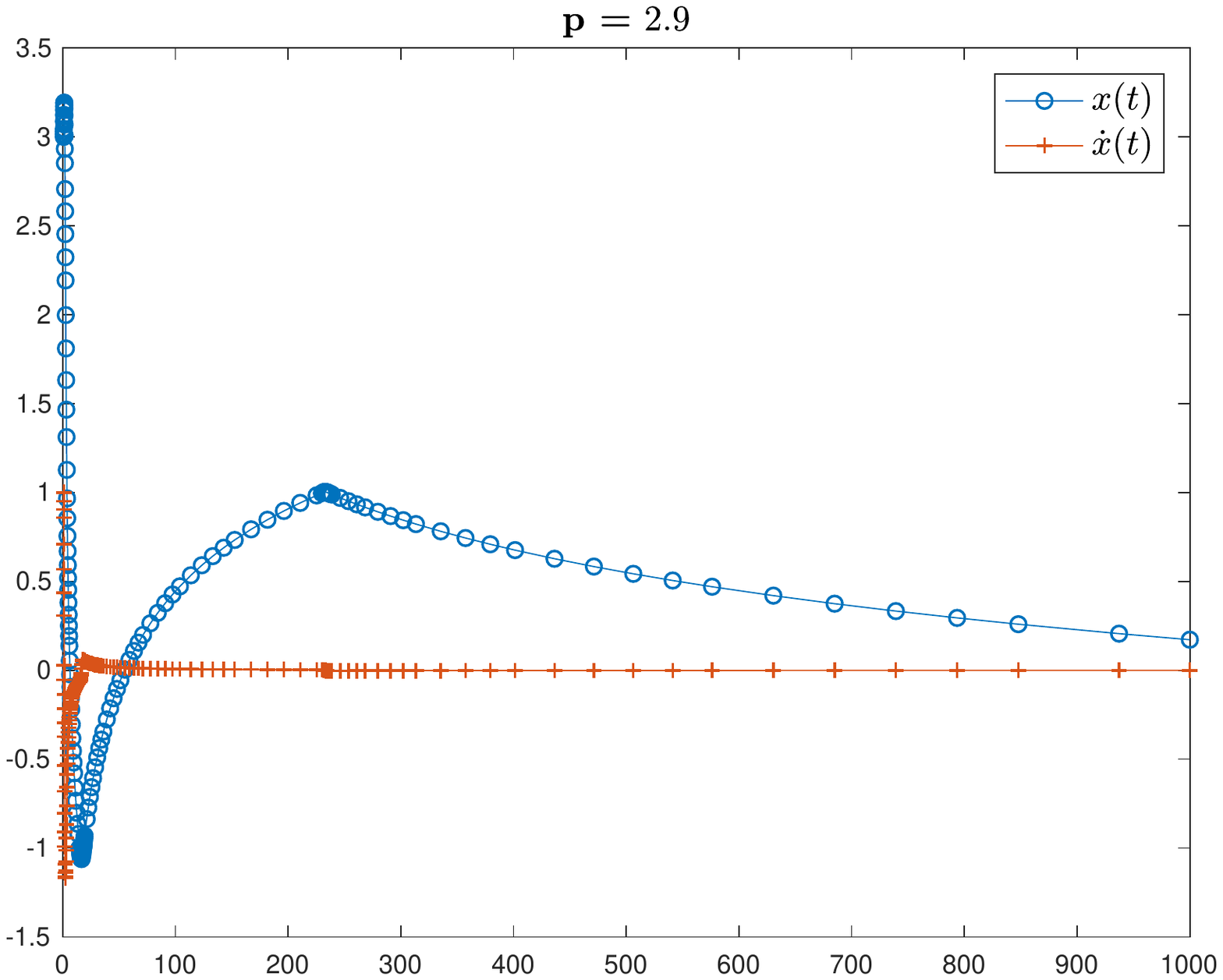}}\\
	\vspace{3mm}
	{\includegraphics*[viewport=78 200  540 600,width=0.325\textwidth]{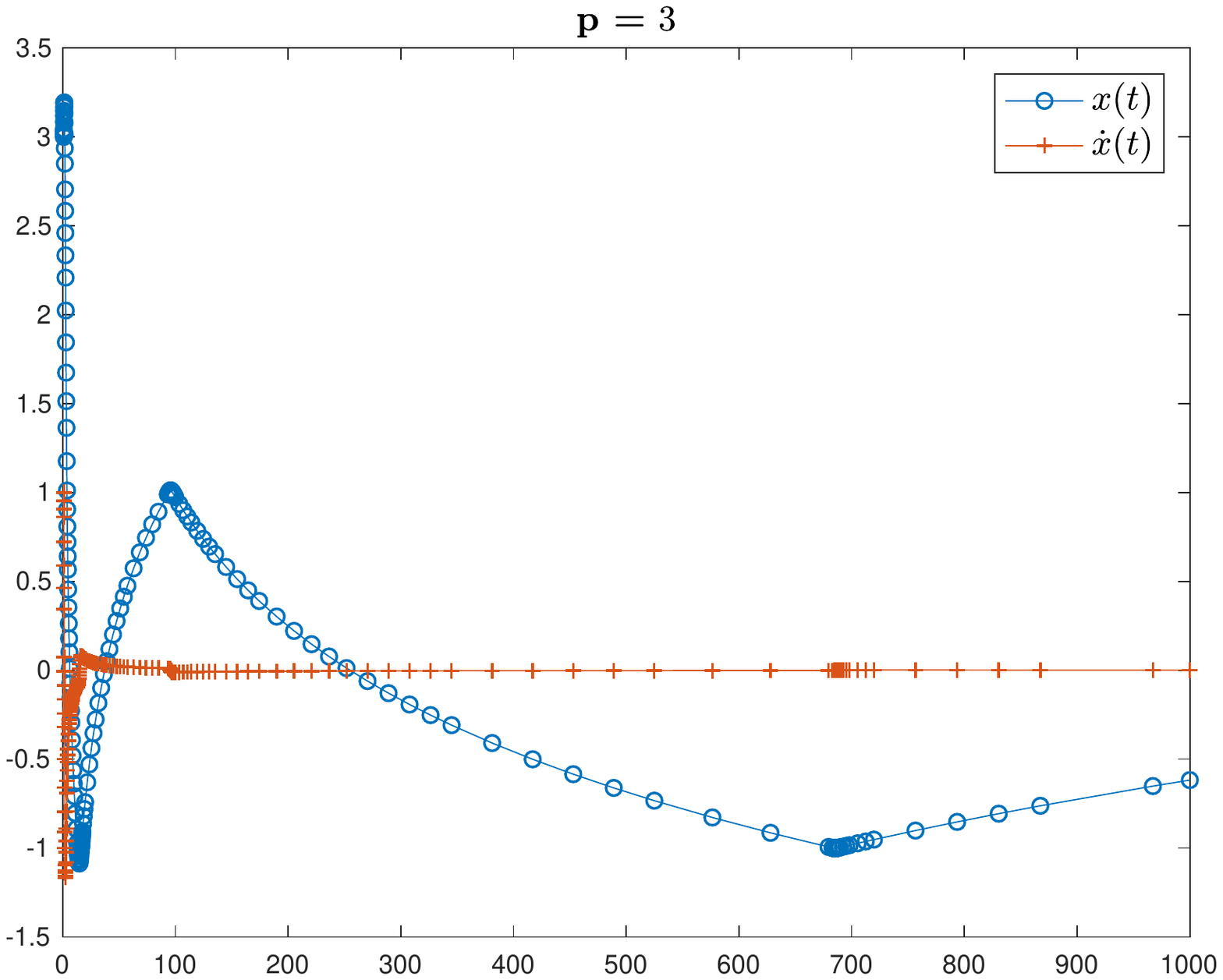}}\hspace{0.03cm}
	{\includegraphics*[viewport=78 200  540 600,width=0.325\textwidth]{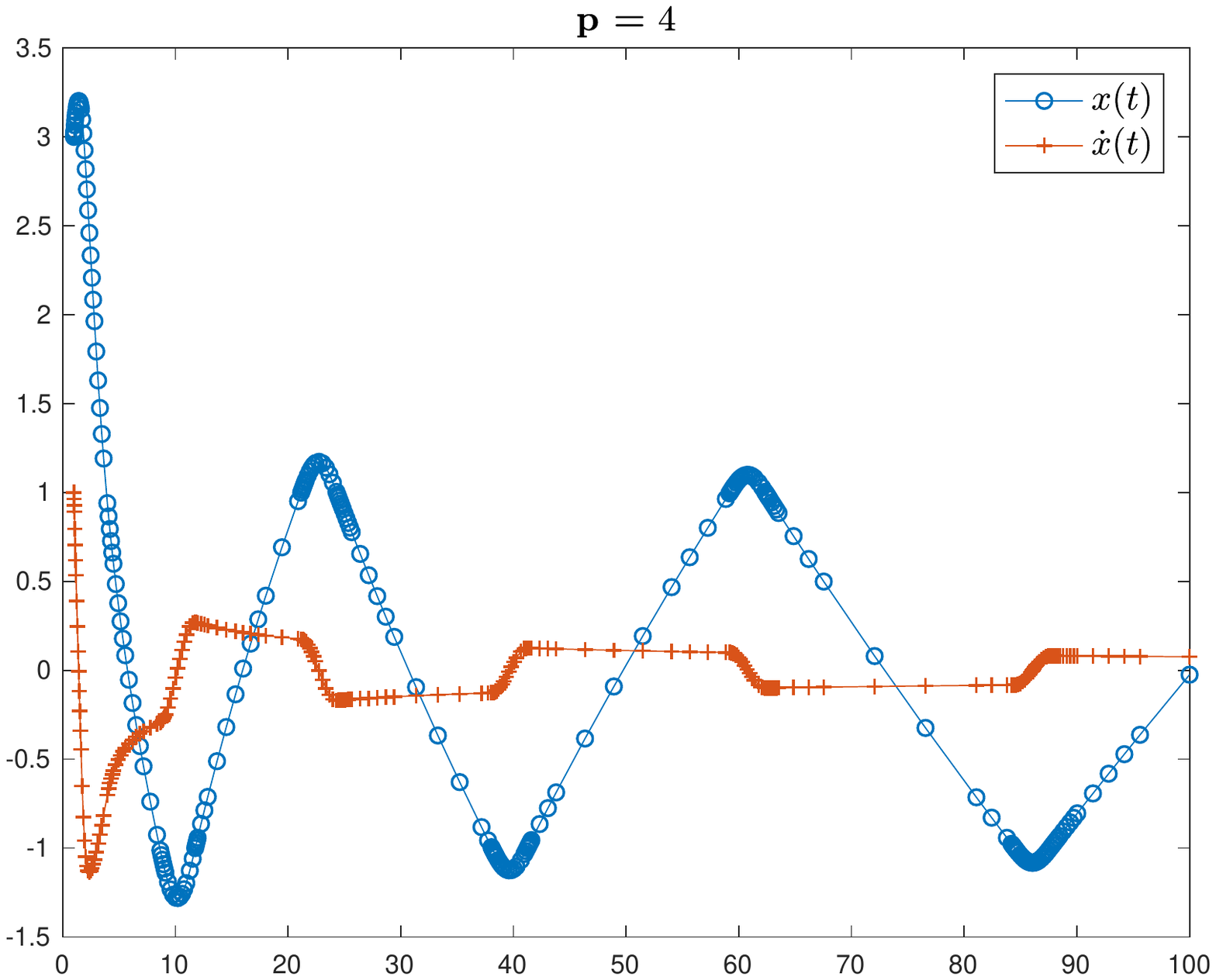}}\hspace{0.03cm}
	{\includegraphics*[viewport=78 200  540 600,width=0.325\textwidth]{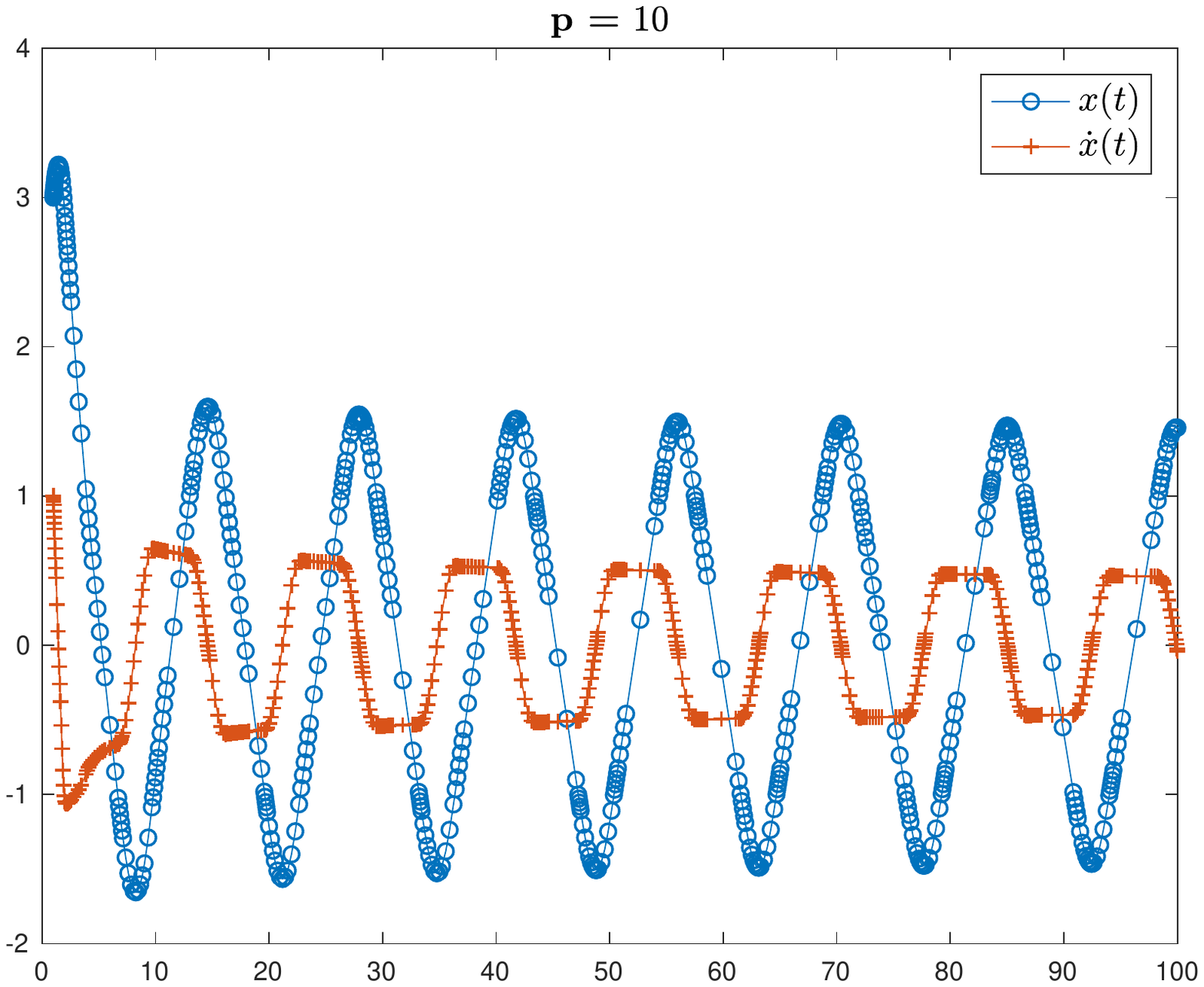}}
	\caption{\small Evolution of the trajectories $x(t)$ (blue) and $\dot x(t)$ (red) of the dynamical system \eqref{adige_v_one_dim} for  $p \geq 2$.
}
	\label{fig:ex4}	
\end{figure}

   For $p= 3$ there is a radical change in trajectory behavior. For $p\geq 3$, they do not converge asymptotically, and exhibit very oscillating behavior by steadily passing through the points $-1$ and $+1$. For $2 < p <3$, there is numerical evidence that the trajectories converge, which confirms the conclusion of Proposition \ref{prop_conv}.


\subsection{An explicit one-dimensional example}
Take $\mathcal H = \mathbb R$ and $f(x) = c|x|^{\gamma}$, where $c$ and $\gamma$ are positive parameters. Let us look for  solutions of
\begin{equation}\label{closed_loop_1_example}
 \ddot{x}(t) + r| \dot{x}(t) |^{p-2} \dot{x}(t)  +  \nabla f (x(t)) = 0,
\end{equation}
when $p>1$.
Precisely, we look for nonnegative solutions
of the form $x(t)= \displaystyle{\frac{1}{t^{\theta}}}$, with $\theta >0$. This means that the trajectory is not oscillating, it is a completely damped trajectory. We proceed by identification,  and determine the values of the parameters $c$, $\gamma$, $r, p$ and $\theta$ which provide such solutions. On the one hand,
$$
\ddot{x}(t) + r| \dot{x}(t) |^{p-2} \dot{x}(t) = \frac{\theta (\theta +1)}{t^{\theta +2}} - \frac{ \theta^{p-1} r}{t^{(\theta +1)(p-1)}}.
$$
On the other hand, $\nabla f (x)= c \gamma |x|^{\gamma -2}x $, which gives
$$
\nabla f (x(t))=   \frac{c \gamma}{t^{\theta (\gamma -1)}}.
$$
Thus, $x(t)= \frac{1}{t^{\theta}}$ is solution of \eqref{closed_loop_1_example} if, and only if,
$$
\frac{\theta (\theta +1)}{t^{\theta +2}} - \frac{ \theta^{p-1} r}{t^{(\theta +1)(p-1)}} + \frac{c \gamma }{t^{\theta (\gamma -1)}}=0.
$$
This is equivalent to solve the following system:
\begin{itemize}
	\item [$i)$] $\theta+ 2 = \theta (\gamma -1)$;
	\item [$ii)$] $\theta+ 2 = (\theta +1)(p-1)  $;
	\item [$iii)$] $c \gamma = \theta^{p-1} r -\theta (\theta +1)$;
	\item [$iv)$] $\theta >0$, $c>0$
\end{itemize}

\noindent Solving $i)$ and $ii)$ with respect to $\gamma$  gives $2<p<3, \,  \gamma >2$, and the following values of $\theta$ and $p$
$$
  \theta = \frac{2}{\gamma -2}, \quad  p = \frac{3\gamma -2}{\gamma}.
$$
Condition $iii)$ gives
$
c= \frac{\theta}{\gamma}\left(  \theta^{p-2} r - (\theta +1)  \right)
$
and the nonnegativity condition $iv)$ gives
$
r >  \frac{\theta + 1}{\theta^{p-2}}.
$
We have $\min f = 0$ and
$$
f (x(t)) =c \frac{1}{t^{\frac{2 \gamma}{\gamma -2 }}}=  \frac{c}{t^{\frac{2 }{p-2}}}.
$$
To summarize, we have shown that when taking $2<p<3$, and $f(x)= c|x|^{\frac{2}{3-p}} $ there exists a solution of
$$
\qquad \ddot{x}(t) +r \| \dot{x}(t) \|^p \dot{x}(t)  + \nabla  f (x(t)) = 0,
$$
of the form $x(t)= \frac{1}{t^{\frac{3-p}{p-2}}}$, for which
$$
f (x(t)) - \min f =  \frac{c}{t^{\frac{2 }{p -2}}}.
$$

As expected,
the speed of convergence of $f (x(t))$ to $0$ depends on the parameter $p$.  Therefore, without other geometric assumptions on $f$, for $2 <p<3$, we cannot expect a convergence rate better than
$\mathcal O \Big(\displaystyle{\frac{1}{t^{\frac{2 }{p -2}}}}\Big)$.
When $p \to 3$ from below, the function $f(x)= c|x|^{\frac{2}{3-p}} $ becomes very flat around its minimum (the origin) and the convergence rate of $x(t) =\frac{1}{t^{\frac{3-p}{p-2}}}$ to the origin becomes very slow.

\section{Damping via closed-loop velocity control, quasi-gradient and (KL)}\label{sec: basic_3}

In this section,  $\cH=\mathbb R^N$ is the finite dimensional Euclidean space. This will allow us to use the Kurdyka--Lojasiewicz property, which we briefly designate by (KL). Unless otherwise indicated, no convexity assumption is made  on the  function $f$ to minimize, which will be assumed to satisfy (KL). To obtain the convergence of the orbits, the need for a geometric assumption on the function $f$ to be minimized has long been recognized.
As for the steepest descent, without additional geometric assumptions on the potential function $f$, the bounded orbits of the heavy ball with friction dynamic (HBF) may not converge. Let's recall the result from
 \cite{AGR}   where it is shown  a function $f: \R^2 \to \R$ which
is $\mathcal C^1$, coercive, whose gradient is Lipschitz continuous on the bounded sets,
and such that the (HBF) system
admits an orbit $t \mapsto x(t)$ which does not converge as $t \to +\infty$. This example is an inertial version of the famous Palis--De Melo counterexample for the continuous steepest descent \cite{PDM}.
In this section, we examine an important situation where the convergence property is satisfied, namely when $f$ is assumed to satisfy the property (KL), a  geometric notion which is presented below.

\subsection{Some basic facts concerning (KL)}
A  function $G: \R^N \to \R$ satisfies the (KL) property if its values can be reparametrized in the neighborhood of each of its
critical point, so that the resulting function become sharp. This means the existence of a continuous, concave, increasing function $\theta$   such that for all $u$ in a slice of $G$
$$
\| \nabla (\theta \circ G) (u)\| \geq 1.
$$
The function $\theta$ captures the geometry of $G$ around its critical point, it is called a desingularizing function;  see \cite{BDLM}, \cite{ABS}, \cite{abrs1} for further results.
The tame functions satisfy the property (KL). 
 Tameness refers to an ubiquitous geometric property of functions and sets encountered in most finite
dimensional optimization problems.
Sets or functions are called tame when they can be described by a finite number of basic formulas/
inequalities/Boolean operations involving standard functions such as polynomial, exponential,
or max functions. Classical examples of tame objects are piecewise
linear objects (with finitely many pieces), or semi-algebraic objects. The  general notion covering these situations is the concept of $o$-minimal structure; see van den Dries \cite{vdDries}.
Tameness models nonsmoothness via the so-called stratification property of
tame sets/functions. It was this property which motivated the vocable of tame topology, “la topologie
modérée” according to Grothendieck.
All these aspects have been well documented in a series of recent papers devoted to nonconvex nonsmooth optimization; see  Ioffe \cite{Ioffe} ("An invitation to tame optimization"), Castera--Bolte--F\'evotte--Pauwels \cite{CBFP} for an application to deep learning, and references therein.  We refer to \cite{abrs1} for illustrations,  and examples within a general
optimization setting.

This property is particularly interesting in our context, because we work with an \textit{autonomous dynamical system}, in which case the (KL) theory applies to the quasi-gradient systems. This contrasts with the   accelerated gradient method of Nesterov which is based on a non-autonomous dynamic system, and for which we do not have a convergence theory based on the (KL) property.
Under this property, we will obtain convergence results with convergence rates linked  to the geometry of the data functions $f$ and $\phi$, via the desingularizing function.

\subsection{Quasi-gradient systems}
Let us first recall the main lines of the quasi-gradient approach to the inertial gradient systems as developed by B\'egout--Bolte--Jendoubi in \cite{BBJ}.
The geometric interpretation is simple: a vector field  $F$ is called quasi-gradient for a function $E$
if it has the same singular point as $E$  and if the angle  between the field $F$ and the gradient $\nabla E$ remains acute
and bounded away from $\pi/2$.  A precise  definition is given below. Of course, such systems have a behavior
which is very similar to those of gradient systems. We refer to Barta--Chill--Fa\v{s}angov\'a \cite{BCF},  \cite{BF}, \cite{CF}, Chergui \cite{Chergui},  Huang \cite{Huang} and the references therein for
further geometrical insights on the topic.

\begin{definition}\label{quasi_grad_def_0}
Let $\Gamma$ be a nonempty closed subset of $\mathbb R^N$, and let
$F : \mathbb R^N  \to \mathbb R^N$ be a locally Lipschitz continuous
mapping.  We say that the first-order system
\begin{equation}\label{quasi_grad_def_1}
\dot{z}(t) + F (z(t))=0,
\end{equation}
has a quasi-gradient structure for $E$ on $\Gamma$,  if there exist a differentiable function $E : \mathbb R^N \to \R$  and $\alpha >0$ such that the two following conditions are satisfied:

\medskip

(angle condition) \quad \qquad $\left\langle  \nabla E (z), F(z) \right\rangle
\geq \alpha \|\nabla E (z) \|  \| F(z) \| $  \, for all $z\in \Gamma$;

\medskip

(rest point equivalence) \quad $\mbox{\rm crit}E \cap \Gamma = F^{-1} (0) \cap \Gamma$.
\end{definition}

Based on this notion, we have the following convergence properties for the bounded trajectories of \eqref{quasi_grad_def_1}.
The following result is a localized version and straight adaptation of  \cite[Theorem 3.2]{BBJ}.

\begin{theorem} {\rm} \label{quasi_grad_thm_1}
Let $F: \mathbb R^N  \to \mathbb R^N$ be a locally Lipschitz continuous mapping.
 Let $z: [0, +\infty[ \to \mathbb R^N $ be a bounded solution trajectory of \eqref{quasi_grad_def_1}.
Take $R \geq \sup_{t\geq 0}     \|z(t)\|$.
Assume that     $F$  defines  a quasi-gradient vector field for $E_R$ on $\bar{B}(0,R)$, where  $E_R: \mathbb R^N \to \R$ is a differentiable function.
 Assume
further that the function  $E_R$ is {\rm(KL)}.
Then, the following properties are satisfied:

\medskip

$(i)$ \, $z(t) \to z_{\infty}$ as $t \to + \infty$, where $ z_{\infty}\in  F^{-1} (0)$;

\medskip

$(ii)$ \,  $\dot{z} \in L^1 (0, +\infty; \R^N )$, $\dot{z}(t) \to 0$ as $t \to + \infty$;

\medskip

$(iii)$
$ \| z(t) -  z_{\infty}\| \leq \frac{1}{\alpha_R} \theta \Big( E_R(z(t)) - E_R (z_{\infty})\Big)
$

\medskip

\noindent where $\theta$ is the desingularizing function for $E_R$ at  $z_{\infty}$, and $\alpha_R$ enters the angle condition of Definition  \ref{quasi_grad_def_0}.
\end{theorem}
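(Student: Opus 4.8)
The plan is to adapt the classical \L ojasiewicz-type argument of B\'egout--Bolte--Jendoubi \cite{BBJ} to the localized setting, using the angle condition as a substitute for the gradient-flow identity $\frac{d}{dt}E=-\|\nabla E\|^2$. First I would differentiate $t\mapsto E_R(z(t))$: since $z$ is $\cC^1$ (as $F$ is locally Lipschitz), the chain rule, the constitutive equation $\dot z=-F(z)$ and the angle condition on $\bar B(0,R)$ (which the trajectory never leaves, by the choice of $R$) give
$$
\frac{d}{dt}E_R(z(t))=\langle\nabla E_R(z(t)),\dot z(t)\rangle=-\langle\nabla E_R(z(t)),F(z(t))\rangle\leq -\alpha_R\|\nabla E_R(z(t))\|\,\|\dot z(t)\|\leq 0.
$$
Hence $E_R(z(\cdot))$ is non-increasing, and being continuous on the compact set $\bar B(0,R)$ it is bounded below, so $E_R(z(t))\to E_\infty$ as $t\to+\infty$. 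As $z$ is bounded in $\R^N$, its $\omega$-limit set $\omega(z)$ is nonempty, compact, connected and $E_R\equiv E_\infty$ on it. If $E_R(z(t_0))=E_\infty$ at some finite $t_0$, monotonicity forces $\frac{d}{dt}E_R(z(t))=0$ for all $t\geq t_0$, hence $\|\nabla E_R(z(t))\|\,\|F(z(t))\|=0$; together with the rest point equivalence $\crit E_R\cap\bar B(0,R)=F^{-1}(0)\cap\bar B(0,R)$ this yields $F(z(t))=0$ for $t\geq t_0$, so $z$ is eventually constant and all three claims are trivial. Thus I may assume $E_R(z(t))>E_\infty$ for all $t$.

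Next, fix a limit point $\bar z\in\omega(z)$; then $E_R(\bar z)=E_\infty$, so $\bar z\in\crit E_R$, and the (KL) property provides $\eta>0$, a neighborhood $U$ of $\bar z$ and a desingularizing function $\theta$ (continuous, concave, increasing, $\theta(0)=0$, $\theta'>0$ on $(0,\eta)$) such that $\theta'(E_R(z)-E_\infty)\|\nabla E_R(z)\|\geq 1$ whenever $z\in U$ and $E_\infty<E_R(z)<E_\infty+\eta$. Along the trajectory, as long as $z(t)\in U$ and $E_R(z(t))<E_\infty+\eta$, the chain rule and the differential inequality above give
$$
\frac{d}{dt}\bigl[\theta(E_R(z(t))-E_\infty)\bigr]=\theta'(E_R(z(t))-E_\infty)\,\frac{d}{dt}E_R(z(t))\leq -\alpha_R\,\theta'(E_R(z(t))-E_\infty)\|\nabla E_R(z(t))\|\,\|\dot x(t)\|\leq -\alpha_R\|\dot z(t)\|,
$$
so that integrating yields the length estimate $\alpha_R\int_s^t\|\dot z(\tau)\|\,d\tau\leq \theta(E_R(z(s))-E_\infty)$ for $s\leq t$, valid as long as the trajectory stays in $U$.

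The main obstacle, as in all such proofs, is the \emph{trapping argument}: one must show that once the trajectory enters a small ball around $\bar z$, it never leaves $U$ again. I would choose a sequence $t_n\to+\infty$ with $z(t_n)\to\bar z$; then also $E_R(z(t_n))\downarrow E_\infty$ and hence $\tfrac{1}{\alpha_R}\theta(E_R(z(t_n))-E_\infty)\to 0$, so for $n$ large we have $z(t_n)\in B(\bar z,\rho)\subset U$, $E_R(z(t_n))<E_\infty+\eta$, and $\tfrac{1}{\alpha_R}\theta(E_R(z(t_n))-E_\infty)<\rho-\|z(t_n)-\bar z\|$. If $T$ were the first exit time of $z$ from $B(\bar z,\rho)$ after $t_n$, the length estimate on $[t_n,T]$ would bound $\|z(T)-z(t_n)\|<\rho-\|z(t_n)-\bar z\|$, contradicting $z(T)\in\partial B(\bar z,\rho)$. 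Hence $z$ remains in $U$ for all $t\geq t_n$, so $\|\dot z\|\in L^1(t_n,+\infty)$ and therefore $\dot z\in L^1(0,+\infty;\R^N)$; the Cauchy criterion gives $z(t)\to z_\infty$ for some $z_\infty\in\omega(z)\subset F^{-1}(0)\cap\bar B(0,R)$, proving $(i)$, and $\dot z(t)=-F(z(t))\to -F(z_\infty)=0$ by continuity of $F$, proving $(ii)$. Finally, integrating the displayed differential inequality from $t$ to $+\infty$ and using $\theta(E_R(z(s))-E_\infty)\to\theta(0)=0$ together with $E_\infty=E_R(z_\infty)$,
$$
\|z(t)-z_\infty\|=\Bigl\|\int_t^{+\infty}\dot z(\tau)\,d\tau\Bigr\|\leq \int_t^{+\infty}\|\dot z(\tau)\|\,d\tau\leq \frac{1}{\alpha_R}\,\theta\bigl(E_R(z(t))-E_R(z_\infty)\bigr),
$$
which is exactly $(iii)$.
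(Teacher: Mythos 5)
Your proof is correct and reconstructs, in the localized setting, the classical argument of B\'egout--Bolte--Jendoubi that the paper explicitly cites without spelling out (the paper states only that this theorem ``is a localized version and straight adaptation of [BBJ, Theorem~3.2]''). The chain of steps — monotonicity of $E_R$ along the orbit via the angle condition, limit energy $E_\infty$, reduction to the nondegenerate case, KL reparametrization yielding the length estimate $\alpha_R\int_s^t\|\dot z\|\le\theta\bigl(E_R(z(s))-E_\infty\bigr)$, and the trapping argument to show the orbit stays in the KL neighborhood — is exactly the intended argument. Two small expository gaps: the inference ``$E_R(\bar z)=E_\infty$, so $\bar z\in\crit E_R$'' deserves a sentence (it follows from LaSalle: $\omega(z)$ is invariant, $E_R\equiv E_\infty$ there, so along orbits in $\omega(z)$ one has $\|\nabla E_R\|\,\|F\|=0$, hence by rest-point equivalence $\omega(z)\subset F^{-1}(0)=\crit E_R$ on $\bar B(0,R)$); and in the display deriving $\tfrac{d}{dt}\theta(E_R(z)-E_\infty)\le-\alpha_R\|\dot z\|$ there is a stray $\|\dot x(t)\|$ where $\|\dot z(t)\|$ is meant. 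Neither affects correctness.
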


\subsection{Convergence of systems with closed-loop velocity control under (KL)}\label{Sec-conv-KL}

Let us apply the above approach to  the inertial system with
closed-loop damping
\begin{equation}\label{first_order_cl_loop_quasi_0}
 \ddot x(t) +\nabla \phi(\dot x(t))+ \nabla f(x(t))=0,
\end{equation}
by writing it as a first-order system, via its Hamiltonian formulation.
We will assume that  $\nabla \phi$ is locally Lipschitz continuous. Indeed, we can reduce to this situation by using a regularization procedure based on the Moreau envelope.

\begin{theorem} \label{quasi_grad_thm_2}
Let $f: \R^N \to \R$ be a $\mathcal C^2$ function whose gradient is  Lipschitz continuous on the bounded sets, and such that $\inf_{\R^N} f >-\infty$.
Let  $ E_{\lambda}: \R^N \times \R^N\to \R $ be  defined by: for all $(x,u)\in  \R^N \times \R^N$
 $$
 E_{\lambda}(x,u):= \demi \|u\|^2 + f(x) +\lambda \left\langle \nabla f (x), u\right\rangle.
$$
Suppose that  the function  $ E_{\lambda}$ satisfies the {\rm (KL)} property.\\
Let $\phi : \R^N \to \R_+$ be a damping potential (see Definition \ref{def1}) which is differentiable, and  which  satisfies the following growth conditions:

\medskip

$(i)$ (local) there exists positive constants $\gamma$,  $\delta$, and  $\epsilon >0$ such that, for all $u$ in
$\R^N$ with $\|u\| \leq \epsilon$
$$\phi (u) \geq \gamma \|u\|^2   \mbox{ and }    \|\nabla \phi (u) \| \leq \delta \|u\|.$$

$(ii)$ (global) there exists  $p\geq 1$, $c>0$, such that for all $u$ in $\R^N$, $\phi (u) \geq c\|u\|^p$.

\medskip

\noindent  Let $x: [0, +\infty[ \to \mathbb R^N $ be a bounded solution trajectory of
$$
 \ddot x(t) +\nabla \phi(\dot x(t))+ \nabla f(x(t)) =0 .
$$
Then, the following properties are satisfied:

\medskip

$(i)$ \, $x(t) \to x_{\infty}$ as $t \to + \infty$, where $ x_{\infty}\in  \crit f$;

\medskip

$(ii)$ \,  $\dot{x} \in L^1 (0, +\infty; \R^N )$, $\dot{x}(t) \to 0$ as $t \to + \infty$;

\medskip

$(iii)$ \,  For $\lambda$ sufficently small, and $t$ sufficiently large
$$ \| x(t) -  x_{\infty}\| \leq \frac{1}{\alpha} \theta \Big(  E_{\lambda}(x(t),u(t)) - E_{\lambda} (x_{\infty},0)\Big)
$$

\smallskip

\noindent where $\theta$ is the desingularizing function for $ E_{\lambda}$ at  $(x_{\infty},0)$, and $\alpha$ enters the corresponding angle condition.
\end{theorem}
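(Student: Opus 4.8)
The plan is to cast the second-order dynamics into its Hamiltonian form and apply the quasi-gradient framework of Definition~\ref{quasi_grad_def_0} and Theorem~\ref{quasi_grad_thm_1}. Set $z(t)=(x(t),u(t))$ with $u=\dot x$, so that the equation becomes $\dot z(t)+F(z(t))=0$ with
$$
F(x,u)=\big(-u,\ \nabla\phi(u)+\nabla f(x)\big).
$$
Since $\nabla\phi$ is assumed locally Lipschitz (the general case reduces to this one via the Moreau-envelope regularization used in the proof of Theorem~\ref{basic_exist_thm}) and $\nabla f$ is $\mathcal C^1$, the field $F$ is locally Lipschitz continuous. The candidate energy is $E_\lambda(x,u)=\demi\|u\|^2+f(x)+\lambda\langle\nabla f(x),u\rangle$, which is $\mathcal C^1$ because $f\in\mathcal C^2$, with $\nabla E_\lambda(x,u)=\big(\nabla f(x)+\lambda\nabla^2 f(x)u,\ u+\lambda\nabla f(x)\big)$.

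First I would localize along the trajectory. Since $x$ is bounded and $\phi(u)\geq c\|u\|^p$, Proposition~\ref{preliminary_est} gives that $\ddot x$ is bounded and $\dot x(t)\to0$; fix $t_1$ with $\|\dot x(t)\|\leq\epsilon$ for $t\geq t_1$, let $R\geq\sup_{t\geq0}\|x(t)\|$, and $M:=\sup_{\|x\|\leq R}\|\nabla^2 f(x)\|<+\infty$. Work on the closed set $\Gamma:=\{(x,u):\|x\|\leq R,\ \|u\|\leq\epsilon\}$, which contains $(x(t),\dot x(t))$ for $t\geq t_1$. Expanding the scalar product and cancelling the terms $\pm\langle\nabla f(x),u\rangle$ gives, on $\Gamma$,
$$
\langle\nabla E_\lambda(x,u),F(x,u)\rangle=\langle\nabla\phi(u),u\rangle-\lambda\langle\nabla^2 f(x)u,u\rangle+\lambda\langle\nabla f(x),\nabla\phi(u)\rangle+\lambda\|\nabla f(x)\|^2.
$$
Using the convex subdifferential inequality $\langle\nabla\phi(u),u\rangle\geq\phi(u)\geq\gamma\|u\|^2$ together with the local bound $\|\nabla\phi(u)\|\leq\delta\|u\|$, and estimating the two cross terms by Young's inequality, one obtains for $\lambda$ small enough (depending only on $\gamma,\delta,M$) that $\langle\nabla E_\lambda,F\rangle\geq c_1(\|u\|^2+\|\nabla f(x)\|^2)$ on $\Gamma$ for some $c_1>0$. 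Since moreover both $\|F(x,u)\|$ and $\|\nabla E_\lambda(x,u)\|$ are bounded above on $\Gamma$ by a fixed multiple of $\|u\|+\|\nabla f(x)\|$, it follows that $\|\nabla E_\lambda\|\,\|F\|\leq C\langle\nabla E_\lambda,F\rangle$ on $\Gamma$, i.e. the angle condition holds with some $\alpha>0$.

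Next I would verify the rest-point equivalence. One has $F(x,u)=0$ iff $u=0$ and $\nabla f(x)=0$ (using $\nabla\phi(0)=0$, since $0=\min_{\R^N}\phi$). On the other hand $\nabla E_\lambda(x,u)=0$ forces $u=-\lambda\nabla f(x)$ and hence $(\id-\lambda^2\nabla^2 f(x))\nabla f(x)=0$; shrinking $\lambda$ so that $\lambda^2 M<1$ makes $\id-\lambda^2\nabla^2 f(x)$ invertible on $\Gamma$, so $\nabla f(x)=0$ and $u=0$. Thus $\crit E_\lambda\cap\Gamma=F^{-1}(0)\cap\Gamma$. With $E_\lambda$ assumed to satisfy (KL), Theorem~\ref{quasi_grad_thm_1} (in its localized form, applied to the time-shifted trajectory $t\mapsto z(t_1+t)$ with the role of the confining set played by $\Gamma$) yields $z(t)\to z_\infty=(x_\infty,0)$ with $\nabla f(x_\infty)=0$, $\dot z\in L^1(0,+\infty;\R^N\times\R^N)$ with $\dot z(t)\to0$, and $\|z(t)-z_\infty\|\leq\frac1\alpha\theta(E_\lambda(z(t))-E_\lambda(z_\infty))$. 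Conclusions $(i)$--$(iii)$ follow at once since $\|x(t)-x_\infty\|\leq\|z(t)-z_\infty\|$, $\|\dot x(t)\|\leq\|\dot z(t)\|$, and $E_\lambda(x_\infty,0)=f(x_\infty)$.

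I expect the main obstacle to be the quantitative verification of the angle condition: bundling the $\lambda$-dependent cross terms so as to keep the coefficient of $\|u\|^2$ positive while extracting a positive coefficient on $\|\nabla f(x)\|^2$, and then matching this quadratic lower bound against an upper bound for $\|\nabla E_\lambda\|\,\|F\|$ of the same homogeneity. A secondary point requiring care is choosing a single $\lambda>0$ small enough to serve simultaneously for the angle condition, the rest-point equivalence, and consistency with the standing (KL) hypothesis on $E_\lambda$; the reduction to locally Lipschitz $\nabla\phi$ via Moreau regularization, while routine given Theorem~\ref{basic_exist_thm}, also deserves a remark.
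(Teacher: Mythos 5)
Your proposal follows essentially the same route as the paper: same Hamiltonian reformulation $F(x,u)=(-u,\nabla\phi(u)+\nabla f(x))$, same Lyapunov function $E_\lambda$, same localization to a set $\Gamma=\bar B(0,R)\times\bar B(0,\epsilon)$ via Proposition~\ref{preliminary_est}, same expansion of $\langle\nabla E_\lambda,F\rangle$ with Young's inequality on the cross terms, same smallness condition on $\lambda$ to extract the quadratic lower bound, and then an appeal to Theorem~\ref{quasi_grad_thm_1}. The only cosmetic difference is in the rest-point equivalence: you invert $\mathrm{Id}-\lambda^2\nabla^2 f(x)$, whereas the paper simply reads it off from the already-established inequality $\langle\nabla E_\lambda,F\rangle\geq\alpha_0(\|u\|^2+\|\nabla f(x)\|^2)$ — either argument works, but the latter is shorter and is the natural by-product of the angle estimate you already have.
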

\begin{proof}
According to the preliminary estimates established in Proposition \ref{preliminary_est}, we have
$$
\int_0^{+\infty} \phi (\dot{x}(t)) dt < +\infty  \, \, \mbox{ and }\, \, \sup_{t\geq 0} \| \dot{x}(t)\| <+\infty.
$$
Combining the first above property with the global growth assumption on $\phi$, we deduce that there exists $p\geq 1$ such that
$$
\int_0^{+\infty} \| \dot{x}(t)\|^p dt < +\infty.
$$
According to the constitutive equation, we have
$$
\ddot x(t) = -\nabla \phi(\dot x(t))- \nabla f(x(t)).
$$
Since $x(\cdot)$ and $\dot{x}(\cdot)$ are  bounded, and  $\nabla f $ is locally Lipschitz continuous, we deduce that $\ddot x (\cdot)$
is also bounded. Classically, these properties imply that
$\dot{x}(t) \to 0$ as $t \to + \infty$.
Take $R \geq \sup_{t\geq 0}     \|x(t)\|$.
Therefore, for $t$ sufficiently large, we have that the trajectory
$t \mapsto (x(t),\dot{x}(t))$ in the phase space $\R^N \times  \R^N$
belongs to the closed set $\Gamma = \bar{B}(0,R) \times \bar{B}(0,\epsilon) $.

\noindent The  Hamiltonian formulation of  \eqref{first_order_cl_loop_quasi_0} gives the first-order differential system
\begin{equation}\label{first_order_cl_loop_quasi_1}
\dot z(t) + F(z(t)) =0,
\end{equation}
where $z(t)=(x(t), \dot x(t)) \in \R^N \times \R^N $, and
   $F: \R^N \times  \R^N \to \R^N \times \R^N$
is defined by
\begin{equation}\label{ham}
F(x,u)=(-u, \nabla \phi(u)+ \nabla f(x)).
\end{equation}
Following \cite{BBJ}, take $E_{\lambda} : \mathbb R^N \times \mathbb R^N\to \R$ defined by
\begin{equation}\label{e_l_quasi_grad_th2}
 E_{\lambda}(x,u):= \demi \|u\|^2 + f(x) +\lambda \left\langle \nabla f (x), u\right\rangle,
\end{equation}
where the parameter $\lambda >0$  will be adjusted  to verify the quasi-gradient property. We have
$$
\nabla  E_{\lambda}(x,u) = \Big( \nabla f (x)+ \lambda \nabla^2 f (x)u, \, u + \lambda \nabla f (x) \Big).
$$
Let us analyze the 	angle condition with $\Gamma = \bar{B}(0,R) \times \bar{B}(0,\epsilon) $. According to the above formulation of $F$ and $\nabla  E_{\lambda}$, we have
\begin{eqnarray*}
\left\langle  \nabla  E_{\lambda}(x,u), F(x,u) \right\rangle
&=& \left\langle  \Big( \nabla f (x)+ \lambda \nabla^2 f (x)u, \, u + \lambda \nabla f (x) \Big), \Big(-u, \nabla \phi(u)+ \nabla f(x)\Big) \right\rangle \\
&=&  -\left\langle  \nabla f (x)+ \lambda \nabla^2 f (x)u, \, u  \right\rangle + \left\langle  u+ \lambda \nabla f (x), \, \nabla \phi(u)+ \nabla f(x)  \right\rangle.
\end{eqnarray*}
After development and simplification, we get
\begin{eqnarray*}
\left\langle  \nabla  E_{\lambda}(x,u), F(x,u) \right\rangle
&=&  - \lambda \left\langle  \nabla^2 f (x)u, \, u  \right\rangle + \left\langle  u, \, \nabla \phi(u) \right\rangle
+ \lambda \left\langle  \nabla f (x) , \, \nabla \phi(u)  \right\rangle
+ \lambda  \|  \nabla f (x) \|^2.
\end{eqnarray*}
According to the local Lipschitz assumption on $\nabla f$,  let
$$
M:= \sup_{\|x\| \leq R} \|  \nabla^2 f (x)\| <+\infty.
$$
Since $\phi$ is a damping potential, we have
$$
\left\langle  u, \, \nabla \phi(u) \right\rangle \geq \phi (u).
$$
Combining the above results, we obtain
\begin{eqnarray}
\left\langle  \nabla  E_{\lambda}(x,u), F(x,u) \right\rangle
&\geq &  - \lambda M \|u\|^2 +  \phi(u)
+ \lambda \left\langle  \nabla f (x) , \, \nabla \phi(u)  \right\rangle
+ \lambda  \|  \nabla f (x) \|^2 \nonumber \\
&\geq &  - \lambda M \|u\|^2 +  \phi(u)
-\frac{\lambda}{2} \|  \nabla f (x) \|^2 - \frac{\lambda}{2} \| \nabla \phi(u)\|^2  + \lambda  \|  \nabla f (x) \|^2  \nonumber \\
&\geq &  - \lambda M \|u\|^2 +  \phi(u)
 - \frac{\lambda}{2} \| \nabla \phi(u)\|^2  + \frac{\lambda}{2}  \|  \nabla f (x) \|^2 .  \label{quasi_gradient_1}
\end{eqnarray}
At this point, we use the local growth assumption on $\phi$: for all $u$ in
$\R^N$ with $\|u\| \leq \epsilon$
\begin{equation} \label{quasi_gradient_11}
\phi (u) \geq \gamma \|u\|^2   \mbox{ and }    \|\nabla \phi (u) \| \leq \delta \|u\|.
\end{equation}
By
combining  \eqref{quasi_gradient_1} with \eqref{quasi_gradient_11},  we obtain
\begin{eqnarray}
\left\langle  \nabla  E_{\lambda}(x,u), F(x,u) \right\rangle
&\geq &   \Big( \gamma -\lambda M   - \frac{\lambda}{2} \delta^2  \Big)  \|u\|^2
  + \frac{\lambda}{2}  \|  \nabla f (x) \|^2  . \label{quasi_gradient_2}
\end{eqnarray}
Take $ \lambda $ small enough to satisfy
$$
\gamma > \lambda \left(M +  \frac{\delta^2}{2}  \right).
$$
Then,
\begin{eqnarray}
\left\langle  \nabla  E_{\lambda}(x,u), F(x,u) \right\rangle
&\geq &   \alpha_0 ( \|u\|^2
  +   \|  \nabla f (x) \|^2)   \label{quasi_gradient_3}
\end{eqnarray}
with $\alpha_0:= \min\{\gamma - \lambda \left(M +  \frac{\delta^2}{2}      \right)  , \frac{\lambda}{2}  \}$.\\
On the other hand,
\begin{eqnarray*}
\| \nabla  E_{\lambda}(x,u)\| &\leq& \sqrt{2} \left( 1+  \lambda \max\{1, M\}   \right)  ( \|u\|^2
  +   \|  \nabla f (x) \|^2)^{\demi} \\
\| F(x,u)\| &\leq& \sqrt{2}(1+ \delta )  ( \|u\|^2
  +   \|  \nabla f (x) \|^2)^{\demi} .
\end{eqnarray*}
Therefore
\begin{eqnarray}
\| \nabla  E_{\lambda}(x,u)\|  \| F(x,u) \|
&\leq &   2 \left( 1+  \lambda \max\{1, M\}   \right)  (1+\delta) ( \|u\|^2
  +   \|  \nabla f (x) \|^2).   \label{quasi_gradient_4}
\end{eqnarray}
As a consequence, the angle condition
$$
\left\langle  \nabla E (z), F(z) \right\rangle
\geq \alpha \|\nabla E (z) \|  \| F(z) \|
$$
is satisfied on $\Gamma$, by taking
$$
\alpha = \frac{\min\{\gamma - \lambda \left(M +  \frac{\delta^2}{2}      \right)  , \frac{\lambda}{2}  \}}{2 \left( 1+  \lambda \max\{1, M\}   \right)  (1+\delta) }.
$$
Finally, the rest point equivalence is a consequence of the inequality
\eqref{quasi_gradient_3}.
Then, apply the abstract theorem \ref{quasi_grad_thm_1} to obtain the claims $(i),(ii),(iii)$.
\end{proof}

\begin{remark}\label{rem-quasi-grad-kl}
$(i)$ The above result allows to consider nonlinear damping.
The main restrictive assumption is that the damping potential is assumed to be nearly quadratic close to the origin. It is not necessarily quadratic close to the origin but it has to satisfy:
for all $u$ in
$\R^N$ with $\|u\| \leq \epsilon$
$$\phi (u) \geq \gamma \|u\|^2   \mbox{ and }    \|\nabla \phi (u) \| \leq \delta \|u\|.$$
$(ii)$ According to \cite[Proposition 3.11]{BBJ}, a desingularizing function of $f$ (see \cite[Definition  2.1]{BBJ}) is desingularizing of $E_{\lambda }$ too, for all $\lambda\in[0,\lambda_1]$.

\smallskip

\noindent $(iii)$ In Section \ref{rem-quasi-hessian} and Section \ref{rem-quasi-hessian_both-quasi} we will develop similar analysis for related dynamical systems which involve Hessian-driven damping.

\smallskip

\noindent $(iv)$  Following \cite[Theorem 4.1 and Theorem 3.7]{BBJ}, a key condition which induces convergence rates for the
trajectories of a quasi-gradient system in the framework of the (KL) property is
\begin{equation}\label{bolte-cond-rate-quasi}\| \nabla  E_{\lambda}(x,u)\|\leq b\| F(x,u)\| \mbox{ for } (x,u)\in\Gamma
\end{equation}
with $b>0$. Let us check this condition in the setting of Theorem \ref{quasi_grad_thm_2}.
We have seen there that
$$\| \nabla  E_{\lambda}(x,u)\|^2\leq C_1 (\|u\|^2+\|\nabla f(x)\|^2),$$
with $C_1 > 0$.
Further, from the Cauchy--Schwarz inequality and the properties of $\phi$ we derive for $\sigma > 1$:
\begin{eqnarray*}\| F(x,u)\|^2 &=& \|u\|^2 + \|\nabla\phi(u)+\nabla f(x)\|^2\\
&\geq& \|u\|^2 + \|\nabla\phi(u)\|^2+\|\nabla f(x)\|^2 -2\|\nabla \phi(x)\|\|\nabla f(x)\|\\
&\geq& \|u\|^2 + \|\nabla\phi(u)\|^2+\|\nabla f(x)\|^2 -\sigma \|\nabla \phi(u)\|^2-\frac{1}{\sigma}\|\nabla f(x)\|^2\\
&\geq& \left(1-\frac{1}{\sigma}\right)\|\nabla f(x)\|^2+\left(1-(\sigma-1)\delta^2\right)\|u\|^2.
\end{eqnarray*}
From there, we can choose $\sigma > 1$ such that
$$\| F(x,u)\|^2 \geq C_3(\|u\|^2+\|\nabla f(x)\|^2),$$
with $C_3>0$. Condition \eqref{bolte-cond-rate-quasi} is now met  with $b=\sqrt{C_1/C_3}$.

\noindent As in \cite[Section 5]{BBJ}, explicit convergence rates can be derived from \cite[Theorem 4.1 and Theorem 3.7]{BBJ}, based on (3.19) and Remark 3.4(c) in \cite{BBJ}.
\end{remark}

\subsection{Application:  \textit{f} with polynomial growth}\label{Sec:KL_poly_growth}

This concerns the question raised at the end of Section \ref{strong-convex}. Additionally to the
hypotheses of Theorem \ref{quasi_grad_thm_2}, assume that $f$ is convex, $\argmin f\neq\emptyset$ and for each $x^*\in\argmin f$, there exists $\eta > 0$ such that
$$f(x) -\inf\nolimits_{\cH} f \geq c\dist (x, \argmin f)^r \quad \forall x\in B(x^*,\eta),$$
with $r\geq 1$ and $c>0$.

According to the proof of \cite[Corollary 5.5]{BBJ}, $f$ satisfies the \L{}ojasiewicz inequality with
desingularizing function (see \cite[Definition 2.1]{BBJ}) of the form $\varphi(s)=c's^{1/r}$, with
$c'>0$. According to \cite[Proposition 3.11]{BBJ}, this is a desingularizing function of $E_{\lambda }$ too, for all $\lambda\in[0,\lambda_1]$ (with $E_{\lambda}$ defined in Theorem \ref{quasi_grad_thm_2}).
In Remark \ref{rem-quasi-grad-kl}(iv) we have shown that \eqref{bolte-cond-rate-quasi} holds.
Relying now on \cite[Theorem 3.7]{BBJ} and \cite[Remark 3.4(c)]{BBJ}, we derive sublinear rates for $\|x(t)-x_{\infty}\|$ in case $r<2$ and exponential rate in case $r=2$.

\subsection{Application: fixed damping matrix}

We will recover and improve the results of Alvarez \cite[Theorem 2.6]{Alvarez}, which concerns the case $f$ convex, and the damped inertial equation
$$
\ddot{x}(t) + A (\dot{x}(t))   +  \nabla f (x(t)) = 0,
$$
where $A: \cH \to \cH$ is a  positive definite self-adjoint linear operator, which is possibly anisotropic (see also \cite{BotCse}).
While the proof of Theorem 2.6 in \cite{Alvarez} works in general Hilbert spaces, we have to restrict ourselves to finite-dimensional spaces, however,  we can drop the convexity assumption on $f$.
The following result is a direct consequence of Theorem \ref{quasi_grad_thm_2} applied to  $\phi : \R^N \rightarrow \R+, \, \phi(x) = \frac{1}{2}\langle Ax, x\rangle,$ where $A : \R^N \rightarrow \R^N$ is a positive definite self-adjoint linear operator.  We note that in this setting, $\phi$ is a damping potential, it is convex continuous and  attains its minimum at the origin. Moreover, the local and  global growth conditions are met.
Indeed, for all $u\in \R^N$,    we have $\phi (u)\geq \demi\lambda_{min} \|u\|^2  $ and $\|\nabla \phi (u)\| \leq \lambda_{max}\|u\|$, where $\lambda_{min}$ and $\lambda_{max}$ are  the smallest and  largest positive eigenvalues of $A$ respectively.

\begin{theorem} \label{quasi_grad_thm_3}
Let $f: \R^N \to \R$ be a $\mathcal C^2$ function whose gradient is  Lipschitz continuous on the bounded sets, and such that $\inf_{\R^N} f >-\infty$. Let $A : \R^N \rightarrow \R^N$ a positive definite self-adjoint linear operator.
Suppose that  the function  $ E_{\lambda}$ is {\rm(KL)} (which is true if $f$ is {\rm(KL)}) where
 $$
 E_{\lambda}(x,u):= \demi \|u\|^2 + f(x) +\lambda \left\langle \nabla f (x), u\right\rangle.
$$

\noindent  Let $x: [0, +\infty[ \to \mathbb R^N $ be a bounded solution trajectory of
$$
 \ddot x(t) + A(\dot x(t))+ \nabla f(x(t)) =0 .
$$
Then, the following properties are satisfied:

\medskip

$(i)$ \, $x(t) \to x_{\infty}$ as $t \to + \infty$, where $ x_{\infty}\in  \crit f$;

\medskip

$(ii)$ \,  $\dot{x} \in L^1 (0, +\infty; \R^N )$, $\dot{x}(t) \to 0$ as $t \to + \infty$;

\medskip

$(iii)$ \, $f(x(t)) \to f(x_{\infty}) \in f(\crit f)$ as $t \to + \infty$.
\end{theorem}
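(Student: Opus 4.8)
The plan is to read off this theorem as a direct specialization of Theorem \ref{quasi_grad_thm_2}, taking as damping potential the positive definite quadratic form $\phi(x)=\demi\langle Ax,x\rangle$. Since Theorem \ref{quasi_grad_thm_2} already does all the dynamical work (quasi-gradient structure, Hamiltonian reformulation, application of the abstract (KL) result), the only thing left to do is to check that this particular $\phi$ satisfies every structural requirement imposed there on a damping potential, after which the conclusions transfer essentially verbatim.

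First I would verify that $\phi$ is a damping potential in the sense of Definition \ref{def1}: because $A$ is positive definite and self-adjoint, $\phi$ is nonnegative, convex and continuous, with $\phi(0)=0=\min_{\R^N}\phi$, and property $(iii)$ of that definition is automatic in $\R^N$. Next I would record that $\phi\in\cC^\infty$ with $\nabla\phi(u)=Au$, a linear map, hence globally --- and in particular locally --- Lipschitz continuous, as needed. Finally, writing $\lambda_{min}>0$ and $\lambda_{max}>0$ for the smallest and largest eigenvalues of $A$, the bounds $\phi(u)\ge\demi\lambda_{min}\|u\|^2$ and $\|\nabla\phi(u)\|\le\lambda_{max}\|u\|$, valid for every $u\in\R^N$, furnish simultaneously the local growth hypothesis $(i)$ of Theorem \ref{quasi_grad_thm_2} (with $\gamma=\demi\lambda_{min}$, $\delta=\lambda_{max}$, and any $\epsilon>0$) and the global growth hypothesis $(ii)$ (with $p=2$, $c=\demi\lambda_{min}$).

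With these checks in place, Theorem \ref{quasi_grad_thm_2} --- whose standing assumptions on $f$ and whose (KL) hypothesis on $E_\lambda$ are exactly those assumed here, the latter holding in particular when $f$ is (KL) by Remark \ref{rem-quasi-grad-kl}$(ii)$ --- yields at once items $(i)$ and $(ii)$: the convergence $x(t)\to x_\infty\in\crit f$ and $\dot x(t)\to 0$ with $\dot x\in L^1(0,+\infty;\R^N)$. Item $(iii)$ is then immediate: $x_\infty$ being a critical point and $f$ continuous, $f(x(t))\to f(x_\infty)\in f(\crit f)$. I do not expect any real obstacle here; the proof is pure bookkeeping of hypotheses. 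The only point deserving a word of care is that Theorem \ref{quasi_grad_thm_2} states the growth conditions as a local condition near the origin plus a global lower bound, whereas for the quadratic $\phi$ both inequalities in fact hold globally, so the quasi-gradient constant $\alpha$ and the desingularizing function produced there are obtained with explicit constants; if convergence rates are also wanted, Remark \ref{rem-quasi-grad-kl}$(iv)$ applies without change.
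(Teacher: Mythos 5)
Your proposal is correct and matches the paper's own treatment: the paper derives Theorem \ref{quasi_grad_thm_3} precisely as a direct specialization of Theorem \ref{quasi_grad_thm_2} to $\phi(u)=\demi\langle Au,u\rangle$, checking that $\phi$ is a damping potential and that the local and global growth hypotheses are met via $\phi(u)\ge\demi\lambda_{\min}\|u\|^2$ and $\|\nabla\phi(u)\|\le\lambda_{\max}\|u\|$. Your additional observation that item $(iii)$ follows from the continuity of $f$ together with $x(t)\to x_\infty\in\crit f$ fills in the one step the paper leaves implicit.
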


Indeed, we can complete this result with the convergence rates which are linked to the desingularization function provided with the property (KL) for $ f $.

\section{Algorithmic results: an inertial type algorithm}
\label{sec: basic_4}

The following convergence result is a discrete algorithmic version of Theorem \ref{quasi_grad_thm_2}. To stay close to the continuous dynamic
we use a semi-implicit discretization: implicit with respect to the damping potential $\phi$, and explicit with respect to the function  $f$ to minimize. This will make it possible to make minimal assumptions about the damping potential $\phi$, and thus cover various situations.
Like the latter, the underlying structure of the proof is the
quasi-gradient property. We choose to give  direct proof, which is a bit simpler in this case.
Consider the following  temporal discretization of (ADIGE-V) with step size $h>0$
$$
\frac{1}{h^2}\left( x_{n+2}-2x_{n+1}+x_n\right) +\nabla\phi\left(\frac{1}{h}(x_{n+2}-x_{n+1})\right)+\nabla f(x_{n+1})=0.
$$
Equivalently,
\begin{equation}\label{alt-alg}
\frac{ x_{n+2}-x_{n+1}}{h} - \frac{ x_{n+1}-x_{n}}{h}   + h\nabla\phi\left(\frac{ x_{n+2}-x_{n+1}}{h}\right)+h\nabla f(x_{n+1})=0.
\end{equation}
This gives the  proximal-gradient algorithm
\begin{equation}\label{prox-grad-alg}
x_{n+2}= x_{n+1}+ h{\prox}_{h\phi}\left(\frac{1}{h}(x_{n+1}-x_n)-h\nabla f(x_{n+1})\right).
\end{equation}
Recall that, for any  $x \in \mathcal H= \R^N $, for any $\lambda >0$
$${\prox}_{\lambda\phi}(x):=\argmin_{\xi \in\mathcal H}\,\left\lbrace \lambda\phi(\xi)+\frac{1}{2} \| x-\xi \|^2  \right\rbrace.
$$
Let us start by establishing a decrease property for the sequence $ (W_n)_{n \in \N}$ of global energies
$$
W_n:= \demi \|u_n\|^2 + f(x_{n+1}),
$$
where we set $u_n= \frac{1}{h}\left(x_{n+1}-x_{n}\right)$ for the discrete velocity.
\begin{lemma}\label{basic-energy-lem-1}
Suppose that $f: \R^N \to \R$ is a differentiable function whose gradient is  $L$-Lipschitz continuous on a ball containing the iterates $(x_n)_{n \in \N}$,  and such that $\inf_{\R^N} f >-\infty$.
Suppose that $\phi (u) \geq \gamma \|u\|^2$ for all $u \in \R^N$.
Then, for all $n \in \N$
$$
W_{n+1}  -W_n + h\left( \gamma - \demi Lh  \right)\|u_{n+1}\|^2 \leq 0.
$$
As a consequence, under the assumption $\gamma > \demi Lh  $, we have
$$
\sum_{n \in \N} \|u_{n}\|^2 <+\infty \quad\mbox{and} \quad u_{n} \to 0 \;\mbox{ as } \; n \to + \infty.
$$
\end{lemma}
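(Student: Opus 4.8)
The plan is to run a discrete Lyapunov analysis on $W_n = \demi\|u_n\|^2 + f(x_{n+1})$, mirroring the continuous energy computation \eqref{closed_loop_2}--\eqref{closed_loop_2_b}. First I would rewrite \eqref{alt-alg} in terms of the discrete velocities $u_n = \frac1h(x_{n+1}-x_n)$ as
$$u_{n+1} - u_n + h\nabla\phi(u_{n+1}) + h\nabla f(x_{n+1}) = 0,$$
equivalently $u_n = u_{n+1} + h\bigl(\nabla\phi(u_{n+1}) + \nabla f(x_{n+1})\bigr)$.

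Next, for the kinetic part, I would expand $\|u_n\|^2$ from this relation and discard the nonnegative term $\tfrac{h^2}{2}\|\nabla\phi(u_{n+1})+\nabla f(x_{n+1})\|^2$ to get
$$\demi\|u_{n+1}\|^2 - \demi\|u_n\|^2 \le -h\langle u_{n+1},\nabla\phi(u_{n+1})\rangle - h\langle u_{n+1},\nabla f(x_{n+1})\rangle.$$
For the potential part, since $x_{n+2}-x_{n+1} = h\,u_{n+1}$ and $\nabla f$ is $L$-Lipschitz on a (convex) ball containing the iterates, the descent lemma gives
$$f(x_{n+2}) - f(x_{n+1}) \le h\langle\nabla f(x_{n+1}),u_{n+1}\rangle + \frac{Lh^2}{2}\|u_{n+1}\|^2.$$
Adding the two estimates, the gradient cross-terms $\pm h\langle u_{n+1},\nabla f(x_{n+1})\rangle$ cancel, leaving
$$W_{n+1} - W_n \le -h\langle u_{n+1},\nabla\phi(u_{n+1})\rangle + \frac{Lh^2}{2}\|u_{n+1}\|^2.$$

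To conclude, I would bound the remaining inner product from below exactly as in \eqref{energy_2}: since $\phi$ is convex with $\phi(0)=0$, the subgradient inequality at $u_{n+1}$ evaluated at the origin yields $\langle\nabla\phi(u_{n+1}),u_{n+1}\rangle \ge \phi(u_{n+1})$, which together with the hypothesis $\phi(u)\ge\gamma\|u\|^2$ gives $\langle\nabla\phi(u_{n+1}),u_{n+1}\rangle\ge\gamma\|u_{n+1}\|^2$. Substituting produces precisely $W_{n+1}-W_n + h\bigl(\gamma - \demi Lh\bigr)\|u_{n+1}\|^2 \le 0$. For the consequence, when $\gamma > \demi Lh$ the sequence $(W_n)$ is non-increasing and bounded below by $\inf_{\R^N}f$ (as $W_n \ge f(x_{n+1})$), hence convergent; telescoping the inequality over $n$ then gives $h\bigl(\gamma-\demi Lh\bigr)\sum_{n}\|u_{n}\|^2 \le W_0 - \inf_{\R^N}f < +\infty$, so $\sum_n\|u_n\|^2<+\infty$ and in particular $u_n\to 0$.

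There is no serious obstacle here: the argument is a direct discrete transcription of the continuous energy estimate. The only points needing a little care are justifying that the squared term may be dropped (it is manifestly nonnegative) and that the descent lemma applies on the ball where $\nabla f$ is Lipschitz — which it does, since that ball is convex and contains both $x_{n+1}$ and $x_{n+2}$, hence the segment joining them.
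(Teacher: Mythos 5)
Your proof is correct and follows essentially the same route as the paper's. The only cosmetic difference is in handling the kinetic term: you expand $\|u_n\|^2 = \|u_{n+1} + h(\nabla\phi(u_{n+1})+\nabla f(x_{n+1}))\|^2$ and discard the explicit square, while the paper invokes the inequality $\langle u_{n+1}-u_n, u_{n+1}\rangle \geq \demi\|u_{n+1}\|^2 - \demi\|u_n\|^2$; these are the same step, since the term you drop is exactly $\demi\|u_{n+1}-u_n\|^2$. Everything else — scalar product with $u_{n+1}$, the descent lemma, the convex subgradient inequality for $\phi$, the telescoping sum — matches the paper's argument.
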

\begin{proof}
Write the algorithm as
$$
u_{n+1} -u_{n} + h \nabla \phi(u_{n+1})  +h\nabla f(x_{n+1})=0.
$$
By taking the scalar product with $u_{n+1}$, we obtain
\begin{equation}\label{energy-est-2021-a}
\langle u_{n+1} -u_{n}, u_{n+1} \rangle +
 h\langle \nabla \phi(u_{n+1}) , u_{n+1} \rangle
  +  \langle \nabla f(x_{n+1}), x_{n+2}-x_{n+1} \rangle =0.
\end{equation}
We have
\begin{eqnarray*}
&&\langle u_{n+1} -u_{n}, u_{n+1} \rangle \geq  \demi \|u_{n+1}\|^2 -\demi \|u_n\|^2 \\
&& \langle \nabla \phi(u_{n+1}) , u_{n+1} \rangle \geq \phi (u_{n+1}) \geq \gamma \|u_{n+1}\|^2 \\
&& f(x_{n+2}) - f(x_{n+1}) \leq \langle \nabla f(x_{n+1}), x_{n+2}-x_{n+1} \rangle + \frac{Lh^2}{2} \| u_{n+1}\|^2,
\end{eqnarray*}
where the last inequality follows from the descent gradient lemma.
By combining the above inequalities with (\ref{energy-est-2021-a}), we obtain
\begin{equation}\label{energy-est-2021-b}
\demi \|u_{n+1}\|^2 -\demi \|u_n\|^2  +
 h\gamma \|u_{n+1}\|^2
  + f(x_{n+2}) - f(x_{n+1}) - \frac{Lh^2}{2} \| u_{n+1}\|^2 \leq 0.
\end{equation}
Equivalently,
$$
W_{n+1}  -W_n + h\left( \gamma - \demi Lh  \right)\|u_{n+1}\|^2 \leq 0.
$$
By summing the above inequalites, and since $f$ is minorized, we get
$$
h\left( \gamma - \demi Lh  \right) \sum_{n \geq 1} \|u_{n}\|^2 \leq W_0 - \inf f.
$$
Since $\gamma - \demi Lh  >0$, we get $ \sum_{n \in \N} \|u_{n}\|^2  <+\infty$, and hence $u_n \to 0$ as $n \to +\infty$.
\end{proof}
\begin{theorem} \label{quasi_grad_thm_algo}
Let $f: \R^N \to \R$ be a $\mathcal C^2$ function whose gradient is Lipschitz continuous on the bounded sets, and such that $\inf_{\R^N} f >-\infty$. Let $\phi : \R^N \to \R_+$ be a damping potential (see Definition \ref{def1}) which is differentiable.
 Let $(x_n)_{n\in \N} $ be a bounded sequence  generated by the algorithm
\begin{equation}\label{prox-grad-alg-b}
x_{n+2}= x_{n+1}+ h{\prox}_{h\phi}\left(\frac{1}{h}(x_{n+1}-x_n)-h\nabla f(x_{n+1})\right).
\end{equation}
We make the following assumptions on the data $f$, $\phi$, and $h$:\\
\smallskip
\noindent $\bullet$ (assumption on $f$): Suppose that  the function  $H$ satisfies the {\rm (KL)} property, where
 $H: \R^N \times \R^N\to \R $ is defined for all $(x,y)\in  \R^N \times \R^N$ by
 $$
H(x,y):= f(x) + \frac{1}{2h^2}\|x-y\|^2 .
$$
\noindent$\bullet$ (assumption on $\phi$):
Suppose that   $\phi$ satisfies the following growth conditions:
\medskip
there exist positive $\gamma$, $\varepsilon$ and $\delta$ such that $\phi (u) \geq \gamma \|u\|^2$ for all $u$ in $\R^N$, and $\|\nabla \phi(u)\|\leq \delta \|u\|$ for all $u$ with 
$\|u\|\leq \varepsilon$.\\
\medskip
\noindent $\bullet$ (assumption on $h$):
Suppose that the step size $h$ is taken small enough to satisfy
$$
0 < h <  \frac{2\gamma}{L},
$$
where
  $L$  is the Lipschitz constant of  $\nabla f$ on the ball centered at the origin and with radius $R= \sup_{n \in \N} \|x_n\|$.\\
\smallskip
\noindent Then, the following properties are satisfied:\\
\medskip
\quad $(i)$ \, $x_n \to x_{\infty}$ as $n \to + \infty$, where $ x_{\infty}\in  \crit f$;\\
\medskip
\quad $(ii)$ \,  $\sum_{n\in \N}  \| x_{n+1} -  x_n\| < +\infty  $.
%
\end{theorem}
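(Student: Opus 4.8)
The plan is to carry out the discrete counterpart of the quasi-gradient/(KL) scheme used for Theorem \ref{quasi_grad_thm_2}, with the augmented energy $H$ as Lyapunov function and the pair $z_n:=(x_{n+1},x_n)$ as the phase-space variable. Writing $u_n=\frac1h(x_{n+1}-x_n)$ for the discrete velocity, one has $W_n=\demi\|u_n\|^2+f(x_{n+1})=H(x_{n+1},x_n)=H(z_n)$. The first step is the energy estimate: since $0<h<2\gamma/L$, Lemma \ref{basic-energy-lem-1} applies and yields $W_{n+1}-W_n+h(\gamma-\demi Lh)\|u_{n+1}\|^2\le 0$; hence $(W_n)$ is non-increasing, $\sum_n\|u_n\|^2<+\infty$ and $u_n\to0$, and because $f$ is bounded below $(W_n)$ converges to some $\ell\in\R$. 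In particular $x_{n+1}-x_n=hu_n\to0$, so the (nonempty, compact) limit set $\omega$ of $(z_n)$ lies on the diagonal; passing to the limit along a convergent subsequence in the algorithm and using $u_n\to0$, continuity of $\nabla f$ and $\nabla\phi(0)=0$ shows that $\nabla f$ vanishes at every limit point, so $\omega\subset\crit H$ and, by the usual argument, $H\equiv\ell$ on $\omega$.

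The second step is the relative-error (subgradient) bound. Differentiating gives $\nabla_xH(z_n)=\nabla f(x_{n+1})+\frac1h u_n$ and $\nabla_yH(z_n)=-\frac1h u_n$. Rewriting the scheme as $\frac1h(u_{n+1}-u_n)+\nabla\phi(u_{n+1})+\nabla f(x_{n+1})=0$ and substituting $\nabla f(x_{n+1})$ into the first expression yields $\nabla_xH(z_n)=-\frac1h u_{n+1}+\frac2h u_n-\nabla\phi(u_{n+1})$. Since $u_n\to0$, for $n$ large we have $\|u_{n+1}\|\le\varepsilon$, so the local growth bound $\|\nabla\phi(u_{n+1})\|\le\delta\|u_{n+1}\|$ is in force, and we obtain a constant $b>0$ with $\|\nabla H(z_n)\|\le b(\|u_n\|+\|u_{n+1}\|)$ for all large $n$.

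The third step invokes the (KL) property of $H$. By the standard uniformization of the (KL) inequality on the compact set $\omega$ (on which $H$ is constant and critical), there are $\eta>0$ and a concave desingularizing function $\varphi$ such that $\varphi'(H(z)-\ell)\,\|\nabla H(z)\|\ge1$ for $z$ near $\omega$ with $\ell<H(z)<\ell+\eta$. If $W_N=\ell$ for some finite $N$, the decrease inequality forces $u_n=0$ for all $n\ge N$ and the conclusion is trivial, so assume $W_n>\ell$ for all $n$. For $n$ large, $z_n$ lies near $\omega$ with $\ell<W_n<\ell+\eta$, and concavity together with the two previous steps gives
\begin{equation*}
\varphi(W_n-\ell)-\varphi(W_{n+1}-\ell)\ \ge\ \varphi'(W_n-\ell)\,(W_n-W_{n+1})\ \ge\ \frac{W_n-W_{n+1}}{\|\nabla H(z_n)\|}\ \ge\ \frac{h(\gamma-\demi Lh)\,\|u_{n+1}\|^2}{b\,(\|u_n\|+\|u_{n+1}\|)} .
\end{equation*}
Setting $\Delta_n:=\varphi(W_n-\ell)-\varphi(W_{n+1}-\ell)\ge0$, this reads $\|u_{n+1}\|^2\le\kappa\,\Delta_n(\|u_n\|+\|u_{n+1}\|)$ with $\kappa=b/(h(\gamma-\demi Lh))$; applying $\sqrt{pq}\le\demi(p+q)$ yields $\|u_{n+1}\|\le\frac14(\|u_n\|+\|u_{n+1}\|)+2\kappa\Delta_n$, hence $\|u_{n+1}\|\le\frac13\|u_n\|+\frac83\kappa\Delta_n$. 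Summing from a large index $N$ and telescoping the $\Delta_n$ (whose sum is bounded by $\varphi(W_N-\ell)<+\infty$) gives $\sum_n\|u_n\|<+\infty$, i.e. $\sum_n\|x_{n+1}-x_n\|=h\sum_n\|u_n\|<+\infty$, which is $(ii)$. Consequently $(x_n)$ is a Cauchy sequence, so $x_n\to x_\infty$, and passing to the limit in the algorithm as in the first step shows $\nabla f(x_\infty)=0$, which is $(i)$.

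I expect the main obstacle to be the index bookkeeping in the relative-error bound — the energy $W_n$ naturally involves $f(x_{n+1})$, while $\nabla H(z_n)$ picks up both $u_n$ and $u_{n+1}$, and the sufficient-decrease inequality only controls $\|u_{n+1}\|^2$ — together with the careful verification that the uniformized (KL) lemma applies here (compactness of $\omega$, $H$ constant and critical on it, and the iterates eventually entering the prescribed neighbourhood). Once these are in place, the summability computation via the concave desingularizing function is routine, exactly parallel to Theorem \ref{quasi_grad_thm_2} and to the arguments of \cite{BBJ}.
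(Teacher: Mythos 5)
Your proposal follows the same route as the paper's own proof: it relies on the sufficient-decrease inequality for $W_n=H(x_{n+1},x_n)$ from Lemma \ref{basic-energy-lem-1}, the observation that the limit set lies on the diagonal inside $\crit H$ with $H$ constant there, the relative-error bound $\|\nabla H(x_{n+1},x_n)\|\lesssim\|u_n\|+\|u_{n+1}\|$ obtained by substituting the scheme into $\nabla_x H$, the uniformized (KL) inequality (Bolte--Sabach--Teboulle's Lemma 6, whose connectedness hypothesis the paper secures via $x_{n+1}-x_n\to0$), and the telescoping estimate $\|u_{n+1}\|\le\tfrac13\|u_n\|+\mathrm{const}\cdot\Delta_n$, the very same sequence of steps the paper carries out. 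The only deviations are cosmetic (a harmless constant-factor slack in your $\sqrt{pq}$ estimate, and you leave implicit that connectedness of the limit set is what makes the uniformized (KL) lemma applicable).
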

\begin{proof} By assumption, the sequence $(x_n)_{n\in\N}$ is bounded. From Lemma \ref{basic-energy-lem-1} we have that $(u_n)_{n\in\N}$ tends to zero where $u_n= \frac{1}{h}\left(x_{n+1}-x_{n}\right)$. In addition,
$$
W_{n+1}  -W_n + h\left( \gamma - \demi Lh  \right)\|u_{n+1}\|^2 \leq 0
$$
for all $n \in \N$, where
$$
W_n:= \demi \|u_n\|^2 + f(x_{n+1}).
$$
Equivalently, by setting
$$
H(x,y)= f(x) + \frac{1}{2h^2}\|x-y\|^2,
$$
 we have for all $n\in \N$
\begin{equation}\label{alt-decr-H-bb}
 H(x_{n+2},x_{n+1}) + C\|x_{n+2}-x_{n+1}\|^2 \leq H(x_{n+1},x_{n}),
\end{equation}
where  $C=\frac{1}{h}\left( \gamma - \demi Lh  \right) > 0$.
The rest of the proof is classical in the framework of the KL theory; we refer the reader to \cite{BotCseLaEUR,ipiano} to similar techniques relying on the above decreasing property.
Relation \eqref{alt-decr-H-bb} implies that there exists
\begin{equation}\label{e-lim-H} \lim_{n\to+\infty}H(x_{n+1},x_n)\in\R.
\end{equation}
Further, let us denote by $\omega ((x_n)_{n \in \N})$ the set of cluster points of the sequence $(x_n)_{n\in \N}$, and by
$\crit f=\{x \in \R^N : \nabla f(x)=0\}$ the set of critical points of $f$.  \\
We easily derive 
$$\crit H=\{(x,x)\in\R^N\times\R^N:x\in \crit f\} $$
and notice that $\omega((x_n)_{n \in \N})\subseteq \crit f$, thus $\omega((x_{n+1},x_n)_{n \in \N})\subseteq \crit H.$ From \eqref{e-lim-H} one can easily conclude that $H$ is constant on $\omega((x_{n+1},x_n))$. Indeed, for $x^*\in\omega((x_n)_{n \in \N})$, we have from above (and the definition of $H$) that
\begin{equation}\label{lim_H-f}\lim_{n\to+\infty}H(x_{n+1},x_n)=f(x^*)=H(x^*,x^*).
\end{equation}
\noindent Assume now that $H$ satisfies the (KL) property with corresponding desingularizing function $\theta$.
We consider two cases.\\
I. There exists $\overline{n} \geq 0$ such that $H(x_{\overline{n}+1},x_{\overline{n}})=H(x^*,x^*)$.
From the decreasing property \eqref{alt-decr-H-bb} we obtain that $(x_n)_{n\geq \overline{n}}$ is a constant sequence and the conclusion follows.\\
II.  For all $n \geq 0$ we have $H(x_{n+1},x_{n}) > H(x^*,x^*)$. Since $\theta$ is concave and $\theta'>0$,
we derive from \eqref{alt-decr-H-bb} that there exists $n' \geq 0$ such that for all $n\geq n'$ it holds
\begin{eqnarray} \Delta_{n,n+1}: &=& \theta\left(H(x_{n+1},x_n)-H(x^*,x^*)\right) -
\theta\left(H(x_{n+2},x_{n+1})-H(x^*,x^*)\right)
\nonumber \\
&\geq & \theta'\left(H(x_{n+1},x_n)-H(x^*,x^*)\right)\cdot \left(H(x_{n+1},x_n)-H(x_{n+2},x_{n+1})\right)\nonumber \\
&\geq& \theta'\left(H(x_{n+1},x_n)-H(x^*,x^*)\right)\cdot C\cdot \|x_{n+2}-x_{n+1}\|^2 \nonumber\\
&\geq & \frac{C \|x_{n+2}-x_{n+1}\|^2}{\|\nabla H(x_{n+1},x_n)\|}, \label{last_ineq}
\end{eqnarray}
where the last inequality \eqref{last_ineq} follows from the uniformized (KL) property (\cite[Lemma 6]{BST}) applied to  the nonempty compact and connected set $\Omega=\omega((x_{n+1},x_n)_{n \in \N})$ (according to \cite[Remark 5]{BST} the connectedness of this set is generic for sequences satisfying $\lim_{n\to+\infty}(x_{n+1}-x_n)=0$). \\
Further, since $\nabla H(x,y)=(\nabla f(x)+\frac{1}{h^2}(x-y),\frac{1}{h^2}(y-x))$, we derive from \eqref{alt-alg}, 
the fact that $\lim_{n\to+\infty}(x_{n+1}-x_n)=0$ and the properties of $\phi$ that there exists $C_2>0$ such that
$$\|\nabla H(x_{n+1},x_n)\|\leq C_2(\|x_{n+2}-x_{n+1}\|+\|x_{n+1}-x_{n}\|) \ \quad \forall n \in \N.$$
Hence there exist $C_3>0$ and $n^{''} \in \N$ such that for all $n\geq n^{''}$
$$\frac{a_{n+1}^2}{a_{n+1}+a_n}\leq C_3\Delta_{n,n+1},$$
where $a_n:=\|x_{n+1}-x_n\|$. From here we get that for all $n\geq n^{''}$
$$a_{n+1}= \sqrt{C_3\Delta_{n,n+1}(a_{n+1}+a_n)}\leq \frac{a_{n+1}+a_n}{4} + C_3\Delta_{n,n+1},$$
which implies 
$$a_{n+1}\leq \frac{1}{3}a_n+\frac{4}{3}C_3\Delta_{n,n+1}.$$
Summing up the last inequality we obtain $\sum_{n\in \N}  \| x_{n+1} -  x_n\| < +\infty  $. This classically implies that $(x_n)_{n\in \N}$ is a Cauchy sequence in $\R^N$, hence it converges to a critical point of $f$.  
\end{proof}
\begin{remark}
$(i)$ If $f: \R^N \to \R$ is a $\mathcal C^2$ coercive function whose gradient is Lipschitz continuous on $\R^N$, then the boundedness of the sequence $(x_n)_{n \in \N}$ follows from \eqref{alt-decr-H-bb}. The function $H$ is a KL function if $f$ is, for instance, semialgebraic; we refer to \cite{LiPongFCM} for other results related to the preservation of the KL property under addition.\\
$(ii)$ For a general damping function $\phi$ we obtain at the limit
$$
\nabla f(x_{\infty}) + \partial \phi (0) \ni 0.
$$
When $\phi$ is differentiable at the origin, it attains its minimum at this point, and hence  $\nabla \phi (0)=0$. So,  we get $\nabla f(x_{\infty})=0$, \ie $x_{\infty}$ is a critical point of $f$, that's the situation considered above. In the case of dry friction, for example
$\phi(u)=r \|u\|$, we get $
\|\nabla f(x_{\infty}) \| \leq r
$
which gives  an approximate critical point, see \cite{AA0,AA,AAC}. \\
$(iii)$ The above result has been given as an illustration of our results,
showing that the continuous dynamic approach gives a valuable guideline to develop corresponding algorithmic results. In the particular case $\phi (u)=\|u\|^2$ one can also consult  \cite{GP}, \cite{ipiano}. The explicit discretization gives rise to inertial gradient algorithms, an interesting subject to explore in this general setting.
\end{remark}

\section{Closed-loop velocity control with Hessian driven damping}\label{Sec:Hessian}

\subsection{Hessian damping}
We propose to tackle questions similar to the previous sections, concerning the combination of closed-loop velocity control  with Hessian driven damping.
The following  system combines closed-loop velocity control with Hessian driven damping:
\begin{equation}\label{DIN}
\qquad \ddot{x}(t) + \partial \phi(\dot{x}(t)) +  \beta  \nabla^2  f (x(t)) \dot{x} (t) + \nabla  f (x(t)) = 0.
\end{equation}
 This autonomous system will be our main subject of study in this section.

 \smallskip

\noindent $\bullet$
The case  $\phi (u)= \frac{\gamma}{2} \|u\|^2$ of a fixed viscous coefficient   was first considered by
 Alvarez--Attouch--Bolte--Redont in \cite{AABR}.
In this case, (\ref{DIN}) can be equivalently written as a first-order system in time and space (different from the Hamiltonian formulation), which allows to extend this system naturally  to the case of a nonsmooth function $f$. This property has been exploited by Attouch--Maing\'e--Redont \cite{AMR}
 for modeling  non-elastic shocks in unilateral mechanics.
 To accelerate this system, several recent studies considered the case where the viscous damping is vanishing, that is
\begin{equation}\label{DIN_AVD}
\qquad \ddot{x}(t) + \frac{\alpha}{t}\dot{x}(t) +  \beta  \nabla^2  f (x(t)) \dot{x} (t) + \nabla  f (x(t)) = 0;
\end{equation}
see  \cite{APR}, \cite{ACFR},
 \cite{BCL},
 \cite{CBFP},  \cite{Kim},  \cite{LJ},  \cite{SDJS}, and Section
 \ref{sec:Hessian_intro} for the properties of this system.

 \smallskip

\noindent $\bullet$ The case $\phi(u)=\frac{\gamma}{2} \|u\|^2 + r\|u\|$ which combines viscous friction with dry friction and Hessian damping
has been considered by Adly--Attouch  \cite{AA-preprint-jca}, \cite{AA}.

 \smallskip

\noindent $\bullet$ By taking $\phi (u)= \frac{r}{p} \|u\|^p$, we get
\begin{equation}\label{Hessian_1}
\qquad \ddot{x}(t) + r\| \dot{x}(t) \|^{p-2} \dot{x}(t) +  \beta  \nabla^2  f (x(t)) \dot{x} (t) + \nabla  f (x(t)) = 0,
\end{equation}
for which we will  address issues similar to those of the previous theme.
In addition to the fast minimization property, one can expect obtaining too the fast convergence of the gradients to zero.

\subsection{Existence and uniqueness results}

Let us consider the  differential inclusion
\begin{equation}\label{Hessian_def_1}
\mbox{\rm (ADIGE-VH)}\quad  \ddot{x}(t) + \partial \phi (\dot{x}(t))+ \beta \nabla^2 f (x(t))\dot{x}(t)  + \nabla f (x(t)) \ni 0,
\end{equation}
which involves  a  damping potential $\phi$ (see Definition \ref{def1}), and a geometric damping driven by the Hessian of $f$.
The suffix V makes reference to the velocity and H  to the Hessian.
They both enter the damping terms.
It allows to cover different situations, in particular system \eqref{Hessian_1} corresponds  to $\phi (u)= \frac{r}{p} \| u\|^{p}$ for  $p>1$.
To prove  existence and uniqueness results for the associated Cauchy problem,  we  make additional assumptions. We assume that $f$ is convex, and
 that the Hessian mapping $x \in \cH  \mapsto \nabla^2 f (x) \in \mathcal L (\cH, \cH) $ is Lipschitz continuous on the bounded sets,
where $\mathcal L (\cH, \cH) $ is equipped with the norm operator.
Note that this property implies that  $\nabla f$ is Lipschitz continuous on the bounded subsets of $\cH$ (apply the mean value theorem in the vectorial case).
However, in the following statement, we formulate the two hypotheses for the sake of clarity.

\begin{theorem}\label{th.existence_uniqueness}
Let $f:\cH \to \R$ be a convex  function which is twice continuously differentiable, and    such that $\inf_{\cH} f >-\infty$. We suppose that

\medskip

$(i)$  $\nabla f$ is Lipschitz continuous on the bounded subsets of $\cH$;

\smallskip

$(ii)$  $\nabla^2 f$ is Lipschitz continuous on the bounded subsets of $\cH$.

\smallskip

\noindent Let  $\phi: \cH \to \R$ be a convex continuous damping function.
 Then, for  any Cauchy data  $(x_0, x_1 ) \in \cH \times \cH$, there exists a unique strong global solution $x : [0, +\infty[ \to \cH$ of
{\rm (ADIGE-VH)}
satisfying $x(0) = x_0$, and $\dot{x}(0)=x_1  $.
\end{theorem}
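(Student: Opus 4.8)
The plan is to follow the two-stage argument used for Theorem \ref{basic_exist_thm} (Moreau--Yosida regularization of the nonsmooth damping, then passage to the limit), the only genuinely new feature being the Hessian-driven damping $\beta\nabla^2 f(x(t))\dot x(t)$. Since this term equals $\frac{d}{dt}\nabla f(x(t))$, it does not spoil the first-order (Hamiltonian) structure; hypothesis $(ii)$ is exactly what keeps the associated vector field Lipschitz on bounded sets, and the convexity of $f$ is what keeps the natural energy non-increasing. As usual we take $t_0=0$ and work on $[0,+\infty[$.

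First I would regularize: for $\lambda>0$ replace $\phi$ by its Moreau envelope $\phi_\lambda$, which is again a damping potential, of class $\mathcal C^{1,1}$, and satisfies \eqref{ineq_phi} and \eqref{ineq_phi_b}. Writing the regularized equation $\ddot x_\lambda + \nabla\phi_\lambda(\dot x_\lambda) + \beta\nabla^2 f(x_\lambda)\dot x_\lambda + \nabla f(x_\lambda)=0$ in Hamiltonian form with $Z_\lambda=(x_\lambda,\dot x_\lambda)$ gives $\dot Z_\lambda + \nabla\Phi_\lambda(Z_\lambda)+G(Z_\lambda)=0$, where $\Phi_\lambda(x,u)=\phi_\lambda(u)$ and $G(x,u)=\big(-u,\ \beta\nabla^2 f(x)u+\nabla f(x)\big)$. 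Using $(i)$--$(ii)$ and the triangle inequality $\|\nabla^2 f(x)u-\nabla^2 f(x')u'\|\le\|\nabla^2 f(x)\|\,\|u-u'\|+\|\nabla^2 f(x)-\nabla^2 f(x')\|\,\|u'\|$, the map $G$ is Lipschitz on bounded sets, hence so is $\nabla\Phi_\lambda+G$, and Cauchy--Lipschitz yields a unique local solution. For the a priori estimate I would take the scalar product of the regularized equation with $\dot x_\lambda$: by \eqref{ineq_phi} and --- here the convexity of $f$ is used --- $\langle\nabla^2 f(x_\lambda)\dot x_\lambda,\dot x_\lambda\rangle\ge0$, the energy $\mathcal E_\lambda(t)=f(x_\lambda(t))-\inf_\cH f+\demi\|\dot x_\lambda(t)\|^2$ is non-increasing. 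This bounds $\dot x_\lambda$ uniformly in $\lambda,t$, hence $x_\lambda$ on every $[0,T]$; then \eqref{ineq_phi_b} with property $(iii)$ of the damping potential bounds $\nabla\phi_\lambda(\dot x_\lambda)$, continuity of $\nabla f$ and $\nabla^2 f$ on bounded sets bounds $\nabla f(x_\lambda)$ and $\nabla^2 f(x_\lambda)\dot x_\lambda$, and the constitutive equation then bounds $\ddot x_\lambda$, all uniformly in $\lambda$ on $[0,T]$. The usual prolongation argument upgrades the local solution to a global one.

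To pass to the limit $\lambda\to0$ I would subtract the equations for two parameters $\lambda,\mu$ in Hamiltonian form and pair with $Z_\lambda-Z_\mu$; the Yosida-approximation estimate for the $\nabla\Phi$ terms (as in \cite[Theorem 3.1]{Brezis}), the uniform bound $\|\nabla\Phi_\lambda(Z_\lambda)\|\le M_T$ on $[0,T]$, and the Lipschitz-on-bounded-sets estimate for $G$ give $\demi\frac{d}{dt}\|Z_\lambda-Z_\mu\|^2\le\tfrac14 M_T(\lambda+\mu)+L_T\|Z_\lambda-Z_\mu\|^2$; Gronwall together with $Z_\lambda(0)=Z_\mu(0)$ shows $(Z_\lambda)$ is Cauchy for uniform convergence on $[0,T]$, with limit $Z=(x,\dot x)$. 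To identify the limit equation I would write $\nabla\phi_\lambda(\dot x_\lambda)=\xi_\lambda:=-\ddot x_\lambda-\beta\nabla^2 f(x_\lambda)\dot x_\lambda-\nabla f(x_\lambda)$ and use $\dot x_\lambda\to\dot x$ strongly in $L^2(0,T;\cH)$, $\ddot x_\lambda\rightharpoonup\ddot x$ weakly in $L^2(0,T;\cH)$ (from the uniform bound on $\ddot x_\lambda$), and $\nabla^2 f(x_\lambda)\dot x_\lambda\to\nabla^2 f(x)\dot x$ in $L^2(0,T;\cH)$, so that $\xi_\lambda\rightharpoonup\xi:=-\ddot x-\beta\nabla^2 f(x)\dot x-\nabla f(x)$; the Mosco convergence of $\zeta\mapsto\int_0^T\phi_\lambda(\zeta)\,dt$ to $\zeta\mapsto\int_0^T\phi(\zeta)\,dt$ and the induced graph convergence of the subdifferentials then give $\xi(t)\in\partial\phi(\dot x(t))$ a.e., i.e.\ $x$ solves {\rm (ADIGE-VH)}. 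Uniqueness follows as in the globally Lipschitz case: two solutions with the same Cauchy data are bounded on each $[0,T]$, and subtracting the inclusions, pairing with the velocity difference, and using the monotonicity of $\partial\phi$ together with the Lipschitz-on-bounded-sets property of $x\mapsto\beta\nabla^2 f(x)\dot x+\nabla f(x)$, Gronwall's lemma forces equality.

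I expect the main obstacle to be, exactly as in Theorem \ref{basic_exist_thm}, the nonsmooth operator $\partial\phi$, handled by the Moreau--Yosida regularization plus the Mosco/graph-convergence machinery. Relative to the velocity-only case the new points are minor but must be isolated: (a) checking that the Hessian term keeps the first-order vector field Lipschitz on bounded sets --- this is precisely where $\nabla^2 f$ Lipschitz on bounded sets is used, and explains why it is assumed in addition to $\nabla f$ Lipschitz; (b) the a priori energy estimate, which needs $\nabla^2 f(x)\succeq0$, i.e.\ the convexity of $f$, without which the energy need not decrease and the uniform bounds would be lost; and (c) the convergence of $\nabla^2 f(x_\lambda)\dot x_\lambda$ in the passage to the limit, which follows from the uniform convergence of $x_\lambda$ and $\dot x_\lambda$ and the continuity of $\nabla^2 f$ on bounded sets.
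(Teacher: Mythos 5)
Your proposal is correct and follows essentially the same route as the paper: Moreau--Yosida regularization of $\phi$, Hamiltonian rewriting with $G(x,u)=(-u,\nabla f(x)+\beta\nabla^2 f(x)u)$, the energy decrease (using $\nabla^2 f\succeq 0$ from convexity) for uniform a priori bounds, the Yosida/Gronwall Cauchy-sequence estimate to pass $\lambda\to0$, and Mosco/graph convergence to identify $\partial\phi(\dot x)$ in the limit, exactly as in the paper's Steps 1--4. The only cosmetic difference is in identifying the Hessian term's limit: you argue directly from uniform convergence of $(x_\lambda,\dot x_\lambda)$ and continuity of $\nabla^2 f$ on bounded sets, while the paper invokes the chain rule $\frac{d}{dt}\nabla f(x_\lambda)=\nabla^2 f(x_\lambda)\dot x_\lambda$; both are valid and equivalent.
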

\begin{proof} To make the reading of the proof easier,  we distinguish several steps.

\smallskip

\textbf{Step 1}: \textit{A priori estimate}.  Let's  establish  a priori energy estimates on the solutions of \eqref{Hessian_def_1}. After taking the scalar product of
 \eqref{Hessian_def_1} with $\dot{x}(t) $, we get
$$
\frac{d}{dt} \mathcal E (t) + \left\langle \partial \phi (\dot{x}(t)), \dot{x}(t) \right\rangle + \beta
\left\langle \nabla^2 f (x(t))\dot{x}(t), \dot{x}(t) \right\rangle =0,
$$
where
$
\mathcal E (t):= f(x(t)) -\inf_{\cH}f + \demi \| \dot{x}(t) \|^2 $
is the global energy.
Since $\phi$ is a damping potential, the subdifferential inequality for convex functions, combined with $\phi(0)=0$, gives
$$
\left\langle \partial \phi (\dot{x}(t)), \dot{x}(t) \right\rangle  \geq \phi (\dot{x}(t)).
$$
Since $f$ is convex, we have that $\nabla^2 f$ is  positive semidefinite, which gives
$$
\left\langle \nabla^2 f (x(t))\dot{x}(t), \dot{x}(t) \right\rangle \geq 0.
$$
Collecting the above results, we obtain the following decay property of the energy
\begin{equation}\label{closed_loop_2b}
 \frac{d}{dt} \mathcal E (t) + \phi (\dot{x}(t)) \leq 0.
\end{equation}
Therefore, the energy is nonincreasing, which implies that,  as long as the trajectory is defined
\begin{equation}\label{closed_loop_2c}
  \| \dot{x}(t) \|^2  \leq 2 \mathcal E (0).
\end{equation}

\textbf{Step 2}: \textit{Hamiltonian formulation of \eqref{Hessian_def_1}}.
 According to  the Hamiltonian formulation of \eqref{Hessian_def_1}, it is equivalent
to solve  the first-order system
$$  \quad \left\{
\begin{array}{l}
\dot x(t)   -u(t) =0;  	 \\
\rule{0pt}{18pt}
 \dot{u}(t) +\partial \phi(u(t))  + \nabla f(x(t)) + \beta
\nabla^2 f (x(t))u(t) \ni 0 ,
 \hspace{2.3cm}
\end{array}\right.
$$
 with the Cauchy data
$x(0) =x_0$, \, $u(0)= x_1$.
Set
$Z(t) = (x(t), u(t)) \in \cH \times \cH .$\\
The above system can be written equivalently as
$$
\dot{Z}(t) + F( Z(t))\ni 0, \quad Z(0) = (x_0, x_1),
$$
where  $F: \cH \times \cH\rightrightarrows \cH \times \cH,\;\;(x,u)\mapsto F(x,u)$ is defined by
$$
F(x,u)= \Big( 0,  \partial \phi(u) \Big) +
 \Big( -u, \nabla f(x)  +\beta
\nabla^2 f (x)u  \Big).
$$
Hence $F$ splits as follows
$
F(x,u) = \partial \Phi (x,u) + G (x,u),
$
where
\begin{equation}\label{Hamilton_Hessian}
\Phi (x,u) =   \phi(u)
\, \mbox{ and } \,
G(x,u) = \Big(  -u, \, \nabla f(x)  +\beta
\nabla^2 f (x)u  \Big).
\end{equation}
Therefore, it is equivalent to solve the following first-order differential inclusion with Cauchy data
\begin{equation}
\label{1odd}
\dot{Z}(t) +\partial\Phi(Z(t)) + G( Z(t))\ni 0, \quad Z(0) = (x_0, x_1).
\end{equation}
Let us prove that  the mapping $(x,u)\mapsto G(x,u)$ is  Lipschitz continuous on the bounded subsets of $\cH\times\cH$.
 For any  $(x, u) \in \cH \times \cH $, set  $G(x,u) = (-u, K(x,u))$ where
 $$
 K(x,u):=   \nabla f(x)  +\beta \nabla^2 f (x)u .
$$
Let $L_R$ be the Lipschitz constant of $\nabla f$ and $\nabla^2 f$ on the ball centered at the origin and with radius $R$, and
set $M_R = \sup_{\|x\|\leq R} \|\nabla^2 f (x)  \|$.
Take $(x_i,u_i) \in \cH \times \cH$, $i=1,2$ with $\| (x_i,u_i)\| \leq R$.  We have
$$
K (x_2,u_2) -K (x_1,u_1)=  \nabla f(x_2)- \nabla f(x_1)   +\beta
(\nabla^2 f (x_2)u_2 -\nabla^2 f (x_1)u_1).
$$
According to  the triangle inequality, and the local Lipschitz continuity property  of $\nabla f$ and $\nabla^2 f$
\begin{eqnarray*}
\| K (x_2,u_2) -K (x_1,u_1) \| &\leq&  \|\nabla f(x_2)- \nabla f(x_1) \|
 +\beta
\| \nabla^2 f (x_2)u_2 -\nabla^2 f (x_1)u_2\| \\
& + &
\beta
\| \nabla^2 f (x_1)u_2 -\nabla^2 f (x_1)u_1\| \\
&\leq& L_R \| x_2 -x_1 \|
+\beta L_R \| x_2 -x_1 \|\|u_2\| + \beta M_R \| u_2 -u_1 \|\\
&\leq& L_R (1+ R\beta)\| x_2 -x_1 \|+ \beta M_R \| u_2 -u_1 \|.
\end{eqnarray*}
Therefore,
\begin{eqnarray}\label{Lip_G_1}
\| G (x_2,u_2) -G (x_1,u_1) \|&\leq& L_R (1+ R\beta)\| x_2 -x_1 \|+ (1+\beta M_R) \| u_2 -u_1 \|,
\end{eqnarray}
which gives  that the mapping $(x,u)\mapsto G(x,u)$ is  Lipschitz continuous on the bounded subsets of $\cH\times\cH$.

\smallskip

\textbf{Step 3}: \textit{Approximate dynamics}. We proceed in a similar way as in Theorem \ref{basic_exist_thm} (which corresponds to the  case $\beta=0$), and  consider the approximate dynamics
\begin{equation}\label{hbdf_lambda_existence}
 \ddot{x}_{\lambda}(t) +  \nabla \phi_{\lambda} (\dot{x}_{\lambda}(t)) + \beta \nabla^2 f (x_{\lambda}(t))\dot{x}_{\lambda}(t) + \nabla f (x_{\lambda}(t)) = 0,\; t\in [0,+\infty[
\end{equation}
 which uses the Moreau-Yosida approximates $(\phi_{\lambda})$ of $\phi$.
We will prove that the filtered sequence $(x_{\lambda})$
converges uniformly as $\lambda \to 0$ over the bounded time intervals towards a solution of  \eqref{Hessian_def_1}.
The Hamiltonian formulation of \eqref{hbdf_lambda_existence} gives the first-order (in time) system
$$  \quad \left\{
\begin{array}{l}
\dot x_{\lambda}(t)   -u_{\lambda}(t) =0;  	 \\
\rule{0pt}{18pt}
 \dot{u}_{\lambda}(t) +\nabla \phi_{\lambda}(u_{\lambda}(t)) + \nabla f(x_{\lambda}(t)) + \beta
\nabla^2 f (x_{\lambda}(t))u_{\lambda}(t) = 0 ,
 \hspace{2.3cm}
\end{array}\right.
$$
 with the Cauchy data
$x_{\lambda}(0) =x_0$, \, $u_{\lambda}(0)= x_1       $.
Set
$Z_{\lambda}(t) = (x_{\lambda}(t), u_{\lambda}(t)) \in \cH \times \cH .$\\
The above system can be written equivalently as
$$
\dot{Z}_{\lambda}(t) + F_{\lambda}( Z_{\lambda}(t))\ni 0, \quad Z_{\lambda}(t_0) = (x_0, x_1),
$$
where  $F_{\lambda}: \cH \times \cH\rightarrow \cH \times \cH,\;\;(x,u)\mapsto F_{\lambda}(x,u)$ is defined by
$$
F_{\lambda}(x,u)= \Big( 0,  \nabla \phi_{\lambda}(u) \Big) +
 \Big( -u, \nabla f(x)  +\beta
\nabla^2 f (x)u  \Big).
$$
Hence $F_{\lambda}$ splits as follows
$
F_{\lambda}(x,u) = \nabla \Phi_{\lambda} (x,u) + G (x,u)
$
where  $\Phi $ and  $G$ have been defined in \eqref{Hamilton_Hessian}.
Therefore, the approximate equation is equivalent to the  first-order differential system with Cauchy data
\begin{equation}
\label{1odd_existence_b}
\dot{Z}_{\lambda}(t) +\nabla \Phi_{\lambda}(Z_{\lambda}(t)) + G( Z_{\lambda}(t))= 0, \quad Z_{\lambda}(0) = (x_0, x_1).
\end{equation}
Let's argue with $\lambda >0$ fixed.
 According to the Lipschitz continuity of $\nabla \Phi_{\lambda}$,  and the fact that $G$ is Lipschitz continuous on the bounded sets, we have that the sum operator $ \nabla \Phi_{\lambda} + G$ which governs \eqref{1odd_existence_b} is  Lipschitz continuous on the bounded sets.
As a consequence, the existence of a local solution to \eqref{1odd_existence_b} follows from the classical Cauchy--Lipschitz theorem.
To pass from a local solution to a global solution, we use the a priori estimate  obtained in Step 1 of the proof. Note that this estimate is valid for any damping potential, in particular for $\phi_{\lambda}$.
According to the Cauchy data, and $f$ minorized,
this implies that, on any bounded time interval, the functions
$(x_{\lambda})$ and $(\dot{x}_{\lambda}) $ are bounded.
According to the property \eqref{ineq_phi_b} of the Yosida approximation, and the property $(iii)$ of the
 damping potential $\phi$, this implies that
$$
\| \nabla \phi_{\lambda} (x_{\lambda}(t))\| \leq \| (\partial \phi )^{0} (x_{\lambda}(t))\|
$$
is also bounded uniformly with respect to  $t$ bounded.
Moreover, according to the local boundedness assumption made  on the gradient and the Hessian of $f$, we have that $\nabla f (x_{\lambda}(t))$ and
$\nabla^2 f (x_{\lambda}(t))\dot{x}_{\lambda}(t)$ are also bounded.
According to the constitutive equation \eqref{hbdf_lambda_existence}, this in turn implies that  $(\ddot{x}_{\lambda} )$ is also bounded.
This implies that if a maximal solution is defined on a finite time interval $[0, T[$, then the limits of $x_{\lambda}(t)$ and $\dot{x}_{\lambda} (t)$
exist, as $t \to T$. According to this property, passing from a local to a global solution
is a classical argument.
So for any $\lambda >0$ we have a unique global solution of
\eqref{hbdf_lambda_existence} with satisfies the Cauchy data $x_{\lambda}(0) =x_0$, $\dot{x}_{\lambda}(0)= x_1 $.

\smallskip

\textbf{Step 4}: \textit{Passing to the limit as $\lambda \to 0$}.
Take  $T >0$, and $ \lambda , \mu >0$.
Consider the corresponding solutions on $[0, T]$
\begin{eqnarray*}
&&  \dot{Z}_{\lambda}(t) +\nabla \Phi_{\lambda}(Z_{\lambda}(t)) + G( Z_{\lambda}(t))= 0, \quad Z_{\lambda}(0) = (x_0, x_1)
\\
&&\dot{Z}_{\mu}(t) +\nabla \Phi_{\mu}(Z_{\mu}(t)) + G( Z_{\mu}(t))= 0, \quad Z_{\mu}(0) = (x_0, x_1).
\end{eqnarray*}
Let's make the difference between the two equations, and take the scalar product by $Z_{\lambda}(t) - Z_{\mu}(t)$. We get
\begin{eqnarray}
\demi \frac{d}{dt}\| Z_{\lambda}(t) - Z_{\mu}(t) \|^2 &+ &
\left\langle  \nabla \Phi_{\lambda}(Z_{\lambda}(t)) - \nabla \Phi_{\mu}(Z_{\mu}(t)) ,  Z_{\lambda}(t) - Z_{\mu}(t) \right\rangle \nonumber\\
&+& \left\langle  G( Z_{\lambda}(t)) - G( Z_{\mu}(t)) ,  Z_{\lambda}(t) - Z_{\mu}(t) \right\rangle =0 . \label{basic_ex_Y_b}
\end{eqnarray}
We now use the following ingredients:

\medskip

 i) According to the  properties of the Yosida approximation (see \cite[Theorem 3.1]{Brezis}), we have
$$
\left\langle  \nabla \Phi_{\lambda}(Z_{\lambda}(t)) - \nabla \Phi_{\mu}(Z_{\mu}(t)) ,  Z_{\lambda}(t) - Z_{\mu}(t) \right\rangle
\geq -\frac{\lambda}{4} \|\nabla \Phi_{\mu}(Z_{\mu}(t))  \|^2 -
\frac{\mu}{4} \|\nabla \Phi_{\lambda}(Z_{\lambda}(t))  \|^2.
$$
According to  the  energy estimates, the sequence $(Z_{\lambda})$ is uniformly bounded on $[0, T]$, let
$$\| Z_{\lambda}(t)\|\leq C_T .$$
 From these properties we immediately infer
$$
 \|\nabla \Phi_{\lambda}(Z_{\lambda}(t))  \|  \leq \sup_{\|\xi\|\leq C_T} \|(\partial \phi)^0(\xi)  \|= M_T <+\infty,
$$
because our assumption on $\phi$ gives that $(\partial \phi)^0$ is bounded on the bounded sets.
Therefore
$$
\left\langle  \nabla \Phi_{\lambda}(Z_{\lambda}(t)) - \nabla \Phi_{\mu}(Z_{\mu}(t)) ,  Z_{\lambda}(t) - Z_{\mu}(t) \right\rangle
\geq -\frac{1}{4} M_T (\lambda +\mu).
$$

ii)  Since the mapping $G : \cH \times  \cH  \to \cH \times  \cH$ is Lipschitz continuous on the bounded sets, and
using again that the sequence $(Z_{\lambda})$ is uniformly bounded on $[0, T]$, we deduce that there exists a constant $L_T$ such that
$$
\|  G( Z_{\lambda}(t)) - G( Z_{\mu}(t)) \| \leq L_T \|   Z_{\lambda}(t) -  Z_{\mu}(t) \|.
$$
Combining the above results, and using Cauchy--Schwarz inequality, we deduce from
\eqref{basic_ex_Y_b} that
$$
\demi \frac{d}{dt}\| Z_{\lambda}(t) - Z_{\mu}(t) \|^2
\leq  \frac{1}{4} M_T (\lambda +\mu) + L_T \|   Z_{\lambda}(t) -  Z_{\mu}(t) \|^2 .
$$
We now proceed with the integration of this differential inequality.
According to the fact that $ Z_{\lambda}(0) - Z_{\mu}(0) =0$, elementary calculus gives
$$
\| Z_{\lambda}(t) - Z_{\mu}(t) \|^2 \leq \frac{M_T}{4L_T}(\lambda +\mu) \Big( e^{2L_T (t-t_0}  -1  \Big).
$$
Therefore, the filtered sequence $(Z_{\lambda})$  is a Cauchy sequence for the uniform convergence on $[0, T]$, and hence it converges uniformly.
This means the uniform convergence on $[0, T]$ of $x_{\lambda}$ and $\dot{x}_{\lambda}$ to $x$ and $\dot{x}$ respectively.
Proving that $x$ is solution of \eqref{Hessian_def_1}  is obtained in a similar way as
in Theorem \ref{basic_exist_thm}. Just rely on the classical derivation chain rule
$\frac{d}{dt}\left( \nabla f (x_{\lambda}(t)) \right) =  \nabla^2 f (x_{\lambda}(t))\dot{x}_{\lambda}(t)          $ to pass to the limit on the Hessian term.
\end{proof}

\subsection{Convergence based on the quasi-gradient approach}\label{rem-quasi-hessian}

Our objective is to address, from the perspective of quasi-gradient systems, the  system (ADIGE-VH)
\begin{equation}\label{Hessian_def_1-quasi}
 \ddot{x}(t) + \nabla \phi (\dot{x}(t))+ \beta \nabla^2 f (x(t))\dot{x}(t)  + \nabla f (x(t)) = 0,
\end{equation}
as it was done in Section \ref{Sec-conv-KL}.
 We assume that $\cH = \R^N$ is a finite-dimensional Hilbert space, and that the hypotheses of
Theorem \ref{quasi_grad_thm_2} and Theorem \ref{th.existence_uniqueness} hold.
We follow the steps of the proof of Theorem \ref{quasi_grad_thm_2}.
By using the estimates in Step 1 of the proof of Theorem \ref{th.existence_uniqueness}, we easily derive the first part of
the proof of Theorem \ref{quasi_grad_thm_2}, namely that the trajectory
$t \mapsto (x(t),\dot{x}(t))$ in the phase space $\R^N \times  \R^N$
belongs to the closed bounded set $\Gamma = \bar{B}(0,R) \times \bar{B}(0,\epsilon) $.
According to Step 2 in the proof of Theorem \ref{th.existence_uniqueness}, the  Hamiltonian formulation of  \eqref{Hessian_def_1-quasi} gives the first-order differential system
\begin{equation}\label{first_order_cl_loop_quasi_1-hessian}
\dot z(t) + F(z(t)) =0,
\end{equation}
where $z(t)=(x(t), \dot x(t)) \in \R^N \times \R^N $, and
   $F: \R^N \times  \R^N \to \R^N \times \R^N$
is defined by
$$F(x,u)=(-u, \nabla \phi(u)+ \nabla f(x)+\beta\nabla^2f(x)u).
$$
Let's focus on the key point which is the angle condition ($E_{\lambda}$ is defined as in Theorem \ref{quasi_grad_thm_2}). We have
\begin{center}
$\left\langle  \nabla  E_{\lambda}(x,u), F(x,u) \right\rangle
= \left\langle  \Big( \nabla f (x)+ \lambda \nabla^2 f (x)u, \, u + \lambda \nabla f (x) \Big), \Big(-u, \nabla \phi(u)+ \nabla f(x) + \beta\nabla^2f(x)u\Big) \right\rangle .$
\end{center}
After development and simplification, we get
\begin{eqnarray*}
\left\langle  \nabla  E_{\lambda}(x,u), F(x,u) \right\rangle
&=&  - \lambda \left\langle  \nabla^2 f (x)u, \, u  \right\rangle + \left\langle  u, \, \nabla \phi(u) \right\rangle
+ \lambda \left\langle  \nabla f (x) , \, \nabla \phi(u)  \right\rangle
+ \lambda  \|  \nabla f (x) \|^2\\
&& + \beta\left\langle u+\lambda\nabla f(x), \nabla^2 f (x)u\right\rangle\\
&\geq& - \lambda \left\langle  \nabla^2 f (x)u, \, u  \right\rangle + \left\langle  u, \, \nabla \phi(u) \right\rangle
+ \lambda \left\langle  \nabla f (x) , \, \nabla \phi(u)  \right\rangle
+ \lambda  \|  \nabla f (x) \|^2\\
&& + \lambda\beta\left\langle \nabla f(x), \nabla^2 f (x)u\right\rangle,
\end{eqnarray*}
where we used that $\nabla^2 f(x)$ is positive semidefinite.
The only difference with respect to the next step in the proof of Theorem \ref{quasi_grad_thm_2} is that we need
to estimate the extra term $\lambda\beta\left\langle \nabla f(x), \nabla^2 f (x)u\right\rangle$. We do this by writing
$\lambda\beta\left\langle \nabla f(x), \nabla^2 f (x)u\right\rangle\geq -\frac{\lambda}{4}\|\nabla f(x)\|^2
-\lambda\beta^2 M^2\|u\|^2$,
and get
\begin{eqnarray}
\left\langle  \nabla  E_{\lambda}(x,u), F(x,u) \right\rangle
&\geq &   \Big( \gamma -\lambda M   - \frac{\lambda}{2} \delta^2 -\lambda \beta^2M^2 \Big)  \|u\|^2
  + \frac{\lambda}{4}  \|  \nabla f (x) \|^2  . \label{quasi_gradient_2_hessian}
\end{eqnarray}
Take $ \lambda $ small enough to satisfy
$
\gamma > \lambda \left(M +  \frac{\delta^2}{2} +\beta ^2M^2  \right).
$
Then
\begin{eqnarray}
\left\langle  \nabla  E_{\lambda}(x,u), F(x,u) \right\rangle
&\geq &   \alpha_0 ( \|u\|^2
  +   \|  \nabla f (x) \|^2),   \label{quasi_gradient_3_hessian}
\end{eqnarray}
with $\alpha_0:= \min\{\gamma - \lambda \left(M +  \frac{\delta^2}{2} +\beta^2M^2     \right)  , \frac{\lambda}{4}  \}$.
On the other hand, as in Theorem \ref{quasi_grad_thm_2},
\begin{eqnarray*}
\| \nabla  E_{\lambda}(x,u)\| &\leq& C_1 ( \|u\|^2
  +   \|  \nabla f (x) \|^2)^{\demi} \\
\| F(x,u)\| &\leq& C_2  ( \|u\|^2
  +   \|  \nabla f (x) \|^2)^{\demi} ,
\end{eqnarray*}
where $C_2=\sqrt{4+3\delta^2+3\beta^2M^2}$.
Therefore
\begin{eqnarray}
\| \nabla  E_{\lambda}(x,u)\|  \| F(x,u) \|
&\leq &   C_1 C_2 ( \|u\|^2
  +   \|  \nabla f (x) \|^2).   \label{quasi_gradient_4_hessian}
\end{eqnarray}
Therefore, for
$
\alpha := \frac{\alpha_0 }{C_1C_2},
$
the angle condition
$
\left\langle  \nabla E (z), F(z) \right\rangle
\geq \alpha \|\nabla E (z) \|  \| F(z) \|
$
is satisfied on $\Gamma$.
Let us summarize the above results.
\begin{theorem} \label{quasi_thm_VH}
Let $f:\cH \to \R$ be a convex  function which is twice continuously differentiable, and    such that $\inf_{\cH} f >-\infty$. We suppose that

\medskip

$(i)$  $\nabla f$ is Lipschitz continuous on the bounded subsets of $\cH$;

\smallskip

$(ii)$  $\nabla^2 f$ is Lipschitz continuous on the bounded subsets of $\cH$.

\smallskip

\noindent  Let  $ E_{\lambda}: \R^N \times \R^N\to \R $ be  defined by: for all $(x,u)\in  \R^N \times \R^N$
 $$
 E_{\lambda}(x,u):= \demi \|u\|^2 + f(x) +\lambda \left\langle \nabla f (x), u\right\rangle.
$$
Suppose that    $ E_{\lambda}$ satisfies the {\rm (KL)} property.
Let $\phi : \R^N \to \R_+$ be a damping potential (see Definition \ref{def1}) which is differentiable, and which satisfies the following growth conditions (i), and (ii):

\medskip

$(i)$ (local) there exists positive constants $\gamma$,  $\delta$, and  $\epsilon >0$ such that, for all $u$ in
$\R^N$ with $\|u\| \leq \epsilon$
$$\phi (u) \geq \gamma \|u\|^2   \mbox{ and }    \|\nabla \phi (u) \| \leq \delta \|u\|.$$

$(ii)$ (global) there exists  $p\geq 1$, $c>0$, such that for all $u$ in $\R^N$, $\phi (u) \geq c\|u\|^p$.

\medskip

\noindent  Let $x: [0, +\infty[ \to \mathbb R^N $ be a bounded solution trajectory of
$$
 \ddot{x}(t) + \nabla \phi (\dot{x}(t))+ \beta \nabla^2 f (x(t))\dot{x}(t)  + \nabla f (x(t)) = 0.
$$
Then, the following properties are satisfied:

\medskip

$(i)$ \, $x(t) \to x_{\infty}$ as $t \to + \infty$, where $ x_{\infty}\in  \crit f$;

\medskip

$(ii)$ \,  $\dot{x} \in L^1 (0, +\infty; \R^N )$ , $\dot{x}(t) \to 0$ as $t \to + \infty$;

\medskip

$(iii)$ \,  For $\lambda$ sufficently small, and $t$ sufficiently large
$$ \| x(t) -  x_{\infty}\| \leq \frac{1}{\alpha} \theta \Big(  E_{\lambda}(x(t),u(t)) - E_{\lambda} (x_{\infty},0)\Big)
$$

\smallskip

\noindent where $\theta$ is the desingularizing function for $ E_{\lambda}$ at  $(x_{\infty},0)$, and $\alpha$ enters the  angle condition.
\end{theorem}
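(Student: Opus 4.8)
The plan is to follow the proof of Theorem~\ref{quasi_grad_thm_2} line by line, inserting the Hessian-driven damping term into the quasi-gradient verification. First I would invoke Theorem~\ref{th.existence_uniqueness} so that "the" bounded trajectory $x(\cdot)$ is well defined. Then I would collect the preliminary estimates exactly as in Step~1 of the proof of Theorem~\ref{th.existence_uniqueness}: taking the scalar product of the equation with $\dot x(t)$ and using that $\phi$ is a damping potential (so $\langle\nabla\phi(\dot x(t)),\dot x(t)\rangle\ge\phi(\dot x(t))$) together with the positive semidefiniteness of $\nabla^2 f$ (convexity of $f$), one gets $\frac{d}{dt}\mathcal E(t)+\phi(\dot x(t))\le 0$, where $\mathcal E(t)=f(x(t))-\inf_{\cH}f+\frac12\|\dot x(t)\|^2$. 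Hence $\mathcal E$ is nonincreasing, $\dot x(\cdot)$ is bounded, and $\int_0^{+\infty}\phi(\dot x(t))\,dt<+\infty$; combining the latter with the global growth assumption $\phi(u)\ge c\|u\|^p$ yields $\int_0^{+\infty}\|\dot x(t)\|^p\,dt<+\infty$. From the constitutive equation, boundedness of $x(\cdot)$ and $\dot x(\cdot)$, and local Lipschitz continuity of $\nabla f$ and $\nabla^2 f$, the acceleration $\ddot x(\cdot)$ is bounded, and together with the $L^p$ integrability this forces $\dot x(t)\to 0$. Consequently, for $R\ge\sup_{t\ge 0}\|x(t)\|$ and $t$ large, the phase-space trajectory $(x(t),\dot x(t))$ lies in the closed bounded set $\Gamma=\bar B(0,R)\times\bar B(0,\epsilon)$.

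Next I would pass to the Hamiltonian formulation $\dot z(t)+F(z(t))=0$ with $z=(x,\dot x)$ and $F(x,u)=(-u,\nabla\phi(u)+\nabla f(x)+\beta\nabla^2 f(x)u)$, and check that $F$ is a quasi-gradient field for the B\'egout--Bolte--Jendoubi energy $E_\lambda(x,u)=\frac12\|u\|^2+f(x)+\lambda\langle\nabla f(x),u\rangle$ on $\Gamma$, in the sense of Definition~\ref{quasi_grad_def_0}. The angle condition is the crux. Expanding $\langle\nabla E_\lambda(x,u),F(x,u)\rangle$ and simplifying, the resulting terms coincide with those in Theorem~\ref{quasi_grad_thm_2} plus one extra contribution $\beta\langle u+\lambda\nabla f(x),\nabla^2 f(x)u\rangle$; the term $\beta\langle u,\nabla^2 f(x)u\rangle\ge 0$ is discarded by convexity, while the cross term $\lambda\beta\langle\nabla f(x),\nabla^2 f(x)u\rangle$ is controlled by Young's inequality as $\lambda\beta\langle\nabla f(x),\nabla^2 f(x)u\rangle\ge-\frac{\lambda}{4}\|\nabla f(x)\|^2-\lambda\beta^2 M^2\|u\|^2$ with $M=\sup_{\|x\|\le R}\|\nabla^2 f(x)\|$. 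Inserting the local growth bounds $\phi(u)\ge\gamma\|u\|^2$ and $\|\nabla\phi(u)\|\le\delta\|u\|$ valid on $\bar B(0,\epsilon)$, together with Young's inequality applied to $\lambda\langle\nabla f(x),\nabla\phi(u)\rangle$, one obtains a lower bound of the form $(\gamma-\lambda(M+\frac{\delta^2}{2}+\beta^2 M^2))\|u\|^2+\frac{\lambda}{4}\|\nabla f(x)\|^2$, which for $\lambda$ small enough is $\ge\alpha_0(\|u\|^2+\|\nabla f(x)\|^2)$. Since both $\|\nabla E_\lambda(x,u)\|$ and $\|F(x,u)\|$ are bounded on $\Gamma$ by a constant times $(\|u\|^2+\|\nabla f(x)\|^2)^{1/2}$, the angle condition follows with an explicit constant $\alpha$, and the rest-point equivalence $\crit E_\lambda\cap\Gamma=F^{-1}(0)\cap\Gamma$ is read off from the same lower bound.

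Finally, with the quasi-gradient structure established on $\Gamma$ and the assumed (KL) property of $E_\lambda$, I would apply the abstract Theorem~\ref{quasi_grad_thm_1} to the trajectory $z(\cdot)$ on a terminal time interval on which it remains in $\Gamma$. This yields $z(t)\to z_\infty=(x_\infty,0)$ with $x_\infty\in\crit f$, $\dot x\in L^1(0,+\infty;\R^N)$ and $\dot x(t)\to 0$, and the desingularizing estimate $\|x(t)-x_\infty\|\le\frac1\alpha\,\theta\big(E_\lambda(x(t),\dot x(t))-E_\lambda(x_\infty,0)\big)$, which are precisely conclusions $(i)$--$(iii)$. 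The main obstacle I anticipate is the angle-condition bookkeeping with $\beta\ne 0$: one must ensure that all the new $\beta$-dependent error terms are absorbed into the coefficient of $\|u\|^2$ while keeping it strictly positive, which constrains $\lambda$ in terms of $\gamma,\delta,M,\beta$; everything else transfers verbatim from the $\beta=0$ case of Theorem~\ref{quasi_grad_thm_2}. A secondary point requiring care is that $\Gamma$ is only reached asymptotically, so the abstract theorem is applied after a finite time, which is harmless since the conclusions are asymptotic.
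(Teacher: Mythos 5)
Your proposal is correct and follows essentially the same route as the paper: reuse the energy estimate of Step~1 of Theorem~\ref{th.existence_uniqueness} to confine $(x(t),\dot x(t))$ to $\Gamma=\bar B(0,R)\times\bar B(0,\epsilon)$ for large $t$, write the Hamiltonian first-order system with $F(x,u)=(-u,\nabla\phi(u)+\nabla f(x)+\beta\nabla^2 f(x)u)$, verify the angle condition by discarding $\beta\langle u,\nabla^2 f(x)u\rangle\ge 0$ via convexity and absorbing the cross term $\lambda\beta\langle\nabla f(x),\nabla^2 f(x)u\rangle$ by Young's inequality into $\frac{\lambda}{4}\|\nabla f(x)\|^2$ and $\lambda\beta^2M^2\|u\|^2$, and then apply the abstract Theorem~\ref{quasi_grad_thm_1}. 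The bookkeeping leading to the threshold $\gamma>\lambda(M+\tfrac{\delta^2}{2}+\beta^2M^2)$ and the comparison bounds on $\|\nabla E_\lambda\|$ and $\|F\|$ match the paper's computation exactly.
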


\subsection{Numerical illustrations}\label{sec:num_H}
We revisit the numerical examples of section \ref{sec:num} where we introduce an additional Hessian damping.
\begin{small}
\begin{figure}[h!]
	\centering
	{\includegraphics*[viewport=78 200  540 600,width=0.325\textwidth]{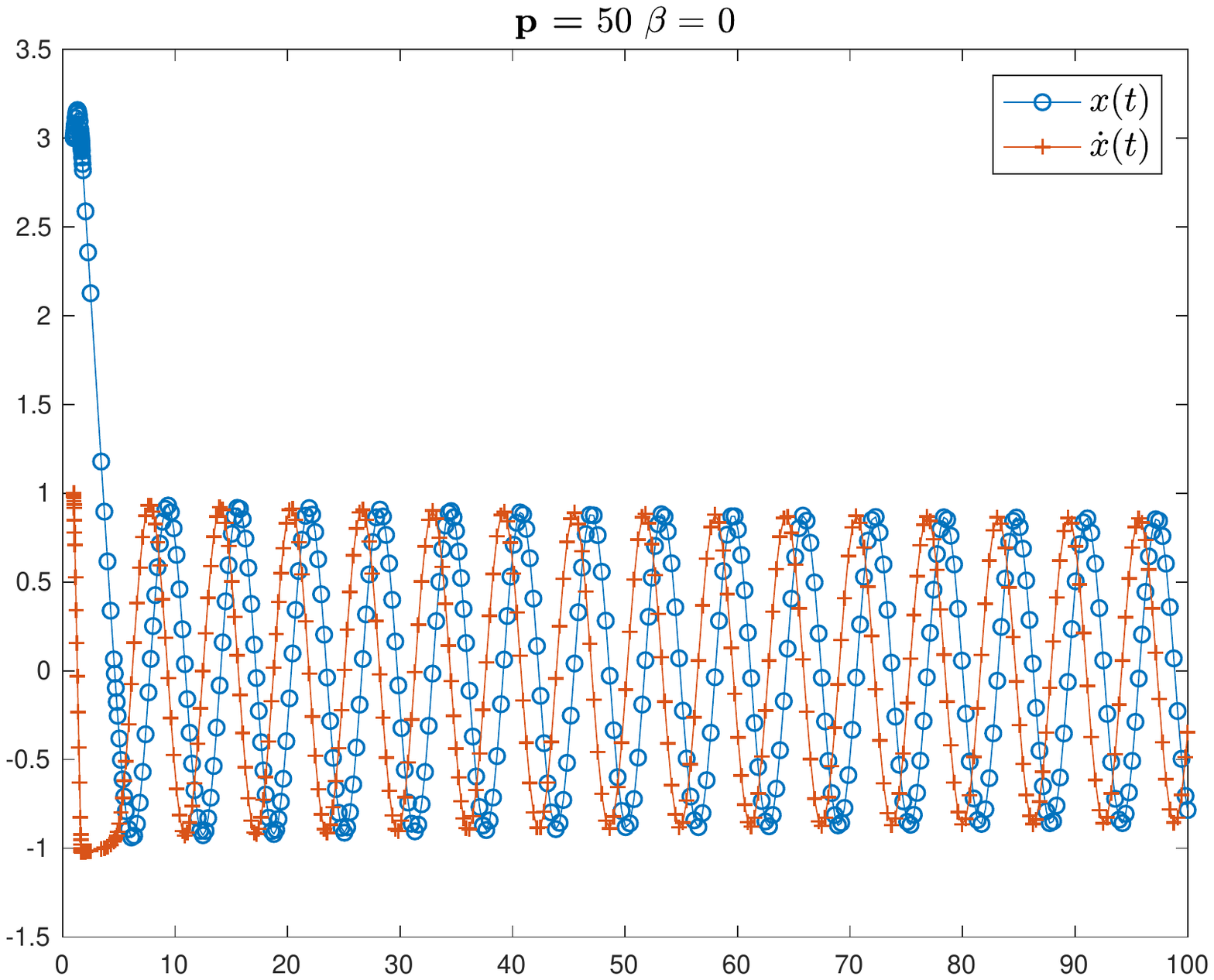}}\hspace{0.03cm}
	{\includegraphics*[viewport=78 200  540 600,width=0.325\textwidth]{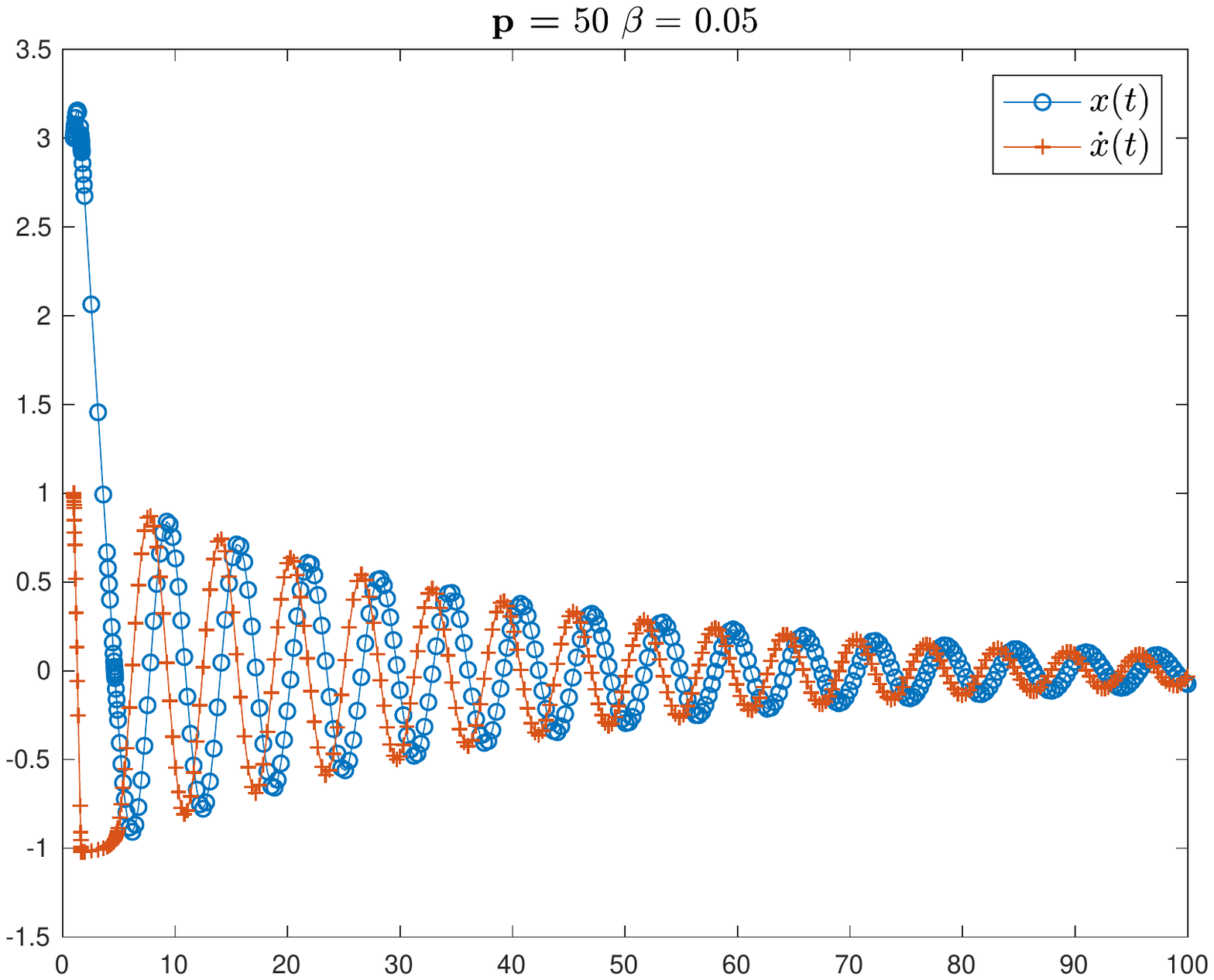}}\hspace{0.03cm}
	{\includegraphics*[viewport=78 200  540 600,width=0.325\textwidth]{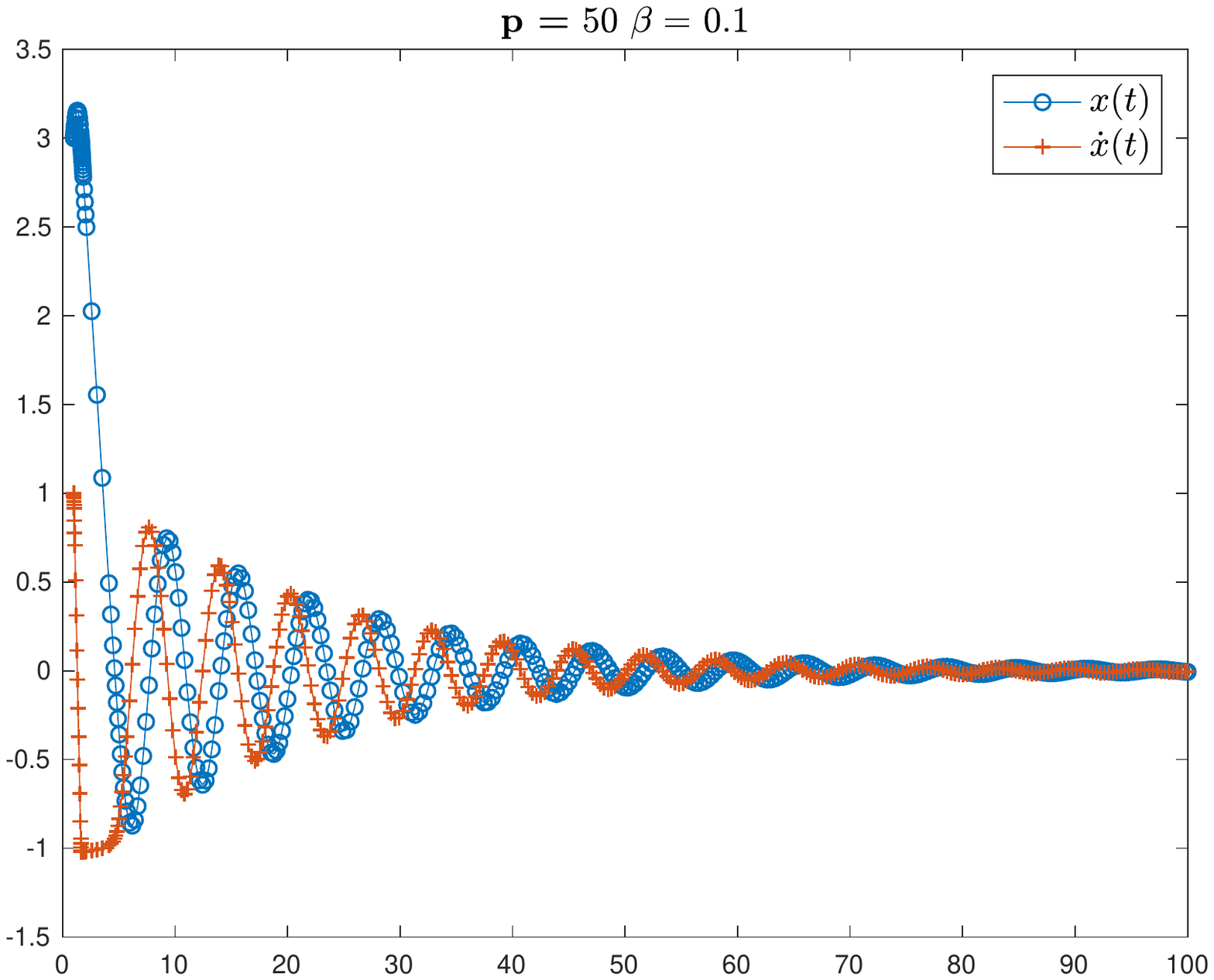}}\\
	{\includegraphics*[viewport=78 200  540 600,width=0.325\textwidth]{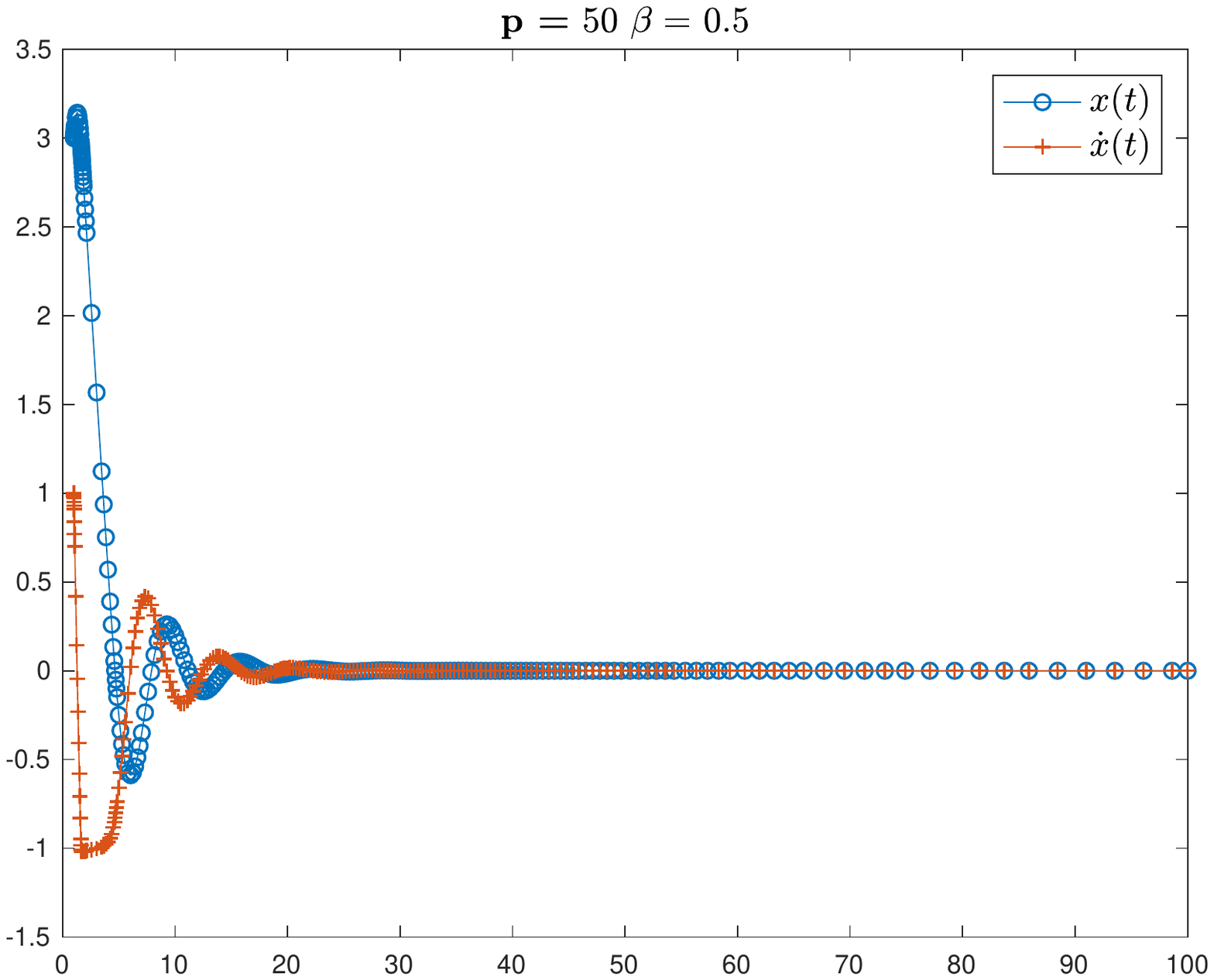}}\hspace{0.03cm}
	{\includegraphics*[viewport=78 200  540 600,width=0.325\textwidth]{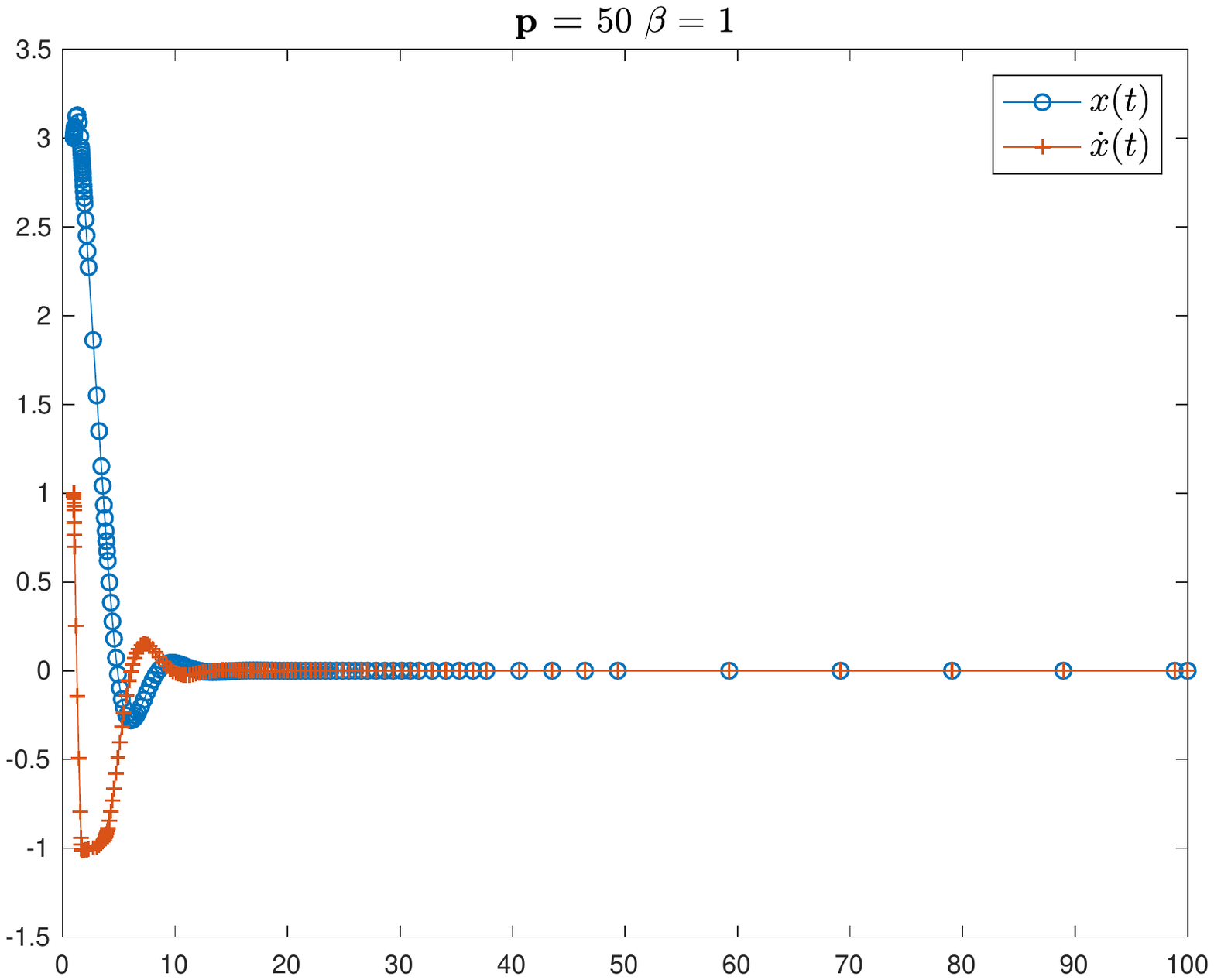}}\hspace{0.03cm}
	{\includegraphics*[viewport=78 200  540 600,width=0.325\textwidth]{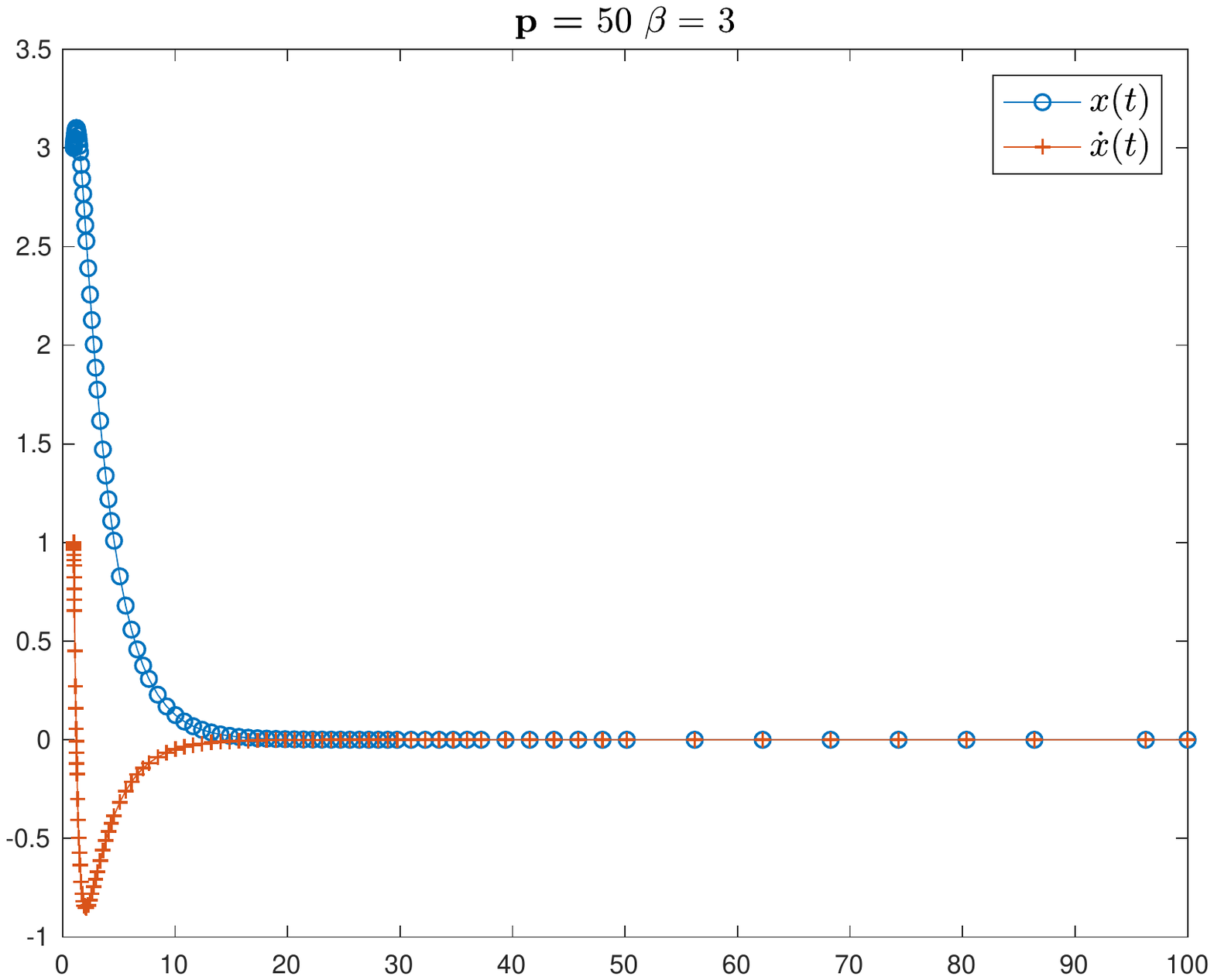}}\\
	{\includegraphics*[viewport=78 200  540 600,width=0.325\textwidth]{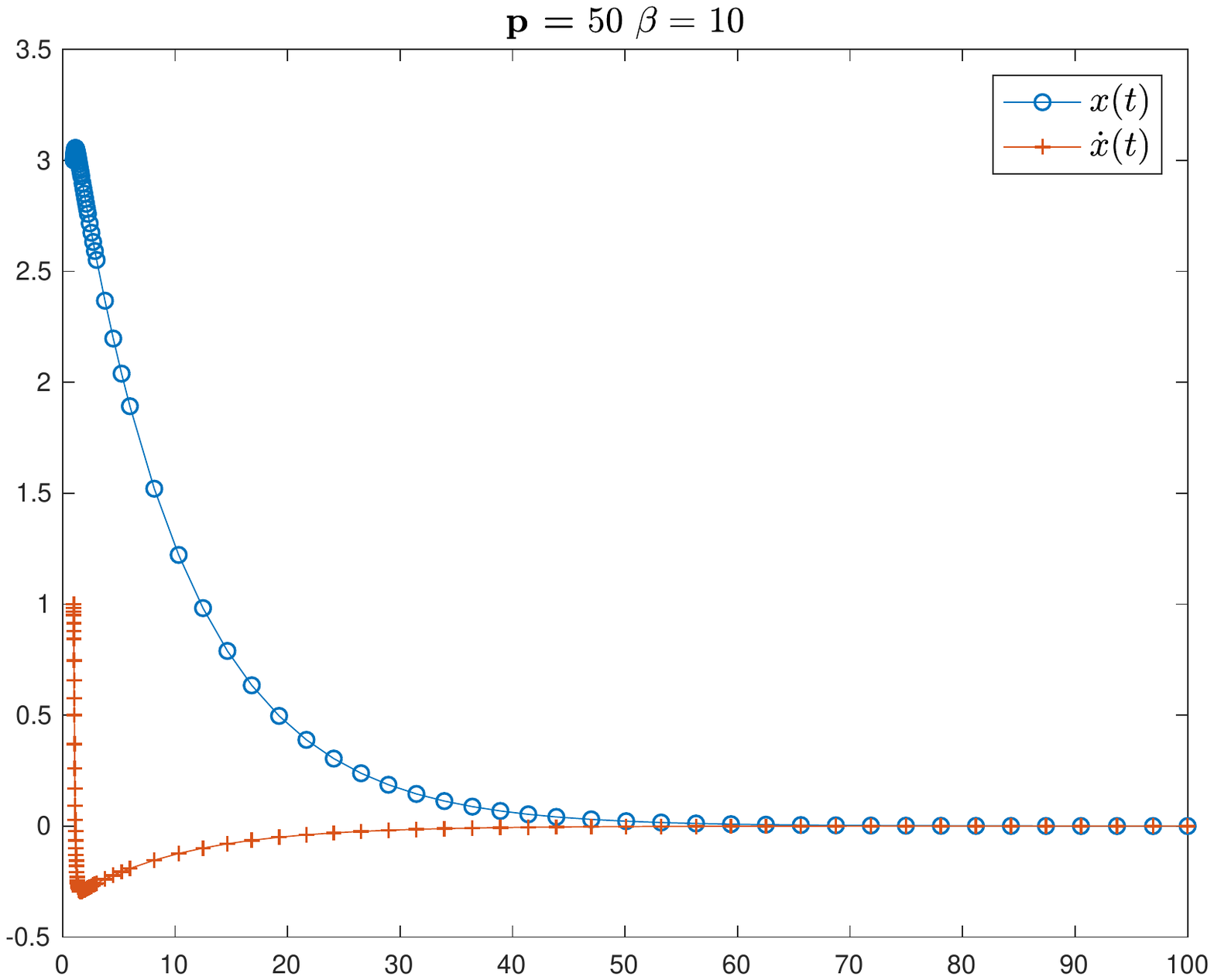}}\hspace{0.03cm}
{\includegraphics*[viewport=78 200  540 600,width=0.325\textwidth]{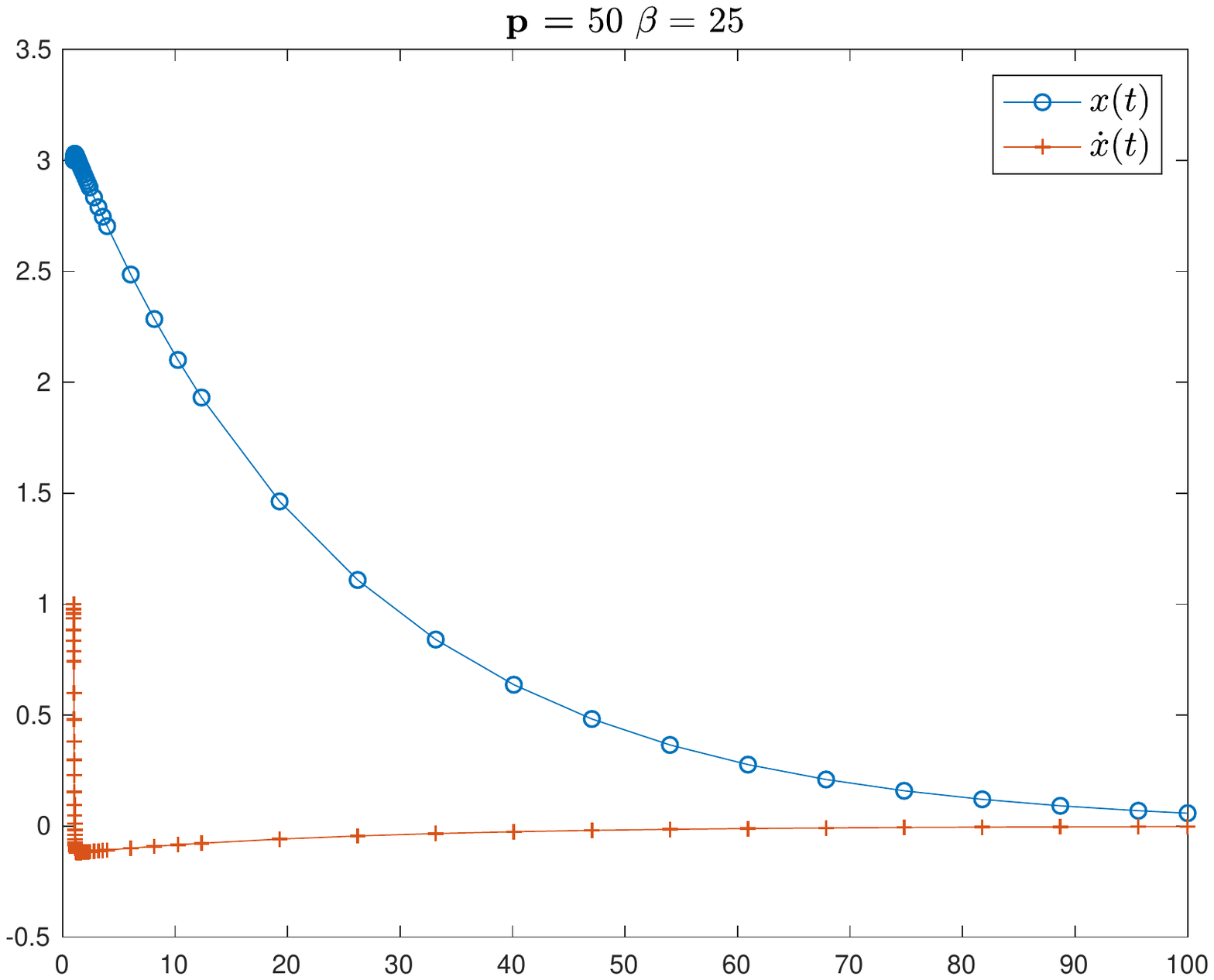}}\hspace{0.03cm}
	{\includegraphics*[viewport=78 200  540 600,width=0.325\textwidth]{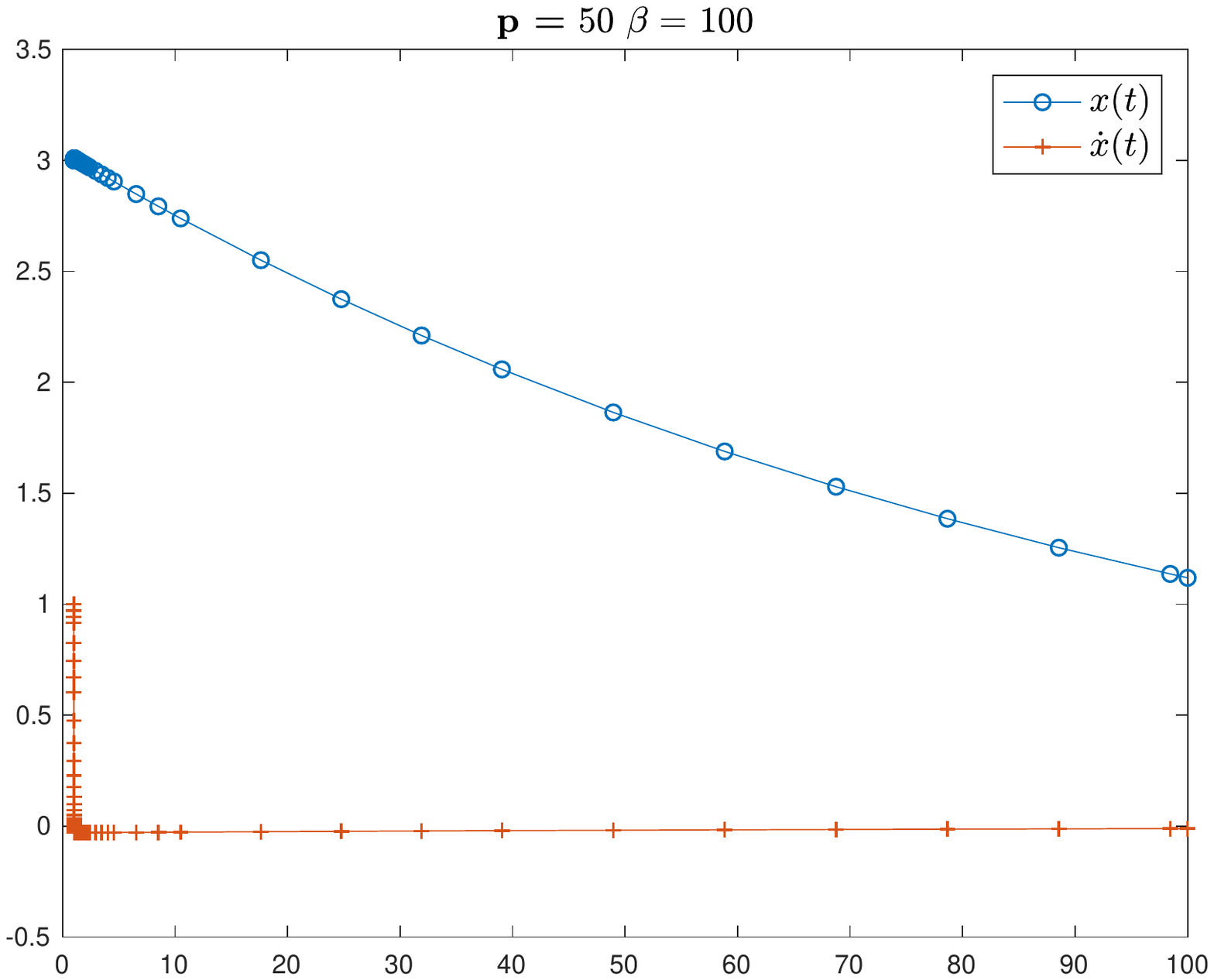}}
	\caption{\small Evolution of the trajectories $x(t)$ (blue) and $\dot x(t)$ (red line) of  \eqref{adige_v_one_dim_H} for different values of $\beta $.
}
	\label{fig:ex3}	
\end{figure}
\end{small}


So, we take $\cH = \R$, $f(x) = \demi |x|^2$, and $\phi (u) = \frac{1}{p} |u|^p$ with $p>1$.
Then, (ADIGE-VH) writes
\begin{equation}\label{adige_v_one_dim_H}
\ddot x(t) +  |\dot x(t)|^{p-2} \dot x(t)+ \beta  \dot x(t) + x(t)=0.
\end{equation}

For $\beta >0$, we are in the framework of Theorem \ref{strong_convex_thm}, with $\phi (u)= \frac{\beta}{2}|u|^2 + \frac{p}{2}|u|^p $. So we have convergence at an exponential rate of $x(t)$ and $\dot{x}(t)$ towards zero. This makes a big contrast with the case $\beta=0$, for which we have convergence towards zero, but with many oscillations in the case of weak damping ($p$ large).
Note that that even for very small $\beta >0$, we have a rapid stabilization of the trajectory towards the origin. On the other hand, taking large $\beta$ is not beneficial, we can observe on Figure \ref{fig:ex3}  that the quality of convergence is degraded in this case.
Indeed, since the damping attached to $|\dot x(t)|^{p-2} \dot x(t)$ is negligeable for large $p$ with respect   to the damping attached to  $\beta \dot x(t)$, the "optimal" value of $\beta$ is close  to the optimal value for (HBF). So, according to Theorem \ref{strong-conv-thm}, it is close  to  $2\sqrt{\mu}$ where $\mu$ is the coefficient of strong convexity of $f$ (see Theorem \ref{strong-conv-thm}). In our situation, this gives
$\beta \sim 2$.

\subsection{Link with the regularized Newton method}

Let us specify the link between our study and Newton's method for solving $Ax \ni0$, where $A$ is a general maximally monotone operator (for convex minimization take $A =\partial f$).
To overcome the ill-posed character of the  continuous Newton method, the following first-order evolution system  was studied by
Attouch--Svaiter \cite{ASv},
\begin{equation*}
 \left\{
\begin{array}{l}
v(t)  \in     A(x(t)) \hspace{2cm} \\
\rule{0pt}{15pt}
 \gamma(t)  \dot{x}(t)  + \beta   \dot{v}(t) + v(t) =0 .
 \end{array}\right.
\end{equation*}
The system can be considered as a continuous version of the
Levenberg--Marquardt, which acts as a regularization of the Newton method.
Under a fairly general assumption on the regularization parameter $\gamma (t)$, this system is well-posed and generates trajectories that converge weakly to equilibria. Parallel results have been obtained for the associated proximal algorithms obtained by implicit temporal discretization, see   \cite{AAS}, \cite{AMAS}, \cite{ARS}.
Formally when $A$ is differentiable, this system writes as
$
\gamma(t)  \dot{x}(t) + \beta   \frac{d}{dt} \left( A(x(t))\right)   + A(x(t)) = 0.
$
When $A =\nabla f$ we obtain
\begin{equation}\label{ARS_Newton}
\gamma(t)  \dot{x}(t) + \beta   \nabla^2 f (x(t)) \dot{x}(t)  + \nabla f(x(t)) = 0.
\end{equation}
 The system (ADIGE-VH) considered in the previous section can be seen as an inertial version of the above system \eqref{ARS_Newton}.
Most interesting, Attouch--Redont--Svaiter developed in \cite{ARS} a closed-loop version of the above results.
They showed the convergence of the trajectories generated by the closed-loop control system when $0<p<1$, where $A$ is a general maximally monotone operator:
\begin{equation*}
\left\{
\begin{array}{l}
v(t) \in A(x(t)) \vspace{3mm}\\
 \| v(t) \|^p  \dot{x}(t) +  \dot{v}(t) + v(t)=0 \vspace{3mm}\\
 x(0) =x_0, \, v(0)\in A (x_0),  v_0 \neq 0.
 \end{array}\right.
\end{equation*}

\noindent For optimization problems, this naturally suggests to consider autonomous inertial systems where the damping coefficient is a closed-loop control of the gradient of $f$.
A first answer to this question has been obtained by Lin--Jordan \cite{LJ} who considered the autonomous system
\begin{equation}\label{general_coef}
 \ddot{x}(t) + \gamma (t)  \dot{x}(t) +  \beta (t) \nabla^2  f (x(t)) \dot{x} (t) + b(t)\nabla  f (x(t)) = 0,
\end{equation}
where $\gamma$, $\beta$ and $b$ are defined by the following formulas:.
\begin{equation}\label{general_coef_b}
\left\{
\begin{array}{l}
 | \lambda(t)|^p   \|  \nabla  f (x(t)) \|^{p-1} =\theta \vspace{2mm}
 \\
 a(t)=   \frac{1}{4}\left(  \int_0^t \sqrt{\lambda (s)}ds +c \right)^2
\vspace{2mm}\\
 \gamma(t) = 2 \frac{\dot{a}(t)}{a(t)} - \frac{\ddot{a}(t)}{\dot{a}(t)} \vspace{2mm} \\
 \beta (t) = \left(\frac{\dot{a}(t)}{a(t)}\right)^2  \vspace{2mm}\\
 b(t)= \frac{\dot{a}(t)( \dot{a}(t) + \ddot{a}(t)  )}{a(t)}
\end{array}\right.
\end{equation}

As a specific feature, the damping coefficients are expressed with the help of $\lambda (t)$ which is equal to a power of the inverse of the norm of the gradient of $f$.
The authors give some interesting non-trivial convergence rates for values. According to the presence of the Hessian driven damping term, they show the fast convergence towards zero of the gradient norms.

\section{Closed-loop damping involving the velocity and the gradient}
\label{Sec: combine}
Let's consider the following system, where the damping term $\partial \phi \Big(\dot{x}(t) + \beta \nabla  f (x(t)\Big)$ involves both the velocity vector and  the gradient of the potential function $f$
\begin{equation}\label{closed_loop_inertial_both_1}
\mbox{\rm (ADIGE-VGH)} \quad \ddot{x}(t) + \partial \phi \Big(\dot{x}(t) + \beta \nabla  f (x(t)\Big)  + \beta \nabla^2  f (x(t)) \dot{x} (t) + \nabla  f (x(t)) \ni 0.
\end{equation}
The parameter  $\beta \geq 0$ is  attached to the geometric  damping induced by the Hessian. As previously considered, $\phi$ is a damping potential function. The suffix V,G,H make respectively reference to the Velocity, the Gradient of $f$, and the Hessian of $f$, which enter the damping terms of the above dynamic.
This model makes it possible to encompass several situations.

\medskip

\noindent $\bullet$  When $\beta=0$, we recover the closed loop controled system
\begin{equation}\label{closed_loop_inertial_both_2}
 \ddot{x}(t) + \partial \phi \Big( \dot{x}(t) \Big)  + \nabla  f (x(t)) = 0,
\end{equation}
studied from  Section \ref{sec: basic_1} to \ref{sec: basic_3}. So studying \eqref{closed_loop_inertial_both_1} can be viewed as an extension of our previous study.
Still, we will see that taking $ \beta> 0 $ induces several favorable properties.

\medskip

\noindent $\bullet$  When $\phi (u) = \frac{\gamma}{2} \|u\|^2$, we obtain the system
\begin{equation}\label{closed_loop_inertial_both_3}
 \ddot{x}(t) + \gamma \dot{x}(t)  + \beta \nabla^2  f (x(t)) \dot{x} (t) + (1+ \gamma \beta) \nabla  f (x(t)) = 0,
\end{equation}
 studied in Section \ref{Sec:Hessian}, and which was introduced by Alvarez--Attouch--Bolte--Redont in \cite{AABR}.

\subsection{Existence and uniqueness results}

A key property for studying \eqref{closed_loop_inertial_both_1} is the following equivalent formulation,  different from the Hamiltonian formulation, and whose proof is immediate. Just introduce
the new variable $u(t) := \dot{x}(t) +  \beta \nabla  f (x(t)) $.

\begin{proposition}\label{first_order_both}
The following are equivalent

\medskip

 \quad $(i) $  \,   $\ddot{x}(t) + \partial \phi \Big(\dot{x}(t) + \beta \nabla  f (x(t)\Big)  + \beta \nabla^2  f (x(t)) \dot{x} (t) + \nabla  f (x(t)) \ni 0.$
\begin{equation*}
\quad (ii) \; \left\{
\begin{array}{l}
\dot{x}(t) +  \beta \nabla  f (x(t)) -u(t) = 0 \hspace{8cm} \\
\rule{0pt}{15pt}
 \dot{u}(t) +\partial \phi (u(t)) +   \nabla  f (x(t) \ni 0.\hspace{7.5cm}
 \end{array}\right.
\end{equation*}

\end{proposition}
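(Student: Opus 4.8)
The plan is to use the substitution $u(t) := \dot{x}(t) + \beta \nabla f(x(t))$ already introduced before the statement, and to check the two implications by a direct computation based on the chain rule $\frac{d}{dt}\big(\nabla f(x(t))\big) = \nabla^2 f(x(t))\dot{x}(t)$, which is legitimate since $f$ is twice continuously differentiable and $x$ is of class $\mathcal{C}^1$ (so that $t\mapsto\nabla f(x(t))$ is differentiable with the stated derivative, for a.e.\ $t$).

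For $(i)\Rightarrow(ii)$, I would start from a (strong) solution $x$ of $(i)$ and set $u(t):=\dot{x}(t)+\beta\nabla f(x(t))$. The first line of $(ii)$ then holds tautologically. Since $\dot{x}$ is (locally) absolutely continuous and $\nabla f$ is Lipschitz on bounded sets, $u$ is again (locally) absolutely continuous with $\dot{u}(t)=\ddot{x}(t)+\beta\nabla^2 f(x(t))\dot{x}(t)$ for a.e.\ $t$. Substituting this expression for $\dot u(t)$ and the identity $u(t)=\dot x(t)+\beta\nabla f(x(t))$ into the inclusion of $(i)$ yields exactly $\dot{u}(t)+\partial\phi(u(t))+\nabla f(x(t))\ni 0$, i.e.\ the second line of $(ii)$. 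Conversely, for $(ii)\Rightarrow(i)$, the first line of $(ii)$ gives $u(t)=\dot{x}(t)+\beta\nabla f(x(t))$, hence once more $\dot{u}(t)=\ddot{x}(t)+\beta\nabla^2 f(x(t))\dot{x}(t)$ a.e.; plugging this into the second line of $(ii)$ and rewriting $\partial\phi(u(t))=\partial\phi\big(\dot x(t)+\beta\nabla f(x(t))\big)$ recovers $(i)$.

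The only point requiring a little care — and the reason the proof can be called immediate rather than entirely trivial — is the regularity bookkeeping: one must verify that the pair $(x,u)$ has the regularity of a strong solution of $(ii)$ if and only if $x$ has the regularity of a strong solution of $(i)$. This follows because the change of variables $u\leftrightarrow\dot x+\beta\nabla f(x)$ is, on bounded sets, a bi-Lipschitz correspondence in the relevant function spaces (using that $\nabla f$ is Lipschitz on bounded sets), so absolute continuity, $L^\infty$ bounds on $\ddot x$, and $\mathcal{C}^1$ regularity of $x$ are transported in both directions. I expect this to be the mildest of obstacles; no serious difficulty arises. It is worth emphasising, though, why the equivalence is recorded: formulation $(ii)$ involves only first-order time derivatives and only $\nabla f$ (not $\nabla^2 f$), which is precisely the structure exploited in the subsequent existence, uniqueness and asymptotic analysis of (ADIGE-VGH).
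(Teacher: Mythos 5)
Your proof is correct and follows exactly the paper's approach: introduce $u(t)=\dot{x}(t)+\beta\nabla f(x(t))$ and apply the chain rule $\frac{d}{dt}\nabla f(x(t))=\nabla^2 f(x(t))\dot x(t)$. The paper indeed presents this as an immediate consequence of the change of variable, and your additional remarks on the regularity transfer are a reasonable elaboration of what the paper leaves implicit.
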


\noindent A major interest of the the formulation $(ii)$ is that it is a first-order system in time and space (without occurence of the Hessian). As such, it requires fewer regularity assumptions on $f$ than in  Theorem \ref{th.existence_uniqueness}.

\begin{theorem}\label{th.existence_uniqueness_both}
Let $f:\cH \to \R$ be a convex  function which is twice continuously differentiable, and    such that $\inf_{\cH} f >-\infty$. Suppose that $\nabla f$ is Lipschitz continuous on the bounded subsets of $\cH$. Let  $\phi: \cH \to \R$ be a convex continuous damping function.
 Then, for  any Cauchy data  $(x_0, x_1 ) \in \cH \times \cH$, there exists a unique strong global solution $x : [0, +\infty[ \to \cH$ of
\mbox{\rm (ADIGE-VGH)}
satisfying $x(0) = x_0$, and $\dot{x}(0)=x_1  $.
\end{theorem}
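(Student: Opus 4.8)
The plan is to work with the first-order reformulation provided by Proposition~\ref{first_order_both}, which turns \mbox{\rm (ADIGE-VGH)} into a differential inclusion in the product space $\cH\times\cH$ governed by the subdifferential of a convex continuous function plus a perturbation that is Lipschitz continuous on bounded sets --- exactly the situation already handled in Theorem~\ref{basic_exist_thm}. Writing $Z(t)=(x(t),u(t))$ with $u(t):=\dot x(t)+\beta\nabla f(x(t))$, item $(ii)$ of Proposition~\ref{first_order_both} becomes
$$
\dot Z(t)+\partial\Phi(Z(t))+G(Z(t))\ni 0,\qquad Z(0)=\big(x_0,\;x_1+\beta\nabla f(x_0)\big),
$$
where $\Phi(x,u):=\phi(u)$ is convex continuous, so $\partial\Phi$ is maximally monotone with $\partial\Phi(x,u)=\{0\}\times\partial\phi(u)$, and $G(x,u):=(\beta\nabla f(x)-u,\ \nabla f(x))$ is, by the local Lipschitz continuity of $\nabla f$, Lipschitz continuous on the bounded subsets of $\cH\times\cH$. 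If $\nabla f$ is globally Lipschitz, then $G$ is globally Lipschitz and the existence and uniqueness of a strong global $Z$ follow directly from the theory of Lipschitz perturbations of convex subdifferentials \cite[Proposition 3.12]{Brezis}; Proposition~\ref{first_order_both} then turns $Z$ back into a solution of \mbox{\rm (ADIGE-VGH)}.

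For the general case I would follow the Moreau--Yosida regularization scheme of Theorem~\ref{basic_exist_thm}: replace $\phi$ by its Moreau envelope $\phi_\lambda$ (still a damping potential, with $\nabla\phi_\lambda$ globally Lipschitz and $\|\nabla\phi_\lambda(u)\|\le\|(\partial\phi)^0(u)\|$), so that the approximate system $\dot Z_\lambda+\nabla\Phi_\lambda(Z_\lambda)+G(Z_\lambda)=0$ has a right-hand side that is Lipschitz on bounded sets and hence a local solution $Z_\lambda=(x_\lambda,u_\lambda)$ by Cauchy--Lipschitz. The a priori estimate comes from the energy $\mathcal E_\lambda(t):=f(x_\lambda(t))-\inf_\cH f+\demi\|u_\lambda(t)\|^2$: differentiating along the system and using $\langle\nabla\phi_\lambda(u),u\rangle\ge\phi_\lambda(u)\ge0$ gives
$$
\frac{d}{dt}\mathcal E_\lambda(t)+\phi_\lambda(u_\lambda(t))+\beta\|\nabla f(x_\lambda(t))\|^2\le0,
$$
so $\mathcal E_\lambda$ is nonincreasing; this yields $\|u_\lambda(t)\|\le\sqrt{2\mathcal E(0)}$ uniformly in $\lambda$, and, for $\beta>0$, the dissipation bound $\int_0^T\|\nabla f(x_\lambda(t))\|^2\,dt\le\mathcal E(0)/\beta$ on each interval of existence $[0,T]$. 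The genuinely new point with respect to Theorem~\ref{basic_exist_thm} is that $\dot x_\lambda=u_\lambda-\beta\nabla f(x_\lambda)$ involves $\nabla f$, so boundedness of $\dot x_\lambda$ is not free; but $\|x_\lambda(t)\|\le\|x_0\|+\int_0^t\|u_\lambda\|+\beta\int_0^t\|\nabla f(x_\lambda)\|$ together with the Cauchy--Schwarz bound $\int_0^t\|\nabla f(x_\lambda)\|\le\sqrt{t}\,\sqrt{\mathcal E(0)/\beta}$ coming from the dissipation estimate shows that $x_\lambda$ is bounded on every $[0,T]$ uniformly in $\lambda$; then $\dot x_\lambda$, $\dot u_\lambda$ and $\ddot x_\lambda$ are bounded too (using property $(iii)$ of the damping potential and the local boundedness of $\nabla f$), which both rules out finite-time blow-up --- so each $x_\lambda$ is global --- and provides $\lambda$-uniform bounds on compact time intervals. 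The case $\beta=0$ is precisely \mbox{\rm (ADIGE-V)}, already settled by Theorem~\ref{basic_exist_thm}.

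The passage to the limit $\lambda\to0$ would be carried out exactly as in Theorem~\ref{basic_exist_thm}. Fixing $T>0$, taking the difference of the equations for $\lambda$ and $\mu$, testing with $Z_\lambda-Z_\mu$, using the Yosida inequality \cite[Theorem 3.1]{Brezis} (controlled through the uniform bound $\|\nabla\Phi_\lambda(Z_\lambda(t))\|\le\sup_{\|\xi\|\le C_T}\|(\partial\phi)^0(\xi)\|$) and the bounded-set Lipschitz continuity of $G$, a Gronwall argument gives $\|Z_\lambda(t)-Z_\mu(t)\|^2\le C_T(\lambda+\mu)$ on $[0,T]$, so $(Z_\lambda)$ converges uniformly on $[0,T]$ to some $Z=(x,u)$. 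To identify the limit one writes $\nabla\phi_\lambda(u_\lambda)=-\dot u_\lambda-\nabla f(x_\lambda)$ and invokes the Mosco convergence in $L^2(0,T;\cH)$ of $\int_0^T\phi_\lambda$ to $\int_0^T\phi$ together with the associated graph convergence of the subdifferentials \cite[Theorem 3.66]{Att00}, obtaining $-\dot u(t)-\nabla f(x(t))\in\partial\phi(u(t))$; the equation $\dot x+\beta\nabla f(x)-u=0$ passes to the limit directly. Since $f\in\mathcal C^2$ one has $\frac{d}{dt}\nabla f(x(t))=\nabla^2f(x(t))\dot x(t)$, so Proposition~\ref{first_order_both} converts $(x,u)$ into a strong global solution of \mbox{\rm (ADIGE-VGH)} with $x(0)=x_0$, $\dot x(0)=x_1$; note that the Hessian-free detour is what lets us require only $\nabla f$ (and not $\nabla^2f$) to be Lipschitz on bounded sets. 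Uniqueness follows as in the globally Lipschitz case: two solutions give, via $(ii)$, two solutions $Z_1,Z_2$ of the product inclusion, and testing the difference with $Z_1-Z_2$, using monotonicity of $\partial\Phi$ and the Lipschitz continuity of $G$ on a ball containing both trajectories, a Gronwall inequality forces $Z_1\equiv Z_2$. I expect the main obstacle to be exactly the a priori control of $x_\lambda$ --- equivalently of $\dot x_\lambda$ --- on bounded time intervals in this non-coercive setting, which is made possible only by the reformulation $(ii)$ and by the gradient dissipation term $\beta\|\nabla f(x)\|^2$ in the energy inequality.
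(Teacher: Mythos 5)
Your proposal follows the paper's scaffold closely---first-order reformulation via Proposition~\ref{first_order_both}, Moreau--Yosida regularization, Mosco/graph convergence of the subdifferentials to pass to the limit, a Gronwall argument on $\|Z_\lambda-Z_\mu\|^2$ to obtain the Cauchy property---and each of these steps is carried out correctly (you even fix the paper's slip in the Cauchy datum $u(0)=x_1+\beta\nabla f(x_0)$). The one place where you genuinely diverge is the a priori bound on $x_\lambda$ over a fixed time interval, which is precisely the delicate new point compared with Theorem~\ref{basic_exist_thm}. You extract it from the gradient dissipation term: the energy inequality yields $\int_0^T\beta\|\nabla f(x_\lambda)\|^2\,dt\le\mathcal E(0)$, hence $\int_0^T\|\nabla f(x_\lambda)\|\,dt\le\sqrt T\,\sqrt{\mathcal E(0)/\beta}$ by Cauchy--Schwarz, and then $\|x_\lambda(t)-x_0\|\le\int_0^t\|u_\lambda\|+\beta\int_0^t\|\nabla f(x_\lambda)\|$ is controlled; this forces $\beta>0$, so you treat $\beta=0$ separately as \mbox{\rm (ADIGE-V)}. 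The paper instead works directly with the defining relation $\dot x+\beta\nabla f(x)=k$, $\|k\|\le\sqrt C$ (from the nonincreasing energy), takes the scalar product with $x(t)-x_0$, discards the cross term $\beta\langle\nabla f(x(t))-\nabla f(x_0),x(t)-x_0\rangle\ge 0$ by monotonicity of $\nabla f$, and concludes $\|x(t)-x_0\|\le t\cdot C'$ by a Gronwall-type integration; this handles all $\beta\ge0$ in one stroke and uses only convexity of $f$, not the dissipation term. Both arguments are correct; yours is a small reorganization that front-loads the dissipation estimate (which the paper records only afterwards, in Theorem~\ref{th.existence_uniqueness_both_conv}) and, as a side observation, shows that for $\beta>0$ the a priori bound would even survive without convexity of $f$, whereas the paper's version leans on monotonicity of $\nabla f$ but covers $\beta=0$ uniformly.
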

\begin{proof} The structure of the proof being similar to Theorem \ref{th.existence_uniqueness}, we only develop the original aspects.

\smallskip

\textbf{Step 1}: \textit{A priori estimate}.   Note that \eqref{closed_loop_inertial_both_1} can be equivalently written as
\begin{equation}\label{closed_loop_inertial_both_4}
\frac{d}{dt} \Big( \dot{x}(t) + \beta \nabla  f (x(t)    \Big)+ \partial \phi \Big(\dot{x}(t) + \beta \nabla  f (x(t)\Big)   + \nabla  f (x(t)) \ni 0.
\end{equation}
After taking the scalar product of
 \eqref{closed_loop_inertial_both_4} with $\dot{x}(t) + \beta \nabla  f (x(t))$, we get
\begin{eqnarray}
\demi \frac{d}{dt} \| \dot{x}(t) &+& \beta \nabla  f (x(t)) \|^2 + \left\langle \partial \phi (\dot{x}(t) + \beta \nabla  f (x(t))), \dot{x}(t) + \beta \nabla  f (x(t)) \right\rangle \nonumber \\
&+&\left\langle \nabla f (x(t)), \dot{x}(t) + \beta \nabla  f (x(t))\right\rangle =0. \label{closed_loop_both_energy_1}
\end{eqnarray}
Since $\phi$ is a damping potential, the subdifferential inequality for convex functions  gives
$$
\left\langle \partial \phi (\dot{x}(t) + \beta \nabla  f (x(t))), \dot{x}(t) + \beta \nabla  f (x(t)) \right\rangle  \geq \phi (\dot{x}(t) + \beta \nabla  f (x(t))).
$$
Collecting the above results, we obtain
\begin{equation}\label{closed_loop_both_b}
 \frac{d}{dt} \left( \demi \| \dot{x}(t) + \beta \nabla  f (x(t)) \|^2    + f(x(t)) - \inf\nolimits_{\cH} f\right) + \phi \Big(\dot{x}(t) + \beta \nabla  f (x(t))\Big) + \beta \|\nabla  f (x(t)))^2 \leq 0.
\end{equation}
Therefore, the energy-like function
\begin{equation}\label{closed_loop_both_energy-decrease}
t \mapsto \demi \| \dot{x}(t) + \beta \nabla  f (x(t)) \|^2    + f(x(t)) - \inf\nolimits_{\cH} f  \quad \mbox{is nonincreasing}.
\end{equation}
This implies that,  as long as the trajectory is defined
\begin{equation}\label{closed_loop_both_c}
  \| \dot{x}(t) + \beta \nabla  f (x(t)) \|^2 \leq C:=  \| x_1 + \beta \nabla  f (x_0) \|^2 + 2 (f(x_0) - \inf\nolimits_{\cH} f ).
\end{equation}
From this,  we will obtain a bound on the trajectory.
We have
$$
\dot{x}(t) + \beta \nabla  f (x(t)) = k(t)
$$
with $\|k(t)\| \leq \sqrt{C}$. Take the scalar product of the above equation with $x(t)-x_0$.
$$
\demi \frac{d}{dt} \|x(t) -x_0 \|^2  + \beta \left\langle \nabla  f (x(t)) -  \nabla  f (x_0), x(t) -x_0 \right\rangle +
\beta \left\langle  \nabla  f (x_0), x(t) -x_0 \right\rangle = \left\langle k(t)
, x(t) -x_0 \right\rangle. $$
According to the convexity of $f$, and hence the monotonicity of $\nabla  f$, and by Cauchy--Schwarz inequality
\begin{equation}\label{dx2_leq_k}
\demi \frac{d}{dt} \|x(t) -x_0 \|^2  \leq ( \|k(t)\| + \beta \|\nabla  f (x_0) \|)\|x(t) -x_0 \|.
\end{equation}
According to the Gronwall inequality, and $\|k(t)\| \leq \sqrt{C}$, we obtain
\begin{equation}\label{closed_loop_both_d}
   \|x(t) -x_0 \| \leq  t \left( \| x_1 + \beta \nabla  f (x_0) \|+ \sqrt{2 (f(x_0) - \inf\nolimits_{\cH} f )} + \beta \|\nabla  f (x_0)\| \right).
\end{equation}

\medskip

\textbf{Step 2}: \textit{first-order formulation of \eqref{closed_loop_inertial_both_1}}.
 According to  Proposition \ref{first_order_both}, it is equivalent
to solve  the first-order system
\begin{equation*}
 \left\{
\begin{array}{l}
\dot{x}(t) +  \beta \nabla  f (x(t)) -u(t) = 0 \hspace{8cm} \\
\rule{0pt}{15pt}
 \dot{u}(t) +\partial \phi (u(t)) +   \nabla  f (x(t) \ni 0,\hspace{7.5cm}
 \end{array}\right.
\end{equation*}
 with the Cauchy data
$x(0) =x_0$, \, $u(0)= x_1$.
Set
$Z(t) = (x(t), u(t)) \in \cH \times \cH .$\\
The above system can be written equivalently as
$$
\dot{Z}(t) + F( Z(t))\ni 0, \quad Z(0) = (x_0, x_1),
$$
where  $F: \cH \times \cH\rightrightarrows \cH \times \cH,\;\;(x,u)\mapsto F(x,u)$ is defined by
$$
F(x,u)= \Big( 0,  \partial \phi(u) \Big) +
 \Big(\beta \nabla f(x) -u, \nabla f(x)   \Big).
$$
Hence $F$ splits as follows
$$
F(x,u) = \partial \Phi (x,u) + G (x,u),
$$
where
\begin{equation}\label{Hamilton_Hessian_b}
\Phi (x,u) =   \phi(u)
\, \mbox{ and } \,
G(x,u) =  \Big(\beta \nabla f(x) -u, \nabla f(x)   \Big).
\end{equation}
Therefore, it is equivalent to solve the following first-order differential inclusion with Cauchy data
\begin{equation}
\label{1odd_b}
\dot{Z}(t) +\partial\Phi(Z(t)) + G( Z(t))\ni 0, \quad Z(0) = (x_0, x_1).
\end{equation}
According to the local Lipschitz assumption on the gradient of $f$, we immediately obtain that  the mapping $(x,u)\mapsto G(x,u)$ is  Lipschitz continuous on the bounded subsets of $\cH\times\cH$.

\textbf{Step 3}: \textit{Approximate dynamics}. We  consider the approximate dynamics
\begin{equation}\label{hbdf_lambda_existence_b}
 \ddot{x}_{\lambda}(t) +  \nabla \phi_{\lambda} \Big(\dot{x}_{\lambda}(t)) + \beta \nabla f (x_{\lambda}(t))\Big) + \beta \nabla^2  f (x(t)) \dot{x}_{\lambda} (t)  + \nabla f (x_{\lambda}(t)) = 0,\; t\in [0,+\infty[
\end{equation}
 which uses the Moreau-Yosida approximates $(\phi_{\lambda})$ of $\phi$.
We will prove that the filtered sequence $(x_{\lambda})$
converges uniformly as $\lambda \to 0$ over the bounded time intervals towards a solution of  \eqref{closed_loop_inertial_both_1}.
The first-order formulation of \eqref{hbdf_lambda_existence_b} gives the following system
$$  \quad \left\{
\begin{array}{l}
\dot{x}_{\lambda}(t) +  \beta \nabla  f (x_{\lambda}(t)) -u_{\lambda}(t) = 0 ;  	 \\
\rule{0pt}{18pt}
 \dot{u}_{\lambda}(t) +\nabla \phi_{\lambda}(u_{\lambda}(t)) + \nabla f(x_{\lambda}(t)) = 0 ,
 \hspace{2.3cm}
\end{array}\right.
$$
 with the Cauchy data
$x_{\lambda}(0) =x_0$, \, $u_{\lambda}(0)= x_1       $.
Set
$Z_{\lambda}(t) = (x_{\lambda}(t), u_{\lambda}(t)) \in \cH \times \cH .$\\
The above system can be written equivalently as
$$
\dot{Z}_{\lambda}(t) + F_{\lambda}( Z_{\lambda}(t))\ni 0, \quad Z_{\lambda}(t_0) = (x_0, x_1),
$$
where  $F_{\lambda}: \cH \times \cH\rightarrow \cH \times \cH,\;\;(x,u)\mapsto F_{\lambda}(x,u)$ is defined by
$$
F_{\lambda}(x,u)= \Big( 0,  \nabla \phi_{\lambda}(u) \Big) +
 \Big(\beta \nabla f(x) -u, \nabla f(x)   \Big).
$$
Hence $F_{\lambda}$ splits as follows
$
F_{\lambda}(x,u) = \nabla \Phi_{\lambda} (x,u) + G (x,u)
$
where  $\Phi $ and $G$ have  been defined in \eqref{Hamilton_Hessian_b}.
Therefore, the approximate equation is equivalent to the  first-order differential system with Cauchy data
\begin{equation}
\label{1odd_existence_bb}
\dot{Z}_{\lambda}(t) +\nabla \Phi_{\lambda}(Z_{\lambda}(t)) + G( Z_{\lambda}(t))= 0, \quad Z_{\lambda}(0) = (x_0, x_1).
\end{equation}
Let us argue with $\lambda >0$ fixed.
 According to the Lipschitz continuity of $\nabla \Phi_{\lambda}$,  and the fact that $G$ is Lipschitz continuous on the bounded sets, we have that the sum operator $ \nabla \Phi_{\lambda} + G$ which governs \eqref{1odd_existence_bb} is  Lipschitz continuous on the bounded sets.
As a consequence, the existence of a local solution to \eqref{1odd_existence_bb} follows from the classical
 Cauchy--Lipschitz theorem.
To pass from a local solution to a global solution, we use the a priori estimates \eqref{closed_loop_both_c} and \eqref{closed_loop_both_d}  obtained in Step 1 of the proof.
Note that these estimates are valid for any damping potential, in particular for $\phi_{\lambda}$.
Suppose that a maximal solution is defined on a finite time interval
$[0,T[$.
According to \eqref{closed_loop_both_d} we first obtain that  $x_{\lambda}(t)$ remains bounded on $[0,T[$. Then, using \eqref{closed_loop_both_c}  and the fact that the gradient of $f$ is Lipschitz continuous on the bounded sets, we obtain that $\dot{x}_{\lambda}(t) $ is also bounded on $[0,T[$.
According to the property \eqref{ineq_phi_b} of the Yosida approximation, the property $(iii)$ of the
 damping potential $\phi$, and \eqref{closed_loop_both_c}, this implies that
$$
\| \ \nabla \phi_{\lambda} \Big(\dot{x}_{\lambda}(t)) + \beta \nabla f (x_{\lambda}(t))\Big)\| \leq \| (\partial \phi )^{0} \Big(\dot{x}_{\lambda}(t)) + \beta \nabla f (x_{\lambda}(t))\Big)\|
$$
is  also bounded by on $[0,T[$.
Moreover, according to the local boundedness assumption made  on the gradient, and the boundedness of $x_{\lambda}(t)$ and $\dot{x}_{\lambda}(t) $, we have that
$\nabla^2 f (x_{\lambda}(t))\dot{x}_{\lambda}(t)$ is  also bounded.
According to the constitutive equation \eqref{hbdf_lambda_existence}, this in turn implies that  $(\ddot{x}_{\lambda} )$ is also bounded.
This implies that the limits of $x_{\lambda}(t)$ and $\dot{x}_{\lambda} (t)$
exist, as $t \to T$. According to this property, passing from a local to a global solution
is a classical argument.
So, for any $\lambda >0$ we have a unique global solution of
\eqref{hbdf_lambda_existence} with satisfies the Cauchy data $x_{\lambda}(0) =x_0$, $\dot{x}_{\lambda}(0)= x_1 $.

\smallskip

\textbf{Step 4}: \textit{Passing to the limit as $\lambda \to 0$}.
Take  $T >0$, and $ \lambda , \mu >0$.
Consider the corresponding solutions on $[0, T]$
\begin{eqnarray*}
&&  \dot{Z}_{\lambda}(t) +\nabla \Phi_{\lambda}(Z_{\lambda}(t)) + G( Z_{\lambda}(t))= 0, \quad Z_{\lambda}(0) = (x_0, x_1)
\\
&&\dot{Z}_{\mu}(t) +\nabla \Phi_{\mu}(Z_{\mu}(t)) + G( Z_{\mu}(t))= 0, \quad Z_{\mu}(0) = (x_0, x_1).
\end{eqnarray*}
Let's make the difference between the two equations, and take the scalar product by $Z_{\lambda}(t) - Z_{\mu}(t)$. We get
\begin{eqnarray}
\demi \frac{d}{dt}\| Z_{\lambda}(t) - Z_{\mu}(t) \|^2 &+ &
\left\langle  \nabla \Phi_{\lambda}(Z_{\lambda}(t)) - \nabla \Phi_{\mu}(Z_{\mu}(t)) ,  Z_{\lambda}(t) - Z_{\mu}(t) \right\rangle \nonumber\\
&+& \left\langle  G( Z_{\lambda}(t)) - G( Z_{\mu}(t)) ,  Z_{\lambda}(t) - Z_{\mu}(t) \right\rangle =0 . \label{basic_ex_Y_bb}
\end{eqnarray}
We now use the following ingredients:

\medskip

 i) According to the general properties of the Yosida approximation (see \cite[Theorem 3.1]{Brezis}), we have
$$
\left\langle  \nabla \Phi_{\lambda}(Z_{\lambda}(t)) - \nabla \Phi_{\mu}(Z_{\mu}(t)) ,  Z_{\lambda}(t) - Z_{\mu}(t) \right\rangle
\geq -\frac{\lambda}{4} \|\nabla \Phi_{\mu}(Z_{\mu}(t))  \|^2 -
\frac{\mu}{4} \|\nabla \Phi_{\lambda}(Z_{\lambda}(t))  \|^2.
$$
According to  the  energy estimates, the sequence $(Z_{\lambda})$ is uniformly bounded on $[0, T]$, let
$$\| Z_{\lambda}(t)\|\leq C_T .$$
 From these properties we immediately infer
$$
 \|\nabla \Phi_{\lambda}(Z_{\lambda}(t))  \|  \leq \sup_{\|\xi\|\leq C_T} \|(\partial \phi)^0(\xi)  \|= M_T <+\infty
$$
because our assumption on $\phi$ gives that $(\partial \phi)^0$ is bounded on the bounded sets.
Therefore
$$
\left\langle  \nabla \Phi_{\lambda}(Z_{\lambda}(t)) - \nabla \Phi_{\mu}(Z_{\mu}(t)) ,  Z_{\lambda}(t) - Z_{\mu}(t) \right\rangle
\geq -\frac{1}{4} M_T (\lambda +\mu).
$$

ii)  Since the mapping $G : \cH \times  \cH  \to \cH \times  \cH$ is Lipschitz continuous on the bounded sets, and
using again that the sequence $(Z_{\lambda})$ is uniformly bounded on $[0, T]$, we deduce that there exists a constant $L_T$ such that
$$
\|  G( Z_{\lambda}(t)) - G( Z_{\mu}(t)) \| \leq L_T \|   Z_{\lambda}(t) -  Z_{\mu}(t) \|.
$$
Combining the above results, and using Cauchy--Schwarz inequality, we deduce from
\eqref{basic_ex_Y_b} that
$$
\demi \frac{d}{dt}\| Z_{\lambda}(t) - Z_{\mu}(t) \|^2
\leq  \frac{1}{4} M_T (\lambda +\mu) + L_T \|   Z_{\lambda}(t) -  Z_{\mu}(t) \|^2 .
$$
We now proceed with the integration of this differential inequality.
According to the fact that $ Z_{\lambda}(0) - Z_{\mu}(0) =0$, elementary calculus gives
$$
\| Z_{\lambda}(t) - Z_{\mu}(t) \|^2 \leq \frac{M_T}{4L_T}(\lambda +\mu) \Big( e^{2L_T (t-t_0}  -1  \Big).
$$
Therefore, the filtered sequence $(Z_{\lambda})$  is a Cauchy sequence for the uniform convergence on $[0, T]$, and hence it converges uniformly.
This means the uniform convergence on $[0, T]$ of $x_{\lambda}$ and $\dot{x}_{\lambda}$ to $x$ and $\dot{x}$ respectively.
Proving that $x$ is solution of \eqref{closed_loop_inertial_both_1}  is obtained in a similar way as
in Theorem \ref{basic_exist_thm}. Just rely on the property
$\frac{d}{dt}\left( \nabla f (x_{\lambda}(t)) \right) =  \nabla^2 f (x_{\lambda}(t))\dot{x}_{\lambda}(t)          $ to pass to the limit on the Hessian term.
\end{proof}

\subsection{Convergence properties}

We have the following convergence properties for the solutions trajectories of the system \eqref{closed_loop_inertial_both_1}
with  closed loop damping involving both the velocity and the gradient.

\begin{theorem}\label{th.existence_uniqueness_both_conv}
Let $f:\cH \to \R$ be a convex  function which is twice continuously differentiable, and    such that $\argmin _{\cH} f \neq \emptyset$. We suppose that $\nabla f$ is Lipschitz continuous on the bounded subsets of $\cH$. Suppose that $\beta >0$.
 Let  $\phi: \cH \to \R$ be a convex continuous damping function.
 Then, for any solution trajectory $x : [0, +\infty[ \to \cH$ of
\mbox{\rm (ADIGE-VGH)} we have
\begin{eqnarray*}
&&(i) \mbox{ The energy-like function} \, \,  t \mapsto \demi \| \dot{x}(t) + \beta \nabla  f (x(t)) \|^2    + f(x(t))  \mbox{ is nonincreasing};\\
&&(ii)  \int_0^{+\infty}  \phi \Big(\dot{x}(t) + \beta \nabla  f (x(t))\Big) dt <+\infty ;\\
&&(iii) \int_0^{+\infty}    \|\nabla  f (x(t)) \| ^2 dt <+\infty
\end{eqnarray*}
Suppose moreover that there exists $r>0$ such that for all $u\in \cH$
$\phi (u) \geq r\|u\|$.
Then the following properties are satisfied:

\medskip

a) The trajectory $x(\cdot)$ converges weakly as $t\to +\infty$,  and its limit belongs to $\argmin _{\cH} f$.

\medskip

b)  $\dot{x}(t)$ and $\nabla  f (x(t))$ converge strongly to zero as $t\to +\infty$.
\end{theorem}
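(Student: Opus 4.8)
The plan is to read off (i)--(iii) from the energy balance already produced while proving existence, and then, under the sharpness assumption on $\phi$, to combine a finite-length estimate for $\dot x(t)+\beta\nabla f(x(t))$ with Opial's lemma to get (a) and (b). For parts (i)--(iii): exactly as in Step~1 of the proof of Theorem~\ref{th.existence_uniqueness_both}, I would take the scalar product of the equivalent form \eqref{closed_loop_inertial_both_4} of (ADIGE-VGH) with $\dot x(t)+\beta\nabla f(x(t))$ and use $\langle\partial\phi(v),v\rangle\ge\phi(v)$ together with $\langle\nabla f(x(t)),\dot x(t)+\beta\nabla f(x(t))\rangle=\frac{d}{dt}f(x(t))+\beta\|\nabla f(x(t))\|^2$, which produces the dissipation inequality \eqref{closed_loop_both_b}. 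Dropping the two nonnegative terms there yields (i); integrating over $[0,+\infty[$ and using that the energy $\tfrac12\|\dot x+\beta\nabla f(x)\|^2+f(x)-\min_{\cH}f$ is nonnegative yields $\int_0^{+\infty}\phi(\dot x+\beta\nabla f(x))\,dt<+\infty$ (this is (ii)) and, since $\beta>0$, $\int_0^{+\infty}\|\nabla f(x)\|^2\,dt<+\infty$ (this is (iii)).

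Now suppose $\phi(u)\ge r\|u\|$ and set $u(t):=\dot x(t)+\beta\nabla f(x(t))$, so that by Proposition~\ref{first_order_both} the second equation reads $\dot u(t)+\partial\phi(u(t))+\nabla f(x(t))\ni0$. From (ii), $r\int_0^{+\infty}\|u(t)\|\,dt\le\int_0^{+\infty}\phi(u(t))\,dt<+\infty$, hence $u\in L^1(0,+\infty;\cH)$, and by the bound \eqref{closed_loop_both_c} the map $\|u(\cdot)\|$ is bounded, so $\|u\|^2\in L^1$ as well. To get boundedness of the trajectory without coercivity, I would fix $z^\ast\in\argmin f$ (so $\nabla f(z^\ast)=0$) and set $h(t):=\tfrac12\|x(t)-z^\ast\|^2$; using $\dot x=u-\beta\nabla f(x)$ and monotonicity of $\nabla f$, one gets $\dot h(t)\le\langle x(t)-z^\ast,u(t)\rangle\le\sqrt{2h(t)}\,\|u(t)\|$, hence $\frac{d}{dt}\sqrt{h(t)}\le\frac1{\sqrt2}\|u(t)\|$ a.e., so $\sqrt{h}$ stays bounded and $x(\cdot)$ is bounded. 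With $M:=\sup_{t\ge0}\|x(t)-z^\ast\|<+\infty$ we then have $\dot h\le M\|u\|\in L^1$, so $t\mapsto h(t)-M\int_0^t\|u\|$ is nonincreasing and bounded below, which gives that $\lim_{t\to+\infty}\|x(t)-z^\ast\|$ exists for every $z^\ast\in\argmin f$ — Opial's first condition.

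Boundedness of $x(\cdot)$ and $f\in\cC^2$ give $\|\nabla^2 f(x(t))\|\le M'$ for all $t$. Writing the second equation as $\dot u=-\xi-\nabla f(x)$ with $\xi\in\partial\phi(u)$ and using $\langle\xi,u\rangle\ge\phi(u)\ge0$, I get $\frac{d}{dt}\|u(t)\|^2\le2\|\nabla f(x(t))\|\,\|u(t)\|\le\|\nabla f(x(t))\|^2+\|u(t)\|^2$, whose right-hand side lies in $L^1$ by (iii) and the previous paragraph; since $\|u\|^2\in L^1$, the elementary lemma "a nonnegative $L^1$ function with derivative $\le$ an $L^1$ function converges, hence to $0$" forces $\|u(t)\|^2\to0$. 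Similarly, using $\nabla^2 f(x)$ positive semidefinite to discard a nonpositive term, $\frac{d}{dt}\|\nabla f(x(t))\|^2\le2\langle\nabla^2 f(x(t))u(t),\nabla f(x(t))\rangle\le M'\big(\|u(t)\|^2+\|\nabla f(x(t))\|^2\big)\in L^1$, and $\|\nabla f(x)\|^2\in L^1$ by (iii), so $\|\nabla f(x(t))\|^2\to0$. Therefore $\nabla f(x(t))\to0$ and $\dot x(t)=u(t)-\beta\nabla f(x(t))\to0$, which proves (b). Finally, if $x(t_n)\rightharpoonup\bar x$ along some $t_n\to+\infty$, then since $\nabla f(x(t_n))\to0$ strongly and the graph of the maximally monotone operator $\nabla f$ is closed for the weak$\times$strong topology, $\nabla f(\bar x)=0$, i.e.\ $\bar x\in\argmin f$; this is Opial's second condition, so Opial's lemma gives $x(t)\rightharpoonup x_\infty$ for some $x_\infty\in\argmin f$, which is (a).

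The two points that need care are: obtaining boundedness of the trajectory without coercivity (one must route through the convexity inequality and the $L^1$-bound on $\dot x+\beta\nabla f(x)$), and upgrading the mere integrability estimates (ii)--(iii) to the pointwise limits $\dot x(t)\to0$ and $\nabla f(x(t))\to0$ — the effective trick being to write differential inequalities for $\|u\|^2$ and $\|\nabla f(x)\|^2$ whose right-hand sides are again $L^1$, which relies on $2ab\le a^2+b^2$ and, crucially, on the positive semidefiniteness of $\nabla^2 f$ to eliminate the sign-indefinite term in the second inequality.
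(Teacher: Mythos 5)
Your parts (i)--(iii) are exactly the paper's argument, read off from the energy dissipation inequality. For parts (a) and (b), however, you take the Opial route, which is the paper's \emph{alternative} approach (Section~\ref{rem-quasi-hessian_both}), not the proof it gives immediately below the theorem. The paper's primary proof writes $\dot x(t)+\beta\nabla f(x(t))=k(t)$ with $k\in L^1$, then invokes Br\'ezis's theorem on $L^1$ perturbations of the steepest descent semigroup to get weak convergence; since that theorem is stated under an inf-compactness hypothesis, the paper has to supply an extra proposition removing it (via Bruck's theorem). Your argument is more self-contained: by taking $z^*\in\argmin f$, using monotonicity of $\nabla f$ to kill the $-\beta\nabla f(x)$ contribution in $\frac{d}{dt}\tfrac12\|x-z^*\|^2\le\langle x-z^*,u\rangle$, and integrating, you get boundedness and Opial's first condition directly, with no detour through a semigroup comparison and no inf-compactness issue. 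This is a genuine and clean improvement in presentation.

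The second real difference is in (b). The paper establishes that $\ddot x$ is bounded (which requires property (iii) of Definition~\ref{def1} to bound the subgradient term $\partial\phi(u(t))$ uniformly through the Yosida construction) and then applies the standard ``$L^1$ plus bounded derivative implies convergence to zero'' lemma to $\xi(t)$ and $\nabla f(x(t))$. You instead write differential inequalities for $\|u(t)\|^2$ and $\|\nabla f(x(t))\|^2$ whose right-hand sides lie in $L^1$, exploiting $\langle\partial\phi(u),u\rangle\ge 0$ and, crucially, the positive semidefiniteness of $\nabla^2 f$ to discard the $-2\beta\langle\nabla^2 f(x)\nabla f(x),\nabla f(x)\rangle$ term. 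This avoids bounding the (possibly multivalued, possibly non-minimal-norm) subgradient altogether. Both routes are valid; yours relies only on what is already in hand ($\|u\|\in L^1\cap L^\infty$, hence $\|u\|^2\in L^1$, and $\|\nabla f(x)\|^2\in L^1$ from (iii)). The only point I would flag for clarity is that your ``elementary lemma'' should be stated with the right-hand side $w$ nonnegative (which it is here, being $\|u\|^2+\|\nabla f(x)\|^2$), so that $\int_0^t w$ has a limit; otherwise one should argue via $|w|$. The closedness of the graph of the maximally monotone $\nabla f$ in the weak--strong topology for Opial's second condition is standard and correct, and equivalent to the lower-semicontinuity argument the paper uses in Section~\ref{rem-quasi-hessian_both}.
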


\begin{proof}
Items $(i)$ to $(iii)$ are direct consequences of the  estimate \eqref{closed_loop_both_b}  established in the Step 1 of the proof of Theorem \ref{th.existence_uniqueness_both}.\\
Let's now make the additional assumption $\phi (u) \geq r\|u\|$.
According to item $(ii)$, we obtain
$$
\int_0^{+\infty}  \| \dot{x}(t) + \beta \nabla  f (x(t))\| dt
\leq \frac{1}{r} \int_0^{+\infty}  \phi \Big(\dot{x}(t) + \beta \nabla  f (x(t))\Big) dt  <+\infty .
$$
Therefore, $x(\cdot)$ is solution of the non-autonomous steepest descent equation
$$
\dot{x}(t) + \beta \nabla  f (x(t)) = k(t)
$$
with $k \in L^1 (0, +\infty; \cH)$.
We can apply  Theorem 3.11 of \cite{Brezis}, which gives the convergence of the trajectory to a point in $\argmin _{\cH} f$.
In particular, the trajectory remains bounded. According to item $(i)$, we get that  $\dot{x}(t)$ is also bounded.
Returning  to the constitutive equation \eqref{closed_loop_inertial_both_1}, we deduce that the acceleration $\ddot{x}(t)$ is also bounded.
This implies that $\xi(t)= \dot{x}(t) + \beta \nabla  f (x(t))$ satisfies
$$\int_0^{+\infty}    \|\xi (t)\| dt <+\infty \, \mbox{ and } \, \|\dot{\xi}(t)\|\leq M
$$
for some $M >0$.
 This classically implies that $\xi(t)= \dot{x}(t) + \beta \nabla  f (x(t))$ tends to zero as $t \to +\infty$.
According to  item $(iii)$, the same argument applied to $\nabla  f (x(t))$ gives that   $\nabla  f (x(t))$ tends to zero as $t \to +\infty$. As a difference of the two previous quantities, we conclude that $\dot{x}(t)$ tends to zero as $t \to +\infty$.
\end{proof}

Indeed, Theorem 3.11 of \cite{Brezis} was proved under the additional sassumption that $f$ is inf-compact. Recent progress based on Opial lemma  \cite{Op} and Bruck theorem \cite{Bru} allows to extend it to general convex function $f$, without making this additional assumption.
This is made precise below.

\begin{proposition} Let $f: \cH \to \Rb $ be a convex lower semicontinuous
proper function such that $\argmin_{\cH} f \neq \emptyset$, and let $k \in L^1 (0, +\infty; \cH)$.
Suppose that $x: [0, +\infty[ \to \cH$ is a strong global solution trajectory of
$$
\dot{x}(t) +  \partial  f (x(t)) \ni k(t).
$$
Then, the trajectory $x(\cdot)$ converges weakly as $t\to +\infty$,  and its limit belongs to $\argmin _{\cH} f$.
\end{proposition}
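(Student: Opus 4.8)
The plan is to deduce the conclusion from Opial's lemma applied to the set $S:=\argmin_{\cH}f$. Since $f$ is convex, lower semicontinuous and proper, $S=\zer\partial f=(\partial f)^{-1}(0)$ is a closed set, and it is nonempty by hypothesis. Opial's lemma yields weak convergence of $x(\cdot)$ to an element of $S$ provided (a) $\lim_{t\to+\infty}\|x(t)-z\|$ exists for every $z\in S$, and (b) every weak sequential cluster point of $x(t)$ as $t\to+\infty$ lies in $S$. So the whole proof reduces to checking (a) and (b).

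For (a), I would fix $z\in S$, use that $k(t)-\dot x(t)\in\partial f(x(t))$ for a.e.\ $t$, and combine this with $0\in\partial f(z)$ via the monotonicity of $\partial f$ to get $\langle k(t)-\dot x(t),\,x(t)-z\rangle\ge 0$, hence
\[
\demi\frac{d}{dt}\|x(t)-z\|^2\le\langle k(t),\,x(t)-z\rangle\le\|k(t)\|\,\|x(t)-z\|.
\]
Since $t\mapsto\|x(t)-z\|$ is locally absolutely continuous, it follows that $t\mapsto\|x(t)-z\|-\int_0^t\|k(s)\|\,ds$ is nonincreasing; it is bounded below by $-\|k\|_{L^1}$, hence has a limit, and because $k\in L^1(0,+\infty;\cH)$ the integral converges, so $\|x(t)-z\|$ converges. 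In particular $x(\cdot)$ is bounded. This step is a routine Gronwall-type estimate.

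For (b) --- which I expect to be the main obstacle, since for a general nonsmooth, non--inf-compact $f$ one cannot work directly with the values $f(x(t))$ --- the idea is to transfer the (known) weak convergence of the unperturbed subgradient flow to $x(\cdot)$ by restarting the flow at a large time. Let $(S(t))_{t\ge0}$ be the contraction semigroup generated by $-\partial f$. For $T\ge 0$ and $t\ge T$ put $v_T(t):=S(t-T)x(T)$, the strong solution of $\dot v+\partial f(v)\ni0$ with $v_T(T)=x(T)$; comparing $x$ and $v_T$ by the same monotonicity computation gives the $L^1$-perturbation estimate $\|x(t)-v_T(t)\|\le\int_T^t\|k(s)\|\,ds\le\rho(T)$, where $\rho(T):=\int_T^{+\infty}\|k(s)\|\,ds\to0$ as $T\to+\infty$. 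By Bruck's theorem, $v_T(t)\rightharpoonup w_T\in S$ as $t\to+\infty$, for each fixed $T$. Now if $x(t_n)\rightharpoonup x_\infty$ with $t_n\to+\infty$, then for a fixed $T$ and $n$ large (so $t_n\ge T$) I would pass to a further subsequence along which the bounded sequence $x(t_n)-v_T(t_n)$ converges weakly to some $r$ with $\|r\|\le\rho(T)$; since $v_T(t_n)\rightharpoonup w_T$ this forces $x_\infty=w_T+r$, hence $\dist(x_\infty,S)\le\rho(T)$. Letting $T\to+\infty$ gives $\dist(x_\infty,S)=0$, and as $S$ is closed, $x_\infty\in S$.

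Once (a) and (b) are in hand, Opial's lemma yields that $x(t)$ converges weakly as $t\to+\infty$ to a point of $S=\argmin_{\cH}f$, which is the assertion. The only non-elementary inputs are the weak convergence of the steepest-descent flow of a convex function with minimizers (Bruck's theorem) and the standard $L^1$-perturbation inequality for evolution equations governed by maximal monotone operators; the remaining arguments are elementary.
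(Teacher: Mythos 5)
Your proof is correct, and it relies on the same two essential inputs as the paper's: the $L^{1}$-perturbation (contraction) estimate comparing $x$ with the unperturbed subgradient flow restarted at a large time $T$, and Bruck's theorem for the weak convergence of that unperturbed flow to a minimizer. The route you take is, however, organized differently. You funnel everything through Opial's lemma, which obliges you to establish separately (a) the quasi-Fej\'er property that $\lim_{t\to+\infty}\|x(t)-z\|$ exists for every $z\in\argmin_{\cH}f$ (your Gronwall step) and (b) that every weak sequential cluster point lies in $\argmin_{\cH}f$ (the restart-and-contraction argument with $\rho(T)\to 0$). The paper instead bypasses Opial: it uses the restart comparison $\|x(t)-v(t-T_\epsilon)\|\le\epsilon$ together with the weak convergence of $v$ to show directly that $t\mapsto\langle x(t),\xi\rangle$ is Cauchy for every $\xi\in\cH$, hence that the weak limit $x_\infty$ of $x(t)$ exists, and then identifies $x_\infty$ as a minimizer by the same $\epsilon$-argument and weak lower semicontinuity of the norm. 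The paper's version is slightly more economical (it never needs the Fej\'er estimate), whereas yours follows the standard Opial template that many readers in optimization will recognize; both are perfectly valid. One small technical point worth making explicit in your step (a): the passage from $h\dot h\le\|k\|h$ (with $h(t)=\|x(t)-z\|$) to $\dot h\le\|k\|$ a.e.\ is harmless even on the set $\{h=0\}$, since there $\dot h=0$ a.e.\ for the absolutely continuous nonnegative function $h$; stating this avoids the appearance of an illicit division by $h$.
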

\begin{proof}
Take $\epsilon >0$. Since $k \in L^1 (0, +\infty; \cH)$, there exists $T_{\epsilon} >0$ such that $\int_{T_{\epsilon}}^{+\infty} \|k(t) \| dt < \epsilon$.
Let's consider the solution $v: [0, +\infty[ \to \cH$ of
$$
\dot{v}(t) + \nabla  f (v(t)) \ni 0; \quad v(0) = x(T_{\epsilon}).
$$
According to the  semigroup of contractions property, we have, for all
$t\geq T_{\epsilon}$
\begin{equation}\label{sg_property}
\| x(t) - v(t-T_{\epsilon})\| \leq \| x(T_{\epsilon}) - v(0)\| +
\int_{T_{\epsilon}}^{t} \|k(t) \| dt \leq \epsilon.
\end{equation}
Take $\xi \in \cH$. By Cauchy--Schwarz inequality, we have
$$
| \left\langle  x(t) - v(t-T_{\epsilon}), \xi \right\rangle | \leq \|  \leq \epsilon \|\xi\|.
$$
By the triangle inequality, we deduce that, for all $t\geq T_{\epsilon}$, $t'\geq T_{\epsilon}$
$$
| \left\langle  x(t), \xi \right\rangle -  \left\langle  x(t'), \xi \right\rangle|  \leq   | \left\langle  v(t-T_{\epsilon}) -v(t'-T_{\epsilon}, \xi \right\rangle|  +  2\epsilon \|\xi\|.
$$
According to the Bruck theorem, we know that the weak limit of $v(t)$
exists. Passing to the limsup on the above inequality we get
$$
\limsup_{t,t' \to +\infty} | \left\langle  x(t), \xi \right\rangle -  \left\langle  x(t'), \xi \right\rangle|  \leq  \limsup_{t,t' \to +\infty} | \left\langle  v(t-T_{\epsilon} -v(t'-T_{\epsilon}, \xi \right\rangle|  +  2\epsilon \|\xi\| \leq  2\epsilon \|\xi\|.
$$
This being true for any $\epsilon >0$, we deduce that the limit of
$\left\langle  x(t), \xi \right\rangle$ exists, which implies that the
weak limit of $x(t)$ exists as $t\to +\infty$, let $x_{\infty}$ its limit.
Passing to the lower limit on \eqref{sg_property}, according to the lower semicontinuity of the norm for the weak topology, we deduce that
\begin{equation}\label{sg_property_b}
\| x_{\infty} - \lim_{t \to +\infty} v(t)\| \leq  \epsilon.
\end{equation}
Since the weak limit of $v(t)$ belongs to $\argmin _{\cH} f$, we deduce that $\dist (x_{\infty}, \argmin _{\cH} f) \leq \epsilon.$
This being true for any $\epsilon >0$, and since $\argmin _{\cH} f$ is closed, we finally get that $x_{\infty} \in \argmin _{\cH} f$.
\end{proof}

\subsection{An approach based on Opial's lemma} \label{rem-quasi-hessian_both}
 Here we will prove the weak convergence of the trajectory
 $x$ to a minimizer of $f$, based on the continuous version of the Opial Lemma \cite{Op}.
 As in the proof of Theorem \ref{th.existence_uniqueness_both_conv}, items $(i)$ to $(iii)$ hold. Assume $\phi (u) \geq r\|u\|$ for all $u\in \cH$.
 According to item $(ii)$ we obtain
$$
\int_0^{+\infty}  \| \dot{x}(t) + \beta \nabla  f (x(t))\| dt <+\infty .
$$
Equivalently, we have
$$
\dot{x}(t) + \beta \nabla  f (x(t)) = k(t)
$$
with $k \in L^1 (0, +\infty; \cH)$.
Let us prove that $x$ is bounded. Relying on step 1 of the proof of Theorem \ref{th.existence_uniqueness_both}, notice that \eqref{dx2_leq_k} holds for
a generic $x_0\in \cH$. Taking an arbitrary $z\in\argmin f$, we derive from \eqref{dx2_leq_k}
\begin{equation}\label{dx2_leq_k_for_opial}
\demi \frac{d}{dt} \|x(t) -z \|^2 \leq  \|k(t)\|\cdot  \|x(t) -z \|.
\end{equation}
Integrating we obtain
\begin{equation}\label{dx2_leq_k_for_opial_integr}
 \frac{1}{2}\|x(T) -z \|^2 \leq \frac{1}{2}\|x_0 -z \|^2 + \int_0^T\|k(t)\|\cdot  \|x(t) -z \|dt \ \  \forall T\geq 0.
\end{equation}
Now apply \cite[Lemme A.5, pag 157]{Brezis} to conclude
$$\|x(T)-z\|\leq \|x_0 -z \| + \int_0^T\|k(t)\|dt  \ \ \forall T\geq 0. $$
Since $k \in L^1 (0, +\infty; \cH)$ we obtain that $x$ is bounded.
Now we can repeat the arguments in the proof of Theorem \ref{th.existence_uniqueness_both_conv} to conclude that
$\lim_{t\to\infty}\dot x(t)=\lim_{t\to\infty}\nabla f( x(t))=0$, so we omit the proof.
Let us pass forward and see how the Opial Lemma \cite{Op} can be applied.\\
Since $x$ is bounded and $k \in L^1 (0, +\infty; \cH)$, we get from \eqref{dx2_leq_k_for_opial} that
$\lim_{t\to\infty}\|x(t)-z\|$ exists, hence the first condition in the Opial Lemma is fulfilled.
To check the second condition in the Opial Lemma is standard. Take $\overline{x}\in \cH $ and $t_n\to +\infty$ such that $x(t_n)$ converges weakly
to $\overline{x}$, as $n\to + \infty$. The convexity of $f$ yields for all
$x\in \cH$ and all $n\in\N$
$$f(x)\geq f(x(t_n))+\langle \nabla f(x(t_n)),x-x(t_n)\rangle.$$
Fixing $x$ and taking the limit as $n\to+\infty$, and relying on the strong convergence of $\nabla f(x(t))$ to $0$ and the boundedness of $x$, we derive
$$f(x)\geq \liminf_{n\rightarrow+\infty}f(x(t_n))\geq f(\overline {x}),$$
where the last inequality follows from the weak lower semicontinuity of the convex function $f$. Since the last inequality holds for an arbitrary $x$,
we obtain $\overline{ x}\in\argmin f$. Therefore, the second condition in the Opial Lemma is fulfilled as well.

\subsection{A finite stabilization property}
As we already noticed, (ADIGE-VGH) can be equivalently written as
$$
\dot{u}(t) + \partial \phi (u(t)) \ni -   \nabla  f (x(t) )
$$
where $ u(t) =\dot{x}(t) +  \beta \nabla  f (x(t))  $.
After taking the scalar product of the above equation with $u(t)$, we get
$$
\demi \frac{d}{dt} \| u(t) \|^2 + \left\langle \partial \phi (u(t)), u(t) \right\rangle = -  \left\langle \nabla  f (x(t)), u(t) \right\rangle .
$$
When $\phi (u) \geq r\|u\|$, and by Cauchy--Schwarz inequality we get
$$
\demi \frac{d}{dt} \| u(t) \|^2 + r \|u(t)\|  \leq \| \nabla  f (x(t))\|  \|u(t) \| .
$$
Since $\nabla  f (x(t))$ converges strongly to zero as $t\to +\infty$
(that's the last point of Theorem \ref{th.existence_uniqueness_both_conv}), we get for $t$ large enough
$\| \nabla  f (x(t))\|  \leq \demi r$, and hence
$$
\demi \frac{d}{dt} \| u(t) \|^2 + \demi r \|u(t)\|  \leq 0 .
$$
This gives that $u(t) \equiv 0$ after a finite time.
Let us summarize the above results in the following Proposition.

\begin{proposition} Under the hypothesis of Theorem \ref{th.existence_uniqueness_both_conv}, and when $\phi (u) \geq r\|u\|$ for some $r>0$, we have that after a finite time
$$
\dot{x}(t) +  \beta \nabla  f (x(t))  \equiv 0,
$$
\ie the trajectory follows the steepest descent dynamic.
\end{proposition}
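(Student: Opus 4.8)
The plan is to use the first-order reformulation of Proposition~\ref{first_order_both}: with $u(t) := \dot{x}(t) + \beta \nabla f(x(t))$, the system \mbox{\rm (ADIGE-VGH)} is equivalent to the differential inclusion $\dot{u}(t) + \partial \phi(u(t)) + \nabla f(x(t)) \ni 0$, so it suffices to prove that $u(t)$ vanishes identically after a finite time. First I would record the regularity needed to differentiate energies: since $x \in \mathcal{C}^1$, $\dot{x}$ is Lipschitz on bounded intervals, and $\nabla f$ is Lipschitz on the bounded sets met by the (bounded) trajectory provided by Theorem~\ref{th.existence_uniqueness_both_conv}, the map $t \mapsto u(t)$ is Lipschitz on bounded intervals; hence $t \mapsto \|u(t)\|^2$ is absolutely continuous and, for almost every $t$, one has the chain rule $\frac{d}{dt}\|u(t)\|^2 = 2\langle \dot{u}(t), u(t)\rangle$ together with a selection $\xi(t) \in \partial \phi(u(t))$ satisfying $\dot{u}(t) = -\xi(t) - \nabla f(x(t))$ (this selection is legitimate since, by the notion of strong solution, $\dot u$ exists a.e.\ and $-\dot u(t) - \nabla f(x(t)) \in \partial\phi(u(t))$ a.e.).

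Taking the scalar product of this identity with $u(t)$, I would use the convex subdifferential inequality and $\phi(0)=0$ to get $\langle \xi(t), u(t)\rangle \geq \phi(u(t)) \geq r\|u(t)\|$, the last step being the standing hypothesis $\phi(u)\geq r\|u\|$, and Cauchy--Schwarz to bound the driving term; this yields, for almost every $t$,
\[
\demi \frac{d}{dt}\|u(t)\|^2 + r\|u(t)\| \leq \|\nabla f(x(t))\|\,\|u(t)\|.
\]
By part~b) of Theorem~\ref{th.existence_uniqueness_both_conv}, $\nabla f(x(t)) \to 0$ strongly, so there is $t_0 \geq 0$ with $\|\nabla f(x(t))\| \leq \demi r$ for all $t \geq t_0$; hence $\frac{d}{dt}\|u(t)\|^2 + r\|u(t)\| \leq 0$ for almost every $t \geq t_0$.

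Finally I would extract finite-time extinction from this inequality. Writing $\rho(t) := \|u(t)\|$, which is absolutely continuous and nonnegative, the inequality reads $\rho(t)\big(\dot{\rho}(t) + \demi r\big) \leq 0$ for almost every $t \geq t_0$; on the open set where $\rho > 0$ this forces $\dot{\rho}(t) \leq -\demi r$, so integration gives $\rho(t) \leq \rho(t_0) - \demi r\,(t-t_0)$ as long as $\rho$ remains positive, which therefore must reach $0$ at some $t^* \leq t_0 + 2\rho(t_0)/r$. Since $\frac{d}{dt}\rho(t)^2 \leq -r\rho(t) \leq 0$ for almost every $t$, the function $\rho^2$ is nonincreasing, hence $\rho(t) = 0$ for all $t \geq t^*$; equivalently $\dot{x}(t) + \beta \nabla f(x(t)) \equiv 0$ after $t^*$. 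The only delicate point is this last step — arguing carefully that an almost-everywhere differential inequality for $\|u\|^2$ produces genuine extinction in finite time and not merely decay, which rests on the absolute continuity of $\rho$ and on the monotonicity of $\rho^2$ once it hits zero; everything else is a routine energy estimate that, moreover, uses no regularity of $f$ beyond the local Lipschitz continuity of $\nabla f$ already assumed for well-posedness.
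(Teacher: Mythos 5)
Your proposal is correct and follows the same route as the paper: pass to the first-order form in $u(t)=\dot x(t)+\beta\nabla f(x(t))$, take the scalar product with $u(t)$, use $\langle\partial\phi(u),u\rangle\geq\phi(u)\geq r\|u\|$ plus Cauchy--Schwarz to reach $\demi\frac{d}{dt}\|u\|^2+r\|u\|\leq\|\nabla f(x)\|\,\|u\|$, and then invoke $\nabla f(x(t))\to 0$ to absorb half of the $r\|u\|$ term for large $t$. The only difference is that you spell out the final step — extracting finite-time extinction from the almost-everywhere differential inequality via absolute continuity of $\rho=\|u\|$ and monotonicity of $\rho^2$ past the first zero — which the paper states without detail; this is a welcome clarification, not a different proof.
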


\subsection{The case \textit{f}  strongly convex: exponential convergence rate}

\begin{theorem}\label{strong_convex_thm-vel-grad} Let $f: \cH \to \R$ be a $\gamma$-strongly convex function (for some $\gamma>0$) which is twice continuously differentiable, and whose gradient is  Lipschitz continuous on the bounded sets. Let $\overline{x}$ be the unique minimizer of $f$.
Let $\phi : \cH \to \R_+$ be a damping potential (see Definition \ref{def1}) which is differentiable, and whose gradient is  Lipschitz continuous on the bounded subsets of $\cH$. Suppose  that $\phi$ satisfies the following growth conditions:

\smallskip

$(i)$ (local) there exists positive constants $\alpha$,   and  $\epsilon >0$ such that, for all $u$ in
$\cH$ with $\|u\| \leq \epsilon$
$$     \left\langle \nabla \phi (u), u \right\rangle \geq \alpha \|u\|^2 .$$

$(ii)$ (global) there exists  $p\geq 1$, $r>0$, such that for all $u$ in $\cH$, $\phi (u) \geq r\|u\|^p$.

\medskip

\noindent Suppose $\beta > 0$. Let  $x: [0, +\infty[ \to \cH$ be a solution trajectory of  \mbox{\rm (ADIGE-VGH)}
\begin{equation}\label{closed_loop_inertial_both_1-quasi-case-str-conv}
\ddot{x}(t) + \nabla \phi \Big(\dot{x}(t) + \beta \nabla  f (x(t)\Big)  + \beta \nabla^2  f (x(t)) \dot{x} (t) + \nabla  f (x(t)) = 0.
\end{equation}
Then, we have  exponential convergence rate to zero  as $t \to +\infty$  for $f(x(t))-f(\overline{x}) $, $\| x(t)-\overline{x}\|$ and $\|\dot x (t)+\beta\nabla f(x(t))\|$.
As a consequence, we also have exponential convergence rate to zero  as $t \to +\infty$  for  $\|\dot x(t)\|$ and $\|\nabla f(x(t))\|$.
\end{theorem}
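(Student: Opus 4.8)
The plan is to exploit the first-order reformulation of Proposition~\ref{first_order_both}: writing $u(t) := \dot{x}(t) + \beta \nabla f(x(t))$, the dynamics becomes $\dot x = u - \beta\nabla f(x)$, $\dot u = -\nabla\phi(u) - \nabla f(x)$, and the natural Lyapunov candidate is the energy $\mathcal{E}(t) := \demi\|u(t)\|^2 + f(x(t)) - f(\overline{x})$ already used in Theorem~\ref{th.existence_uniqueness_both_conv}. Differentiating and substituting the two equations gives $\dot{\mathcal{E}}(t) = -\langle u(t), \nabla\phi(u(t))\rangle - \beta\|\nabla f(x(t))\|^2 \le 0$ (this is the computation in Step~1 of the proof of Theorem~\ref{th.existence_uniqueness_both}, except that we keep the term $\langle\nabla\phi(u),u\rangle$ instead of minorizing it by $\phi(u)$). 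Since $f$ is $\gamma$-strongly convex, $f(x)-f(\overline{x})\ge\frac{\gamma}{2}\|x-\overline{x}\|^2$, so $\mathcal{E}$ non-increasing already forces $\|x(t)-\overline{x}\|^2 \le \frac{2}{\gamma}\mathcal{E}(0)$; hence $x(\cdot)$ is bounded, and then so are $\dot x(\cdot)$, $u(\cdot)$, $\nabla f(x(\cdot))$, $\nabla^2 f(x(\cdot))\dot x(\cdot)$ and $\ddot x(\cdot)$.

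Next I would establish the preliminary asymptotics needed to activate the local hypothesis $(i)$ on $\phi$. Integrating the energy inequality and using the global growth $\phi(u)\ge r\|u\|^p$ together with $\langle\nabla\phi(u),u\rangle\ge\phi(u)$ yields $\int_0^{+\infty}\|u(t)\|^p\,dt<+\infty$ and $\int_0^{+\infty}\|\nabla f(x(t))\|^2\,dt<+\infty$. Because $\dot u$ and $\frac{d}{dt}(\nabla f(x))=\nabla^2 f(x)\dot x$ are bounded, the maps $t\mapsto\|u(t)\|^p$ and $t\mapsto\|\nabla f(x(t))\|^2$ are uniformly continuous, hence tend to $0$; therefore $u(t)\to 0$ and $\nabla f(x(t))\to 0$, and $\gamma\|x(t)-\overline{x}\|\le\|\nabla f(x(t))\|$ gives $x(t)\to\overline{x}$ and $\mathcal{E}(t)\to 0$. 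In particular there is $t_1$ with $\mathcal{E}(t_1)\le\demi\epsilon^2$, and since $\demi\|u(t)\|^2\le\mathcal{E}(t)\le\mathcal{E}(t_1)$ for $t\ge t_1$, we have $\|u(t)\|\le\epsilon$ for all $t\ge t_1$.

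I would then derive the exponential rate. For $t\ge t_1$ the local condition gives $\langle\nabla\phi(u(t)),u(t)\rangle\ge\alpha\|u(t)\|^2$, while strong convexity gives $\|\nabla f(x(t))\|^2\ge 2\gamma(f(x(t))-f(\overline{x}))$; hence $\dot{\mathcal{E}}(t)\le-\alpha\|u(t)\|^2-\beta\gamma(f(x(t))-f(\overline{x}))\le -c\,\mathcal{E}(t)$ with $c:=\min\{2\alpha,\beta\gamma\}>0$. Gronwall's lemma yields $\mathcal{E}(t)\le\mathcal{E}(t_1)e^{-c(t-t_1)}$. From $f(x(t))-f(\overline{x})\le\mathcal{E}(t)$, $\frac{\gamma}{2}\|x(t)-\overline{x}\|^2\le\mathcal{E}(t)$ and $\demi\|u(t)\|^2\le\mathcal{E}(t)$ we obtain the announced exponential decay of $f(x(t))-f(\overline{x})$, $\|x(t)-\overline{x}\|$ and $\|\dot x(t)+\beta\nabla f(x(t))\|=\|u(t)\|$. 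Finally $\|\nabla f(x(t))\|\le L\|x(t)-\overline{x}\|$ on the bounded ball containing the trajectory and $\overline{x}$, and $\|\dot x(t)\|\le\|u(t)\|+\beta\|\nabla f(x(t))\|$, so $\|\nabla f(x(t))\|$ and $\|\dot x(t)\|$ decay exponentially as well.

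There is no deep obstacle, but two points require care. First, one must justify that $u(t)$ enters and then \emph{remains} in the region $\{\|u\|\le\epsilon\}$ where hypothesis $(i)$ applies — this is exactly what the preliminary boundedness/convergence step provides, and it uses $\beta>0$ both to force $\nabla f(x(t))\to 0$ and to furnish the coercive term $-\beta\|\nabla f(x)\|^2$ in the energy dissipation. Second, the bookkeeping that turns $\dot{\mathcal{E}}\le-\alpha\|u\|^2-\beta\|\nabla f(x)\|^2$ into $\dot{\mathcal{E}}\le-c\,\mathcal{E}$ works precisely because $\beta>0$ lets us absorb the function gap $f(x)-f(\overline{x})$ through the gradient term, with no need for an extra cross term of the type $\epsilon\langle x-\overline{x},\dot x\rangle$ used in the proof of Theorem~\ref{strong_convex_thm}; this is where the hypothesis $\beta>0$ is genuinely used.
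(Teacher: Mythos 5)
Your proof is correct, and it takes a genuinely different (and in fact simpler) route than the paper. The paper carries over the template of Theorem~\ref{strong_convex_thm} and builds a cross-term-augmented Lyapunov function
\[
h_{\varepsilon,\beta}(t) = \demi\|\dot x(t)+\beta\nabla f(x(t))\|^2 + (1-\beta\varepsilon)\big(f(x(t))-f(\overline{x})\big) + \varepsilon\langle x(t)-\overline{x},\dot x(t)+\beta\nabla f(x(t))\rangle,
\]
for a small $\varepsilon>0$, showing $\dot h_{\varepsilon,\beta}+C_3 h_{\varepsilon,\beta}\le 0$ and sandwiching $h_{\varepsilon,\beta}$ between multiples of $\|\dot x+\beta\nabla f(x)\|^2+f(x)-f(\overline{x})$. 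You instead work with the plain energy $\mathcal E(t)=\demi\|\dot x(t)+\beta\nabla f(x(t))\|^2+f(x(t))-f(\overline{x})$ and observe that the dissipation identity $\dot{\mathcal E}(t)=-\langle\nabla\phi(u(t)),u(t)\rangle-\beta\|\nabla f(x(t))\|^2$ (which the paper also derives, see \eqref{energy-multiplied-local-pr}) already controls both pieces of $\mathcal E$: the local condition $(i)$ handles $\|u\|^2$ once $\|u(t)\|\le\epsilon$, and the strong-convexity (Polyak--\L{}ojasiewicz) bound $\|\nabla f(x)\|^2\ge 2\gamma(f(x)-f(\overline{x}))$ converts the gradient dissipation directly into exponential decay of the potential gap, so no cross term is needed. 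This is both shorter and more transparent about where $\beta>0$ enters: with $\beta=0$ the gradient term disappears from $\dot{\mathcal E}$ and one would be forced back to a cross-term argument, which is exactly what Theorem~\ref{strong_convex_thm} does. Your observation that $\|u(t)\|\le\epsilon$ persists once it occurs, because $\demi\|u(t)\|^2\le\mathcal E(t)$ is monotone, is also a clean way to activate the local hypothesis. One cosmetic point: from $\|\nabla f(x)\|^2\ge 2\gamma(f(x)-f(\overline{x}))$ the Gronwall constant should be $\min\{2\alpha,2\beta\gamma\}$ rather than $\min\{2\alpha,\beta\gamma\}$; the stated constant is still a valid lower bound, so the conclusion is unaffected. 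The trade-off is purely in the resulting constant: the paper's $C_3$ is implicit and $\varepsilon$-dependent, while yours is explicit but not claimed to be optimal.
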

\begin{proof} Since $f$ is strongly convex, $f$ is a coercive function.  According to the decrease property of the global energy, see \eqref{closed_loop_both_energy-decrease} and Theorem \ref{th.existence_uniqueness_both_conv} $(i)$, we have that $f(x(t))$ is bounded from above, and hence the trajectory $x$ is bounded.
Item $(ii)$ of Theorem \ref{th.existence_uniqueness_both_conv}, and the global  growth assumption on $\phi$ give that, for some $p\geq 1$
$$
\int_0^{+\infty}  \| \dot{x}(t) + \beta \nabla  f (x(t))\|^p dt <+\infty.
$$
By a similar argument as in the proof of Theorem \ref{th.existence_uniqueness_both_conv} (where we argued with $p=1$)
we deduce that $\lim_{t\to +\infty} \| \dot{x}(t) + \beta \nabla  f (x(t))\| =0$.
Therefore, for $t$ sufficiently large
$$\|\dot x (t)+\beta\nabla f(x(t))\|\leq \epsilon.$$
From \eqref{closed_loop_both_energy_1} and the local property (i) we derive
\begin{equation}\label{energy-multiplied-local-pr}
\frac{d}{dt} \left(\demi\| \dot{x}(t) + \beta \nabla  f (x(t))\|^2 + f(x(t)) - f(\overline{x})\right) + \alpha\|\dot x(t)+\beta\nabla f(x(t))\|^2 + \beta\| \nabla  f (x(t)) \|^2\leq 0.
\end{equation}
Since  $\dot{x}(\cdot)+\beta\nabla f(x(\cdot))$ is  bounded,
let $L>0$ be the Lipschitz constant of $\nabla\phi$ on a ball that contains the vector $\dot x(t)+\beta\nabla f(x(t))$ for all $t\geq 0$. Since $\nabla \phi (0) =0$ we have, for all $t\geq 0$
\begin{equation}\label{local_Lip-case-vel-grad}
\| \nabla \phi(\dot x(t))+\beta\nabla f(x(t))\| \leq L \| \dot x(t)+\beta\nabla f(x(t))\|.
\end{equation}
Using successively \eqref{closed_loop_inertial_both_1-quasi-case-str-conv}, \eqref{local_Lip-case-vel-grad}
and \eqref{from-str-conv1}, we obtain
\begin{eqnarray}\frac{d}{dt}\langle x(t)-\overline{x},\dot x(t)+\beta\nabla f(x(t))\rangle  &=&
\|\dot x(t)\|^2  + \beta \frac{d}{dt}(f(x(t)) - f(\overline{x})) \nonumber \\
&& + \langle x(t)-\overline{x},-\nabla \phi(\dot x(t)+\beta\nabla f(x(t)))-\nabla f(x(t))\rangle \nonumber\\
&\leq & \|\dot x(t)\|^2  + \beta \frac{d}{dt}(f(x(t)) - f(\overline{x})) + \frac{L^2}{2\gamma}\|\dot x(t)+\beta\nabla f(x(t))\|^2\nonumber\\ && +\frac{\gamma}{2} \|x(t)-\overline{x}\|^2 + \langle \overline{x} -x(t),\nabla f(x(t))\rangle \nonumber\\
&\leq & \|\dot x(t)\|^2  + \beta \frac{d}{dt}(f(x(t)) - f(\overline{x})) + \frac{L^2}{2\gamma}\|\dot x(t)+\beta\nabla f(x(t))\|^2\nonumber\\ &&  + f(\overline{x}) - f(x(t)). \label{strong_conv_2-case-vel-grad}
\end{eqnarray}
Take now $\varepsilon >0$ (we will specify below how it should be chosen), and define
$$h_{\varepsilon,\beta}(t) := \demi\| \dot{x}(t) + \beta \nabla  f (x(t))\|^2 + (1-\beta\varepsilon) \big(f(x(t)) - f(\overline{x})\big)  + \varepsilon \langle x(t)-\overline{x},\dot x(t)+\beta\nabla f(x(t))\rangle.$$
Multiplying \eqref{strong_conv_2-case-vel-grad} with $\varepsilon$ and adding the result to  \eqref{energy-multiplied-local-pr}, we derive
\begin{eqnarray*}
\dot h_{\varepsilon,\beta}(t) \leq &&- \alpha\|\dot x(t)+\beta\nabla f(x(t))\|^2 - \beta\| \nabla  f (x(t)) \|^2 +\varepsilon\|\dot x(t)\|^2  + \frac{\varepsilon L^2}{2\gamma}\|\dot x(t)+\beta\nabla f(x(t))\|^2 \\
&& - \varepsilon (f(x(t))-f(\overline{x})).
\end{eqnarray*}
We use the inequality
\begin{equation}\label{usef-ineq}\varepsilon\|\dot x(t)\|^2 \leq 2\varepsilon \|\dot x(t)+\beta\nabla f(x(t))\|^2
+2\varepsilon\beta^2 \|\nabla f(x(t))\|^2\end{equation}
and we obtain
\begin{equation}\label{d-h-e-b} \dot h_{\varepsilon,\beta}(t) \leq - \left(\alpha - 2\varepsilon - \frac{\varepsilon L^2}{2\gamma}\right)\|\dot x(t)+\beta\nabla f(x(t))\|^2 - (\beta - 2\varepsilon\beta^2)\| \nabla  f (x(t)) \|^2  - \varepsilon (f(x(t))-f(\overline{x})).
\end{equation}
Choose $\varepsilon > 0$ small enough such that $C_1: =\min\left\{\left(\alpha - 2\varepsilon - \frac{\varepsilon L^2}{2\gamma}\right), \beta - 2\varepsilon\beta^2, \varepsilon\right\} > 0$. We obtain
\begin{equation}\label{d-h-e-b-ok} \dot h_{\varepsilon,\beta}(t) \leq - C_1\Big(\|\dot x(t)+\beta\nabla f(x(t))\|^2 + \| \nabla  f (x(t)) \|^2  + f(x(t))-f(\overline{x})\Big).
\end{equation}
Further, we have
\begin{eqnarray} h_{\varepsilon,\beta}(t) &=& \demi\| \dot{x}(t) + \beta \nabla  f (x(t))\|^2 +
\varepsilon\beta\big(\langle x(t)-\overline{x},\nabla f(x(t))\rangle + f(\overline{x}) - f(x(t))\big)\label{h-e-b-leq}\\
&& + f(x(t)) - f(\overline{x})  + \varepsilon \langle x(t)-\overline{x},\dot x(t))\rangle. \nonumber
\end{eqnarray}
Since $f$ is strongly convex, we have (see for example Theorem 2.1.10 in \cite{Nest2})
\begin{equation}\label{nes-str-cv}\langle x(t)-\overline{x},\nabla f(x(t))\rangle + f(\overline{x}) - f(x(t)) \leq \frac{1}{2\gamma}\|\nabla f(x(t))\|^2.
\end{equation}
Moreover, from \eqref{from-str-conv2} and \eqref{usef-ineq} we get
\begin{eqnarray*} && f(x(t)) - f(\overline{x})  + \varepsilon \langle x(t)-\overline{x},\dot x(t))\rangle \leq
 f(x(t)) - f(\overline{x}) +
\frac{\varepsilon}{2}\|x(t)-\overline{x}\|^2+\frac{\varepsilon}{2}\|\dot x(t)\|^2\\
&&\leq  \left(1+\frac{\varepsilon}{\gamma}\right) \big(f(x(t)) - f(\overline{x})\big)
+ \varepsilon \|\dot x(t)+\beta\nabla f(x(t))\|^2
+\varepsilon\beta^2 \|\nabla f(x(t))\|^2.
\end{eqnarray*}
From this, \eqref{nes-str-cv} and \eqref{h-e-b-leq} we get
\begin{eqnarray*} h_{\varepsilon,\beta}(t) &\leq& \left(\frac{1}{2}+\varepsilon\right)\|\dot x(t)+\beta\nabla f(x(t))\|^2 +
\left(\frac{\varepsilon\beta}{2\gamma} + \varepsilon\beta^2\right)\|\nabla f(x(t))\|^2 +
\left(1+ \frac{\varepsilon}{\gamma}\right)(f(x(t))-f(\overline{x}))\\
&\leq & C_2\Big(\|\dot x(t)+\beta\nabla f(x(t))\|^2 + \| \nabla  f (x(t)) \|^2  + f(x(t))-f(\overline{x})\Big),
\end{eqnarray*}
where $C_2:= \max\left\{ \frac{1}{2}+\varepsilon,  \frac{\varepsilon\beta}{2\gamma} + \varepsilon\beta^2, 1+ \frac{\varepsilon}{\gamma}\right\} > 0$.
Combining this inequality with \eqref{d-h-e-b-ok}, we obtain
$$\dot {h}_{\varepsilon,\beta}(t) + C_3 h_{\varepsilon,\beta}(t)\leq 0,$$
with $C_3:= \frac{C_1}{C_2}  >0$. Then, the Gronwall inequality classically implies
\begin{equation}\label{rate_h-str-conv-vel-grad}h_{\varepsilon,\beta}(t) \leq h_{\varepsilon,\beta}(0)e^{-C_3t}.\end{equation}
Finally, from \eqref{from-str-conv2} and the Cauchy--Schwarz inequality we have
\begin{eqnarray*}h_{\varepsilon,\beta}(t) &\geq & \frac{1}{2}\|\dot x(t)+\beta\nabla f(x(t))\|^2 + (1-\beta\varepsilon)\big(f(x(t)) - f(\overline{x})\big)\\
&&-\frac{\varepsilon}{2}\|x(t)-\overline{x}\|^2 - \frac{\varepsilon}{2}\|\dot x(t)+\beta\nabla f(x(t))\|^2\\
&\geq & \frac{1-\varepsilon}{2}\|\dot x(t)+\beta\nabla f(x(t))\|^2 + \left(1-\beta\varepsilon-\frac{\varepsilon}{\gamma}\right)(f(x(t))-f(\overline{x})).\end{eqnarray*}
Therefore, by taking $\varepsilon$ small enough, we obtain
\begin{equation}\label{rate-1part-vel-grad}h_{\varepsilon,\beta}(t)\geq C_4 \Big(\|\dot x(t)+\beta\nabla f(x(t))\|^2 + f(x(t))-f(\overline{x})\Big),\end{equation}
with $C_4:= \min\left\{\frac{1-\varepsilon}{2},1-\beta\varepsilon-\frac{\varepsilon}{\gamma}\right\} > 0$.
Combining this inequality  with  \eqref{rate_h-str-conv-vel-grad} and  \eqref{from-str-conv2}, we obtain an exponential
convergence rate to zero for $f(x(t))-f(\overline{x}) $, $\|x(t)-\overline{x}\|$ and
$\|\dot x (t)+\beta\nabla f(x(t))\|$.\\
Since $\nabla f$ is Lipschitz continuous on the bounded sets, and $x(t)$ converges to $\overline{x}$, there exists  $L_f >0$ such that for all $t\geq 0$
$$
\|\nabla f (x(t)) \|=\|\nabla f (x(t)) - \nabla f (\overline{x})\| \leq L_f \| x(t) - \overline{x}\|.
$$
Based on the exponential convergence rate of  $\|x(t)-\overline{x}\|$ to zero, we deduce that the same property holds for
$\|\nabla f (x(t)) \|$. By combining this last property with  the exponential convergence rate of $\|\dot x (t)+\beta\nabla f(x(t))\|$ to zero, we finally get that $\|\dot x (t)\|$ converges exponentially to zero when $t \to +\infty$.
\end{proof}

\begin{remark} Similar rates have been reported in \cite[Theorem 4.2]{ACFR} for the heavy ball method
with Hessian driven damping.
\end{remark}

\begin{remark} It is possible to derive similar exponential rates also for the system \eqref{Hessian_def_1},
however for a restrictive choice of $\beta > 0$. To see this, notice that for $\theta > 0$ we have
\begin{eqnarray*}
&&\frac{d}{dt} \left(\demi\| \dot{x}(t) + \beta \nabla  f (x(t))\|^2 + f(x(t)) - f(\overline{x})\right)\\
&& =  -\langle \dot x(t), \nabla\phi(\dot x(t))\rangle  - \beta\langle \nabla \phi(\dot x(t)), \nabla f(x(t))\rangle - \beta\|\nabla f(x(t))\|^2\\
&&\leq  -\alpha \|\dot x(t)\|^2 + \frac{\beta\theta}{2}\|\nabla f(x(t))\|^2 + \frac{\beta L^2}{2\theta}\|\dot x(t)\|^2
- \beta\|\nabla f(x(t))\|^2\\
&&= -\left(\alpha - \frac{\beta L^2}{2\theta}\right)\|\dot x(t)\|^2 - \beta\left(1 - \frac{\theta}{2}\right)\|\nabla f(x(t))\|^2.
\end{eqnarray*}

\end{remark}

\subsection{Further convergence results based on the quasi-gradient approach}\label{rem-quasi-hessian_both-quasi}

Let us consider the dynamical systems (ADIGE-VGH) in case $\phi$ is differentiable, $f: \R^N \to \R$ is a $\mathcal C^2$ function (possible nonconvex) whose gradient is  Lipschitz continuous on the bounded sets, and such that $\inf_{\R^N} f >-\infty$:
 \begin{equation}\label{closed_loop_inertial_both_1-quasi}
\ddot{x}(t) + \nabla \phi \Big(\dot{x}(t) + \beta \nabla  f (x(t))\Big)  + \beta \nabla^2  f (x(t)) \dot{x} (t) + \nabla  f (x(t)) = 0.
\end{equation}
The considerations are similar to those of Section 7.3 and Theorem \ref{quasi_grad_thm_2}.\\
According to Step 2 in the proof of Theorem \ref{th.existence_uniqueness_both}, the first-order reformulation is
\begin{equation}\label{first_order_cl_loop_quasi_1_both-hessian}
\dot z(t) + F(z(t)) =0,
\end{equation}
where $z(t)=(x(t), u(t) ) \in \R^N \times \R^N $, and
   $F: \R^N \times  \R^N \to \R^N \times \R^N$
is defined by
$$F(x,u)=(\beta\nabla f(x)-u, \nabla \phi(u)+ \nabla f(x)).$$
Let us check the angle condition ($E_{\lambda}$ is defined as in Theorem \ref{quasi_grad_thm_2}). We have
\begin{eqnarray*}
\left\langle  \nabla  E_{\lambda}(x,u), F(x,u) \right\rangle
&=& \left\langle  \Big( \nabla f (x)+ \lambda \nabla^2 f (x)u, \, u + \lambda \nabla f (x) \Big), \Big(\beta\nabla f(x)-u, \nabla \phi(u)+ \nabla f(x) \Big) \right\rangle.
\end{eqnarray*}
After development and simplification, we get
\begin{eqnarray*}
\left\langle  \nabla  E_{\lambda}(x,u), F(x,u) \right\rangle
&=&  - \lambda \left\langle  \nabla^2 f (x)u, \, u  \right\rangle + \left\langle  u, \, \nabla \phi(u) \right\rangle
+ \lambda \left\langle  \nabla f (x) , \, \nabla \phi(u)  \right\rangle
+ \lambda  \|  \nabla f (x) \|^2\\
&& + \beta\|\nabla f(x)\|^2+\lambda\beta\left\langle \nabla f(x), \nabla^2 f (x)u\right\rangle.
\end{eqnarray*}
We estimate the term $\lambda\beta\left\langle \nabla f(x), \nabla^2 f (x)u\right\rangle$ by writing
$$\lambda\beta\left\langle \nabla f(x), \nabla^2 f (x)u\right\rangle\geq -\frac{\lambda}{4}\|\nabla f(x)\|^2
-\lambda\beta^2 M^2\|u\|^2$$
and get (as in the proof of Theorem \ref{quasi_grad_thm_2})
\begin{eqnarray}
\left\langle  \nabla  E_{\lambda}(x,u), F(x,u) \right\rangle
&\geq &   \Big( \gamma -\lambda M   - \frac{\lambda}{2} \delta^2 -\lambda \beta^2M^2 \Big)  \|u\|^2
  + \left(\frac{\lambda}{4}+\beta\right)  \|  \nabla f (x) \|^2  . \label{quasi_gradient_2_both_hessian}
\end{eqnarray}
We also have 
\begin{equation*}
\| F(x,u)\| \leq C_2  ( \|u\|^2 +   \|  \nabla f (x) \|^2)^{\demi},
\end{equation*}
  where $C_2=\sqrt{2(2+\beta^2+\delta^2)}$.
 The rest can be done in the lines of the proof of Theorem \ref{quasi_grad_thm_2}.

\section{Conclusion, perspectives}

In this article, from the point of view of optimization, we put forward some classical and new properties concerning the asymptotic convergence of autonomous damped inertial dynamics.
From a control point of view, the damping terms of these dynamics can be considered as closed-loop controls of the current data: position, speed, gradient of the objective function, Hessian of the objective function, and combination of these objects.
Let us cite some of the main results and advantages of the autonomous approach compared to the non-autonomous approach, where damping involves parameters given from the start as functions of time.

\subsection{PRO}
\begin{itemize}
\item Autonomous systems are easy to implement. It is not necessary to adjust the damping coefficient as is the case for non-autonomous systems.

\smallskip

\item When the function to be minimized is strongly convex, there is convergence at an exponential rate, and this is valid for a large class of damping potentials.

\smallskip

\item We were able to exploit the quasi-gradient structure of the autonomous damped dynamics and combine them with the Kurdyka-Lojasiewicz theory to obtain convergence rates for a large class of functions $f$, possibly non-convex.
This is specific to the autonomous case because the theories mentioned above are not developed in the non-autonomous case.

\smallskip

\item The Hessian damping naturally comes within the framework of autonomous systems.
It notably improves the theoretical and numerical behavior of the trajectories, by reducing the oscillatory aspects.
Its introduction into the algorithms does not change their numerical complexity (it makes appear the difference of the gradient  at two consecutive steps). This makes this geometric damping  successful, several recent articles have been devoted to it.

\smallskip

\item The closed-loop approach clearly distinguishes between the strong and weak damping effects, and the transition between them. It also shows the replacement of the theory of convergence by the notion of attractor when the damping becomes too weak.

\smallskip

\item We have introduced a new autonomous system where the damping involves both the speed and the gradient of $ f $, and which benefits from very good convergence properties. At the beginning of time it takes advantage of the inertial effect, then after a finite time it turns into a steepest descent dynamic, thus avoiding the oscillatory aspects. This regime  change  has some similarities with the restart method, and also the recent work of  Poon-Liang \cite{PL} on adaptive acceleration.

\smallskip

\item The closed-loop approach makes it possible to make the link with different fields, such as PDE and control theory, where the stabilization of oscillating systems is a central issue.
Although the simple mathematical framework chosen in this article (single functional space $\cH$, differentiable objective function $f$) does not make it possible to deal directly with the associated PDEs, the Lyapunov analysis that we have developed can naturally be extended to this framework.

\smallskip

\item We have developed an inertial algorithm which shares the good convergence properties of the related continuous dynamics, in the case of the quasi-gradient and Kurdyka-Lojasiewicz approach.
Note that the quasi-gradient approach reflects relative errors in the algorithms, and therefore gives a lot of flexibility.
It is this approach that has made it possible to deal with many different algorithms in Attouch-Bolte-Svaiter \cite{ABS} in the nonconvex nonsmooth case. It would be interesting to develop these aspects for splitting algorithms, such as  proximal gradient algorithms, regularized Gauss--Seidel algorithms, and PALM (see also \cite{BotCseLaJDE} for a continuous-times approach to structured optimization problems).
\end{itemize}

\subsection{CONS}

\smallskip

\begin{enumerate}
\item To date, we do not know in the autonomous case the equivalent of the accelerated gradient method of Nesterov and Su-Boyd-Cand\`es damped inertial dynamic, that is to say an adjustment of the damping potential which guarantees the rate of convergence of values $ 1 / t^2 $ for any convex function.
This is a current research subject, for recent progress in this direction, see Lin-Jordan \cite{LJ}.

\smallskip

\item The general approach based on the quasi-gradient and the Kurdyka-Lojasiewicz theory (as developed in Section \ref{sec: basic_3})  works mainly in finite dimension.
The extension of the (KL) theory to spaces of infinite dimension is a current research subject.

\end{enumerate}

\medskip

\subsection{Perspectives}
$\mbox{ }$

\smallskip

\begin{enumerate}

\item Develop closed-loop versions of the Nesterov accelerated gradient method from a theoretical and numerical point of view.
Our analysis allowed us to better define the type of damping potential $\phi$ capable of doing this, but this remains an open question for study. Indeed, the case $ p = 2 $ (\ie quadratic behavior of the damping potential near the origin) is the critical case separating the weak damping from the strong damping. Taking $p>2$,  with $ p $  close to $ 2$ provides a vanishing viscosity damping coefficient, which is a specific property of the Nesterov accelerated gradient method. Our intuition is that we need to refine the power scale which is not precise enough to provide the correct setting of the vanishing damping term (\ie going from $ p = 2 $ to $ p> 2 $, with $ p $ even very close to $ 2$ is too sudden a change).

\smallskip

\item Extend our study to the case of nonsmooth optimization possibly involving a constraint. This is an important subject, which is closely related to item 6 of this list, because a common device to deal with a constrained optimization problem is to use a gradient-projection method, which falls under fixed point methods.

\smallskip

\item Develop a control perspective with closed-loop damping for the restarting methods.
The restarting methods take advantage of the inertial effect to accelerate the trajectories, then stop when a given criteria is deteriorate. Then restart from the current point with zero velocity, and so on.
In many ways, the dynamic we developed in Section \ref{Sec: combine}  follows a similar strategy. Our results are valid with  general data functions  $ f $ and $ \phi $, while the known results concerning the restart methods only concern the case where $ f $ is  strongly convex. It is an important subject of study, largely to explore.

\smallskip

\item Obtain a closed-loop version of the Tikhonov regularization method, and make the link with the Haugazeau method.
The objective is then, within the framework of the convex optimization, to obtain an autonomous dynamic whose trajectories strongly converge towards the solution of minimum norm; see
  Attouch--Cabot--Chbani--Riahi \cite{AC2R-JOTA}, and
 Bo\c t--Csetnek--L\'{a}szl\'{o} \cite{BCL} for some recent results in the open-loop case (the Tikhonov regularization parameter tends to zero in a controlled manner, not too fast) and references therein.

\smallskip

\item Develop the corresponding algorithmic results.
Continuous dynamics provide a valuable guide to introduce and analyze algorithms that benefit from similar convergence properties.
In Theorem \ref{quasi_grad_thm_algo} we have analyzed the convergence property of an inertial algorithm with general damping potential $\phi$ and general (tame) function $f$. A similar analysis can certainly be developed on the basis of the Theorems \ref{quasi_thm_VH} and \ref{th.existence_uniqueness_both_conv} which also involve the Hessian-driven damping.
Natural extensions then consist in studying structured optimization problems and the corresponding proximal-based algorithms.

\smallskip

\item  In recent years, most of the previous themes have been extended (in the open-loop case) to the case of  maximally monotone operators, see \'Alvarez--Attouch \cite{AA1}, Attouch--Maing\'e \cite{AM}, Attouch--Peypouquet \cite{AP-max}, Attouch--Cabot \cite{AC1}, Attouch--L\'{a}szl\'{o} \cite{AL},  Bo\c t--Csetnek \cite{BotCse}. It would be interesting to consider the closed-loop version of these dynamics, as was done by Attouch-Redont-Svaiter \cite{ARS} for first-order Newton-like evolution systems.

\smallskip

\item Time rescaling is a powerful tool to accelerate the inertial systems; see   Attouch--Chbani--Riahi \cite{ACR-Pafa},
Shi--Du--Jordan--Su \cite{SDJS} and references therein. It leads naturally to non-autonomous dynamics. It would be interesting to study autonomous closed-loop version. This means first extracting quantities which tend monotonically to $+\infty$.

\smallskip

\item Study of the stability of the dynamics and algorithms with respect to perturbations/errors. This is an an important topic from a numerical point of view, see \cite{AC2R-JOTA}, \cite{ACPR}, \cite{ACR-Pafa}, \cite{VSBV}.

\smallskip

\item The concepts of control theory and dissipative dynamical systems have proven to be useful and intuitive design guidelines for speeding up stochastic gradient methods, especially  for the variance-reduction methods for the finite-sum
problem, see \cite{HWL}  and  accompanying bibliography.
It is likely that our approach fits these questions well.
\end{enumerate}

\end{document}